\numberwithin{equation}{section}\newtheorem{theorem}{Theorem}[section]
\newtheorem{corollary}[theorem]{Corollary}\newtheorem{lemma}[theorem]{Lemma}
\newtheorem{proposition}[theorem]{Proposition}\theoremstyle{remark}
\newtheorem{remark}{Remark}[section]
\theoremstyle{definition}
\newcommand{\bra}[1]{\langle #1 \rangle} 
\newcommand{\brasigma}[1]{\langle #1 \rangle_\sigma}     
\newcommand{\one}[1]{\mathbf{1}_{#1}}
\title[Scattering for NLS]
{Scattering in the energy space
for the NLS with variable coefficients}
\date{\today}    
\author{Biagio Cassano}
\address{Biagio Cassano: 
SAPIENZA --- Universit\`a di Roma,
Dipartimento di Matematica, 
Piazzale A.~Moro 2, I-00185 Roma, Italy}
\email{cassano@mat.uniroma1.it}
\author{Piero D'Ancona}
\address{Piero D'Ancona: 
SAPIENZA --- Universit\`a di Roma,
Dipartimento di Matematica, 
Piazzale A.~Moro 2, I-00185 Roma, Italy}
\email{dancona@mat.uniroma1.it}
\subjclass[2000]{%
35L70, 
58J45
}\keywords{}
\newcommand{\R}{{\mathbb R}}
\newcommand{\bu}{{\bar u}}
\DeclarePairedDelimiter{\abs}{\lvert}{\rvert}
\DeclarePairedDelimiter{\norma}{\lVert}{\rVert} 
\newcommand{\ignora}[1]{}
\lbrace\begin{array}{@{}l@{}}}%
\begin{document}
\begin{abstract}
  We consider the NLS with variable coefficients
  in dimension $n\ge3$
  \begin{equation*}
    i \partial_t u - Lu +f(u)=0,
    \qquad
    Lv=\nabla^{b}\cdot(a(x)\nabla^{b}v)-c(x)v,
    \qquad
    \nabla^{b}=\nabla+ib(x),
  \end{equation*}
  on $\mathbb{R}^{n}$ or more generally on an
  exterior domain with Dirichlet boundary conditions,
  for a gauge invariant, defocusing nonlinearity of power type
  $f(u)\simeq|u|^{\gamma-1}u$.
  We assume that $L$ is a small, long range perturbation
  of $\Delta$, plus a potential with a large positive part. 
  The first main result
  of the paper is a bilinear smoothing (interaction
  Morawetz) estimate for the solution.

  As an application, under the conditional assumption
  that Strichartz estimates are valid for the linear flow
  $e^{itL}$, we prove global well posedness
  in the energy space for subcritical powers 
  $\gamma<1+\frac{4}{n-2}$, and scattering provided
  $\gamma>1+\frac4n$.
  When the domain is $\mathbb{R}^{n}$,
  by extending the Strichartz estimates due to Tataru
  \cite{Tataru08-a}, we prove that the conditional assumption
  is satisfied and deduce well posedness
  and scattering in the energy space.
\end{abstract}
\maketitle


\section{Introduction}\label{sec:introduction}

We study the Cauchy problem in the energy space
for the semilinear Schr\"{o}dinger equation
\begin{equation}\label{eq:main}
  i \partial_t u - Lu +f(u)=0,
  \qquad
  u(0,x)=u_{0}(x)
\end{equation}
on an exterior domain $\Omega=\mathbb{R}^{n}\setminus \omega$
with $C^{1}$ boundary, in dimension $n\ge 3$, where
$\omega$ is compact and possibly empty. Here
$L$ is a second order elliptic operator defined on $\Omega$
with Dirichlet boundary conditions, of the form
\begin{equation}\label{eq:opL}
  Lv=\nabla^{b}\cdot(a(x)\nabla^{b}v)-c(x)v,
  \qquad
  \nabla^{b}=\nabla+ib(x),
\end{equation}
where $a(x)=[a_{jk}(x)]_{j,k=1}^{n}$, 
$b(x)=(b_{1}(x),\dots,b_{n}(x))$
and $c(x)$ satisfy
\begin{equation}\label{eq:assopL}
  a,b,c\ \ \text{are real valued,}\ \ 
  a_{jk}=a_{kj}\ \ \text{and}\ \ 
  NI\ge a(x)\ge\nu I
  \ \ \text{for some}\ \ N\ge\nu>0.
\end{equation}
The low dimensional cases $n\le 2$ require substantial
modifications of our techniques and will be the object
of future work.

Our main results can be summarized as follows.
Assume that 
\begin{enumerate}
[label=(\roman*)]
  \item the principal part of $L$
  is a small, long range perturbation of $\Delta$;
  \item $b,c$ have an almost critical decay, 
  with $b$ and $c_{-}:=\max\{0,-c\}$ small;
  \item the boundary $\partial\Omega$ is starshaped 
  with respect to the metric induced by $a(x)$;
  \item the nonlinearity $f(u)\simeq|u|^{\gamma-1}u$ is of power 
  type, gauge invariant, defocusing, with $\gamma$ in the 
  subcritical range $1\le\gamma<1+\frac{4}{n-2}$.
\end{enumerate}
Then we prove:
\begin{enumerate}
  \item a virial identity for \eqref{eq:main}, from which
  we deduce a smoothing and a bilinear smoothing
  (interaction Morawetz) estimate for solutions of
  \eqref{eq:main}.
  \item global well posedness and scattering
  in the energy space for the Cauchy problem \eqref{eq:main},
  under the black box assumption that Strichartz estimates
  are valid for the linear flow $e^{itL}$;
  scattering requires $\gamma>1+\frac4n$.
  \item in the case $\Omega=\mathbb{R}^{n}$, 
  we extend the Strichartz estimates
  proved by Tataru \cite{Tataru08-a} to the case of
  large electric potentials; hence we can drop the black
  box assumption and we obtain well posedness
  and scattering in the energy space for \eqref{eq:main}.
\end{enumerate}

Note that for exterior domains, 
Strichartz estimates are known but only locally in time,
see e.g.~\cite{BlairSmithSogge12-a},
\cite{BaskinMarzuolaWunsch12-a} and
the references therein.
However, research on this
topic is advancing rapidly, thus in the general case
$\Omega \not = \mathbb{R}^{n}$ we decided to assume 
\emph{a priori} the validity of Strichartz estimates.
In the case $\Omega=\mathbb{R}^{n}$
sufficiently strong results are already available
and we use them to close the proof of scattering.
On a related note we
mention the global smoothing estimates on the 
exterior of polygonal domains proved
in \cite{BaskinMarzuolaWunsch12-a}.

The theory of Strichartz estimates on
$\mathbb{R}^{n}$ is extensive and many results are known.
We mention in particular
\cite{Yajima95-a},
\cite{Yajima99-a},
\cite{Yajima99-a},
\cite{RodnianskiSchlag04-a}
\cite{DAnconaFanelli06-a}
for the case of electric potentials,
\cite{DAnconaFanelli08-a}
and \cite{ErdoganGoldbergSchlag09-a}
for magnetic potentials,
and, for operators with fully variable coefficients,
\cite{StaffilaniTataru02-a},
\cite{RobbianoZuily05-a}
and \cite{Tataru08-a} 
(see also the refences therein).
Note that large perturbations in the second order
terms require suitable nontrapping assumptions,
which are implicit here in the assumption that
$|a(x)-I|$ is sufficiently small.

Scattering theory is a important subject and the number of
references is huge. For a comprehensive review of the
classical theory and an extensive bibliography we refer to
\cite{Cazenave03-a}
(see also
\cite{GinibreVelo85-a}).
Smoothing estimates are also a classical subject,
originated in 
\cite{Kato65-b} and
\cite{Morawetz68-a},
\cite{Morawetz75-a}.
The bilinear version of smoothing estimates, also
called \emph{interaction Morawetz} estimates,
was introduced as a tool in scattering theory in
\cite{CollianderKeelStaffilani04-a}, 
\cite{TaoVisanZhang07-a}
and recently adapted to
Schr\"{o}dinger equations with an
electromagnetic potential in
\cite{CollianderCzubakLee14-a}.
We mention that here we follow the simpler approach 
developed in
\cite{Visciglia09-a}, \cite{CassanoTarulli14-a}.

We conclude the introduction with a detailed exposition
of our results. Here and in the rest of the paper we make 
frequent use of the basic properties of Lorentz spaces 
$L^{p,q}$, in particular precised H\"{o}lder, Young and Sobolev 
inequalities, for which we refer to \cite{ONeil63-a}.

In the following we denote by
$|a(x)|$ the operator norm of the matrix $a(x)$,
and we use the notations
\begin{equation*}
  \textstyle
  |a'|=\sum_{|\alpha|=1}|\partial^{\alpha}a(x)|,
  \qquad
  |a''|=\sum_{|\alpha|=2}|\partial^{\alpha}a(x)|,
  \qquad
  |a'''|=\sum_{|\alpha|=3}|\partial^{\alpha}a(x)|,
\end{equation*}
\begin{equation*}
  \textstyle
  |b'|=\sum_{j,k}|\partial_{x_{j}}b_{k}|,
  \qquad
  |c'|=\sum_{j}|\partial_{x_{j}}c|.
\end{equation*}

\subsection{The operator \texorpdfstring{$L$}{}
and its heat kernel \texorpdfstring{$e^{tL}$}{}}
\label{sub:selfadjoint}
The results of this section are valid for 
all dimensions $n\ge3$.
Very mild conditions on the coefficients of $L$
are sufficient for selfadjointness:
in Proposition \ref{pro:selfadjoint} we prove 
by standard arguments that if
\begin{equation}\label{eq:assselfadjL}
  b\in L^{n,\infty},
  \qquad
  c\in L^{\frac n2,\infty},
  \qquad
  \|c_{-}\|_{L^{\frac n2,\infty}}<\epsilon,
\end{equation}
with $\epsilon$ small enough (and $a(x)\in L^{\infty}$),
then the operator $L$ defined on $C^{\infty}_{c}(\Omega)$
extends in the sense of forms
to a selfadjoint, nonpositive operator
with domain $H^{1}_{0}(\Omega)\cap H^{2}(\Omega)$.
Throughout the paper, this operator will be referred to
as the operator $L$ with Dirichlet boundary conditions;
note that in all our results the assumptions
are stronger than \eqref{eq:assselfadjL}.

Under the additional assumption
\begin{equation*}
  b^{2}+|\nabla \cdot b|\in L^{2}_{loc},
  \qquad
  c\in L^{\frac n2,1},
  \qquad
  \|c_{-}\|_{L^{\frac n2,1}}<\epsilon
\end{equation*}
with $\epsilon$ small enough, we prove in
Proposition \ref{pro:heatk} that the heat kernel of $L$
satisfies a gaussian upper estimate of the form
\begin{equation*}
  |e^{tL}(x,y)|
  \le C' t^{-\frac n2}e^{-\frac{|x-y|^{2}}{Ct}},
  \qquad
  t>0.
\end{equation*}
In Proposition \ref{pro:powereq}, assuming further that
\begin{equation*}
  \|a-I\|_{L^{\infty}}+
  \||b|+|a'|\|_{L^{n,\infty}}+
  \|b'\|_{L^{\frac n2,\infty}}
  <
  \epsilon
\end{equation*}
for $\epsilon$ small enough,
using the previous bound we deduce the equivalence
\begin{equation}\label{eq:equivfrac}
  \|(-L)^{\sigma}v\|_{L^{p}}
  \simeq
  \|(-\Delta)^{\sigma}v\|_{L^{p}},
  \qquad
  1<p<\frac{n}{2 \sigma},
  \qquad
  0\le \sigma\le1.
\end{equation}

\subsection{Morawetz and interaction Morawetz estimates}
\label{sub:morawetz}

From now on we restrict to the case 
when the operator $L$ is a suitable long range 
perturbation of $\Delta$ on $\Omega$; 
the precise conditions are the following.

Let $n \geq 3$ and 
 assume that for some $0<\delta\le1$
\begin{equation}\label{eq:assader}
  |a'(x)|+|x||a''(x)|+|x|^{2}|a'''(x)|\le C_a 
  \langle x\rangle^{-1-\delta},
\end{equation}
where $\bra{x}:=(1+\abs{x}^2)^{1/2}$.
Moreover, $b$ and the matrix
$db(x):=
  [\partial_{j}b_{\ell}-\partial_{\ell}b_{j}]_{j,\ell=1}^{n}$
satisfy
\begin{equation}\label{eq:assdb}
  \textstyle
  b\in L^{n,\infty},
  \qquad
  |db(x)|\le \frac{C_{b}}{|x|^{2+\delta}+|x|^{2-\delta}}.
\end{equation}
The potential $c(x)$ satisfies
\begin{equation}\label{eq:assc}
  \textstyle
  -\frac{C_{-}^{2}}{|x|^{2}}
  \le
  c(x)
  \le
  \frac{C_{+}^{2}}{|x|^{2}}
\end{equation}
(which implies $c\in L^{\frac n2,\infty}$)
and is \emph{repulsive} with respect
to the metric $a(x)$, meaning that
\begin{equation}\label{eq:assac}
  \textstyle
  a(x)x \cdot\nabla c(x)\le 
  \frac{C_{c}}{|x|\langle x\rangle^{1+\delta}}.
\end{equation}
The nonlinearity  
$ f\colon \mathbb{C} \to \mathbb{C} $ is such that $f(0)=0$ and,
for some $1 \leq \gamma < 1+\frac{4}{n-2}$,
\begin{equation}\label{eq:growthfu}
  \abs{f(z)-f(w)}\leq (|z|+|w|)^{\gamma-1}|z-w|, 
  \quad \text{ for all }z,w\in \mathbb{C}.
\end{equation}
Note that it is easy to adapt our proofs to handle
nonlinearities satisfying the more general assumption
\begin{equation*}
  \abs{f(z)-f(w)}\leq (1+|z|^{\gamma-1}+|w|^{\gamma-1})|z-w|.
\end{equation*}
We also assume that $f$ is  \emph{gauge invariant}, that is to say
\begin{equation}\label{eq:gauinv}
  f(\R)\subseteq \R
  \quad\text{and}
  \quad f(e^{i\theta}z)=e^{i\theta}f(z)
  \quad \text{ for all }\theta \in \R,\,z\in \mathbb{C}.
\end{equation}
Moreover, writing
\begin{equation}\label{eq:defnF}
  \textstyle
  F(z):=\int_0^{\abs{z}}f(s)\,ds,
\end{equation}
we assume that $f$ is \emph{repulsive}, i.e.,
\begin{equation}
  \label{eq:repulsivity}
  f(z)\bar z - 2 F(z)\geq 0 \ \text{ for all }z \in \mathbb{C}.
\end{equation}
Finally, concerning the domain
$\Omega$, we assume that $\partial \Omega$ is $C^{1}$
and $a(x)$--\emph{starshaped},
meaning that at all points $x\in\partial \Omega$
the exterior normal $\vec\nu$ to $\partial\Omega$ satisfies
\begin{equation}\label{eq:assbdry}
  a(x)x\cdot\vec{\nu}(x)\le0.
\end{equation}
In the following statement we use the
Morrey-Campanato type norms defined by
\begin{equation*}
  \textstyle
  \|v\|_{\dot X}^{2}:=
  \sup\limits_{R>0}\frac{1}{R^{2}}\int_{\Omega\cap\{|x|=R\}}
  |v|^{2}dS,
  \qquad
  \|v\|_{\dot Y}^{2}:=
  \sup\limits_{R>0}\frac{1}{R}\int_{\Omega\cap\{|x|\le R\}}
  |v|^{2}dx.
\end{equation*}
Moreover we use the notation $L^{2}_{T}=L^{2}(0,T)$
to denote integration in $t$ on the interval $[0,T]$,
while $L^{p}_{T}L^{q}=L^{p}(0,T;L^{q}(\Omega))$
and $L^{p}L^{q}=L^{p}(\mathbb{R};L^{q}(\Omega))$.

\begin{theorem}[Smoothing]\label{the:1}
  Let $n\ge4 $, $L$ the operator in \eqref{eq:opL}, 
  \eqref{eq:assopL}
  with Dirichlet b.c.~on the exterior domain $\Omega$,
  and assume
  \eqref{eq:assader},
  \eqref{eq:assdb},
  \eqref{eq:assac} and
  \eqref{eq:assbdry}.
  Let $u \in C(\mathbb{R}, H^1_{0}(\Omega))$
  be a solution of Problem \eqref{eq:main}.
  Then, if $N/\nu-1$ and the
  constants $C_a,C_{b},C_{-},C_{c}$ are sufficiently small,
  $u$ satisfies for all $T>0$ the estimate
  \begin{equation}\label{eq:smoomor}
     \norma{u}_{\dot X_{x}L^{2}_{T}}^2 
         + \norma{\nabla^{b}u}_{\dot Y_{x}L^{2}_{T}}^2
     \textstyle
    + \int_0^T\int_{\Omega}
         \frac{f(u)\overline{ u}-2F(u)}{\abs{x}}\,dxdt 
     \lesssim
     \|u(0)\|_{\dot H^{\frac{1}{2}}}^2
        + \|u(T)\|_{\dot H^{\frac{1}{2}}}^2
  \end{equation}
   with an implicit constant independent of $T$.
\end{theorem}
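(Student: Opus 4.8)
The plan is to derive a virial (Morawetz) identity for \eqref{eq:main}, integrate it over $[0,T]$, and bound the boundary-in-time terms by $\|u(t)\|_{\dot H^{1/2}}^{2}$. Fix a radial weight $\phi=\phi(x)$ --- concretely a one-parameter family $\{\phi_{R}\}_{R>0}$ obtained by rescaling and smoothing $|x|$, chosen so as to reproduce the Morrey--Campanato norms $\dot X$ and $\dot Y$ and to yield the weight $|x|^{-1}$ in the nonlinear term; such multipliers are classical in smoothing theory, and here one follows the variant of Visciglia and Cassano--Tarulli cited in the introduction. Set
\begin{equation*}
  M(t):=2\,\mathrm{Im}\int_{\Omega}\overline{u(t)}\;a(x)\nabla^{b}u(t)\cdot\nabla\phi(x)\,dx,
\end{equation*}
treating $a-I$, $b$ and $c$ as perturbations of the flat Laplacian. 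Differentiating $M$ in $t$, using $i\p_t u=Lu-f(u)$ and its conjugate, and integrating by parts repeatedly (the Dirichlet condition kills the interior boundary contributions except for one surface integral over $\p\Omega$) produces an identity
\begin{equation*}
  \dot M(t)=\mathcal Q(t)+\mathcal N(t)+\mathcal B(t)+\mathcal E(t).
\end{equation*}

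Here $\mathcal Q(t)\ge0$ is a quadratic expression in $\nabla^{b}u$ built from the Hessian of $\phi$ together with the zero-order term coming from the (variable-coefficient) bi-Laplacian of $\phi$; for the chosen family, after integrating in $t$ and taking $\sup_{R>0}$, the Hessian part controls $\|\nabla^{b}u\|_{\dot Y_{x}L^{2}_{T}}^{2}$ and the zero-order part controls $\|u\|_{\dot X_{x}L^{2}_{T}}^{2}$ --- and it is precisely the positivity of this zero-order contribution that forces $n\ge4$, since for $n=3$ it degenerates. The term $\mathcal N(t)$ equals, up to lower order, $\int_{\Omega}(\Delta\phi)\bigl(f(u)\overline u-2F(u)\bigr)\,dx$, hence (by the choice of $\phi$) is comparable to $\int_{\Omega}\frac{f(u)\overline u-2F(u)}{|x|}\,dx\ge0$, the nonnegativity being the repulsivity \eqref{eq:repulsivity} together with gauge invariance \eqref{eq:gauinv}. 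The boundary term $\mathcal B(t)$ is a surface integral over $\p\Omega$ proportional to $a(x)x\cdot\vec\nu(x)$ times a nonnegative density, hence $\mathcal B(t)\le0$ by the $a$-starshapedness \eqref{eq:assbdry}.

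It then remains to control the error $\mathcal E(t)$, which gathers every contribution of $a-I$, $b$, $db$, $\nabla c$ and $c_{-}$. Using the decay hypotheses \eqref{eq:assader}, \eqref{eq:assdb}, \eqref{eq:assc}, \eqref{eq:assac} together with Hardy, H\"older and Sobolev inequalities in Lorentz spaces, one bounds each summand of $\mathcal E(t)$ by $\bigl(C_{a}+C_{b}+C_{-}+C_{c}+(N/\nu-1)\bigr)\bigl(\mathcal Q(t)+\mathcal N(t)\bigr)$, with constants uniform in $R$, so that for these quantities small enough $\mathcal E$ is absorbed into $\mathcal Q+\mathcal N$. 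Integrating the resulting differential inequality over $[0,T]$ and taking $\sup_{R>0}$ bounds the left-hand side of \eqref{eq:smoomor} by $\sup_{R}\bigl(|M_{R}(T)|+|M_{R}(0)|\bigr)$. Since $|\nabla\phi_{R}|\lesssim1$ and $\mathrm{div}(\nabla\phi_{R})=O(\langle x\rangle^{-1})$ uniformly in $R$, one further integration by parts, the fractional Hardy inequality, the embedding $\dot H^{1/2}\hookrightarrow L^{\frac{2n}{n-1},2}$ (to treat the magnetic part via $b\in L^{n,\infty}$) and the boundedness of the first-order piece between $\dot H^{1/2}$ and $\dot H^{-1/2}$ give $|M_{R}(t)|\lesssim\|u(t)\|_{\dot H^{1/2}}^{2}$ uniformly in $R$ and $t$, yielding \eqref{eq:smoomor} with a constant independent of $T$. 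As $u$ is only assumed in $C(\R,H^{1}_{0}(\Omega))$, the computation of $\dot M$ is first carried out for regularized solutions and passed to the limit by density. The step I expect to be the main obstacle is exactly this error bookkeeping together with the design of $\phi$: one must arrange a \emph{single} family $\{\phi_{R}\}$ for which simultaneously the Hessian term dominates the $\dot Y$ norm, the bi-Laplacian term produces the $\dot X$ norm with the right sign when $n\ge4$, the nonlinear term keeps the weight $|x|^{-1}$, and all coefficient errors are genuinely subordinate to $\mathcal Q+\mathcal N$ under one common smallness threshold on $C_{a},C_{b},C_{-},C_{c},N/\nu-1$; by comparison the boundary and endpoint estimates are routine.
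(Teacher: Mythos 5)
Your proposal takes essentially the same route as the paper: multiply the equation by $[A^{b},\psi]\bar u$ with a rescaled radial Morawetz weight $\psi_{R}$, obtain a virial identity whose right-hand side splits into a sign-definite Hessian/``$A^{2}\psi$'' part (producing the $\dot Y$ and $\dot X$ contributions, with the $n\ge4$ restriction coming from the sign of the bulk piece \eqref{eq:formtoratio}), a nonnegative nonlinear part driven by repulsivity, a nonpositive boundary flux by starshapedness, and coefficient errors killed by smallness, with $|M_{R}(t)|\lesssim\|u(t)\|_{\dot H^{1/2}}^{2}$ by a duality--interpolation argument as in Lemma \ref{stimaterminederivata}. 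One small correction to the bookkeeping: the error terms cannot be absorbed pointwise in $t$ into $\mathcal Q(t)+\mathcal N(t)$ at fixed $R$; in the paper one first integrates in $t$, bounds each error directly by small multiples of the reverse Morrey--Campanato norms $\|u\|_{\dot X_{x}L^{2}_{T}}$, $\|\nabla^{b}u\|_{\dot Y_{x}L^{2}_{T}}$ (and the $|x|^{-1}$-weighted nonlinear integral), and the absorption and the emergence of these norms on the left only occur after taking $\sup_{R>0}$ of the leading terms, as in \eqref{eq:stimaS(R)}--\eqref{eq:concl}.
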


Theorem \ref{the:1} actually holds even in the case
$n=3$, but we need a condition on $a(x)$ which 
essentially forces it to be diagonal, and this is
of course too restrictive for our purposes
(see \eqref{eq:assratio} below). 
Thus in the 3D case we modify our approach and
prove an estimate in terms of 
\emph{nonhomogeneous} Morrey-Campanato norms
\begin{equation*}
  \textstyle
  \|v\|_{X}^{2}:=
  \sup\limits_{R>0}\frac{1}{\langle R\rangle^{2}}
  \int_{\Omega\cap\{|x|=R\}}
  |v|^{2}dS,
  \qquad
  \|v\|_{Y}^{2}:=
  \sup\limits_{R>1}\frac{1}{R}\int_{\Omega\cap\{|x|\le R\}}
  |v|^{2}dx.
\end{equation*}
We also need some slightly stronger assumptions 
on the coefficients: we require
\begin{equation}\label{eq:assperturb}
  |a(x)- I|\le  C_{I} \langle x\rangle^{-\delta},
  \qquad
  C_{I}<1,
\end{equation}
moreover we assume
\begin{equation}\label{eq:assdbstrong}
  \textstyle
  b\in L^{3,\infty},
  \qquad
  |db(x)|\le \frac{C_{b}}{|x|^{2+\delta}+|x|}.
\end{equation}
Then we have:

\begin{theorem}[Smoothing, $n=3$]\label{the:1strong}
  Let $L$ the operator in \eqref{eq:opL}, 
  \eqref{eq:assopL}
  with Dirichlet b.c.~on the exterior domain $\Omega$,
  and assume
  \eqref{eq:assader}, \eqref{eq:assperturb}
  \eqref{eq:assdbstrong},
  \eqref{eq:assc}, \eqref{eq:assac}, 
  \eqref{eq:gauinv}, \eqref{eq:repulsivity}, and
  \eqref{eq:assbdry}.
Let $u \in C(\mathbb{R}, H^1_{0}(\Omega))$
be a solution of Problem \eqref{eq:main}.
Then, if $N/\nu-1$ and the
constants $C_a,C_I,C_{b},C_{-},C_{c}$ are sufficiently small,
the solution $u$ satisfies for all $T>0$ the estimate
\begin{equation}\label{eq:smoomorstrong}
   \norma{u}_{ X_{x}L^{2}_{T}}^2 
       + \norma{\nabla^{b}u}_{ Y_{x}L^{2}_{T}}^2
   \textstyle
  + \int_0^T\int_{\Omega}
       \frac{f(u)\overline{ u}-2F(u)}{\bra{x}}\,dxdt 
   \lesssim
   \|u(0)\|_{\dot H^{\frac{1}{2}}}^2
      + \|u(T)\|_{\dot H^{\frac{1}{2}}}^2
\end{equation}
 with an implicit constant independent of $T$.
\end{theorem}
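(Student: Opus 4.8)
The plan is to run a Morawetz--virial argument with a \emph{nonhomogeneous} radial multiplier adapted to the inhomogeneous norms $X,Y$, and to absorb every coefficient-dependent error using the smallness of $N/\nu-1,C_a,C_I,C_b,C_-,C_c$.

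First I would establish (or invoke) the general virial identity for \eqref{eq:main}: for a sufficiently regular real radial weight $\psi(x)=\varphi(|x|)$ with $\varphi'\ge0$, the Morawetz action
\[
M_\psi(t):=2\,\mathrm{Im}\int_\Omega \bar u\,(a\nabla\psi)\cdot\nabla^b u\,dx
\]
satisfies, schematically,
\[
\tfrac{d}{dt}M_\psi=2\!\int_\Omega\! D^2_a\psi(\nabla^b u,\overline{\nabla^b u})-\tfrac12\!\int_\Omega\!\big(L^*\,\mathrm{div}(a\nabla\psi)\big)|u|^2+\!\int_\Omega\!\mathrm{div}(a\nabla\psi)\,\big(f(u)\bar u-2F(u)\big)+\mathcal E_{db}+\mathcal E_c+\mathcal B_{\partial\Omega},
\]
where $D^2_a\psi$ is the Hessian of $\psi$ in the metric $a$, $\mathcal E_{db}\propto\int\varphi'(|x|)\,db(\tfrac{x}{|x|},\cdot)$ paired with the gauge current $\mathrm{Im}(\bar u\,\nabla^b u)$, $\mathcal E_c\propto\int(a\nabla\psi\cdot\nabla c)|u|^2$, and $\mathcal B_{\partial\Omega}\propto\int_{\partial\Omega}(a\nabla\psi\cdot\vec\nu)|\nabla^b u|^2$. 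This is obtained by differentiating $M_\psi$ in $t$, substituting $\partial_t u=-iLu+if(u)$, and integrating by parts; gauge invariance \eqref{eq:gauinv} is exactly what kills the imaginary cross terms produced by $f$, leaving only the real, $\mathrm{div}(a\nabla\psi)$-weighted nonlinear term above.

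Second I would choose the weight. For the inhomogeneous $X,Y$ norms one wants a one-parameter family $\varphi_R$ that is \emph{smooth at the origin} (so $\varphi_R(r)\approx\tfrac12 r^2$ near $r=0$, unlike the homogeneous weight $|x|$), grows like $\langle r\rangle$ for $r\lesssim R$, and is constant for $r\gtrsim R$; taking $\sup_R$ of the resulting inequalities reconstructs $\|\nabla^b u\|_{Y_xL^2_T}^2$ from the radial and tangential parts of $\int D^2_a\psi_R(\nabla^b u,\overline{\nabla^b u})$, and $\|u\|_{X_xL^2_T}^2$ from the surface terms that $\mathrm{div}(a\nabla\psi_R)$ produces on $|x|=R$ together with the near-origin contribution (here $n=3$ enters). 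The crucial point is that smoothing $\varphi$ at the origin removes the $|x|^{-3}$-singular term whose sign is not controllable unless $a$ is diagonal --- this is precisely the obstruction noted after Theorem~\ref{the:1} --- while at infinity the $\langle r\rangle$-weight forces the anisotropy $a-I$ (bounded by $C_I\langle x\rangle^{-\delta}$ via \eqref{eq:assperturb}, with derivatives controlled by $C_a$ via \eqref{eq:assader}) to enter only against integrable Hessian factors, so $D^2_a\psi_R\ge0$ up to a small absorbable error. The boundary term $\mathcal B_{\partial\Omega}$ has the right sign by $a$--starshapedness \eqref{eq:assbdry} since $\varphi'\ge0$; the potential term is handled by writing $a\nabla\psi\cdot\nabla c=\varphi'(|x|)\,\tfrac{ax}{|x|}\cdot\nabla c$ and using the repulsivity bound \eqref{eq:assac} (plus Hardy for the parts controlled by $C_c,C_-$); and the nonlinear term equals $\int\frac{f(u)\bar u-2F(u)}{\langle|x|\rangle}$ up to a positive factor, nonnegative by \eqref{eq:repulsivity}.

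Third I would estimate the remaining errors. Using \eqref{eq:assader}, \eqref{eq:assdbstrong}, $b\in L^{3,\infty}$ and \eqref{eq:assc} together with Hardy's inequality and H\"older in Lorentz spaces (as in \cite{Visciglia09-a,CassanoTarulli14-a}), the terms carrying $|a'|,|a''|,|a'''|$, $|db|$, $|b|$, $c_-$, or the factor $N/\nu-1$ are each bounded by a small constant times the positive left-hand side $\|u\|_{X_xL^2_T}^2+\|\nabla^b u\|_{Y_xL^2_T}^2$; note $\varphi'(|x|)|db(x)|\lesssim C_b$ by \eqref{eq:assdbstrong}, so $\mathcal E_{db}$ is absorbable. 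Hence, for $N/\nu-1,C_a,C_I,C_b,C_-,C_c$ small, integrating the virial identity over $[0,T]$ and absorbing yields the left-hand side of \eqref{eq:smoomorstrong}, while the right-hand side is $|M_\psi(T)|+|M_\psi(0)|$; since $\varphi'$ and $a$ are bounded, a standard computation (symmetrising the $\mathrm{Im}$-bracket to produce a $\mathrm{div}(a\nabla\psi)|u|^2$ correction, then using $\dot H^{1/2}$--$\dot H^{-1/2}$ duality with the bounded multiplier $a\nabla\psi$) gives $|M_\psi(t)|\lesssim\|u(t)\|_{\dot H^{1/2}}^2$ uniformly in $t$, completing the proof. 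The main obstacle is the interplay in the second and third steps: calibrating $\varphi_R$ so that the surface and near-origin terms deliver exactly the $X$ norm in three dimensions while \emph{simultaneously} keeping every coefficient-dependent error integrable and absorbable --- in particular certifying that the term responsible for the diagonality restriction in Theorem~\ref{the:1} is rendered harmless by the smooth, $\langle r\rangle$-bounded choice of weight and by \eqref{eq:assperturb}.
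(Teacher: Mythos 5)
Your diagnosis of the $n=3$ obstruction is incorrect, and the remedy you propose does not match the paper's route; moreover, as stated, your weight would create a sign problem rather than solve one.

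The paper does \emph{not} change the Morawetz weight when passing from $n\ge4$ to $n=3$: exactly the same family $\psi_R(|x|)=R\psi_1(|x|/R)$ from \eqref{psi0} is used, and this weight is already quadratic near the origin ($\psi_R(r)\approx\frac{n-1}{4n}r^2/R$ for $r\le R$). The $n=3$ difficulty has nothing to do with a singularity at $x=0$. It comes from the region $|x|>R$, where $S(x)$ in \eqref{eq:SgreR} contains the term $-\big[2a_{\ell m}a_{\ell m}+\overline a^2-6\overline a\widehat a+15\widehat a^2-12|a\widehat x|^2\big]\big(1-(R/|x|)^{n-1}\big)\frac{1}{2|x|^3}$; to discard it with the right sign one needs the bracket to be nonnegative, and the ellipticity bound \eqref{eq:formtoratio} forces $N/\nu\le\sqrt{(n^2+2n+15)/(6(n+2))}$, which for $n=3$ degenerates to $N=\nu$ (this is the ``essentially diagonal'' constraint mentioned after Theorem~\ref{the:1}). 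The actual fix in the paper is algebraic, not geometric: write $a=I+q$ with $|q|\le C_I\bra{x}^{-\delta}$ using \eqref{eq:assperturb}, show the bracket is $\ge -46C_I\bra{x}^{-\delta}$, and absorb the resulting error of size $C_I\bra{x}^{-\delta}|x|^{-3}$ over $\Omega_{\ge1}$ into $\|u\|_{X L^2_T}^2$ via \eqref{eq:stnorma3}, restricting to $\sup_{R>1}$. In addition, the near-origin error terms (from the remainder $R(x)$ and from $a\nabla\psi\cdot\nabla c$, not from $S(x)$) are handled by splitting $\Omega_{\le1}$ and $\Omega_{\ge1}$ and using the magnetic Hardy inequality \eqref{eq:hardymag1} to convert $\int_{\Omega_{\le1}}|x|^{-2}|u|^2$ into $\|\nabla^b u\|_{Y L^2_T}^2$ — this is \eqref{eq:usopotenziale2}--\eqref{eq:stimadimbassasingolare}, and it is the genuine technical novelty of the $n=3$ proof.

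Your proposed weight, by contrast, is chosen to become constant for $r\gtrsim R$. If $\varphi_R$ is constant past $2R$, then $\varphi_R'$ must decrease from $\sim1$ to $0$, so $\varphi_R''<0$ in the transition annulus, and the radial eigenvalue of the Hessian $D^2\psi$ becomes negative there; consequently the main nonnegative term $\int\Re(\alpha_{\ell m}\partial^b_m u\,\overline{\partial^b_\ell u})$ would acquire a wrong-signed contribution that you cannot absorb. (If you instead meant $\varphi_R'\to\text{const}$, your weight is essentially the paper's.) Either way, the argument that ``smoothing at the origin removes the $|x|^{-3}$-singular term whose sign is uncontrollable'' does not address the true obstruction; it is a misidentification, since the paper's weight already has that smoothness. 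The missing ingredients are (a) the $a=I+q$ decomposition with the $\bra{x}^{-\delta}$-decaying bound from \eqref{eq:assperturb}, (b) the restriction to $\sup_{R>1}$ that makes the $|x|^{-3}\bra{x}^{-\delta}$ weight integrable against the $X$ norm, and (c) the Hardy split near the origin producing the $\|\nabla^b u\|_Y^2$ term. None of these appears in your sketch, so as written the proof cannot be completed.
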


The previous results are  \emph{a priori} estimates on
a global solution $u$, for which conservation of energy might
not hold; this is why we state 
estimates \eqref{eq:smoomor},\eqref{eq:smoomorstrong}  on a
finite time interval $[0,T]$ and we need the
norm of $u$ both at $t=0$ and at $t=T$ at the right hand side.
Note that it is possible to give explicit bounds on
the smallness assumption on the coefficients, see
Remark \ref{rem:explicsmall1}.

\begin{remark}[]\label{rem:cacciafDanlu}
  The proofs of Theorems \ref{the:1} and \ref{the:1strong} have
  a substantial overlap with the proof in
  \cite{CacciafestaDAnconaLuca14-a} 
  of resolvent estimates for the Helmholtz equation
  \begin{equation*}
    Lu+zu=f,
    \qquad
    z\in \mathbb{C}\setminus \mathbb{R}.
  \end{equation*}
  One can indeed deduce estimates for the \emph{linear}
  Schr\"{o}dinger equation from the corresponding
  estimates for Helmholtz, via Kato's theory of
  smoothing \cite{Kato65-a},
  but with a loss in the sharpness of the estimates
  (see Corollary 1.3 in \cite{CacciafestaDAnconaLuca14-a}
  for details; see also \cite{Cacciafesta13-a} for
  earlier results in a simpler setting).
\end{remark}

\begin{remark}\label{rem:sharpere}
  Note that in \eqref{eq:smoomor} and \eqref{eq:smoomorstrong}
  the space-time norms are reversed in $(x,t)$, 
  due to the method of proof. In the hypoteses of 
  Theorem \ref{the:1}, thanks to \eqref{eq:smoomor} 
  and \eqref{eq:stnorma1}, \eqref{eq:stnorma4},
  and in the hypoteses of Theorem \ref{the:1strong}, thanks to
  \eqref{eq:smoomorstrong} and 
  \eqref{eq:stnorma4}, \eqref{eq:stnorma12},
  we deduce the standard weighted $L^{2}$ estimate
  \begin{equation}\label{eq:smoomorwei}
    \norma{\bra{x}^{-3/2-}u}_{L^2_T L^2_x}^2 
        + \norma{\bra{x}^{-1/2-}\nabla^{b}u}_{L^2_T L^2_x}^2
    \textstyle
   + \int_0^T\int_{\Omega}
        \frac{f(u)\overline{ u}-2F(u)}{\abs{x}}\,dxdt 
    \lesssim
    \|u(0)\|_{\dot H^{\frac{1}{2}}}^2
            + \|u(T)\|_{\dot H^{\frac{1}{2}}}^2.
  \end{equation}
  By \eqref{eq:equivmagw} we can replace $\nabla^{b}$
  with $\nabla$ at the left hand side, obtaining
  \begin{equation}\label{eq:smoomorwei2}
    \norma{\bra{x}^{-3/2-}u}_{L^2_T L^2_x}^2 
        + \norma{\bra{x}^{-1/2-}\nabla u}_{L^2_T L^2_x}^2
    \textstyle
    \lesssim
    \|u(0)\|_{\dot H^{\frac{1}{2}}}^2
            + \|u(T)\|_{\dot H^{\frac{1}{2}}}^2.
  \end{equation}
  If the assumptions on $b,c$ are slightly stronger so that
  the heat kernel $e^{tL}$ satisfies an upper gaussian bound,
  we can apply the techniques in 
  \cite{CacciafestaDAncona12-a}
  to obtain a further estimate of weighted $L^{2}$ tipe.
  In the next Corollary we assume
  $\Omega=\mathbb{R}^{n}$ to keep the proof simple but this
  would not be necessary.
\end{remark}

\begin{corollary}[]\label{cor:smooheat}
  Let $n\ge3$, $\Omega=\mathbb{R}^{n}$, let $L$ be as in
  Theorem \ref{the:1} or as in Theorem \ref{the:1strong}, 
  and assume that
  \begin{equation*}
    b^{2}+|\nabla \cdot b|\in L^{2}_{loc},
    \qquad
    c\in L^{\frac n2,1},
    \qquad
    \|c_{-}\|_{L^{\frac n2,1}}<\epsilon.
  \end{equation*}
  Then for $\epsilon$ small enough the
  flow $e^{itL}$ satisfies the estimate
  \begin{equation}\label{eq:smoomorzero}
    \|\bra{x}^{-1-}e^{itL}u_{0}\|_{L^{2}_{t}L^{2}_{x}}
    \lesssim
    \|u_{0}\|_{L^{2}}.
  \end{equation}
\end{corollary}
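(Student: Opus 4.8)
The plan is to feed the free flow into the weighted smoothing estimate \eqref{eq:smoomorwei2}, recast its two terms as $L^{2}$‑data estimates, and then trade half a power of the spatial weight for half a derivative by complex interpolation; the Gaussian heat kernel bound enters when moving fractional powers of $L$ past fractional powers of $-\Delta$ and past the weights. First, apply \eqref{eq:smoomorwei2} to $u(t)=e^{itL}u_{0}$, a solution of \eqref{eq:main} with $f\equiv0$. Since $-L\ge0$ is self‑adjoint, $\norma{(-L)^{1/4}e^{itL}u_{0}}_{L^{2}}$ is conserved, so by \eqref{eq:equivfrac} (with $\sigma=\tfrac14$, $p=2$) we have $\norma{e^{itL}u_{0}}_{\dot H^{1/2}}\simeq\norma{u_{0}}_{\dot H^{1/2}}$ uniformly in $t$. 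Letting $T\to\iy$ in \eqref{eq:smoomorwei2}, whose implicit constant is $T$‑independent, yields
\begin{equation*}
  \norma{\bra{x}^{-3/2-}e^{itL}u_{0}}_{L^{2}_{t}L^{2}_{x}}
  +\norma{\bra{x}^{-1/2-}\nabla e^{itL}u_{0}}_{L^{2}_{t}L^{2}_{x}}
  \lesssim\norma{u_{0}}_{\dot H^{1/2}}.
\end{equation*}

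Next, replace $u_{0}$ by $(-L)^{-1/4}v$ (first for $v$ in a dense subspace of $L^{2}$); by \eqref{eq:equivfrac} the right‑hand side becomes $\simeq\norma{v}_{L^{2}}$. In the gradient term write $\nabla(-L)^{-1/4}=\bigl(\nabla(-L)^{-1/2}\bigr)(-L)^{1/4}$: the Riesz transform $\nabla(-L)^{-1/2}$ of $L$ is bounded on $L^{2}(\bra{x}^{-1-}dx)$ — a weighted $L^{2}$ estimate that follows from the Gaussian heat kernel bound of Proposition \ref{pro:heatk} together with the smallness of the perturbation — and it can be inverted modulo bounded operators by a Neumann series; therefore the gradient term dominates $\norma{\bra{x}^{-1/2-}(-L)^{1/4}e^{itL}v}_{L^{2}_{t}L^{2}_{x}}$. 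We are left with the two endpoint estimates
\begin{equation*}
  \norma{\bra{x}^{-3/2-}(-L)^{-1/4}e^{itL}v}_{L^{2}_{t}L^{2}_{x}}\lesssim\norma{v}_{L^{2}},
  \qquad
  \norma{\bra{x}^{-1/2-}(-L)^{1/4}e^{itL}v}_{L^{2}_{t}L^{2}_{x}}\lesssim\norma{v}_{L^{2}}.
\end{equation*}

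Now fix $\eta>0$ small so that the weights above are $\bra{x}^{-3/2-\eta}$, $\bra{x}^{-1/2-\eta}$, and consider the analytic family
\begin{equation*}
  v\longmapsto\mathcal{T}_{\zeta}v:=\bra{x}^{-\zeta-\eta}\,(-L)^{-(\zeta-1)/2}\,e^{itL}v,
  \qquad\tfrac12\le\operatorname{Re}\zeta\le\tfrac32.
\end{equation*}
On $\operatorname{Re}\zeta=\tfrac12$ and on $\operatorname{Re}\zeta=\tfrac32$ the two endpoint estimates give $\norma{\mathcal{T}_{\zeta}v}_{L^{2}_{t}L^{2}_{x}}\lesssim\norma{v}_{L^{2}}$, uniformly in $\operatorname{Im}\zeta$, because $\mathcal{T}_{\sigma+i\tau}=\bra{x}^{-i\tau}\,\mathcal{T}_{\sigma}\,(-L)^{-i\tau/2}$ and both $\bra{x}^{-i\tau}$ and $(-L)^{-i\tau/2}$ are isometries of $L^{2}_{x}$. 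Applying Stein's complex interpolation theorem to the bounded analytic function $\zeta\mapsto\langle\mathcal{T}_{\zeta}v,G\rangle_{L^{2}_{t,x}}$ (with $v,G$ in suitable dense classes) gives the same bound on $\operatorname{Re}\zeta=1$; evaluated at $\zeta=1$, where the power of $-L$ vanishes, this is $\norma{\bra{x}^{-1-\eta}e^{itL}v}_{L^{2}_{t}L^{2}_{x}}\lesssim\norma{v}_{L^{2}}$, i.e.\ \eqref{eq:smoomorzero}. A density argument in $v$ finishes the proof.

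The hard part is the second step: the weighted boundedness of the $L$‑Riesz transform $\nabla(-L)^{-1/2}$ (and, implicitly through \eqref{eq:equivfrac}, of the comparison operators $(-L)^{\pm1/4}(-\Delta)^{\mp1/4}$). These do not follow from the unweighted equivalence \eqref{eq:equivfrac} by itself; their proof relies on the Gaussian upper bound for $e^{tL}$, available exactly under the hypotheses assumed here through Proposition \ref{pro:heatk}, and follows the weighted‑estimate techniques of \cite{CacciafestaDAncona12-a}. For $n=3$ one starts from Theorem \ref{the:1strong} rather than Theorem \ref{the:1}, so that the weight in the smoothing estimate is the nonhomogeneous $\bra{x}$; this is the only point where that case differs.
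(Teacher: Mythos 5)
Your global strategy is the same as the paper's: feed $u=e^{itL}u_{0}$ into the weighted estimate \eqref{eq:smoomorwei2}, interpolate the two terms on its left-hand side, convert between powers of $-\Delta$ and powers of $-L$ using the Gaussian bound on $e^{tL}$, and commute $(-L)^{1/4}$ with $e^{itL}$. However, the order in which you carry out the conversion and the interpolation creates a genuine gap.

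The problem is your second endpoint. After the substitution $u_{0}=(-L)^{-1/4}v$, the smoothing estimate gives a bound on $\norma{\bra{x}^{-1/2-}\nabla e^{itL}(-L)^{-1/4}v}_{L^{2}_{t}L^{2}_{x}}=\norma{\bra{x}^{-1/2-}R\,(-L)^{1/4}e^{itL}v}_{L^{2}_{t}L^{2}_{x}}$, where $R=\nabla(-L)^{-1/2}$. To deduce from this a bound on $\norma{\bra{x}^{-1/2-}(-L)^{1/4}e^{itL}v}_{L^{2}_{t}L^{2}_{x}}$ you need a \emph{lower} bound for $R$ on $L^{2}(\bra{x}^{-1-2\eta}dx)$, i.e.\ a bounded left inverse of $R$ on that space. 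Neither the \emph{upper} bound for $R$ nor a left inverse follows from the Gaussian bound on $e^{tL}$: the Gaussian bound controls $e^{tL}(x,y)$ but not $\nabla_{x}e^{tL}(x,y)$, and boundedness of the $L$--Riesz transform (even unweighted, for $p>2$) is a well-known delicate issue not implied by Gaussian heat kernel bounds alone. The ``Neumann series'' inversion is also unsubstantiated: one would be inverting $R^{*}R=(-L)^{-1/2}(-\Delta)(-L)^{-1/2}$, but showing that $R^{*}R-I$ is small in the weighted operator norm — with the adjoint taken with respect to the weighted pairing — is not available here. The last paragraph of your argument points in the right direction (the key is a weighted estimate from \cite{CacciafestaDAncona12-a}), but the operator that \cite{CacciafestaDAncona12-a} controls on $A_{2}$-weighted $L^{2}$ is the zero-order comparison operator $(-L)^{1/4}(-\Delta)^{-1/4}$, via the Duong--Ouhabaz--Sikora imaginary-power bounds derived from the Gaussian estimate, and \emph{not} the Riesz transform $\nabla(-L)^{-1/2}$.

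The paper's proof avoids this trap by re-ordering the steps: first interpolate the two terms of \eqref{eq:smoomorwei2} while keeping $(-\Delta)$ throughout (converting $\nabla$ to $(-\Delta)^{1/2}$ costs only the \emph{classical} Riesz transform on an $A_{2}$-weighted $L^{2}$, which is standard), landing on $\norma{\bra{x}^{-1-}(-\Delta)^{1/4}u}_{L^{2}_{T}L^{2}}\lesssim\norma{u_{0}}_{\dot H^{1/2}}+\norma{u(T)}_{\dot H^{1/2}}$; only \emph{then} use the Gaussian bound, via the $A_{2}$-weighted estimate $\norma{w\,(-L)^{1/4}v}_{L^{2}}\lesssim\norma{w\,(-\Delta)^{1/4}v}_{L^{2}}$ for $w=\bra{x}^{-1-}$, to pass to $(-L)^{1/4}$; and finally commute $(-L)^{1/4}$ with $e^{itL}$ and apply \eqref{eq:equivLH1}. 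If you replace your Riesz-transform step by this $(-\Delta)^{1/4}\!\to\!(-L)^{1/4}$ comparison (and move the interpolation to before the conversion), the rest of your argument goes through.
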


The next results are  bilinear smoothing (interaction Morawetz)
estimates for equation \eqref{eq:main}, which are the crucial
tool in the proof of scattering. Note that the assumptions
are essentially the same as in Theorems \ref{the:1}, 
\ref{the:1strong},
and the constant $C_{c'}$ may be large.

\begin{theorem}[Bilinear smoothing, $n\ge 4$]\label{the:2}
  Let $n\ge4$ and let $\Omega,L$ be as in
  Theorem \ref{the:1}. In addition, assume that
  \begin{equation}\label{eq:assacnabla}
    \textstyle
      |x|^{2}\vert\nabla c \vert \le
      C_{c'}\bra{x}^{-1-\delta}.
  \end{equation}
  Let $u \in C(\mathbb{R}, H^1_{0}(\Omega))$ be
  a solution of \eqref{eq:main}.
  Then, if the constants $C_a,C_{b},C_{-},C_{c}$ and
  $N/\nu-1$ are small enough,
  $u$ satisfies the estimate
   \begin{equation}\label{eq:interactiontesi}
     \int_0^T
     \int_{\Omega \times \Omega}
     \frac{|u(t,x)|^{2}|u(t,y)|^{2}}{|x-y|^{3}}
     dxdydt
     \lesssim
     \|u(0)\|_{L^2}^{2}
     \left[\norma{u(0)}_{\dot H^{\frac{1}{2}}}
     +\norma{u(T)}_{\dot H^{\frac{1}{2}}}\right]^{2}.
   \end{equation}
\end{theorem}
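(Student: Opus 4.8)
The plan is to run the interaction (Morawetz) method for the bilinear functional
\begin{equation*}
  M(t):=\int_{\Omega\times\Omega}|u(t,x)|^{2}\,a(y)(x-y)\cdot\operatorname{Im}\!\big(\bar u(t,y)\nabla^{b}u(t,y)\big)\,dx\,dy
  \;+\;(x\leftrightarrow y),
\end{equation*}
which is the natural doubling of the virial quantity already used to prove Theorem \ref{the:1}/\ref{the:2}. First I would differentiate $M(t)$ in time, using the equation \eqref{eq:main} in each variable separately; the terms in which $\partial_{t}$ hits the $|u(t,x)|^{2}$ factor combine with a mass-current conservation law, and the terms in which $\partial_{t}$ hits the momentum density $\operatorname{Im}(\bar u\nabla^{b}u)$ produce, after integration by parts in $y$, exactly the virial integrand from Theorem \ref{the:2} integrated against the probability-like weight $|u(t,x)|^{2}\,dx$. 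The defocusing/repulsivity hypotheses \eqref{eq:repulsivity}, the repulsivity of $c$ \eqref{eq:assac}, the starshapedness \eqref{eq:assbdry}, and the long-range bounds \eqref{eq:assader}, \eqref{eq:assdb} ensure that all the "bad" terms have a favorable sign or are controlled by the already-established smoothing quantities on the left of \eqref{eq:smoomor}; the crucial gain is the positive bilinear term
\begin{equation*}
  \int_{\Omega\times\Omega}\frac{|u(t,x)|^{2}|u(t,y)|^{2}}{|x-y|^{3}}\,dx\,dy,
\end{equation*}
which appears because $a(y)(x-y)\cdot\nabla_{y}$ applied to the Newtonian-type weight $|x-y|$ in the flat case produces a $\delta$-like concentration in $x=y$ of the right homogeneity; here the almost-diagonal and long-range smallness of $a$ is what keeps this term positive and of the stated form up to lower-order perturbations absorbable by Theorem \ref{the:1}.

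Concretely, the steps in order: (1) set up $M(t)$ and record the two conservation laws (mass, and momentum/stress with the magnetic and potential source terms), taking care of the Dirichlet boundary contributions on $\partial\Omega$, which are nonnegative by the $a$-starshapedness \eqref{eq:assbdry}; (2) compute $M'(t)$ and organize it as (positive bilinear term) + (doubled single-particle virial integrand, which is $\gtrsim$ the left-hand side of \eqref{eq:smoomor} after integrating against $\|u(t)\|_{L^2}^2$) + (error terms); (3) bound the error terms — those involving $a'$, $a''$, $db$, $\nabla c$, and $c_{-}$ — by the smoothing norms $\|u\|_{\dot X_x L^2_T}^2+\|\nabla^b u\|_{\dot Y_x L^2_T}^2$ times $\|u(t)\|_{L^2}^2$, using the pointwise decay assumptions and Hardy/Morrey–Campanato inequalities, so that the smallness of $C_a,C_b,C_-,C_c$ and $N/\nu-1$ lets them be absorbed; (4) integrate over $[0,T]$, use $|M(T)-M(0)|\lesssim \sup_{t\in[0,T]}\|u(t)\|_{L^2}^2\big(\|u(t)\|_{\dot H^{1/2}}\|\nabla^b u(t)\|_{L^2}\big)$ together with mass conservation and the identity $\|u(t)\|_{L^2}=\|u(0)\|_{L^2}$, and finally invoke Theorem \ref{the:1} to control the $\dot H^{1/2}$-type kinetic factor by $\|u(0)\|_{\dot H^{1/2}}^2+\|u(T)\|_{\dot H^{1/2}}^2$. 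Conclude \eqref{eq:interactiontesi}.

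The main obstacle is step (3): in the variable-coefficient, magnetic, exterior-domain setting the bilinear differentiation generates a long list of cross terms — in particular terms where the derivative falls on $a(y)$ inside the weight $a(y)(x-y)$ rather than on $u$, and magnetic commutator terms mixing $\nabla^b$ in $x$ and $y$ — and one must check that every one of them either carries a sign, vanishes by the gauge structure (using $f$ gauge invariant, \eqref{eq:gauinv}), or decays fast enough in $|x|$ (via \eqref{eq:assader}, \eqref{eq:assdb}, \eqref{eq:assacnabla}) to be dominated by the single-particle smoothing quantities that Theorem \ref{the:1} already bounds. The presence of $C_{c'}$, allowed to be large, is handled by noting that $\nabla c$ enters only multiplied by $|u(t,x)|^2$ integrated against a term of size $|x|^{-2}\bra{x}^{-1-\delta}|u(t,y)|^2$, which after integrating in $x$ gives $\|u(0)\|_{L^2}^2$ times a convergent weighted integral of $|u(t,y)|^2$ controlled by $\|u\|_{\dot X_x L^2_T}^2$, with no smallness of $C_{c'}$ needed. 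Once the bookkeeping is done, the positivity of the $|x-y|^{-3}$ term is automatic and the estimate follows; I expect the computation to parallel closely the single-particle virial identity underlying Theorem \ref{the:1}, applied fiberwise in the second variable.
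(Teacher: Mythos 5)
Your overall strategy is the one the paper uses: double the virial into an interaction functional, differentiate in $t$ using \eqref{eq:main} in each variable, recognize the symmetrized Hessian-type terms as nonnegative, absorb the variable-coefficient, magnetic and potential errors into the linear smoothing bound from Theorem~\ref{the:1}, and close with $\|u(t)\|_{L^2}=\|u(0)\|_{L^2}$ together with the $\dot H^{1/2}$ control of the boundary term. However, there is a concrete error that would make the computation fail as written. Your multiplier $a(y)(x-y)$ is $\nabla\psi$ for the \emph{quadratic} weight $\psi(x-y)=\tfrac12|x-y|^{2}$. With that weight $\Delta^{2}\psi\equiv0$, so the bilinear kernel $|x-y|^{-3}$ never appears when you compute $M'(t)$; all you obtain is positivity of the quadratic-in-$\nabla u$ term. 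The paper (for $n\ge4$) takes $\psi(x-y)=|x-y|$, so the multiplier is the \emph{unit} vector $a(x)\widehat{x-y}$, and then $-\Delta_{x}^{2}|x-y|=(n-1)(n-3)|x-y|^{-3}\ge0$ (more precisely, $-A_{x}^{2}\psi\gtrsim|x-y|^{-3}$ when $N/\nu-1$ is small, cf.\ \eqref{eq:formtoratio}).

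This is tied to a misattribution in your heuristic: you say the gain comes from ``$a(y)(x-y)\cdot\nabla_{y}$ applied to the Newtonian-type weight $|x-y|$,'' but $(x-y)\cdot\nabla_{y}|x-y|=-|x-y|$, which is no concentration at all. The $|x-y|^{-3}$ term arises from the \emph{fourth-order} contribution $-\tfrac12\int A_{x}^{2}\psi(x-y)\,|u(x)|^{2}|u(y)|^{2}\,dx\,dy$ in the virial identity — exactly as $A^{2}\psi$ supplied the main positive term $I_{|u|^{2}}$ in the single-particle proof (\eqref{eq:termsv2}) — not from a first-order operator hitting the weight. Once you fix the weight to $|x-y|$, your step (2) bookkeeping would produce the right structure, but you should also be aware of a harmonic-analysis ingredient your plan omits: to absorb the long-range remainder $E(x,y)$ (the terms where derivatives of $a$ enter $A_{x}^{2}\psi$) one rewrites the positive quantity as $\||D|^{(3-n)/2}|U|^{2}\|_{L^{2}}^{2}$ and uses a Kato--Ponce type commutator estimate (Lemma~\ref{lem:stima}); this is not automatic from pointwise decay and smallness alone.
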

\begin{theorem}[Bilinear smoothing, $n=3$]\label{the:2strong}
  Let $n=3$ and let $\Omega,L$ be as in
  Theorem \ref{the:1strong}. In addition, assume \eqref{eq:assacnabla}.
  Let $u \in C(\mathbb{R}, H^1_{0}(\Omega))$ be
  a solution of \eqref{eq:main}.
  Then, if the constants $C_a,C_I,C_{b},C_{-},C_{c}$ and
  $N/\nu-1$ are small enough,
  $u$ satisfies the estimate
   \begin{equation}\label{eq:interactiontesistrong}
     \norma{u}_{L^4(0,T;L^4(\Omega))}^4
     \lesssim
     \|u(0)\|_{L^2}^{2}
     \left[\norma{u(0)}_{\dot H^{\frac{1}{2}}}
     +\norma{u(T)}_{\dot H^{\frac{1}{2}}}\right]^{2}.
   \end{equation}
\end{theorem}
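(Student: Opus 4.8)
The plan is to run the interaction Morawetz argument exactly as in the proof of Theorem \ref{the:2}, building the tensorized virial quantity on the product domain $\Omega\times\Omega$ with weight $|x-y|$, and then to observe that in dimension $n=3$ the interaction term controlled by the Morawetz identity is \emph{local} rather than weighted. First I would recall that the one-particle virial identity underlying Theorems \ref{the:1strong} produces, after integration in time, control of the nonhomogeneous Morrey–Campanato norms $\norma{u}_{X_xL^2_T}$ and $\norma{\nabla^b u}_{Y_xL^2_T}$ by $\|u(0)\|_{\dot H^{1/2}}^2+\|u(T)\|_{\dot H^{1/2}}^2$. For the bilinear estimate one takes $v(t,x,y)=u(t,x)u(t,y)$, which solves a Schrödinger-type equation on $\Omega\times\Omega$ driven by the operator $L_x+L_y$, and applies the virial computation with the multiplier built from $\psi(x,y)=|x-y|$. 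The key algebraic point, as in \cite{CollianderKeelStaffilani04-a}, \cite{TaoVisanZhang07-a}, \cite{Visciglia09-a}, \cite{CassanoTarulli14-a}, is that the Hessian of $|x-y|$ in the $2n$ variables is nonnegative and that its Laplacian, when $n=3$, is a multiple of the Dirac delta $\delta(x-y)$; this is precisely the mechanism that in flat space yields the $L^4_{t,x}$ bound, since restricting $|u(x)|^2|u(y)|^2$ to the diagonal $x=y$ gives $|u|^4$.

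Concretely, the steps in order are: (1) write the interaction virial functional $I(t)=\iint_{\Omega\times\Omega}|x-y|\,\Im(\bar v\,\nabla^{(b)}_{x,y}v)\cdot\frac{(x-y,y-x)}{|x-y|}\,dxdy$ (the tensor magnetic gradient), using $v=u(t,x)u(t,y)$; (2) differentiate in $t$ using equation \eqref{eq:main} for each factor, as in the derivation of Theorem \ref{the:1strong}, collecting the positive diagonal term $c_n\int_0^T\!\int_\Omega |u(t,x)|^4\,dx\,dt$ coming from $-\Delta_{x,y}|x-y|=c_n\delta(x-y)$ in $n=3$; (3) check that all the error terms generated by $a(x)-I$, $b$, $c$, $db$, $\nabla c$ and by the boundary $\partial\Omega$ are dominated — exactly as in Theorems \ref{the:1strong} and \ref{the:2} — by the left-hand sides of \eqref{eq:smoomorstrong} applied to $u(t,\cdot)$, using the smallness of $C_a,C_I,C_b,C_-,C_c$ and $N/\nu-1$ (here $C_{c'}$ in \eqref{eq:assacnabla} may be large because the corresponding term carries an extra decaying factor and is absorbed); (4) use repulsivity \eqref{eq:repulsivity} and $a$-starshapedness \eqref{eq:assbdry} to discard the nonlinear and boundary contributions with a favorable sign; (5) bound $|I(t)|\lesssim \|u(t)\|_{L^2}^2\|\nabla u(t)\|_{L^2}\|u(t)\|_{L^2}$-type quantities via Cauchy–Schwarz and the conservation-free a priori bounds, and close by estimating $\sup_t\|u(t)\|_{\dot H^{1/2}}$ through interpolation, so that the right-hand side of \eqref{eq:interactiontesistrong} appears; (6) absorb the $\dot X,\dot Y$ Morawetz contributions using \eqref{eq:smoomorstrong} itself, yielding the stated bound.

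The main obstacle I anticipate is step (3) together with the delta-function manipulation in step (2): making rigorous the passage $-\Delta_{x,y}|x-y|=c_n\delta(x-y)$ on the product of an exterior domain with itself, i.e.\ justifying that the diagonal term really produces $\int|u|^4$ and that the boundary of $\Omega\times\Omega$ (which includes $\partial\Omega\times\Omega$, $\Omega\times\partial\Omega$, and their intersection) contributes with the correct sign or vanishes thanks to \eqref{eq:assbdry}. Because $n=3$ forces the use of the \emph{nonhomogeneous} norms $X,Y$ and the slightly stronger coefficient hypotheses \eqref{eq:assperturb}, \eqref{eq:assdbstrong}, one must be careful that every perturbative error term in the bilinear identity is matched against $\norma{u}_{X_xL^2_T}^2+\norma{\nabla^b u}_{Y_xL^2_T}^2$ rather than the homogeneous versions; this bookkeeping, while parallel to the $n\ge4$ case, is where the near-diagonal behavior of $|x-y|^{-1}$ (integrable in $\R^3$, so harmless) versus the low-frequency tails must be tracked with the nonhomogeneous weights. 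A secondary technical point is the approximation argument needed to run the virial computation for merely $C(\R;H^1_0)$ solutions without energy conservation, which is handled as in Theorems \ref{the:1}, \ref{the:1strong} by working on $[0,T]$ and keeping $\|u(T)\|_{\dot H^{1/2}}$ on the right.
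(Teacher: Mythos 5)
Your plan follows essentially the same route as the paper's proof of Theorem \ref{the:2strong}: tensorize the virial identity with a weight $\psi(x-y)$, exploit positivity of the Hessian terms, control the perturbative errors coming from $a'$, $db$, $\nabla c$ through the $n=3$ smoothing estimate \eqref{eq:smoomorstrong} with the nonhomogeneous $X$, $Y$ norms, discard boundary and repulsivity terms by sign, and read off $\|u\|_{L^4_{t,x}}^4$ from concentration on the diagonal. The point you flag as the main obstacle --- rigorously extracting $\int|u|^4$ from the distributional concentration at $x=y$ in $n=3$ --- is precisely what the paper resolves by replacing $|x-y|$ with the regularized weight $\brasigma{x-y}=(\sigma^2+|x-y|^2)^{1/2}$, $\sigma>0$: all virial computations are then classical, the leading positive piece of $-A_x^2\psi$ is $15\sigma^4\brasigma{x-y}^{-7}$, a nonnegative approximate identity in $\R^3$ whose total mass is independent of $\sigma$, and one lets $\sigma\to0$ only at the very end after integrating against $|u(x)|^2|u(y)|^2$ and absorbing all remaining terms via \eqref{eq:smoomorstrong}. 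One slip worth correcting: what concentrates on the diagonal is the biharmonic-type term $A_x^2\psi$ (for $a=I$ in $\R^3$, $\Delta_x^2|x-y|=-8\pi\delta(x-y)$), not the Laplacian $\Delta_{x,y}|x-y|$, which equals $4/|x-y|$ and is not a delta.
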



\subsection{Global existence and scattering}\label{sub:scattering}

The proof of well posedness and scattering for \eqref{eq:main}
in the energy space relies in an essential way
on Strichartz estimates for the linear flow $e^{itL}$. 
As mentioned above,
these are known in the case $\Omega=\mathbb{R}^{n}$
under various assumptions on the coefficients,
while the results for exterior domains are far from complete.
For this reason we decided to state our main results by 
assuming the validity of Strichartz estimates in a black 
box form, and then specialize them to some
situations where Strichartz
estimates are already available. Recalling that an
\emph{admissible (non endpoint) couple} is a couple
of indices $(p,q)$ with $2<p\le \infty$ and $2/p+n/q=n/2$,
our black box assumption has the following form:

\textsc{Assumption (S)}. 
\textit{
The Schr\"{o}dinger flow
$e^{itL}$ satisfies the Strichartz estimates
\begin{equation}\label{eq:bbstr}
  \|e^{itL}u_{0}\|_{L^{p_{1}}L^{q_{1}}}
  \lesssim
  \|u_{0}\|_{L^{2}},
  \qquad
  \textstyle
  \|\int_{0}^{t}e^{i(t-s)L}Fds\|_{L^{p_{1}}L^{q_{1}}}
  \lesssim
  \|F\|_{L^{p_{2}'}L^{q_{2}'}}
\end{equation}
for all admissible couples $(p_{j},q_{j})$,
while the derivative of the flow $\nabla e^{itL}$ satisfies
\begin{equation}\label{eq:1bbstr}
  \|\nabla e^{itL}u_{0}\|_{L^{p_{1}}L^{q_{1}}}
  \lesssim
  \|\nabla u_{0}\|_{L^{2}},
  \qquad
  \textstyle
  \|\nabla\int_{0}^{t}e^{i(t-s)L}Fds\|_{L^{p_{1}}L^{q_{1}}}
  \lesssim
  \|\nabla F\|_{L^{p_{2}'}L^{q_{2}'}}
\end{equation}
for admissible couples $(p_{j},q_{j})$ such that $q_{1}<n$.
}

Note that it is not trivial to deduce \eqref{eq:1bbstr}
from \eqref{eq:bbstr}: indeed,
for this step one needs the equivalence of norms
\begin{equation*}
  \|(-L)^{\frac12}v\|_{L^{q}}\simeq
  \|\nabla v\|_{L^{q}}
\end{equation*}
with $q$ in the appropriate range. 
Under fairly general assumptions on $L$,
we are able to prove this equivalence for all
$1<q<n$ (see \eqref{eq:equivfrac}),
and this is the reason for the restriction on $q_{1}$ in (S).

Using Assumption (S) we can
prove local well posedness in the energy space,
and global well posedness provided the nonlinearity is
\emph{defocusing}, i.e.,
\begin{equation}\label{eq:defocus}
  \textstyle
  F(r)=\int_{0}^{r}f(s)ds\ge0
  \ \ \text{for}\ \ s\in \mathbb{R}
\end{equation}
(this is the content of 
Proposition \ref{pro:localex} and Theorem \ref{the:global}):

\begin{theorem}[]\label{the:localglobal}
  Let $n\ge3$, let $\Omega=\mathbb{R}^{n}\setminus \omega$ 
  be an exterior domain with compact and possibly
  empty $C^{1}$ boundary, let $L$ be the selfadjoint
  operator with Dirichlet b.c.~defined by \eqref{eq:opL},
  \eqref{eq:assopL}, \eqref{eq:assselfadjL},
  and assume (S) holds.
  \begin{enumerate}
  [label=\textit{(\roman*)}]
    \item (Local existence in $H^{1}$).
    If $f\in C^{1}(\mathbb{C},\mathbb{C})$ satisfies
    $f(0)=0$ and
    $|f(z)-f(w)| \lesssim(|z|+|w|)^{\gamma-1}|z-w|$
    for some $1\le \gamma<1+\frac{4}{n-2}$,
    then for all $u_{0}\in H^{1}_{0}(\Omega)$ there exists
    $T=T(\|u_{0}\|_{H^{1}})$ and a unique solution
    $u\in C([-T,T];H^{1}_{0}(\Omega))$. 
    \item (Global existence in $H^{1}$).
    Assume in addition that
    $b^{2}+|\nabla \cdot b|\in L^{2}_{loc}$,
    $c\in L^{\frac n2,1}$,
    \begin{equation*}
      \|a-I \|_{L^{\infty}}+
      \||b|+|a'|\|_{L^{n,\infty}}+
      \|b'\|_{L^{\frac n2,\infty}}
      +\|c_{-}\|_{L^{\frac n2,1}}
      <\epsilon
    \end{equation*}
    for $\epsilon$ small enough, and that
    $f(u)$ is gauge invariant \eqref{eq:gauinv} 
    and defocusing \eqref{eq:defocus}.
    Then for all initial data
    $u_{0}\in H^{1}_{0}(\Omega)$ problem 
    \eqref{eq:maineq} has a unique global solution
    $u\in C\cap L^{\infty}(\mathbb{R};H^{1}_{0}(\Omega))$.
    The solution
    has constant energy for all $t\in \mathbb{R}$:
    \begin{equation*}
      \textstyle
      E(t)=\frac12\int_{\Omega}
      a(x)\nabla^{b}u \cdot \overline{\nabla^{b}u}dx
      +\frac12\int_{\Omega}c(x)|u|^{2}dx
      +\int_{\Omega}F(u)dx
      \equiv
      E(0).
    \end{equation*}
  \end{enumerate}
\end{theorem}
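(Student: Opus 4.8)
The plan is to prove the local and global existence results via a standard Strichartz‐space fixed point argument, then upgrade to global existence using the conserved energy and the defocusing hypothesis. For part (i), I would run a contraction mapping scheme in the complete metric space
\[
  X_T = \bigl\{ u \in C([-T,T];H^1_0(\Omega)) : \|u\|_{S(T)} \le 2M \bigr\},
\]
where $M \simeq \|u_0\|_{H^1}$ and $\|\cdot\|_{S(T)}$ denotes the supremum over admissible couples of $\|u\|_{L^p_T L^q}$ together with $\|\nabla u\|_{L^p_T L^q}$ for couples with $q<n$ (so that Assumption (S), i.e.\ \eqref{eq:bbstr}–\eqref{eq:1bbstr}, applies). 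On this space one defines $\Phi(u)(t) = e^{itL}u_0 - i\int_0^t e^{i(t-s)L}f(u(s))\,ds$. The homogeneous term is controlled by (S) directly. For the Duhamel term I would pick the admissible exponents adapted to the subcritical power $\gamma<1+\tfrac4{n-2}$: by Sobolev embedding $\dot W^{1,q}\hookrightarrow L^r$ and the growth bound $|f(z)-f(w)|\lesssim(|z|+|w|)^{\gamma-1}|z-w|$, H\"older in time and space gives $\|f(u)\|_{L^{p_2'}_T L^{q_2'}} + \|\nabla f(u)\|_{L^{p_2'}_T L^{q_2'}} \lesssim T^{\theta}\|u\|_{S(T)}^{\gamma}$ for some $\theta>0$ in the subcritical case; the chain rule estimate $|\nabla f(u)|\lesssim |f'(u)||\nabla u|\lesssim |u|^{\gamma-1}|\nabla u|$ uses $f\in C^1$. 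The same computation yields the difference estimate $\|\Phi(u)-\Phi(v)\|_{S(T)}\lesssim T^\theta(\|u\|_{S(T)}+\|v\|_{S(T)})^{\gamma-1}\|u-v\|_{S(T)}$, so shrinking $T=T(\|u_0\|_{H^1})$ makes $\Phi$ a contraction. Uniqueness in the larger class $C([-T,T];H^1_0)$ follows by the same difference estimate on a possibly shorter interval together with a continuity/connectedness argument.

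For part (ii), the first task is to justify the energy identity. Under the strengthened smallness assumptions on $a-I$, $b$, $b'$, $c_-$, Proposition~\ref{pro:selfadjoint} gives that $L$ is selfadjoint and nonpositive with form domain $H^1_0(\Omega)$, so the quadratic form $Q(u)=\int_\Omega a\nabla^b u\cdot\overline{\nabla^b u}\,dx+\int_\Omega c|u|^2\,dx=-\langle Lu,u\rangle$ is nonnegative and equivalent to $\|\nabla u\|_{L^2}^2$ (the gauge term $b\in L^{n,\infty}$ and $c\in L^{n/2,\infty}$ are controlled by Hardy/Sobolev with small constant, and $c_-$ is small). Hence $E(t)=\tfrac12 Q(u(t))+\int_\Omega F(u(t))\,dx$ is well defined and, since $F\ge 0$ by the defocusing hypothesis \eqref{eq:defocus}, we have $\|\nabla u(t)\|_{L^2}^2\lesssim Q(u(t))\le 2E(t)$. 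To show $E(t)\equiv E(0)$ I would differentiate formally, using the equation $i\partial_t u=Lu-f(u)$ and the gauge invariance \eqref{eq:gauinv} which guarantees $\operatorname{Re}\bigl(\overline{\partial_t u}\,f(u)\bigr)=\partial_t F(u)$ pointwise, so that $\frac{d}{dt}E(t)=\operatorname{Re}\langle \partial_t u, -Lu+f(u)\rangle = \operatorname{Re}\langle\partial_t u, -i\partial_t u\rangle = 0$; this computation must be justified by a density/regularization argument, approximating $u_0$ by smooth data, using the propagation of $H^2$ regularity on the approximants and passing to the limit via the local well-posedness estimates.

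Once the energy is conserved, $\|u(t)\|_{H^1_0}$ stays bounded uniformly in $t$ by a fixed constant $C(E(0),\|u_0\|_{L^2})$ — here $\|u(t)\|_{L^2}$ is conserved by the standard $L^2$ identity (again using gauge invariance, $\operatorname{Im}\langle Lu,u\rangle=0$ and $\operatorname{Im}\langle f(u),u\rangle=0$), and $\|\nabla u(t)\|_{L^2}$ is bounded via the energy as above. Since the local existence time in part (i) depends only on $\|u_0\|_{H^1}$, the solution can be continued indefinitely by iterating on intervals of uniform length, yielding a global solution in $C\cap L^\infty(\mathbb{R};H^1_0(\Omega))$.

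The main obstacle I anticipate is not the abstract scheme but the rigorous justification of the energy conservation identity in the low-regularity setting on a general exterior domain: one must produce a regularization of the nonlinear flow for which the formal computation is legitimate (smoothing the data, or the nonlinearity, and controlling the commutator of $L$ with the regularization near $\partial\Omega$ with Dirichlet conditions), then pass to the limit using only the Strichartz/energy bounds available. The unbounded and merely $L^{n/2,\infty}$ potential $c$ and the magnetic term $b$ require some care in showing $Q$ is closed and equivalent to the free Dirichlet energy, but this is exactly what Proposition~\ref{pro:selfadjoint} supplies, so I would cite it rather than reprove it.
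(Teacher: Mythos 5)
Your part (i) is essentially the paper's argument (Proposition \ref{pro:localex}): a contraction on $C_T H^1_0 \cap L^{p_1}_T W^{1,q_1}$ using (S), with the same observation that $q_1<n$ must be arranged. No substantive difference there.

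For part (ii), your overall scheme (energy conservation $\Rightarrow$ uniform $H^1$ bound $\Rightarrow$ continuation) matches Theorem \ref{the:global}, but the regularization step --- the only nontrivial technical content --- is handled differently, and your version is left genuinely open. You propose to approximate $u_0$ by smooth data and exploit propagation of $H^2$ regularity for the approximating flows, conceding you would have to ``control the commutator of $L$ with the regularization near $\partial\Omega$.'' That is exactly the difficulty the paper avoids. The paper instead applies the resolvent regularizer $J_\epsilon=(I-\epsilon L)^{-1}$ (constructed at the end of Section \ref{sec:gaussian_bds}) directly to the $H^1$ solution $u$: since $J_\epsilon$ commutes with $L$, the function $u_\epsilon=J_\epsilon u$ lies in $C_T H^2$ and satisfies the exact equation $i\partial_t u_\epsilon-Lu_\epsilon+J_\epsilon f(u)=0$ with no commutator error. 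The defect is then the well-localized term $f(u_\epsilon)-J_\epsilon f(u)$, which one controls by dominated convergence using the $L^p$ boundedness and strong $L^p$ convergence of $J_\epsilon$ (Proposition \ref{pro:regulJLp}) together with the $L^p$ equivalence $\|(-L)^{1/2}\cdot\|_{L^p}\simeq\|\nabla\cdot\|_{L^p}$ (Proposition \ref{pro:powereq}). This is where the additional hypotheses in part (ii) enter: they guarantee the gaussian heat-kernel bound (Proposition \ref{pro:heatk}) on which Proposition \ref{pro:powereq} rests, machinery your proposal does not invoke. Your alternative route is not clearly viable here, because $H^2$ persistence for the nonlinear flow with $b\in L^{n,\infty}$, $c\in L^{n/2,1}$ and a merely Lipschitz power nonlinearity (in particular for $\gamma<2$, where $f\notin C^2$) on an exterior domain with Dirichlet conditions is not something the paper's hypotheses provide and would itself need a nontrivial proof; the resolvent regularization sidesteps it entirely.
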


Combining the global existence result with
the bilinear smoothing estimate in
Theorems \ref{the:2} and \ref{the:2strong}, we obtain the main results of
this paper. Note that a power nonlinearity
$f(u)=|u|^{\gamma-1}u$ 
with $1+\frac4n< \gamma<1+\frac{4}{n-2}$
satisfies all conditions of the following Theorems:
\begin{theorem}[Scattering on $\Omega$, under (S)]
  \label{the:3}
  Let $n\ge 3$, $\Omega=\mathbb{R}^{n}\setminus \omega$
  an exterior domain with $C^{1}$ compact and
  possibly empty boundary satisfying \eqref{eq:assbdry},
  $L$ the operator \eqref{eq:opL}
  with Dirichlet b.c.~on $\Omega$. 
  Assume $a,b,c$ satisfy,
  for some $\epsilon,C>0$, $\delta\in(0,1]$
  \begin{equation*}
    |x|a(x)x \cdot \nabla c
    <\epsilon \bra{x}^{-1-\delta},
    \qquad
    |x||c|+|x|^{2}|c'|<C \bra{x}^{-1-\delta},
  \end{equation*}
  and in addition
      \begin{gather*}
        \textstyle
        \|a-I\|_{L^{\infty}}+ 
        |x|^2 c_{-}+ \|c_{-}\|_{L^{\frac n2,1}}<\epsilon,
        \quad
        |x||b|+|x|^{2}|b'|< \epsilon|x|^{\delta}\bra{x}^{-2\delta},
        \qquad
        \text{ if $n \ge 4$};
        \\
        \textstyle
        \bra{x}^{\delta} \|a-I\|_{L^{\infty}}+\bra{x}^2 c_{-}+\|c_{-}\|_{L^{\frac n2,1}}<\epsilon,    
        \quad
        |x||b|+ |x|\bra{x}^{1+\delta}|b'|< \epsilon,
        \qquad
        \text{ if $n = 3$}.
      \end{gather*}
  Finally     $|a'|+|x||a''|+|x|^{2}|a'''|
    <\epsilon \bra{x}^{-1-\delta}$, and $f:\mathbb{C}\to \mathbb{C}$ 
  is gauge invariant \eqref{eq:gauinv},
  repulsive \eqref{eq:repulsivity},
  defocusing \eqref{eq:defocus}
  and satisfies
  $f(0)=0$, 
  $|f(z)-f(w)| \lesssim(|z|+|w|)^{\gamma-1}|z-w|$
  for some $1+\frac4n< \gamma<1+\frac{4}{n-2}$.
  Then if (S) holds and
  $\epsilon$ is small enough we have:
  \begin{enumerate}
  [noitemsep,topsep=0pt,parsep=0pt,partopsep=0pt,
  label=\textit{(\roman*)}]
    \item (Existence of wave operators) For every 
    $u_{+}\in H^{1}_{0}(\Omega)$ there exists a unique
    $u_{0}\in H^{1}_{0}(\Omega)$ such that the global
    solution $u(t)$ to \eqref{eq:main} satisfies
    $\|e^{-itL}u_{+}-u(t)\|_{H^{1}}\to0$ as $t\to+\infty$.
    An analogous result holds for $t\to-\infty$.
    \item (Asymptotic completeness) For every
    $u_{0}\in H^{1}_{0}(\Omega)$ there exists a unique
    $u_{+}\in H^{1}_{0}(\Omega)$ such that the global
    solution $u(t)$ to \eqref{eq:main} satisfies
    $\|e^{-itL}u_{+}-u(t)\|_{H^{1}}\to0$ as $t\to+\infty$.
    An analogous result holds for $t\to-\infty$.
  \end{enumerate}
\end{theorem}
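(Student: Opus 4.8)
The plan is to deduce scattering in the energy space by combining three ingredients that are by now in place: global well posedness (Theorem~\ref{the:localglobal}, whose hypotheses are implied by the stronger assumptions made here), the bilinear smoothing estimates of Theorems~\ref{the:2} and \ref{the:2strong}, and the Strichartz estimates of Assumption~(S); this is the scheme of \cite{Visciglia09-a} and \cite{CassanoTarulli14-a}, adapted to the operator $L$. First, for $u_{0}\in H^{1}_{0}(\Omega)$ Theorem~\ref{the:localglobal} gives a unique global solution $u\in C\cap L^{\infty}(\mathbb{R};H^{1}_{0}(\Omega))$ with $\|u(t)\|_{L^{2}}\equiv\|u_{0}\|_{L^{2}}$ and conserved energy $E(t)\equiv E(0)$. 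Since $f$ is defocusing ($F\ge0$), $c_{-}$ is small and $a(x)\ge\nu I$, the energy controls $\|\nabla^{b}u(t)\|_{L^{2}}^{2}$, so that $M:=\sup_{t}\|u(t)\|_{H^{1}}<\infty$ and, by interpolation, $\sup_{t}\|u(t)\|_{\dot H^{1/2}}<\infty$.

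Next I would produce a global scattering norm and bridge it to Strichartz spaces. Inserting the $\dot H^{1/2}$ bound into Theorem~\ref{the:2strong} ($n=3$) or Theorem~\ref{the:2} ($n\ge4$) and letting $T\to\infty$ yields a finite, $T$-independent bound: $\|u\|_{L^{4}(\mathbb{R};L^{4}(\Omega))}<\infty$ for $n=3$, and, applying Hardy--Littlewood--Sobolev to the kernel $|x-y|^{-3}$ in \eqref{eq:interactiontesi}, $\|u\|_{L^{4}(\mathbb{R};L^{r}(\Omega))}<\infty$ with $r=\tfrac{4n}{2n-3}$ for $n\ge4$; call this norm $\mathcal{N}$. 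Interpolating the finite $\mathcal{N}$ bound against $u\in L^{\infty}_{t}H^{1}_{x}\hookrightarrow L^{\infty}_{t}L^{2n/(n-2)}_{x}$ and using H\"older in time, for every $\eta>0$ one obtains a partition $\mathbb{R}=\bigcup_{j=1}^{J}I_{j}$ into finitely many intervals with $\|u\|_{\mathcal{N}(I_{j})}<\eta$; here $\gamma>1+\tfrac4n$ is exactly what lets $\mathcal{N}$ control a positive power of the nonlinear term on each piece. On each $I_{j}$ a Strichartz continuity argument --- using \eqref{eq:bbstr}, \eqref{eq:1bbstr}, the growth condition \eqref{eq:growthfu}, the equivalence $\|(-L)^{1/2}v\|_{L^{q}}\simeq\|\nabla v\|_{L^{q}}$ from \eqref{eq:equivfrac} (valid for $q<n$, hence the restriction $q_{1}<n$ in (S)) and the comparison of $\nabla^{b}u$ with $\nabla u$ --- bounds $\|u\|_{L^{p_{1}}L^{q_{1}}(I_{j})}+\|\nabla u\|_{L^{p_{1}}L^{q_{1}}(I_{j})}$ by $C(M)$ for an admissible pair $(p_{1},q_{1})$. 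Summing over $j$ gives $u,\nabla u\in L^{p_{1}}L^{q_{1}}(\mathbb{R})$ and, by the inhomogeneous estimates, $f(u)$ and $\nabla f(u)$ in the dual Strichartz spaces.

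Finally I would read off scattering from Duhamel's formula $e^{itL}u(t)=u_{0}+i\int_{0}^{t}e^{isL}f(u(s))\,ds$: by the previous step $\int_{0}^{t}e^{isL}f(u(s))\,ds$ is Cauchy in $H^{1}$ as $t\to\pm\infty$ (the tail $\int_{S}^{t}$ is bounded in $H^{1}$ by the dual Strichartz norm of $f(u)$ over $[S,\infty)$, which vanishes as $S\to\infty$), so $u_{\pm}:=u_{0}+i\int_{0}^{\pm\infty}e^{isL}f(u(s))\,ds\in H^{1}_{0}(\Omega)$ and $\|e^{-itL}u_{\pm}-u(t)\|_{H^{1}}\to0$, with uniqueness of $u_{\pm}$ immediate; this is asymptotic completeness. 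For the wave operators one reverses the argument: given $u_{+}\in H^{1}_{0}(\Omega)$, solve $u(t)=e^{-itL}u_{+}-i\int_{t}^{\infty}e^{-i(t-s)L}f(u(s))\,ds$ by contraction in the Strichartz space on a half-line $[T,\infty)$ with $T$ large, using $\|e^{-itL}u_{+}\|_{L^{p_{1}}L^{q_{1}}([T,\infty))}\to0$; the solution so obtained scatters to $u_{+}$ and extends to a global solution by Theorem~\ref{the:localglobal}, and one takes $u_{0}:=u(0)$. In both parts uniqueness follows from the well posedness theory of Theorem~\ref{the:localglobal}.

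The main obstacle is the bridging step: the interaction Morawetz norm $\mathcal{N}$ is not itself a Strichartz norm, so one must decompose $f(u)$ so that each term is estimated by $\|u\|_{\mathcal{N}(I)}^{\theta}$, $\theta>0$, times admissible Strichartz norms --- which is what forces $\gamma>1+\tfrac4n$ and becomes delicate as $\gamma\downarrow1+\tfrac4n$, where $\theta\to0$. A secondary difficulty is carrying the gradient through the whole argument, since scattering is claimed in $H^{1}$ and not merely $L^{2}$: this is where one needs \eqref{eq:equivfrac}, the $\nabla$ versus $\nabla^{b}$ comparison, and the fact that $e^{-itL}$ is an isometry only on the operator-Sobolev norm $\|(-L)^{1/2}\cdot\|_{L^{2}}$, which by \eqref{eq:equivfrac} is equivalent to $\|\cdot\|_{\dot H^{1}}$.
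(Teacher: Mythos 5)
Your broad outline (global well posedness $+$ interaction Morawetz $+$ Strichartz $\Rightarrow$ scattering via Duhamel) and the scattering endgame match the paper, but the bridge you describe between the interaction Morawetz estimate and the Strichartz bootstrap is both different from the paper's and contains a genuine gap for $n\ge 4$.

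The gap is in the sentence where you claim that ``applying Hardy--Littlewood--Sobolev to the kernel $|x-y|^{-3}$'' turns \eqref{eq:interactiontesi} into $\|u\|_{L^4(\mathbb{R};L^r(\Omega))}<\infty$ with $r=\tfrac{4n}{2n-3}$. HLS goes the wrong way: it gives
\begin{equation*}
  \int\!\!\int\frac{|u(x)|^{2}|u(y)|^{2}}{|x-y|^{3}}\,dx\,dy
  \;\lesssim\;
  \|u\|_{L^{4n/(2n-3)}}^{4},
\end{equation*}
i.e.\ the interaction Morawetz functional is bounded \emph{above} by the $L^{r}$ norm, not below. The left side equals $\||D|^{(3-n)/2}|u|^{2}\|_{L^{2}}^{2}$, which for $n\ge4$ is a \emph{negative}-order homogeneous Sobolev norm of $|u|^{2}$, and finiteness of such a norm does not imply any $L^{r}$ bound on $u$. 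So the quantity $\mathcal{N}$ you want to partition into small pieces simply does not exist for $n\ge 4$ by the route you propose; without this the subsequent continuity argument on intervals $I_j$ cannot start. (The $n=3$ case, where the exponent is $0$ and one genuinely gets $\|u\|_{L^{4}_{t,x}}$, is fine.)

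The paper avoids this entirely by using the interaction Morawetz estimate only qualitatively, via the Visciglia-style contradiction argument you say you are following but in fact do not. In Proposition \ref{pro:decayLp} one assumes $\|u(t_{k})\|_{L^{2+4/n}}\ge\epsilon_{0}$ for $t_{k}\to\infty$; the Gagliardo--Nirenberg inequality on unit cubes, $\|v\|_{L^{2+4/n}(\Omega)}^{2+4/n}\lesssim\|v\|_{H^{1}}^{2}\sup_{x}\|v\|_{L^{2}(Q_{1}(x))}^{4/n}$, localizes an amount $\epsilon_{1}$ of $L^{2}$ mass to a cube $Q_{1}(x_{k})$; the cutoff identity $\frac{d}{dt}\int\chi|u|^{2}=2\int\chi\,\nabla\!\cdot\Im[a\nabla^{b}u\,\bar u]$ propagates that concentration on a uniform time interval $\bar t$; and then the space-time blocks $(t_{k},t_{k}+\bar t)\times Q_{2}(x_{k})^{2}$ give a divergent contribution to $\int\int\int\tfrac{|u(x)|^{2}|u(y)|^{2}}{|x-y|^{3}}$ (resp.\ $\|u\|_{L^{4}_{t,x}}^{4}$ if $n=3$), contradicting the Morawetz bound. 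This yields $\|u(t)\|_{L^{r}}\to0$ for all $2<r<\tfrac{2n}{n-2}$. Proposition \ref{pro:bddstrich} then bootstraps Strichartz norms not via a finite partition into small-norm intervals, but via the smallness of $o(T):=\|u\|_{L^{\infty}([T,\infty);L^{\gamma+1})}$: the inequality $\phi(t)\le C+o(T)\,\phi(t)^{p/p'}$ with $p/p'<\gamma$ closes once $T$ is large; this is exactly where $\gamma>1+\tfrac4n$ (i.e.\ $p<p'\gamma$) is used. Your version would need to replace the HLS step, for $n\ge4$, by a genuine bilinear interpolation of the Morawetz functional against $\dot H^{1/2}$ regularity in the style of Tao--Visan--Zhang — the ``more technical method'' the paper explicitly chooses not to use — and as written the proposal does not supply it.
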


When $\Omega=\mathbb{R}^{n}$, Strichartz estimates
for $e^{itL}$ were proved by Tataru \cite{Tataru08-a}
in the case $L$ is a small, long range perturbations of
$\Delta$. In Theorems \ref{the:strich1} - \ref{the:strich2}
we adapt the result in \cite{Tataru08-a}
to our situation, and in particular,
combining it with the smoothing estimate 
\eqref{eq:smoomor}, we extend Strichartz estimates
to potentials $c(x)$ with a large positive part.
In addition we deduce the necessary estimates also for
the derivative of the flow $\nabla e^{itL}$
(Corollary \ref{cor:strichder}). As a consequence,
Assumption (S) is satisfied and we obtain the final
result of the paper:

\begin{theorem}[Scattering on $\mathbb{R}^{n}$]\label{the:4}
  Let $n\ge3$, assume $a,b,c$ satisfy $c\in L^{n}_{loc}$ and
  \begin{gather*}
    |a-I|+
    \bra{x}(|a'|+|b|)+
    \bra{x}^{2}(|a''|+|b'|)+
    \bra{x}^{3}|a'''|
    <\epsilon \bra{x}^{-\delta}, 
    \\
    |x| \bra{x} a(x) x \cdot \nabla c < \epsilon\bra{x}^{-\delta},
    \quad 
    \|c_{-}\|_{L^{\frac n2,1}}<\epsilon,
    \quad
    |x||c| + |x|^2 |c'|< C \bra{x}^{-1-\delta}.
    \\
|x|^{2}c_{-}<\epsilon, \quad   \text{if $n\ge 4$}, \qquad \bra{x}^{2} c_{-}<\epsilon, \quad \text{ if $n=3$},
  \end{gather*}
  for some $C>0$, $\delta\in(0,1]$ and some $\epsilon$
  small enough,
  and let $L$ be the selfadjoint operator defined by
  \eqref{eq:opL}-\eqref{eq:assopL} on $\mathbb{R}^{n}$.
  Finally, assume $f:\mathbb{C}\to \mathbb{C}$
  is gauge invariant \eqref{eq:gauinv},
  repulsive \eqref{eq:repulsivity},
  defocusing \eqref{eq:defocus}
  and satisfies
  $f(0)=0$, 
  $|f(z)-f(w)| \lesssim(|z|+|w|)^{\gamma-1}|z-w|$
  for some $1+\frac4n< \gamma<1+\frac{4}{n-2}$.

  Then the conclusions (i), (ii) of Theorem \ref{the:3} are valid.
\end{theorem}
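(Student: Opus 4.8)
The plan is to deduce Theorem~\ref{the:4} from Theorem~\ref{the:3}. Since Theorem~\ref{the:3} already yields existence of wave operators and asymptotic completeness once the black box Assumption~(S) is available, it suffices to (a) check that the coefficient hypotheses assumed here imply those of Theorem~\ref{the:3} (with $\Omega=\mathbb{R}^{n}$, so that $\omega=\emptyset$ and the starshapedness condition \eqref{eq:assbdry} is vacuous), and (b) verify Assumption~(S), i.e.\ the Strichartz bounds \eqref{eq:bbstr} and \eqref{eq:1bbstr} for $e^{itL}$. Step (a) is a routine comparison of the weighted pointwise and Lorentz-norm bounds on $a,b,c$ --- for instance $|b|\lesssim\epsilon\bra{x}^{-1-\delta}$ gives $b\in L^{n,\infty}$ with small norm, and $|x||c|+|x|^{2}|c'|\lesssim\bra{x}^{-1-\delta}$ together with $c\in L^{n}_{loc}$ gives $c\in L^{n/2,1}$ --- carried out separately for $n\ge4$ (comparing with Theorems~\ref{the:1}, \ref{the:2}) and for $n=3$ (comparing with Theorems~\ref{the:1strong}, \ref{the:2strong}, and checking \eqref{eq:assperturb}, \eqref{eq:assdbstrong}), after shrinking $\epsilon$ by a fixed factor. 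The substance is therefore in Step (b).

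For the scalar Strichartz estimate \eqref{eq:bbstr}, the starting point is Tataru's result \cite{Tataru08-a}, adapted in Theorems~\ref{the:strich1}--\ref{the:strich2}, which provides both the homogeneous Strichartz estimates and an inhomogeneous version with source in a weighted $L^{2}_{t}L^{2}_{x}$ space for the flow generated by a small, long range perturbation of $\Delta$ plus a \emph{small} potential. To absorb the large positive part of $c$, I would split $c=c_{s}+c_{\ell}$ with $c_{s}$ small in $L^{n/2,1}$ and $c_{\ell}$ compactly supported, set $L_{0}=\nabla^{b}\cdot(a\nabla^{b})-c_{s}$, and expand $e^{itL}u_{0}$ by Duhamel's formula around $e^{itL_{0}}$. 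The source term, written as $c_{\ell}e^{itL}u_{0}=\bra{x}^{-1-}\bigl[\bra{x}^{1+}c_{\ell}\,e^{itL}u_{0}\bigr]$, is controlled using the compact support of $c_{\ell}$ and the local smoothing estimate \eqref{eq:smoomorzero} of Corollary~\ref{cor:smooheat}: $\|\bra{x}^{1+}c_{\ell}e^{itL}u_{0}\|_{L^{2}_{t}L^{2}_{x}}\lesssim\|u_{0}\|_{L^{2}}$. Feeding this into the weighted inhomogeneous Strichartz estimate for $L_{0}$ closes the homogeneous bound in \eqref{eq:bbstr}, and the inhomogeneous bound follows by the same scheme together with a $TT^{*}$/Christ--Kiselev argument for the non-endpoint admissible pairs required by (S). This is the content of Theorems~\ref{the:strich1}--\ref{the:strich2}.

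To upgrade to the derivative estimates \eqref{eq:1bbstr}, I would invoke the equivalence of norms \eqref{eq:equivfrac} with $\sigma=\tfrac12$, valid for $1<q<n$: under the present hypotheses the heat kernel $e^{tL}$ satisfies a Gaussian upper bound (Proposition~\ref{pro:heatk}) and the smallness conditions of Proposition~\ref{pro:powereq} hold, so $\|(-L)^{1/2}v\|_{L^{q}}\simeq\|\nabla v\|_{L^{q}}$ on that range. Since $(-L)^{1/2}$ commutes with $e^{itL}$, for any admissible $(p_{1},q_{1})$ with $q_{1}<n$ one gets
\[
  \|\nabla e^{itL}u_{0}\|_{L^{p_{1}}L^{q_{1}}}
  \simeq\|e^{itL}(-L)^{1/2}u_{0}\|_{L^{p_{1}}L^{q_{1}}}
  \lesssim\|(-L)^{1/2}u_{0}\|_{L^{2}}
  \simeq\|\nabla u_{0}\|_{L^{2}},
\]
using \eqref{eq:bbstr} and \eqref{eq:equivfrac}; the inhomogeneous version follows by commuting $(-L)^{1/2}$ through the Duhamel integral. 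This is Corollary~\ref{cor:strichder}. With \eqref{eq:bbstr}, \eqref{eq:1bbstr} in hand, Assumption~(S) holds, and Theorem~\ref{the:3} gives conclusions (i), (ii).

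The main obstacle is the absorption step in (b): one must choose the splitting of $c$ and the intermediate weighted spaces so that the local smoothing estimate \eqref{eq:smoomorzero} couples correctly to the Duhamel iteration for $L_{0}$, control the Lorentz-norm bookkeeping so that the smallness of $c_{s}$ is genuinely available, and --- in the three-dimensional case, where the weaker Morrey--Campanato norms $X,Y$ and the stronger coefficient conditions \eqref{eq:assperturb}, \eqref{eq:assdbstrong} are in force --- check that the same reasoning still closes.
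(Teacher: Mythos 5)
Your high-level plan is the same as the paper's: reduce Theorem~\ref{the:4} to Theorem~\ref{the:3} by verifying Assumption~(S), where the substance is (i) Strichartz for $e^{itL}$ with a large positive potential via Tataru's estimates combined with the smoothing bound, and (ii) the derivative Strichartz via the equivalence $\|(-L)^{1/2}v\|_{L^{q}}\simeq\|\nabla v\|_{L^{q}}$ for $1<q<n$. This is precisely the chain Theorems~\ref{the:strich1}--\ref{the:strich2}, Corollary~\ref{cor:strichder}, then Theorem~\ref{the:3}, and your treatment of the derivative Strichartz (commuting $(-L)^{1/2}$ through the flow and applying \eqref{eq:equivfrac}) matches Corollary~\ref{cor:strichder} exactly.

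However, the mechanism you sketch for the absorption step (Theorem~\ref{the:strich2}) deviates from the paper's and contains a gap. The paper does \emph{not} split $c$: it sets $L_{0}=\nabla^{b}\cdot(a\nabla^{b})$, with no potential at all, so Tataru applies without any discussion of tails, and then writes $u=e^{itL}u_{0}=e^{itL_{0}}u_{0}-i\int_{0}^{t}e^{i(t-s)L_{0}}(cu)\,ds$. Applying the \emph{standard} endpoint inhomogeneous Strichartz estimate for $e^{itL_{0}}$ with source in the dual-endpoint space $L^{2}_{t}L^{2n/(n+2)}_{x}$ and H\"older in the form
\begin{equation*}
  \|cu\|_{L^{2}L^{2n/(n+2)}}\lesssim\|\bra{x}^{1+\delta}c\|_{L^{n}}\,\|\bra{x}^{-1-\delta}u\|_{L^{2}L^{2}},
\end{equation*}
the smoothing estimate \eqref{eq:smoomorzero} for the \emph{full} flow $e^{itL}$ closes the bound. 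Your variant --- absorb a tail $c_{s}$ into $L_{0}$ and then control $c_{\ell}=c-c_{s}$ via $\|\bra{x}^{1+}c_{\ell}e^{itL}u_{0}\|_{L^{2}L^{2}}\lesssim\|u_{0}\|_{L^{2}}$ --- requires $\bra{x}^{2+}c_{\ell}\in L^{\infty}$ to pass from \eqref{eq:smoomorzero} by pointwise domination. But the only local regularity assumed is $c\in L^{n}_{loc}$, and the pointwise bound $|x||c|<C\bra{x}^{-1-\delta}$ only rules out singularities worse than $|x|^{-1}$; a behaviour such as $c\sim|x|^{-1+\varepsilon}$ is allowed and is not locally bounded, so the $L^{\infty}$ H\"older step fails. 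You also invoke ``an inhomogeneous Strichartz estimate with source in a weighted $L^{2}_{t}L^{2}_{x}$ space'' for $L_{0}$, which the paper does not prove and does not need --- it uses the standard dual-endpoint inhomogeneous estimate. The splitting of $c$ is unnecessary: once $c$ is kept entirely as a Duhamel source, the $L^{n}$ H\"older step handles the mild local singularity cleanly and the argument closes in a single pass.
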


\section{Notations and elementary identities}\label{sec:notations}

Using the convention of implicit summation over repeated indices,
we define the operators
\begin{equation}\label{eq:notAAb}
  A^{b}v:=\nabla^b\cdot(a(x)\nabla^bv)=
  \partial_j^b(a_{jk}(x)\partial_k^bv),
  \qquad
 Av:=\nabla\cdot(a(x)\nabla v)=
  \partial_j(a_{jk}(x)\partial_kv)
\end{equation}
so that $L=A^{b}-c$.
The quadratic form associated with $A$ is given by
\begin{equation*}
  a(w,z):=a_{jk}(x)w_k\overline{z}_j.
\end{equation*}
We shall use the notations
\begin{equation*}
  \textstyle
  \widehat{x}=\frac{x}{|x|}=
  (\widehat{x}_{1},\dots,\widehat{x}_{n}),\qquad
  \widehat{x}_{j}=\frac{x_{j}}{|x|},
\end{equation*}
\begin{equation*}
  \widehat{a}(x)=a_{\ell m}(x)\widehat{x}_{\ell}\widehat{x}_{m},
  \qquad
  \overline{a}(x)=\mathop{\textrm{trace}}a(x)=a_{mm}(x).
\end{equation*}
Since $a(x)$ is positive definite, we have
\begin{equation*}
  0\le \widehat{a}= a \widehat{x}\cdot \widehat{x}
  \le |a \widehat{x}|\le \overline{a}.
\end{equation*}
Indices after a semicolon refer to partial derivatives:
\begin{equation*}
  a_{jk;\ell}:=\partial_{\ell}a_{jk},\qquad
  a_{jk;\ell m}:=\partial_{\ell}\partial_{m}a_{jk},\qquad
  a_{jk;\ell mp}:=\partial_\ell\partial_ m\partial_p a_{jk}.
\end{equation*}
Notice the formulas
\begin{equation*}
  \partial_{k}(\widehat{x}_{\ell})=
  |x|^{-1}[\delta_{k\ell}-\widehat{x}_{k}\widehat{x}_{\ell}],
\end{equation*}
\begin{equation*}
  \partial_{k}(\widehat{x}_{\ell}\widehat{x}_{m})=
  |x|^{-1}[\delta_{k\ell}\widehat{x}_{m}+\delta_{km}
     \widehat{x}_{\ell}
  -2 \widehat{x}_{k} \widehat{x}_{\ell}\widehat{x}_{m}],
\end{equation*}
\begin{equation*}
\begin{split}
  \textstyle
  \partial_{j}\partial_{k}(\widehat{x}_{\ell}\widehat{x}_{m})=
  &
  \frac{1}{|x|^{2}}
  [
  \delta_{k\ell}\delta_{jm}+\delta_{km}\delta_{j\ell}
  +8 \widehat{x}_{j}\widehat{x}_{k}\widehat{x}_{\ell}
     \widehat{x}_{m}
    \\
  &
  -2 \delta_{k\ell}\widehat{x}_{j}\widehat{x}_{m}
  -2 \delta_{km}\widehat{x}_{j}\widehat{x}_{\ell}
  -2\delta_{jk}\widehat{x}_{\ell}\widehat{x}_{m}
  -2\delta_{j\ell}\widehat{x}_{k}\widehat{x}_{m}
  -2\delta_{jm}\widehat{x}_{k}\widehat{x}_{\ell}
  ]
\end{split}
\end{equation*}
which imply
\begin{equation*}
  a_{jk}a_{\ell m}\widehat{x}_{j}
  \partial_{k}(\widehat{x}_{\ell}\widehat{x}_{m})=
  2|x|^{-1}[|a \widehat{x}|^{2}-\widehat{a}^{2}],
\end{equation*}
and
\begin{equation*}
  \textstyle
  a_{jk}a_{\ell m}
  \partial_{j}\partial_{k}(\widehat{x}_{\ell}\widehat{x}_{m})=
  \frac{2}{|x|^{2}}
  [
  a_{\ell m}a_{\ell m}
  -4(|a \widehat{x}|^{2}-\widehat{a}^{2})
  -\overline{a}\widehat{a}
  ].
\end{equation*}
Using the previous identities, we see that for any
radial function $\psi(x)=\psi(|x|)$ we can write
\begin{equation}\label{eq:apsigen}
  A \psi(x)=\partial_{\ell}(a_{\ell m}\widehat{x}_{m}\psi')
  =
  \widehat{a}\psi''+
  \frac{\overline{a}-\widehat{a}}{|x|}
  \psi'
  +
  a_{\ell m;\ell}\widehat{x}_{m}\psi'
\end{equation}
where $\psi'$ denotes the derivative of $\psi(r)$ with respect
to the radial variable.


We now give the definitions 
of the Morrey-Campanato type norms $\dot X,\dot Y,X,Y$
and recall some relations
between them and usual weighted $L^{2}$ norms.

For an open subset $\Omega \subseteq\mathbb{R}^{n}$, $n\ge2$,
we use the notations
\begin{equation*}
    \Omega_{=R}=\Omega\cap\{x \colon |x|=R\},\quad
    \Omega_{\le R}=\Omega\cap\{x \colon |x|\le R\},\quad
    \Omega_{\ge R}=\Omega\cap\{x \colon |x|\ge R\},
\end{equation*}
\begin{equation*}
    \Omega_{R_{1}\le|x|\le R_{2}}
    =\Omega\cap\{x \colon R_{1}\le|x|\le R_{2}\}.
\end{equation*}
The homogeneous and  inhomogeneous norms $\dot X$ 
and  $X$ of a function 
$v:\Omega\to \mathbb{C}$
are defined as
\begin{equation*}
  \textstyle
  \|v\|_{\dot X}^{2}:=
  \sup\limits_{R>0}\frac{1}{R^{2}}\int_{\Omega_{=R}}
  |v|^{2}dS,
  \quad
  \|v\|_{X}^{2}:=
  \sup\limits_{R>0}\frac{1}{\langle R\rangle^{2}}
  \int_{\Omega_{=R}}
  |v|^{2}dS,
  \end{equation*}
where $dS$ is the surface measure on $\Omega_{=R}$
and $\langle R\rangle=\sqrt{1+R^{2}}$.
We shall also need proper Morrey-Campanato spaces, both in
the homogeneous version $\dot Y$ and in the non homogenous
version $Y$; their norms are defined as
\begin{equation}\label{eq:MC3}
  \textstyle
  \|v\|_{\dot Y}^{2}:=
  \sup\limits_{R>0}\frac{1}{R}\int_{\Omega_{\le R}}
  |v|^{2}dx,
  \qquad
  \|v\|_{Y}^{2}:=
  \sup\limits_{R>0}
  \frac{1}{\langle R\rangle}\int_{\Omega_{\le R}}
  |v|^{2}dx.
\end{equation}
The following equivalence is easy to prove:
\begin{equation}\label{eq:equivnonhom}
  \textstyle
  \|v\|_{Y}^{2}\le
  \sup_{R\ge1}\frac1R\int_{\Omega_{\le R}}|v|^{2}\le
  \sqrt{2}\|v\|_{Y}^{2}.
\end{equation}

The following Lemmas collect a few estimates to be used
in the rest of the paper, which follow immediately from
the definitions (proofs are straightforward, and 
full details can be found in
\cite{CacciafestaDAnconaLuca14-a}).

\begin{lemma}\label{lem:normest1}
  For any $v\in C^{\infty}(\mathbb{R}^{n})$,
  \begin{equation}\label{eq:stnorma8}
    \||x|^{-1}v\|_{\dot Y}\le
    \|v\|_{\dot X},
    \qquad
    \|\bra{x}^{-1}v\|_{ Y}\le
    \|v\|_{ X},
  \end{equation}
  \begin{equation}\label{eq:stnorma2}
    \textstyle
    \sup\limits_{R>0}
    \int_{\Omega_{\ge R}}
    \frac{R^{n-1}}{|x|^{n+2}}|v|^{2}dx
    \le
    \frac{1}{n-1} \|v\|_{\dot X}^{2},
    \qquad
    \sup\limits_{R>1}
    \int_{\Omega_{\ge R}}
    \frac{R^{n-1}}{|x|^{n+2}}|v|^{2}dx
    \le
    \frac{2}{n-1} \|v\|_{X}^{2}.
  \end{equation}
\end{lemma}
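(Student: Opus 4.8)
The plan is to reduce all four inequalities to the definitions of the $\dot X$, $X$, $\dot Y$, $Y$ norms by slicing the integrals over $\Omega_{\le R}$ (respectively $\Omega_{\ge R}$) into the spheres $\Omega_{=r}$ via the coarea formula in polar coordinates, $\int_{\Omega_{\le R}}g\,dx=\int_0^R\bigl(\int_{\Omega_{=r}}g\,dS\bigr)dr$ and $\int_{\Omega_{\ge R}}g\,dx=\int_R^{\infty}\bigl(\int_{\Omega_{=r}}g\,dS\bigr)dr$ for $g\ge0$, where $dS$ is precisely the surface measure used in the definition of $\|\cdot\|_{\dot X}$ and $\|\cdot\|_X$ (so no Jacobian factor enters). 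The only facts needed are the pointwise-in-$r$ bounds $\int_{\Omega_{=r}}|v|^2\,dS\le r^2\|v\|_{\dot X}^2$ and $\int_{\Omega_{=r}}|v|^2\,dS\le\bra r^2\|v\|_X^2$, immediate from the definitions; everything else is a one-dimensional integral in $r$ and monotonicity of the integral.

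For \eqref{eq:stnorma8} I would write $\int_{\Omega_{\le R}}|x|^{-2}|v|^2\,dx=\int_0^R r^{-2}\bigl(\int_{\Omega_{=r}}|v|^2\,dS\bigr)dr\le\int_0^R r^{-2}\,r^2\,\|v\|_{\dot X}^2\,dr=R\|v\|_{\dot X}^2$; dividing by $R$ and taking $\sup_{R>0}$ gives $\||x|^{-1}v\|_{\dot Y}\le\|v\|_{\dot X}$. The inhomogeneous statement is identical with the weight $\bra r^{-2}$ and the bound $\int_{\Omega_{=r}}|v|^2\,dS\le\bra r^2\|v\|_X^2$, giving $\int_{\Omega_{\le R}}\bra x^{-2}|v|^2\,dx\le R\|v\|_X^2$; since $R/\bra R\le1$ for all $R\ge0$, this yields $\|\bra x^{-1}v\|_Y\le\|v\|_X$.

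For \eqref{eq:stnorma2} I would slice over $\Omega_{\ge R}$: $\int_{\Omega_{\ge R}}\frac{R^{n-1}}{|x|^{n+2}}|v|^2\,dx=R^{n-1}\int_R^{\infty}r^{-n-2}\bigl(\int_{\Omega_{=r}}|v|^2\,dS\bigr)dr\le R^{n-1}\|v\|_{\dot X}^2\int_R^{\infty}r^{-n}\,dr=\frac{1}{n-1}\|v\|_{\dot X}^2$, using $\int_R^{\infty}r^{-n}\,dr=\frac{R^{1-n}}{n-1}$ (convergent since $n\ge2$); taking $\sup_{R>0}$ gives the first bound. For the second, restrict to $R>1$: then on the relevant range $r\ge R>1$ one has $\bra r^2=1+r^2\le 2r^2$, hence $\int_{\Omega_{=r}}|v|^2\,dS\le 2r^2\|v\|_X^2$, and the same computation produces $\frac{2}{n-1}\|v\|_X^2$ in place of $\frac{1}{n-1}\|v\|_{\dot X}^2$.

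I do not expect any real obstacle here; the only point deserving a word of care is the application of the coarea formula on the exterior domain $\Omega$, namely noting that $\Omega_{=r}=\Omega\cap\{|x|=r\}$ and that the $dS$ in the definitions of $\dot X$ and $X$ coincides with the slice measure produced by the coarea formula, so that the whole argument is just monotonicity together with the elementary integral $\int_R^{\infty}r^{-n}\,dr$.
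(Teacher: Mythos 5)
Your proof is correct and complete. The paper itself does not include a proof of this lemma (it defers to \cite{CacciafestaDAnconaLuca14-a}), but the slicing argument you give — applying the coarea formula on $\Omega$ with $|\nabla|x||=1$, invoking the pointwise-in-$r$ bounds $\int_{\Omega_{=r}}|v|^2\,dS\le r^2\|v\|_{\dot X}^2$ (resp.\ $\le\bra r^2\|v\|_X^2$), and reducing to the elementary integral $\int_R^\infty r^{-n}\,dr=\frac{R^{1-n}}{n-1}$ together with $R/\bra R\le1$ and $\bra r^2\le 2r^2$ for $r\ge1$ — is exactly the standard route and matches the cited source.
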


\begin{lemma}\label{lem:normest2}
  For any $0<\delta<1$ and $v\in C^{\infty}(\mathbb{R}^{n})$,
  \begin{equation}\label{eq:stnorma1}
    \textstyle
    \int_{\Omega}
    \frac{|v|^{2}}{|x|^{2}\langle x\rangle^{1+\delta}}
    \le  2  \delta^{-1}\|v\|_{\dot X}^{2},
  \end{equation}
  \begin{equation}\label{eq:stnorma3}
    \textstyle
    \int_{\Omega_{\ge1}}
    \frac{|v|^{2}}{|x|^{3}\langle x\rangle^{\delta}}
    \le
    \int_{\Omega_{\ge1}}
    \frac{|v|^{2}}{|x|^{3+\delta}}
    \le  
    2 \delta^{-1}\|v\|_{X}^{2},
  \end{equation}
  \begin{equation}\label{eq:stnorma4}
    \textstyle
    \int_{\Omega}\frac{|v|^{2}}{\langle x\rangle^{1+\delta}}
    \le
    8 \delta^{-1}
    \|v\|_{Y}^{2}
    \le
    8 \delta^{-1}
    \|v\|_{\dot Y}^{2}.
  \end{equation}
\end{lemma}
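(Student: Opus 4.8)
\textbf{Proof strategy for Lemma \ref{lem:normest2}.}
The plan is to prove each of the three inequalities \eqref{eq:stnorma1}, \eqref{eq:stnorma3}, \eqref{eq:stnorma4} by reducing the left-hand integral to a dyadic sum over spherical shells, estimating each shell by the definition of the relevant Morrey-Campanato norm, and summing a convergent geometric series. Throughout I would use the coarea formula to write $\int_{\Omega} g(x)|v|^2\,dx = \int_0^{\infty} g(R)\bigl(\int_{\Omega_{=R}}|v|^2\,dS\bigr)\,dR$ whenever $g$ is radial, together with the monotonicity of $R\mapsto \int_{\Omega_{\le R}}|v|^2\,dx$.

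For \eqref{eq:stnorma1}: since $R^{-2}\int_{\Omega_{=R}}|v|^2\,dS \le \|v\|_{\dot X}^2$ for every $R>0$, the coarea formula gives
\begin{equation*}
  \int_{\Omega}\frac{|v|^2}{|x|^2\langle x\rangle^{1+\delta}}\,dx
  = \int_0^{\infty}\frac{1}{R^2\langle R\rangle^{1+\delta}}
    \Bigl(\int_{\Omega_{=R}}|v|^2\,dS\Bigr)\,dR
  \le \|v\|_{\dot X}^2 \int_0^{\infty}\frac{dR}{\langle R\rangle^{1+\delta}}.
\end{equation*}
It then suffices to check $\int_0^{\infty}\langle R\rangle^{-1-\delta}\,dR \le 2\delta^{-1}$, which follows by splitting at $R=1$: the part on $[0,1]$ contributes at most $1$, and on $[1,\infty)$ one has $\langle R\rangle^{-1-\delta}\le R^{-1-\delta}$, whose integral is $\delta^{-1}$; since $1\le\delta^{-1}$ for $0<\delta<1$, the sum is $\le 2\delta^{-1}$. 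For \eqref{eq:stnorma3} the first inequality is immediate from $|x|^{3+\delta}\ge |x|^3\langle x\rangle^{\delta}$ on $\{|x|\ge1\}$ (indeed $\langle x\rangle \le \sqrt2\,|x|$ there, but more simply $|x|\le\langle x\rangle$ is false in general — rather one uses $\langle x\rangle^{\delta}\le(\,\sqrt2|x|)^{\delta}\le 2^{1/2}|x|^\delta$, so in fact I would just write the bound $\int_{\Omega_{\ge1}}|v|^2|x|^{-3}\langle x\rangle^{-\delta}\le\int_{\Omega_{\ge1}}|v|^2|x|^{-3-\delta}$ using only $\langle x\rangle\ge|x|$). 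For the second inequality in \eqref{eq:stnorma3}, apply the coarea formula on $[1,\infty)$ with $g(R)=R^{-3-\delta}$ and use $R^{-2}\langle R\rangle^{-2}$... more directly: $\int_{\Omega_{=R}}|v|^2\,dS\le \langle R\rangle^2\|v\|_X^2\le 2R^2\|v\|_X^2$ for $R\ge1$, so the integral is bounded by $2\|v\|_X^2\int_1^{\infty}R^{-1-\delta}\,dR = 2\delta^{-1}\|v\|_X^2$.

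For \eqref{eq:stnorma4}, the middle inequality $\|v\|_Y\le\|v\|_{\dot Y}$ is just $\langle R\rangle\ge R$ (wait — one needs $\langle R\rangle \ge R$, true, but the definition of $Y$ has $\langle R\rangle$ in the denominator so $\frac{1}{\langle R\rangle}\le\frac1R$, giving $\|v\|_Y^2\le\|v\|_{\dot Y}^2$ after noting the sup over $R>0$). For the first inequality I would integrate by parts in $R$, or equivalently write $\langle x\rangle^{-1-\delta}$ as a superposition of indicators: using $\langle x\rangle^{-1-\delta}\simeq\int_{|x|}^{\infty}\langle R\rangle^{-2-\delta}\,dR$ type identities, or more cleanly, set $m(R)=\int_{\Omega_{\le R}}|v|^2\,dx\le\langle R\rangle\|v\|_Y^2$ and compute $\int_{\Omega}\langle x\rangle^{-1-\delta}|v|^2\,dx = \int_0^{\infty}\langle R\rangle^{-1-\delta}\,dm(R)$, then integrate by parts to get $(1+\delta)\int_0^{\infty} R\langle R\rangle^{-3-\delta}m(R)\,dR \le (1+\delta)\|v\|_Y^2\int_0^{\infty}R\langle R\rangle^{-2-\delta}\,dR$, and bound the last integral by $\int_0^{\infty}\langle R\rangle^{-1-\delta}\,dR\le 2\delta^{-1}$ together with $1+\delta\le 2$, yielding the constant $8\delta^{-1}$ with room to spare (the factor-of-$2$ slack absorbs the difference between $R\langle R\rangle^{-2-\delta}$ and $\langle R\rangle^{-1-\delta}$).

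I do not expect a genuine obstacle here: every step is elementary one-variable calculus once the coarea formula reduces to radial integrals, and the stated constants ($2\delta^{-1}$, $8\delta^{-1}$) are deliberately generous. The only point requiring a little care is the boundary-term bookkeeping in the integration by parts for \eqref{eq:stnorma4} — one must check the boundary terms at $R=0$ and $R=\infty$ vanish, which holds because $m(0)=0$ and $\langle R\rangle^{-1-\delta}m(R)\le\langle R\rangle^{-\delta}\|v\|_Y^2\to0$ as $R\to\infty$ — and keeping track of which norm ($\dot X$ vs $X$, $\dot Y$ vs $Y$) is used on each shell. As the excerpt itself notes, full details appear in \cite{CacciafestaDAnconaLuca14-a}, so a brief indication of the dyadic/coarea argument suffices.
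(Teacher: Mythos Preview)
Your argument is correct and is exactly the elementary coarea/integration-by-parts reduction the paper has in mind when it defers the details to \cite{CacciafestaDAnconaLuca14-a}; in fact your bookkeeping in \eqref{eq:stnorma4} yields $4\delta^{-1}$, comfortably inside the stated $8\delta^{-1}$. The only cosmetic wobble is the first inequality in \eqref{eq:stnorma3}: it follows at once from $|x|\le\langle x\rangle$, hence $|x|^{3+\delta}\le |x|^{3}\langle x\rangle^{\delta}$, so no $\sqrt2$ or reversal of direction is involved.
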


\begin{lemma}\label{lem:normest4}
  For any $R>0$, $0<\delta<1$ and 
  $v,w\in C^{\infty}(\mathbb{R}^{n})$,
  \begin{equation}\label{eq:stnorma10}
    \textstyle
    \int_{\Omega_{\le1}}\frac{|vw|}{|x|^{2-\delta}}
    +
    \int_{\Omega_{\ge1}}\frac{|vw|}{|x|^{2+\delta}}
    \le
    9\delta^{-1}
    \|v\|_{\dot X}\|w\|_{\dot Y}.
  \end{equation}
\end{lemma}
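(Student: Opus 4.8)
The plan is to estimate the two integrals separately; in each case I decouple $v$ and $w$ by Cauchy--Schwarz with a power split $|x|^{-(2\mp\delta)}=|x|^{-a}|x|^{-b}$ tuned so that the resulting one--dimensional radial integrals converge at both endpoints and are controlled by $\|v\|_{\dot X}^2$ (surface norm, attached to $v$) and $\|w\|_{\dot Y}^2$ (ball norm, attached to $w$) respectively.

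For the inner integral over $\Omega_{\le1}$ I would take $a=\tfrac32-\tfrac\delta2$, $b=\tfrac12-\tfrac\delta2$ (so $a+b=2-\delta$), so that
\[
  \int_{\Omega_{\le1}}\frac{|vw|}{|x|^{2-\delta}}
  \le\Big(\int_{\Omega_{\le1}}\frac{|v|^2}{|x|^{2a}}\Big)^{1/2}
     \Big(\int_{\Omega_{\le1}}\frac{|w|^2}{|x|^{2b}}\Big)^{1/2}.
\]
The first factor is handled by passing to polar coordinates and using $\int_{\Omega_{=r}}|v|^2\,dS\le r^2\|v\|_{\dot X}^2$: since $2-2a=\delta-1>-1$, the radial integral $\int_0^1 r^{2-2a}\,dr=\delta^{-1}$ converges and $\int_{\Omega_{\le1}}|x|^{-2a}|v|^2\le\delta^{-1}\|v\|_{\dot X}^2$. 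For the second factor I would integrate by parts in the radial variable against $g(r):=\int_{\Omega_{\le r}}|w|^2\,dx$, using $g(r)\le r\|w\|_{\dot Y}^2$ and $0<2b<1$: the boundary term at $r=0$ vanishes, the one at $r=1$ is $g(1)\le\|w\|_{\dot Y}^2$, and the remaining term is $2b\int_0^1 r^{-2b-1}g(r)\,dr\le\frac{2b}{1-2b}\|w\|_{\dot Y}^2=\frac{1-\delta}{\delta}\|w\|_{\dot Y}^2$, so $\int_{\Omega_{\le1}}|x|^{-2b}|w|^2\le\delta^{-1}\|w\|_{\dot Y}^2$. Multiplying, the inner integral is $\le\delta^{-1}\|v\|_{\dot X}\|w\|_{\dot Y}$.

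For the outer integral over $\Omega_{\ge1}$ the same scheme applies with $a=\tfrac32+\tfrac\delta2$, $b=\tfrac12+\tfrac\delta2$ (so $a+b=2+\delta$): now $2-2a=-1-\delta<-1$ makes $\int_1^\infty r^{2-2a}\,dr=\delta^{-1}$ converge, giving $\int_{\Omega_{\ge1}}|x|^{-2a}|v|^2\le\delta^{-1}\|v\|_{\dot X}^2$, while $2b>1$ makes the boundary term of the integration by parts vanish at infinity (the one at $r=1$ is $-g(1)\le0$), leaving $2b\int_1^\infty r^{-2b-1}g(r)\,dr\le\frac{2b}{2b-1}\|w\|_{\dot Y}^2=\frac{1+\delta}{\delta}\|w\|_{\dot Y}^2$. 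Hence the outer integral is $\le\frac{\sqrt{1+\delta}}{\delta}\|v\|_{\dot X}\|w\|_{\dot Y}\le\frac{\sqrt2}{\delta}\|v\|_{\dot X}\|w\|_{\dot Y}$ since $\delta<1$. Adding the two bounds yields the constant $(1+\sqrt2)\delta^{-1}<9\delta^{-1}$, as claimed.

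There is no genuine obstacle: the argument reduces to Cauchy--Schwarz plus one--dimensional calculus, and the only point needing care is the choice of exponents forcing $a<\tfrac32<a'$ and $b<\tfrac12<b'$ for the inner and outer splits, together with the bookkeeping of the constant. All integrals may a priori equal $+\infty$, so one works with nonnegative integrands and Tonelli and reads the inequality as vacuous when the right-hand side is infinite; the parameter $R>0$ in the statement plays no role. (As noted in the text, full details of this kind of estimate are in \cite{CacciafestaDAnconaLuca14-a}.)
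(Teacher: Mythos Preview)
Your proof is correct and follows exactly the elementary approach the paper has in mind: the paper gives no proof here, simply noting that ``proofs are straightforward, and full details can be found in \cite{CacciafestaDAnconaLuca14-a}'', and your Cauchy--Schwarz split with radial integration against the definitions of $\dot X$ and $\dot Y$ is precisely that straightforward argument. Your constant $(1+\sqrt2)\delta^{-1}$ is in fact sharper than the stated $9\delta^{-1}$, and your remarks about the irrelevant parameter $R$ and the vacuous case are accurate.
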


In the following Lemma we prove some magnetic
Hardy type inequalities, which require $n\ge3$,
expressed in terms of the nonhomogeneous $X,Y$ norms
(compare \eqref{eq:hardymag1} with Theorem A.1 in 
\cite{FanelliVega09-a}):
\begin{lemma}\label{lem:normest5}
  Let $n\ge3$ and
  assume $b(x)=(b_{1}(x),\dots,b_{n}(x))$ is continuous
  up to the boundary of $\Omega$ with values in $\mathbb{R}^{n}$.
  For any $0<\delta<1, y \in \Omega$ and
  $v\in C^{\infty}_{c}(\Omega)$, we have:
  \begin{equation}\label{eq:hardymag1}
    \textstyle
    \||x-y|^{-1}v\|_{L^{2}(\Omega)}\le
    \frac{2}{n-2}\|\nabla^{b}v\|_{L^{2}(\Omega)},
  \end{equation}
  \begin{equation}\label{eq:stnorma12}
    \||x|^{-1}v\|^{2}_{Y}\le
    6\|\nabla^{b}v\|^{2}_{Y}+ 3\|v\|^{2}_{X},
  \end{equation}
  \begin{equation}\label{eq:stnorma13}
    \textstyle
    \int_{\Omega_{\le1}} \frac{|\nabla^{b}v| |v|}{|x|}dx+
    \int_{\Omega_{\ge1}}\frac{|\nabla^{b}v| |v|}{|x|^{2+\delta}}dx
    \le
    9 \delta^{-1}
    (\|\nabla^{b}v\|^{2}_{Y}+\|v\|^{2}_{X}),
  \end{equation}
  \begin{equation}\label{eq:stnorma7}
    \textstyle
    \|v\|_{X}\le
    4\sup_{R>1}
    R^{-2} \int_{\Omega_{=R}}|v|^{2}dS
    +
    13\|\nabla^{b} v\|^{2}_{Y}.
  \end{equation}
\end{lemma}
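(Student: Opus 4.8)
The plan is to prove the four inequalities in sequence, deriving the later ones from the earlier ones and from the scalar $X,Y$ estimates of Lemmas \ref{lem:normest1}--\ref{lem:normest4}. The common engine throughout is the \emph{diamagnetic inequality} $|\nabla|v|| \le |\nabla^{b}v|$ pointwise a.e., which lets one transfer classical (non-magnetic) Hardy and Morrey-Campanato bounds to the magnetic gradient; the only place where the magnetic structure is genuinely used is in obtaining the sharp constant in \eqref{eq:hardymag1}, and even there the diamagnetic inequality plus the classical Hardy inequality on $\R^{n}$ suffices.

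First I would prove \eqref{eq:hardymag1}. Since $v\in C^{\infty}_{c}(\Omega)$, extend $v$ by zero to all of $\R^{n}$; then $|v|\in H^{1}(\R^{n})$ with $|\nabla|v||\le|\nabla^{b}v|$ a.e., and the classical Hardy inequality centered at $y$, namely $\||x-y|^{-1}w\|_{L^{2}(\R^{n})}\le\frac{2}{n-2}\|\nabla w\|_{L^{2}(\R^{n})}$ for $w\in H^{1}(\R^{n})$ (valid for $n\ge3$), applied to $w=|v|$ gives the claim. Next, for \eqref{eq:stnorma12}, fix $R>0$ and estimate $R^{-1}\int_{\Omega_{\le R}}|x|^{-2}|v|^{2}\,dx$. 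I would split this into the region $|x|\le R/2$ (say) and an annular region, or more directly integrate by parts in the radial direction: writing $\int_{\Omega_{\le R}}|x|^{-2}|v|^{2}$ and using $\dive(x/|x|^{2})=(n-2)/|x|^{2}$ together with the boundary term at $|x|=R$, one bounds the left side by a constant times $\int_{\Omega_{\le R}}|x|^{-1}|\nabla^{b}v||v|\,dx$ plus $R^{-1}\int_{\Omega_{=R}}|v|^{2}dS$; the first term is then absorbed by Cauchy--Schwarz into $\|\,|x|^{-1}v\|_{Y}$ and $\|\nabla^{b}v\|_{Y}$, and after dividing by $R$ and taking the supremum one obtains \eqref{eq:stnorma12} with the stated constants $6$ and $3$ (a small numerical optimization of the absorption step fixes the coefficients). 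For \eqref{eq:stnorma13}, split the integral at $|x|=1$; in $\Omega_{\le1}$ use Cauchy--Schwarz to dominate $\int|x|^{-1}|\nabla^{b}v||v|$ by $\|\nabla^{b}v\|_{\dot Y}\,\|\,|x|^{-1}v\|_{\dot Y}$ and then invoke \eqref{eq:stnorma8} and the localization \eqref{eq:stnorma12} (or rather its proof restricted to $R\le 1$, where the $\dot X,\dot Y$ and $X,Y$ norms agree); in $\Omega_{\ge1}$ apply \eqref{eq:stnorma10} with the roles $v\leadsto\nabla^{b}v$, $w\leadsto v$ after inserting the weight $|x|^{-\delta}\le 1$, and control the resulting $\dot X$ norm of $v$ by $\|v\|_{X}$ on $\{|x|\ge1\}$. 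Finally \eqref{eq:stnorma7} follows by combining \eqref{eq:stnorma12} and the definition of $\|v\|_{X}$: bound $\langle R\rangle^{-2}\int_{\Omega_{=R}}|v|^{2}dS$ for $R\le1$ trivially by the $L^{2}(\Omega_{\le1})$-type quantity (controlled via \eqref{eq:hardymag1} with, e.g., $y$ fixed, giving $\lesssim\|\nabla^{b}v\|_{L^{2}}^{2}\lesssim\|\nabla^{b}v\|_{Y}^{2}$ by \eqref{eq:stnorma4}), and for $R>1$ use $\langle R\rangle^{-2}\le R^{-2}$.

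The main obstacle is the radial integration-by-parts argument for \eqref{eq:stnorma12}: one must be careful that $\partial\Omega$ contributes no harmful boundary term. Here the hypothesis $v\in C^{\infty}_{c}(\Omega)$ is essential — $v$ vanishes near $\partial\Omega$, so integrating $\dive$ of a vector field against $|v|^{2}$ over $\Omega_{\le R}$ produces only the spherical boundary term at $|x|=R$ and no contribution from the obstacle. The second delicate point is bookkeeping the constants so that they come out as the clean values $6,3,9\delta^{-1},4,13$ claimed in the statement; this is purely a matter of optimizing each Cauchy--Schwarz absorption (choosing the splitting parameter $\theta$ in $ab\le \theta a^{2}+\theta^{-1}b^{2}/4$), and I would defer the arithmetic to the reference \cite{CacciafestaDAnconaLuca14-a} as the paper already does for Lemmas \ref{lem:normest1}--\ref{lem:normest4}.
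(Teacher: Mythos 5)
For \eqref{eq:hardymag1} your route via the diamagnetic inequality $|\nabla|v||\le|\nabla^{b}v|$ followed by the classical Hardy inequality on $\R^{n}$ centered at $y$ is valid and gives the same sharp constant $\frac{2}{n-2}$, but it is \emph{not} the paper's route. The paper integrates the pointwise identity $\nabla\cdot\{\frac{x-y}{|x-y|^{2}}|v|^{2}\}=2\Re[\,\overline{v}\,\nabla^{b}v\cdot\frac{x-y}{|x-y|^{2}}]+(n-2)\frac{|v|^{2}}{|x-y|^{2}}$ directly on $\Omega$, observes the boundary term vanishes for $v\in C^{\infty}_{c}(\Omega)$, and applies Cauchy--Schwarz once. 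The paper's identity exploits that $\Re(\overline{v}\nabla^{b}v)=\Re(\overline{v}\nabla v)=\frac12\nabla|v|^{2}$ (the magnetic piece $ib|v|^{2}$ drops out after taking real parts), which is morally the same cancellation that makes the diamagnetic inequality true; your approach packages it via the diamagnetic inequality while the paper leaves it inline. Your version has the small advantage of being modular, the paper's of being self-contained without invoking Sobolev-space manipulations on $|v|$. For \eqref{eq:stnorma12}--\eqref{eq:stnorma7} the paper simply defers to the cited reference, exactly as you ultimately propose to do, so the sketches are extra content rather than a point of comparison.

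That said, your sketch of \eqref{eq:stnorma7} contains a genuine error that you should be aware of. You propose to bound $\langle R\rangle^{-2}\int_{\Omega_{=R}}|v|^{2}\,dS$ for $R\le1$ ``trivially by the $L^{2}(\Omega_{\le1})$-type quantity'', but a surface integral at radius $R$ cannot be estimated by a volume integral over $\Omega_{\le1}$ without a trace-type or divergence-theorem argument (for instance integrating $\partial_{r}\{r^{1-n}\int_{\Omega_{=r}}|v|^{2}dS\}$ between $R$ and some $\tilde R>1$, which introduces both the surface term at $\tilde R$ \emph{and} an integral of $|v||\nabla^{b}v|$ over an annulus — this is where both terms on the right of \eqref{eq:stnorma7} come from). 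Moreover the chain $\|\nabla^{b}v\|_{L^{2}}^{2}\lesssim\|\nabla^{b}v\|_{Y}^{2}$ that you invoke is false: since $\|w\|_{Y}^{2}=\sup_{R>0}\langle R\rangle^{-1}\int_{\Omega_{\le R}}|w|^{2}$, one always has $\|w\|_{Y}\le\|w\|_{L^{2}}$, and the ratio $\|w\|_{L^{2}}/\|w\|_{Y}$ can be made arbitrarily large by pushing the support of $w$ out to large radii. Citing \eqref{eq:stnorma4} does not rescue this; that estimate bounds a weighted $L^{2}$ norm \emph{by} the $Y$-norm, i.e.\ it goes the other way. A correct argument for \eqref{eq:stnorma7} must instead use \eqref{eq:stnorma12} (or the integration-by-parts identity underlying it) to control the annulus contribution, which is presumably why the constant $13$ appears: it inherits the $6$ from \eqref{eq:stnorma12} after absorption.
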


\begin{proof}
  We give the complete proof of \eqref{eq:hardymag1};
  the remaining inequalities are proved in
  \cite{CacciafestaDAnconaLuca14-a}.
  Integrating on $\Omega$ the identity
  \begin{equation*}
    \textstyle
    \nabla \cdot 
    \left\{\frac{x-y}{|x-y|^2}|v|^2\right\} = 
    \Re\left[ 2 c(x) \overline{\nabla^b f(x)}\frac{x-y}
    {|x-y|^2}\right] + (n-2)\frac{|c(x)|^2}{|x-y|^2}
  \end{equation*}
  and noticing that boundary term vanishes, we get
  \begin{equation*}
    \textstyle
    \frac{n-2}{2}
    \int_{\Omega} \frac{\abs{f(x)}^2}{\abs{x-y}^2} \, dx 
    \leq 
    \textstyle
    \Re \int_{\Omega} \frac{(x-y) f(x)}{\abs{x-y}^2} 
    \overline{\nabla^b f(x)}\,dx  
    \leq 
    \textstyle\left(\int_{\Omega} 
    \frac{\abs{f(x)}^2}{\abs{x-y}^2}\right)^{\frac12} 
    \left(\int_{\Omega} \abs{\nabla^b f(x)}^2\,dx\right)^{\frac12}. \qedhere
  \end{equation*}
\end{proof}

By a density argument, it is clear that the previous estimates 
are valid
not only for smooth functions but also for functions 
in $D(L)=H^{1}_{0}(\Omega)\cap H^{2}(\Omega)$.

We conclude this section with some additional properties
of the magnetic norms.

\begin{lemma}[]\label{lem:equivmag}
  Let $n\ge3$.
  If $b\in L^{n,\infty}(\Omega)$, the following equivalence holds:
  \begin{equation}\label{eq:equivmag}
    \|\nabla^{b}v\|_{L^{2}(\Omega)}
    \simeq
    \|\nabla v\|_{L^{2}(\Omega)}.
  \end{equation}
  Moreover, for $s>0$ we have
  \begin{equation}\label{eq:equivmagw}
    \|\bra{x}^{-s}\nabla^{b}v\|_{L^{2}(\Omega)}
    +
    \|\bra{x}^{-s-1}v\|_{L^{2}(\Omega)}
    \simeq
    \|\bra{x}^{-s}\nabla v\|_{L^{2}(\Omega)}
    +
    \|\bra{x}^{-s-1}v\|_{L^{2}(\Omega)}.
  \end{equation}
\end{lemma}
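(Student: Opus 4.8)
The plan is to reduce both equivalences to a single bound on the zeroth-order term $bv$, using the diamagnetic inequality $|\nabla|v||\le|\nabla^{b}v|$ (valid a.e.), the sharp Sobolev embedding $\|w\|_{L^{2^{*},2}}\lesssim\|\nabla w\|_{L^{2}}$ with $2^{*}=\frac{2n}{n-2}$ (which is where $n\ge3$ enters), and H\"older's inequality in Lorentz spaces $\|bv\|_{L^{2}}\le\|b\|_{L^{n,\infty}}\|v\|_{L^{2^{*},2}}$. By density it suffices to argue for $v\in C^{\infty}_{c}(\Omega)$, extended by zero to $\mathbb{R}^{n}$.

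First I would treat \eqref{eq:equivmag}. The diamagnetic inequality gives $\|v\|_{L^{2^{*},2}}=\||v|\|_{L^{2^{*},2}}\lesssim\|\nabla|v|\|_{L^{2}}\le\|\nabla^{b}v\|_{L^{2}}$, and the ordinary Sobolev inequality gives $\|v\|_{L^{2^{*},2}}\lesssim\|\nabla v\|_{L^{2}}$; hence, by H\"older in Lorentz spaces,
\[
  \|bv\|_{L^{2}}\le\|b\|_{L^{n,\infty}}\,\|v\|_{L^{2^{*},2}}\lesssim\|b\|_{L^{n,\infty}}\,\min\bigl\{\|\nabla v\|_{L^{2}},\ \|\nabla^{b}v\|_{L^{2}}\bigr\}.
\]
Then \eqref{eq:equivmag} follows by writing $\nabla^{b}v=\nabla v+ibv$ and applying the triangle inequality in both directions.

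For \eqref{eq:equivmagw} I would run the same argument with $w:=\bra{x}^{-s}v$ in place of $v$. Using $\nabla w=\bra{x}^{-s}\nabla v-s\bra{x}^{-s-2}xv$ together with $|\bra{x}^{-s-2}x|\le\bra{x}^{-s-1}$ on one hand, and the pointwise estimate $|\nabla|w||\le\bra{x}^{-s}|\nabla|v||+s\bra{x}^{-s-1}|v|\le\bra{x}^{-s}|\nabla^{b}v|+s\bra{x}^{-s-1}|v|$ on the other, one gets $\|\nabla w\|_{L^{2}}$ bounded both by $\|\bra{x}^{-s}\nabla v\|_{L^{2}}+s\|\bra{x}^{-s-1}v\|_{L^{2}}$ and by $\|\bra{x}^{-s}\nabla^{b}v\|_{L^{2}}+s\|\bra{x}^{-s-1}v\|_{L^{2}}$. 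Since $\bra{x}^{-s}bv=bw$, H\"older and Sobolev give $\|\bra{x}^{-s}bv\|_{L^{2}}\lesssim\|b\|_{L^{n,\infty}}\|\nabla w\|_{L^{2}}$, so that $\|\bra{x}^{-s}bv\|_{L^{2}}$ is controlled by the right-hand side, resp.\ the left-hand side, of \eqref{eq:equivmagw}. Writing $\bra{x}^{-s}\nabla^{b}v=\bra{x}^{-s}\nabla v+i\bra{x}^{-s}bv$, applying the triangle inequality both ways and adding the common term $\|\bra{x}^{-s-1}v\|_{L^{2}}$ to both sides then gives \eqref{eq:equivmagw}.

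The main point to be careful about is that $\|b\|_{L^{n,\infty}}$ is not assumed small here, so $\|bv\|_{L^{2}}$ cannot be absorbed into $\|\nabla v\|_{L^{2}}$ by a perturbative argument — which is precisely why the diamagnetic inequality is needed, to bound $\|bv\|_{L^{2}}$ \emph{directly} by $\|\nabla^{b}v\|_{L^{2}}$ (with a possibly large but harmless constant). The remaining steps — the density extension from $C^{\infty}_{c}(\Omega)$ to $D(L)=H^{1}_{0}(\Omega)\cap H^{2}(\Omega)$, and verifying $\bra{x}^{-s}|v|\in H^{1}$ with the product and chain rules used above — are routine.
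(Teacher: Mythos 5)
Your proof is correct and follows essentially the same route as the paper's: both hinge on the diamagnetic inequality, Sobolev embedding into $L^{\frac{2n}{n-2},2}$, and H\"older in Lorentz spaces to control $\|bv\|_{L^{2}}$ by $\min\bigl\{\|\nabla v\|_{L^{2}},\,\|\nabla^{b}v\|_{L^{2}}\bigr\}$, and obtain the weighted version by commuting $\bra{x}^{-s}$ through $\nabla$ and $\nabla^{b}$. The only stylistic difference is that the paper proves \eqref{eq:equivmagw} by directly invoking \eqref{eq:equivmag} for $w=\bra{x}^{-s}v$ after the commutation step, whereas you re-run the $\|bw\|_{L^{2}}$ estimate from scratch (and the quantity you bound via the diamagnetic inequality is really $\|\nabla|w|\|_{L^{2}}$, not $\|\nabla w\|_{L^{2}}$, though this is harmless since Sobolev needs only the former).
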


\begin{proof}
  By H\"{o}lder inequality and Sobolev embedding 
  in Lorentz spaces, we can write
  \begin{equation*}
    \|\nabla^{b}v\|_{L^{2}}
    \le
    \|\nabla v\|_{L^{2}}+\|bv\|_{L^{2}}
    \le
    \|\nabla v\|_{L^{2}}+\|b\|_{L^{n,\infty}}
    \|v\|_{L^{\frac{2n}{n-2},2}}
    \lesssim
    (1+\|b\|_{L^{n,\infty}})\|\nabla v\|_{L^{2}}.
  \end{equation*}
  Conversely, writing $\nabla=\nabla^{b}-ib$, we have
  \begin{equation*}
    \|\nabla v\|_{L^{2}}
    \le
    \|\nabla^{b}v\|_{L^{2}}+\|bv\|_{L^{2}}
    \lesssim
    \|\nabla^{b}v\|_{L^{2}}+\|b\|_{L^{n,\infty}}
    \|v\|_{L^{\frac{2n}{n-2},2}}.
  \end{equation*}
  Recall now the pointwise diamagnetic inequality
  \begin{equation}\label{eq:diam}
    |\nabla |v||\le
    |\nabla^{b}v|
  \end{equation}
  which is true for $b\in L^{2}_{loc}$.
  Thus, again by Sobolev-Lorentz embedding,
  \begin{equation*}
    \|v\|_{L^{\frac{2n}{n-2},2}}
    \lesssim
    \|\nabla|v|\|_{L^{2}}
    \le
    \|\nabla^{b}v\|_{L^{2}}
  \end{equation*}
  and we obtain \eqref{eq:equivmag}.
  Next we can write
  \begin{equation*}
    \|\bra{x}^{-s}\nabla v\|_{L^{2}}
    +
    \|\bra{x}^{-s-1} v\|_{L^{2}}
    \simeq
    \|\nabla(\bra{x}^{-s}v)\|_{L^{2}}
    +
    \|\bra{x}^{-s-1} v\|_{L^{2}}
  \end{equation*}
  and
  \begin{equation*}
    \|\bra{x}^{-s}\nabla^{b} v\|_{L^{2}}
    +
    \|\bra{x}^{-s-1} v\|_{L^{2}}
    \simeq
    \|\nabla^{b}(\bra{x}^{-s}v)\|_{L^{2}}
    +
    \|\bra{x}^{-s-1} v\|_{L^{2}}
  \end{equation*}
  which, together with \eqref{eq:equivmag}, imply
  \eqref{eq:equivmagw}.
\end{proof}

\begin{lemma}[]\label{lem:distsob}
  Let $n\ge3$ and consider the operator
  $L=A^{b}-c$ with Dirichlet b.c.~on $\Omega$, under assumptions
  \eqref{eq:assopL}, \eqref{eq:assader},
  \eqref{eq:assdb},
  \eqref{eq:assac} and
  \eqref{eq:assbdry}.
  If the constant
  $C_{-}$ is sufficiently small, the operator $L$ is
  selfadjoint and nonpositive. If in addition
  $b\in L^{n,\infty}(\Omega)$ then 
  for all $0\le s\le1$ we have the equivalence
  \begin{equation}\label{eq:homsob}
    \|(-L)^{\frac s2}v\|_{L^{2}(\Omega)}
    \simeq
    \|v\|_{\dot H^{s}(\Omega)}.
  \end{equation}
\end{lemma}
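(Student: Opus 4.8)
The plan is to establish the equivalence \eqref{eq:homsob} by interpolation, proving it first at the endpoints $s=0$ and $s=1$ and then filling in the intermediate range. The case $s=0$ is trivial. For $s=1$ we must show $\|(-L)^{1/2}v\|_{L^2}^2\simeq\|v\|_{\dot H^1}^2=\|\nabla v\|_{L^2}^2$; since $(-L)^{1/2}$ is defined via the spectral theorem for the nonpositive selfadjoint operator $L$, the square of its $L^2$-norm equals the quadratic form $-\langle Lv,v\rangle$, which by the definition of $L$ in the form sense is
\begin{equation*}
  \int_\Omega a(x)\nabla^b v\cdot\overline{\nabla^b v}\,dx
  + \int_\Omega c(x)|v|^2\,dx.
\end{equation*}
So the task reduces to showing this quantity is comparable to $\|\nabla v\|_{L^2}^2$.

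First I would handle the principal term: by ellipticity \eqref{eq:assopL} we have $\nu|\nabla^b v|^2\le a\nabla^b v\cdot\overline{\nabla^b v}\le N|\nabla^b v|^2$ pointwise, and then Lemma \ref{lem:equivmag} (using $b\in L^{n,\infty}$) gives $\|\nabla^b v\|_{L^2}\simeq\|\nabla v\|_{L^2}$. Next I would control the potential term: the upper bound $c(x)\le C_+^2|x|^{-2}$ from \eqref{eq:assc} together with the magnetic Hardy inequality \eqref{eq:hardymag1} (with $y=0$) gives
\begin{equation*}
  \int_\Omega c_+|v|^2\,dx
  \le C_+^2\int_\Omega\frac{|v|^2}{|x|^2}\,dx
  \le \frac{4C_+^2}{(n-2)^2}\|\nabla^b v\|_{L^2}^2
  \lesssim \|\nabla v\|_{L^2}^2,
\end{equation*}
which yields the upper bound for the form. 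For the lower bound, the negative part satisfies $c_-(x)\le C_-^2|x|^{-2}$, so the same Hardy inequality gives $\int c_-|v|^2\le \frac{4C_-^2}{(n-2)^2}\|\nabla^b v\|_{L^2}^2$; choosing $C_-$ small enough (and $N/\nu-1$ small, as assumed) this is absorbed, leaving $-\langle Lv,v\rangle\ge c_0\|\nabla^b v\|_{L^2}^2\gtrsim\|\nabla v\|_{L^2}^2$. This is exactly where the smallness hypothesis on $C_-$ enters and is the technical crux of the argument.

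Having the endpoints, I would conclude by complex interpolation. On the one hand, $(-L)^{s/2}$ interpolates between $(-L)^0=\mathrm{Id}$ on $L^2$ and $(-L)^{1/2}$ mapping $D((-L)^{1/2})\to L^2$, and by the spectral theorem the domains $D((-L)^{s/2})$ form the complex interpolation scale $[L^2,D((-L)^{1/2})]_s$ with $\|(-L)^{s/2}v\|_{L^2}$ as the corresponding norm. On the other hand, the homogeneous Sobolev norm $\|v\|_{\dot H^s}$ is by definition the interpolation norm between $\|v\|_{L^2}$ and $\|v\|_{\dot H^1}=\|\nabla v\|_{L^2}$ (for functions vanishing on $\partial\Omega$, using $H^1_0(\Omega)$). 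Since we have shown the two norms agree at $s=0$ and $s=1$, they agree (up to constants) for all $0\le s\le1$; bilinear/quadratic interpolation or the standard $K$-method transfers the two-sided bound. The main obstacle, as noted, is not the interpolation step itself but ensuring the lower bound at $s=1$ survives, i.e.\ tracking the smallness of $C_-$ (and of $N/\nu-1$) so that the repulsive positive part of $c$ and the ellipticity constants cooperate to keep $-L$ bounded below by a positive multiple of $-\Delta$; once that coercivity is in hand everything else is routine.
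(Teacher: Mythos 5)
Your proof is correct and follows essentially the same route as the paper: trivial at $s=0$; at $s=1$ reduce to the quadratic form, use ellipticity together with Lemma \ref{lem:equivmag} for the principal part, Hardy's inequality to absorb $\int c|v|^2$ with $C_-$ small, and then complex interpolation via the formula $[D(H^{\sigma_0}),D(H^{\sigma_1})]_\theta=D(H^{\sigma_\theta})$. (One minor remark: for the coercivity at $s=1$ only the smallness of $C_-$ relative to $\nu$ is actually needed; $N/\nu-1$ plays no role in this particular step.)
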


\begin{proof}
  Selfadjointness and positivity
  are standard, and actually hold under 
  less restrictive assumptions on the coefficients
  (see Proposition \ref{pro:powereq} below for a more general
  result). Next, \eqref{eq:homsob} is trivial for $s=0$,
  while for $s=1$ we have
  \begin{equation*}
    \textstyle
    \|(-L)^{\frac12}v\|_{L^{2}}^{2}=
    (-Lv,v)_{L^{2}(\Omega)}=
    a(\nabla^{b}v,\nabla^{b}v)+\int_{\Omega} c|v|^{2}dx
  \end{equation*}
  which implies, using \eqref{eq:equivmag},
  \begin{equation*}
    \textstyle
    \|(-L)^{\frac12}v\|_{L^{2}}^{2}
    \simeq
    \|\nabla^{b}v\|^{2}_{L^{2}}
    +\int_{\Omega} c|v|^{2}dx
    \simeq
    \|\nabla v\|^{2}_{L^{2}}
    +\int_{\Omega} c|v|^{2}dx.
  \end{equation*}
  By Hardy's inequality we obtain the claim for $s=1$, 
  provided $C_{-}$ is sufficiently small, and by
  complex interpolation we conclude the proof
  (recalling the complex interpolation formula
  $[D(H^{\sigma_{0}}),D(H^{\sigma_{1}})]_{\theta}=
  D(H^{\sigma_{\theta}})$ with
  $\sigma_{\theta}=(1-\theta)\sigma_{0}+\theta \sigma_{1}$
  which is valid for any selfadjoint operator $H$).
\end{proof}

\section{Virial identity}\label{sec:virial} 

In \cite{CacciafestaDAnconaLuca14-a}
a virial identity for the Helmholtz equation
with variable coefficients was obtained by adapting
the Morawetz multiplier method. We show here how to
modify the technique in order to prove the analogous
virial identity for the nonlinear
Schr\"{o}dinger equation \eqref{eq:main}.
To make sense of the formal manipulations, one needs some
additional smoothness (e.g., $u\in H^{2}(\Omega)$ is enough),
which can be obtained by an approximation procedure
similar to the proof of the conservation of energy in
Theorem \ref{the:global} below; we omit the details.
The identity is the following:

\begin{proposition}[Virial Identity]\label{pro:virialidentiy}
  Assume $a,b,c,f(z)$ are as in Theorem \ref{the:1}, let $u$
  be a solution of \eqref{eq:main} and $\psi\colon \R^n \to \R$
  an arbitrary weight. 
  Then the following identity holds:
  \begin{equation}\label{eq:virial}
    \begin{split}
      \partial_t [\Im(a(\nabla\psi,\nabla^b u)u)]=& 
      -\textstyle \frac12 A^{2}\psi|u|^{2}
    + \Re(\alpha_{\ell m}\ 
      \partial^{b}_{m}u\ \overline{\partial^{b}_{\ell}u}) \\
      & - a(\nabla \psi,\nabla c)|u|^{2} \\
      & +2\Im(a_{jk}\partial^{b}_{k}u
       (\partial_{j}b_{\ell}-\partial_{\ell}b_{j})
       a_{\ell m}\partial_{m}\psi\ \overline{u}) \\
      & + A\psi [f(u)\bar u - 2 F(u)] \\
      & + \partial_j \{ -\Re Q_j + 2 F(u) a_{jk}\partial_k \psi  + \Im[u_t \bar u a_{jk}\partial_k \psi]\} ,
    \end{split}
  \end{equation}
  where
  \begin{gather}%
    \alpha_{\ell m}=2a_{jm}\partial_{j}(a_{\ell k}
       \partial_{k}\psi)
            -a_{jk}\partial_{k}\psi \partial_{j}a_{\ell m}, \label{eq:alpha}\\
    Q_{j}=
    a_{jk}\partial^{b}_{k}u \cdot 
      [A^{b},\psi]\overline{u}
      -\frac12 a_{jk}(\partial_{k}A \psi)|u|^{2}
      -a_{jk}\partial_{k}\psi 
      \left[c|u|^{2}+a(\nabla^{b}u,\nabla^{b}u)\right]. \label{eq:Qj}
  \end{gather}
\end{proposition}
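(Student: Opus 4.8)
The plan is to establish \eqref{eq:virial} by a direct pointwise (``local'') Morawetz multiplier computation, reusing the spatial integrations by parts already carried out for the Helmholtz equation in \cite{CacciafestaDAnconaLuca14-a} and adding only the bookkeeping produced by the time derivative. Set $D:=\Im(a(\nabla\psi,\nabla^{b}u)u)=\Im(a_{jk}\partial_{k}\psi\,\overline{\partial^{b}_{j}u}\,u)$ and rewrite \eqref{eq:main} as $u_{t}=-i(A^{b}u-cu)+if(u)$. Since $a,b,\psi$ are time independent,
\[
  \partial_{t}D=\Im\!\big(a_{jk}\partial_{k}\psi\,\overline{\partial^{b}_{j}u_{t}}\,u\big)+\Im\!\big(a_{jk}\partial_{k}\psi\,\overline{\partial^{b}_{j}u}\,u_{t}\big)=:T_{1}+T_{2}.
\]
In $T_{1}$ one integrates the factor $\partial^{b}_{j}$ by parts; the magnetic terms recombine so that only a divergence, a factor $A\psi$ and a second copy of $T_{2}$ survive:
\[
  T_{1}=\partial_{j}\Im\!\big(a_{jk}\partial_{k}\psi\,\overline{u_{t}}\,u\big)-(A\psi)\,\Im(\overline{u_{t}}u)+T_{2},
\]
hence $\partial_{t}D=\partial_{j}\Im(a_{jk}\partial_{k}\psi\,\overline{u_{t}}\,u)-(A\psi)\Im(\overline{u_{t}}u)+2T_{2}$. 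This reduces everything to computing $2T_{2}$ and the lower order term $-(A\psi)\Im(\overline{u_{t}}u)$, into both of which the equation is inserted once more.

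The nonlinear and potential pieces are elementary. Gauge invariance \eqref{eq:gauinv} gives $\Im(\bar u f(u))=0$ and $f(u)\bar u=|u|f(|u|)\in\R$, while differentiating \eqref{eq:defnF} yields the key identity $\Re(\overline{\partial^{b}_{j}u}\,f(u))=\partial_{j}F(u)$; substituting in $2T_{2}$ and integrating by parts once produces the term $A\psi\,[f(u)\bar u-2F(u)]$ (the $A\psi f(u)\bar u$ half coming from $-(A\psi)\Im(\overline{u_{t}}u)$) together with the perfect divergence $\partial_{j}[2F(u)\,a_{jk}\partial_{k}\psi]$. The identity $\Re(\overline{\partial^{b}_{j}u}\,u)=\tfrac12\partial_{j}|u|^{2}$ turns the $c$-terms into $-a(\nabla\psi,\nabla c)|u|^{2}$ after one integration by parts, the spurious multiples of $c\,(A\psi)|u|^{2}$ cancelling between $2T_{2}$ and $-(A\psi)\Im(\overline{u_{t}}u)$. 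Finally $-(A\psi)\Im(\overline{u_{t}}u)$ also feeds in, via $\Re(A^{b}u\,\bar u)=\Re\partial_{i}(\bar u\,a_{im}\partial^{b}_{m}u)-a(\nabla^{b}u,\nabla^{b}u)$ and $\Re(\bar u\,a_{im}\partial^{b}_{m}u)=\tfrac12 a_{im}\partial_{m}|u|^{2}$, a copy of $-\tfrac12 A^{2}\psi\,|u|^{2}$, a term $(A\psi)\,a(\nabla^{b}u,\nabla^{b}u)$, and further divergences.

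The core of the argument is the principal part $-2\Re(a_{jk}\partial_{k}\psi\,\overline{\partial^{b}_{j}u}\,A^{b}u)$ in $2T_{2}$: writing $A^{b}u=\partial^{b}_{i}(a_{im}\partial^{b}_{m}u)$ and integrating by parts twice in $i$, one uses the symmetry of $a$, the Leibniz rule, and the non-commutativity $[\partial^{b}_{i},\partial^{b}_{j}]=i(\partial_{i}b_{j}-\partial_{j}b_{i})$ to collect: (i) the quadratic form in $\nabla^{b}u$, which after absorbing the leftover $(A\psi)\,a(\nabla^{b}u,\nabla^{b}u)$ is exactly $\Re(\alpha_{\ell m}\,\partial^{b}_{m}u\,\overline{\partial^{b}_{\ell}u})$ with $\alpha_{\ell m}$ as in \eqref{eq:alpha}; (ii) the part in which every derivative lands on $|u|^{2}$, completing the $-\tfrac12 A^{2}\psi\,|u|^{2}$ term; (iii) the curl term $2\Im\big(a_{jk}\partial^{b}_{k}u\,(\partial_{j}b_{\ell}-\partial_{\ell}b_{j})a_{\ell m}\partial_{m}\psi\,\bar u\big)$ produced by the magnetic commutator; and (iv) several perfect divergences. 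One then gathers all the divergences accumulated so far into $\partial_{j}\{-\Re Q_{j}+2F(u)a_{jk}\partial_{k}\psi+\Im[u_{t}\bar u\,a_{jk}\partial_{k}\psi]\}$, using $[A^{b},\psi]w=(A\psi)w+2a_{jk}(\partial_{j}\psi)\partial^{b}_{k}w$ to put $Q_{j}$ in the compact form \eqref{eq:Qj}.

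The routine but genuinely delicate step — and the main obstacle — is (iii)--(iv): tracking every term generated by the two integrations by parts in the variable-coefficient magnetic operator, checking that the magnetic non-commutativity contributes only the stated curl term and no stray lower order pieces, and assembling $Q_{j}$ correctly. Since these spatial manipulations coincide with those in \cite{CacciafestaDAnconaLuca14-a}, I would quote them and only verify the extra $t$-derivative bookkeeping above; the smoothness of $u$ (e.g.\ $u\in H^{2}(\Omega)$) needed to justify all the integrations by parts is obtained by the approximation argument used for energy conservation in Theorem \ref{the:global}, as noted before the statement.
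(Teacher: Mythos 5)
Your proposal is correct and in substance it coincides with the paper's proof: the paper multiplies the equation by the multiplier $[A^{b},\psi]\bar u=(A\psi)\bar u+2a(\nabla\psi,\nabla^{b}u)$ and takes real parts, invoking (a) the spatial identity from \cite{CacciafestaDAnconaLuca14-a} for $\Re[(-A^{b}u+cu)[A^{b},\psi]\bar u]$, (b) the nonlinear identity obtained from $\nabla F(u)=\Re(f(u)\overline{\nabla^{b}u})$, and (c) the $\partial_{t}$ bookkeeping identity; your computation of $\partial_{t}D=T_{1}+T_{2}$ followed by one integration by parts in $T_{1}$ yields $2T_{2}-(A\psi)\Im(\overline{u_{t}}u)=\Re[(-A^{b}u+cu)[A^{b},\psi]\bar u]+\Re[f(u)[A^{b},\psi]\bar u]$ up to a divergence, which is exactly the same reduction. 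Both arguments then defer the delicate double integration by parts on the principal part to \cite{CacciafestaDAnconaLuca14-a}, as you explicitly note, so the route is the same and only the order of operations in step (c) differs.
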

\begin{proof}
  We multiply both sides of \eqref{eq:main} by 
  the multiplier
  \begin{equation*}
    [A^b,\psi]\bar u = (A \psi) \bar u + 2 a(\nabla \psi,\nabla u)
  \end{equation*}
  and take real parts. We recall the following identity 
  (which however can be checked directly with some lengthy
  but elementary computations) from
  Proposition 2.1 of 
  \cite{CacciafestaDAnconaLuca14-a}:
  \begin{equation}\label{eq:id1}
  \begin{split}
    \Re[ (-A^{b}u+cu)[A^b,\psi]\overline u ]=&
    -\textstyle \frac12 A^{2}\psi|u|^{2}
    + \Re(\alpha_{\ell m}\ 
      \partial^{b}_{m}u\ \overline{\partial^{b}_{\ell}u})
    \\
    &
    -  a(\nabla \psi,\nabla c)|u|^{2}\\
    &
    +2\Im(a_{jk}\partial^{b}_{k}u
       (\partial_{j}b_{\ell}-\partial_{\ell}b_{j})
       a_{\ell m}\partial_{m}\psi\ \overline{u})
    \\
    &
    -\Re\, \partial_{j}Q_{j},
  \end{split}
  \end{equation}
  where $\alpha_{lm}$ are defined by \eqref{eq:alpha} and $Q_{j}$ by \eqref{eq:Qj}.
  For the terms containing $f(u)$ we can write
  \begin{equation}\label{eq:f}
    \Re(f(u)[A^b,\psi]\bar u)= \,A\psi [f(u)\bar u - 2 F(u)] 
      + \nabla\cdot \{ 2F(u) a \nabla \psi \}.
  \end{equation}
  Indeed, by the assumptions on $f$, there exists a function 
  $g\colon [0,+\infty)\to\R$ such that $f(z)=g(\abs{z}^2)z$. 
  As a consequence,
  \begin{equation*}
  \begin{split}
    \nabla F(u)&=
    \textstyle
    \nabla \, \int_0^{\abs{u}}f(s)\,ds=
    \nabla \int_0^{\abs{u}}g(s^2)s\,ds=
    \\
    &=\frac{1}{2}\nabla
    \textstyle
    \int_0^{\abs{u}^2}g(t)\,dt
    =\Re(g(\abs{u}^2)u\nabla \bar{u})=\Re(f(u)\nabla \bar{u})=\\
    &=\Re(f(u)\overline{\nabla^b u}),
  \end{split}
  \end{equation*}
  since $\Re(f(u)ib\bu)=0$.
  We conclude that
  \begin{equation*}
    \begin{split}
      \Re[f(u)[A\psi \bu + 2 a (\nabla\psi,\nabla^b u)]]=& A\psi f(u)\bu 
      + 2 \nabla \psi^t a \Re(f(u)\overline{\nabla^b u})= \\
      =& A\psi f(u)\bu + 2 \nabla \psi^t a \nabla F(u)=\\
      =& A \psi f(u)\bu + 2 [a\nabla\psi] \cdot \nabla F(u)=\\
      =& A\psi[f(u)\bu- 2F(u)]+\nabla\cdot \{2F(u)a\nabla\psi\},
    \end{split}
  \end{equation*}
  and \eqref{eq:f} is proved.
  Finally, for the terms containing $iu_{t}$ we have the
  identity
  \begin{equation}\label{eq:virialederivata}
    \Re(i \partial_t u [A^b,\psi]\bar u)=
     \, \partial_t [-\Im a(\nabla \psi,{\nabla^b u}) u] + 
    \nabla\cdot \{\Im(u_t \bar u a \nabla \psi)\}.
  \end{equation}
  This can be proved directly as follows:
  \begin{equation*}
    \begin{split}
      \Re[i u_t&[A\psi \bu + 2a(\nabla\psi,\nabla^b u)]]= 
      -\Im[u_t \nabla \cdot (a\nabla\psi)\bu + 2\nabla\psi^t a 
      \overline{\nabla u}u_t - 2i\nabla\psi^t a b \bu u_t]=\\
      =& -\Im[-\nabla u_t^t a \nabla\psi \bu - u_t 
      \overline{\nabla u}^t a \nabla\psi 
      + 2 \nabla \psi^t a \overline{\nabla u}u_t 
      -2i\nabla\psi^t a b \bu u_t + 
      \nabla\cdot\{u_t \bu a \nabla \psi\}]=\\
      =& -\Im[\overline{\nabla u_t}^t a \nabla \psi u 
      + \overline{\nabla u}^t a \nabla \psi u_t] 
      + 2 \Im[i \nabla\psi^t a (b\bu)u_t]+
      \nabla\cdot\{\Im[u_t \bu a \nabla \psi]\}=\\
      =& - \Im[\partial_t(\overline{\nabla u}^t a \nabla \psi u)] 
      + \Im[i \partial_t(\nabla\psi^t a \overline{bu}u)]
      +\nabla\cdot\{\Im[u_t \bu a \nabla \psi]\}=\\
      =& - \Im[\partial_t(\overline{\nabla u}^t a \nabla \psi u)] 
      + \Im[i \partial_t(\nabla\psi^t a \overline{bu}u)]
      +\nabla\cdot\{\Im[u_t \bu a \nabla \psi]\}=\\
      =& \partial_t[-\Im a(\nabla \psi,\nabla^b u)u]
      +\nabla\cdot\{\Im[u_t \bu a \nabla \psi]\}.
    \end{split}
  \end{equation*}
  Gathering \eqref{eq:id1}, \eqref{eq:f} and 
  \eqref{eq:virialederivata} we obtain \eqref{eq:virial}.
\end{proof}

\section{Proof of Theorems \ref{the:1}, \ref{the:1strong}: 
the smoothing estimate}
\label{sec:proofsthms}

Since the arguments for Theorems \ref{the:1} and \ref{the:1strong} 
largely overlap, we shall
proceed with both proofs in parallel. 
The proof consists in integrating the 
virial identity \eqref{eq:virial}
on $\Omega$ and estimating carefully all the terms. 
Note that some of the following computations are similar
to those of Section 4 in \cite{CacciafestaDAnconaLuca14-a}.

\begin{remark}\label{rem:explicsmall1}
  At several steps, we shall need to assume
  that the constants $N/\nu-1$, 
  $C_a$, $C_I$, $C_{c}$, $C_{b}$, $C_{-}$ are small enough.
  We can give explicit conditions on the 
  smallness required in Theorem \ref{the:1} and in Theorem \ref{the:1strong}.
  In both the Theorems the smallness of $C_{-}$ is only required in order to
  make $L$ a selfadjoint, nonpositive operator.
  In view of the magnetic Hardy inequality \eqref{eq:hardymag1},
  it is sufficient to assume
  \begin{equation}\label{eq:condCm}
    \textstyle
    C_{-}\le \frac{2 \sqrt{\nu}}{n-2}.
  \end{equation}

  In Theorem \ref{the:1}
  it is sufficient that
  \begin{equation}\label{eq:assratio}
    \textstyle
    \frac{N}{\nu}\le
    \sqrt{\frac{n^{2}+2n+15}{6(n+2)}}
    \quad \text{for}\ 
    3\le n\le 25,
    \qquad \qquad
    \frac{N}{\nu}<
    \frac{7n-1}{3(n+3)}
    \quad \text{for}\ 
    n\ge 26
  \end{equation}
  and that the following quantities are positive:
  \begin{align*}
    & \textstyle
    \frac{K_0}{2}\nu^2 -\frac{5 N^{2}C_{b} 
      + 12 n C_a(N+C_a)+C_{c}}{\delta}>0, 
    \\
    \textstyle
    \frac{n-1}{3n} \nu^2 - 
    &
    \textstyle
    \frac{ 5N^{2}C_{b}+ 24NC_a}{\delta}>0,  
    \qquad
    \textstyle
    \left(n -\frac{N}{\nu}\right) - \frac{n}{n-1} \nu C_a >0, 
  \end{align*}
  where 
  \begin{equation*}
    \textstyle
    K_{0}:=
    \frac{n-1}{6}-\frac{n+3}{2}\frac{N}{\nu}+n>0.
  \end{equation*}
  We remark that $n-N/\nu >0$ thanks to 
  \eqref{eq:assratio}. On the other hand, the condition
  $K_0>0$ is equivalent to the second equation  
  in \eqref{eq:assratio} and is implied by the first equation in 
  \eqref{eq:assratio} in the case $n\leq 26$.

  In Theorem \ref{the:1strong}
  it is sufficient that the following quantities are positive:
  \begin{gather*}
\textstyle
        \frac{(1-C_I)^2}{78} 
  -8 \delta^{-1}
  [C_{c}+9C_{I}+41 C_{a}(N+C_{a})]
  - 9 \delta^{-1}N^{2}C_{b} >0 ,
  \\
\textstyle
   \frac{(1-C_I)^2}{6} 
  -13 \delta^{-1}
  [C_{c}+38 C_{a}(N+C_{a})]
  - 9 \delta^{-1}N^{2}C_{b} >0
  \\
\textstyle
  \left(n -\frac{N}{\nu}\right) - \frac{n}{n-1} \nu C_a >0.
  \end{gather*}

\end{remark}

The proof is divided into several steps.

\subsection{Choice of the weight \texorpdfstring{$\psi$}{}}
\label{sub:weight}

Define
\begin{equation}\label{psi0}
  \textstyle
  \psi_1(r)=\int_0^r\psi_{1}'(s)ds
\end{equation}
where
\begin{equation*}
  \psi'_1(r)=
  \begin{cases}
  \frac{n-1}{2n}r,
  & r\leq1
  \\
  \frac{1}{2}-\frac{1}{2nr^{n-1}},
  & r>1.
  \end{cases}
\end{equation*}
Then $\psi$ is the
radial function, depending on a scaling parameter $R>0$,
\begin{equation*}
  \textstyle
  \psi(|x|)\equiv\psi_R(|x|):=R\psi_1\left(\frac{|x|}{R}\right).
\end{equation*}
Here and in the following, with a slight abuse, we shall use
the same letter $\psi$ to denote a function $\psi(r)$
defined for $r\in \mathbb{R}^{+}$ and
the radial function $\psi(x)=\psi(|x|)$ 
defined on $\mathbb{R}^{n}$. We compute the first radial
derivatives $\psi^{(j)}(r)=(\frac{x}{|x|}\cdot \nabla)^{j}\psi(x)$
for $|x|>0$:
\begin{equation}\label{eq:psi1}
  \psi'(x)=
  \begin{cases}
  \frac{n-1}{2n}\cdot\frac{|x|}{R},& |x|\leq R
  \\
  \frac{1}{2}-\frac{R^{n-1}}{2n|x|^{n-1}},
  & |x|>R
  \end{cases}
\end{equation}
which can be equivalently written as
\begin{equation*}
  \textstyle
  \psi'(x)=
  \frac{|x|}{2nR}
  \left[
    n \frac{R}{R\vee|x|}-(\frac{R}{R\vee|x|})^{n}
  \right]
\end{equation*}
and implies in particular
\begin{equation}\label{eq:bddpsip}
  \textstyle
  0\le \psi'\le \frac{1}{2}.
\end{equation}
Then we have
\begin{equation}\label{eq:psi2}
  \psi''(x)=
  \textstyle
  \frac{n-1}{2n}\cdot\frac{R^{n-1}}{(R\vee|x|)^n}
  =
  \frac{n-1}{2n}\cdot
  \begin{cases}
  \frac{1}{R} & |x|\leq R
  \\
  \frac{R^{n-1}}{|x|^n} & |x|>R,
  \end{cases}
\end{equation}
\begin{equation}\label{eq:psi3}
  \psi'''(x)=
  \textstyle
  -\frac{n-1}{2}\frac{R^{n-1}}{|x|^{n+1}}\one{|x|\ge R}
\end{equation}
\begin{equation}\label{eq:psi4}
  \psi^{IV}(x)=
  \textstyle
  \frac{n^{2}-1}{2}\cdot \frac{R^{n-1}}{|x|^{n+2}}\one{|x|\ge R}
  -
  \frac{n-1}{2}\frac{1}{R^{2}}\delta_{|x|= R}
  .
\end{equation}
\begin{equation}\label{eq:diffpsi}
  \psi''-\frac{\psi'}{|x|}=
  \begin{cases}
    0
     &|x|\le R\\
    -\frac{1}{2|x|}
    \left(1-\frac{R^{n-1}}{|x|^{n-1}}\right)
     &|x|>R.
  \end{cases}
\end{equation}
Moreover the function (see \eqref{eq:apsigen})
\begin{equation}\label{eq:Apsiphi}
  \textstyle
  A \psi=
  \widehat{a}\psi''
  +
  \frac{\overline{a}-\widehat{a}}{|x|}
  \psi'
  +
  a_{\ell m;\ell}\widehat{x}_{m}\psi'.
\end{equation}
is continuous and piecewise Lipschitz.

\subsection{Estimate of the terms in \texorpdfstring{$|u|^{2}$}{}}
\label{sub:v2}
In this section we consider the terms
\begin{equation}\label{eq:termsv2}
  I_{\abs{u}^2}= -\frac12 A^{2}\psi|u|^{2}
  -a(\nabla \psi,\nabla c)|u|^{2}.
\end{equation}
We compute the quantity $A^{2}\psi$:
after some long but elementary 
computations (see \cite{CacciafestaDAnconaLuca14-a}) we have
\begin{equation}\label{eq:splitting}
  A^{2}\psi(x)=S(x)+R(x)
\end{equation}
where
\begin{equation}\label{eq:Sx2}
\begin{split}
  S(x)=
  &
  \textstyle
  \widehat{a}^{2}\psi^{IV}+
  [2 \overline{a}\widehat{a}-6 \widehat{a}^{2}+4|a 
  \widehat{x}|^{2}]
     \frac{\psi'''}{|x|}+
    \\
  &+
  \textstyle
  [2a_{\ell m}a_{\ell m}
     +\overline{a}^{2}
     -6 \overline{a}\widehat{a}
     +15 \widehat{a}^{2}
     -12 |a \widehat{x}|^{2}]
     \left(\frac{\psi''}{|x|^{2}}-\frac{\psi'}{|x|^{3}}\right)
\end{split}
\end{equation}
and
\begin{equation*}
  \textstyle
\begin{split}
  R(x)=
  &
  \widehat{a}a_{\ell m;\ell}\widehat{x}_{m}{\psi'''}+
  (\overline{a}-\widehat{a})
  a_{jk;j}\widehat{x}_{k}
  \textstyle
  \left(
  \frac{\psi''}{|x|}-\frac{\psi'}{|x|^{2}}
  \right)+
    \\
  &+
    [
  \partial_{j}(a_{jk}a_{\ell m;k}\widehat{x}_{\ell}
  \widehat{x}_{m})
  +\partial_{j}(a_{jk}a_{\ell m})\partial_{k}(\widehat{x}_{\ell}
      \widehat{x}_{m})
  ]
  \textstyle
  \left(
  {\psi''}-\frac{\psi'}{|x|}
  \right)
  +
  (A \overline{a})\frac{\psi'}{|x|}+
    \\
  &+
    2a_{jk}a_{\ell m;k}\widehat{x}_{\ell}\widehat{x}_{m}
    \widehat{x}_{j}
    \textstyle
    \left({\psi'''}-\frac{\psi''}{|x|}\right)
    +2a(\nabla \overline{a},\nabla \frac{\psi'}{|x|})+
    \\ 
  &+
  \textstyle
  A(a_{\ell m;\ell}\widehat{x}_{m}\psi').
\end{split}
\end{equation*}
The remainder $R(x)$ can be estimated as follows:
recalling that, by definition of $\psi$, we have
\begin{equation*}
  \textstyle
  |\psi'|\le \frac{|x|}{2 (R\vee|x|)},\qquad
  |\psi''|\le \frac{n-1}{2n (R\vee|x|)},\qquad
  |\psi'''|\le \frac{n-1}{2 \abs{x}(R\vee|x|)}
\end{equation*}
and using assumption \eqref{eq:assader},
we find that
\begin{equation}\label{eq:estR}
  |R(x)|\le
  \frac{12 n C_a(N+C_a)}{|x|\bra{x}^{1+\delta}(R\vee|x|)}.
\end{equation}

\subsubsection{Proof of Theorem \ref{the:1}}
We prove that, assuming \eqref{eq:assac}, 
\eqref{eq:assader}, \eqref{eq:assopL}, 
\eqref{eq:assratio},
we have
\begin{equation}\label{eq:Iv2}
\begin{split}
  \textstyle
  \int_{\Omega}\int_0^T I_{\abs{u}^2} \,dtdx \geq &
  \textstyle
  \frac{n-1}{2}\nu
  \frac{1}{R^{2}}\int_{\Omega_{=R}}\widehat{a}
  \norma{u}_{L^2_T}dS\\
  &-
  \textstyle
  \left[\frac{n+3}{2}N-n\nu\right]
  (n-1)
  \int_{\Omega_{\ge R}}
  \widehat{a}
  \frac{R^{n-1}}{|x|^{n+2}}
  \norma{u}_{L^2_T}\,dx \\
  &-(12 n C_a(N+C_a)+C_{c})\delta^{-1}
  \|u\|_{\dot X_x L^2_T}^{2}.
\end{split}
\end{equation}
We focus on the main term $S(x)$.
With our choice of the weight $\psi$
we have in the region $|x|\le R$
\begin{equation}\label{eq:SlessR}
  \textstyle
  S(x)=
  -\frac{n-1}{2}\widehat{a}^{2}
  \frac{1}{R^{2}}\delta_{|x|=R}
\end{equation}
while in the region $|x|>R$
\begin{equation}\label{eq:SgreR}
\begin{split}
  S(x)=
  &
  \textstyle
  (n-1)
  \left[
    \frac{n+3}{2}\widehat{a}-\overline{a}
  \right]
  \widehat{a}
  \frac{R^{n-1}}{|x|^{n+2}}
  -
  2(n-1)
  [|a \widehat{x}|^{2}-\widehat{a}^{2}]
  \frac{R^{n-1}}{|x|^{n+2}}
    \\
  &
  \textstyle
  -
  [
    2a_{\ell m}a_{\ell m}+\overline{a}^{2}
    -6 \overline{a}\widehat{a}+15 \widehat{a}^{2}
    -12|a \widehat{x}|^{2}
  ]
  \bigl(
  1-(\frac{R}{|x|})^{n-1}
  \bigr)
  \frac{1}{2|x|^{3}}.
\end{split}
\end{equation}
Note that $a_{\ell m}a_{\ell m}$ is the square of the
Hilbert-Schmidt norm of the matrix $a(x)$.
We deduce from assumption \eqref{eq:assopL}
\begin{equation*}
  nN\ge\overline{a}\ge n\nu,\qquad
  N\ge|a \widehat{x}|\ge\widehat{a}\ge\nu,\qquad
  a_{\ell m}a_{\ell m}\ge n\nu^{2},
\end{equation*}
so that
\begin{equation}\label{eq:nge4S1}
  \textstyle
  S(x)\le -\frac{n-1}{2}\nu\widehat{a}
  \frac{1}{R^{2}}\delta_{|x|=R}
  \qquad\text{for}\ |x|\le R.
\end{equation}
On the other hand, we have
\begin{equation}\label{eq:formtoratio}
  2|a(x)|_{HS}^{2}+\overline{a}^{2}
    -6 \overline{a}(x)\widehat{a}(x)
    +15 \widehat{a}^{2}(x)
    -12|a(x) \widehat{x}|^{2}\ge
    (2n+n^{2}+15)\nu^{2}-6(n+2)N^{2}
    \ge0
\end{equation}
by \eqref{eq:assratio} (note that the
second condition in \eqref{eq:assratio} implies the
first one when $n\ge26$), thus we get
\begin{equation}\label{eq:nge4S2}
  S(x)\le
  \textstyle
  (n-1)
  \left[\frac{n+3}{2}N-n\nu\right]\widehat{a}
  \frac{R^{n-1}}{|x|^{n+2}}
  \qquad\text{for}\ |x|\ge R.
\end{equation}

Now we can estimate from below the integral
\begin{equation*}
  \textstyle
  -\int_{\Omega}\int_0^T A^{2}\psi|u|^{2}\,dtdx
  =-\int_{\Omega} A^{2}\psi\norma{u(x)}^{2}_{L^2_T}\,dx
  =I+II
\end{equation*}
where
\begin{equation*}
  \textstyle
  I=-\int_{\Omega} S(x)\norma{u(x)}^{2}_{L^2_T}dx,\qquad
  II=-\int_{\Omega} R(x)\norma{u(x)}^{2}_{L^2_T}dx.
\end{equation*}
By \eqref{eq:estR} and \eqref{eq:stnorma1} we have immediately
for any $R>0$
\begin{equation}\label{eq:Rv2}
  \textstyle
  II
  \ge
  -
  24 n \delta^{-1} C_a(N+C_a)
  \|u\|_{\dot X_x L^2_T}^{2}.
\end{equation}
Note that we must first integrate in time over $[0,T]$,
then in space over $\Omega_{=R}$
and finally divide by $R^{2}$ and take the sup in $R>0$;
this gives a reverse norm $\dot X_{x}L^{2}_{t}$ in the
previous estimate.
Concerning the $S(x)$ term $I$,  we have
by \eqref{eq:nge4S1}, \eqref{eq:nge4S2} 
\begin{equation}\label{eq:Sv2}
  \textstyle
  I
  \ge
  \frac{n-1}{2}\nu
  \frac{1}{R^{2}}\int_{\Omega_{=R}}\widehat{a}
  \norma{u}_{L^2_T}^2dS
  -
  \left[\frac{n+3}{2}N-n\nu\right]
  (n-1)
  \int_{\Omega_{\ge R}}
  \widehat{a}
  \frac{R^{n-1}}{|x|^{n+2}}
  \norma{u}_{L^2_T}^2\,dx
\end{equation}
for all $R>0$. 

It remains to consider the second term in \eqref{eq:termsv2};
we have
\begin{equation}\label{eq:repuls}
  \textstyle
  -a(\nabla \psi,\nabla c)|u|^{2}=
  -a(\widehat{x},\nabla c)\psi'|u|^{2}
  \ge 
  -\frac{C_{c}}{|x|^{2}\langle x\rangle^{1+\delta}}
  \psi'|u|^{2}
\end{equation}
thanks to assumption \eqref{eq:assac}.
Since $0<\psi'<1/2$, by estimate \eqref{eq:stnorma1} we obtain
\begin{equation}\label{eq:estc}
  \textstyle
  -\int_{\Omega}\int_0^T a(\nabla \psi,\nabla c)|u|^{2} \,dtdx
  \ge 
  -C_{c}\delta^{-1}\|u\|_{\dot X_x L^2_T}^{2}
\end{equation}
Collecting \eqref{eq:Sv2}, \eqref{eq:Rv2}, and 
\eqref{eq:estc} we get \eqref{eq:Iv2}.

\subsubsection{Proof of Theorem \ref{the:1strong}}
We prove that, assuming  \eqref{eq:assopL}, \eqref{eq:assader},  \eqref{eq:assperturb},
\eqref{eq:assac}, 
we have for all $R>1$
\begin{equation}\label{eq:estIv2bis}
\begin{split}
  \textstyle
  \int_{\Omega}\int_0^T I_{|v|^{2}}\,dtdx\ge
  \, &(1-C_{I})
  \frac{1}{R^{2}}\int_{\Omega_{=R}}\norma{u}_{L^2_T}^{2}dS
  \\
  &
  -8 \delta^{-1}
  [C_{c}+9C_{I}+41 C_{a}(N+C_{a})]
  \norma{u}^{2}_{X L^2_T}
  \\
  &
  -13 \delta^{-1}
  [C_{c}+36 C_{a}(N+C_{a})]
  \|\nabla^{b}u\|^{2}_{Y L^2_T}.
\end{split}
\end{equation}
Writing $a(x)=I+q(x)$ i.e.
$q_{\ell m}:= a_{\ell m}-\delta_{\ell m}$ we have,
with the notations
$\widehat{q}=q_{\ell m}\widehat{x}_{\ell}\widehat{x}_{m}$ and
$\overline{q}=q_{\ell\ell}$,
\begin{equation*}
  a_{\ell m}a_{\ell m}=
  \delta_{\ell m}\delta_{\ell m}+
  2 \delta_{\ell m}q_{\ell m}
     +q_{\ell m}q_{\ell m}
  =
  3+2 \overline{q}+q_{\ell m}q_{\ell m}
\end{equation*}
and also
\begin{equation*}
  \widehat{a}=1+\widehat{q},\qquad
  \overline{a}=3+\overline{q},\qquad
  |a \widehat{x}|^{2}=
  1+2 \widehat{q}+|q \widehat{x}|^{2}.
\end{equation*}
Note that $|q|=|a(x)-I|\le C_{I}\bra{x}^{-\delta}<1$
by assumption \eqref{eq:assperturb}, which implies
\begin{equation*}
  |\overline{q}|\le 3C_{I}\bra{x}^{-\delta},\qquad
  |\widehat{q}|\le C_{I}\bra{x}^{-\delta},\qquad
  |q \widehat{x}|\le C_{I}\bra{x}^{-\delta}
\end{equation*}
so that
\begin{equation*}
\begin{split}
  2a_{\ell m}a_{\ell m}+\overline{a}^{2}
  -6 \overline{a}\widehat{a}+15 \widehat{a}^{2}
  -12|a \widehat{x}|^{2}
  = &
  4 \overline{q}-12 \widehat{q}
  +2q_{\ell m}q_{\ell m}+\overline{q}^{2}
  -6 \overline{q}\widehat{q}
  +15 \widehat{q}^{2}
  -12|q \widehat{x}|^{2}
  \\
  \ge &
  4 \overline{q}-12 \widehat{q}
  -6 \overline{q}\widehat{q}
  -12|q \widehat{x}|^{2}
  \ge
  -46 C_{I}\bra{x}^{-\delta}.
\end{split}
\end{equation*}
We have also $1-C_{I}\le \widehat{a}\le 1+C_{I}$ so that
($n=3$)
\begin{equation*}
  \textstyle
  -\frac{n-1}{2}\widehat{a}^{2}\le 
  -(1-C_{I})^{2},
  \quad
  \textstyle
  \left(\frac{n+3}{2}\widehat{a}-\overline{a}\right)\widehat{a}
   \le 6  C_{I}(1+C_{I})
  < 12 C_{I}
\end{equation*}
Thus under the assumptions of Theorem \ref{the:1strong}
we obtain the estimates
\begin{equation}\label{eq:neq3S1}
  \textstyle
  S(x)\le -
  (1-C_{I})^{2}
  \frac{1}{R^{2}}\delta_{|x|=R}
  \quad\text{for}\ |x|\le R
\end{equation}
and
\begin{equation}\label{eq:neq3S2}
  \textstyle
  S(x)\le 
  24 C_{I}
  \left[
    \frac{R^{2}}{|x|^{5}}+
    \frac{1}{|x|^{3}\langle x\rangle^{\delta}}
  \right]
  \quad\text{for}\ |x|> R.
\end{equation}

Now we can estimate from below the integral
\begin{equation*}
  \textstyle
  -\int_{\Omega}\int_0^T A^{2}\psi|u|^{2}\,dtdx
  =-\int_{\Omega} A^{2}\psi\norma{u(x)}^{2}_{L^2_T}\,dx
  =I+II
\end{equation*}
where
\begin{equation*}
  \textstyle
  I=-\int_{\Omega} S(x)\norma{u(x)}^{2}_{L^2_T}dx,\qquad
  II=-\int_{\Omega} R(x)\norma{u(x)}^{2}_{L^2_T}dx.
\end{equation*}

Concerning the $S(x)$ term $I$, using \eqref{eq:stnorma2} and 
\eqref{eq:stnorma3} in \eqref{eq:neq3S1}, \eqref{eq:neq3S2},
we have for all $R>1$ 
\begin{equation}\label{eq:Sv2n3}
  I
  \ge
  (1-C_{I})^{2}
  \frac{1}{R^{2}}\int_{\Omega_{=R}}
  \norma{u}_{L^2_T}^{2}\,dS
  -72 C_{I} \delta^{-1}
  \|u\|^{2}_{X_x L^2_T}.
\end{equation}

We estimate the now the $II$--term: for all $R>1$, thanks to 
\eqref{eq:estR}, we have
\begin{equation}\label{eq:dastimareII}
\begin{split}
  \textstyle
  II
  \ge &
  - 36 C_a(N+C_a) \int_0^T \int_{\Omega} |x|^{-1}\bra{x}^{-1-\delta}(R\vee|x|)^{-1}\abs{u(t,x)}^2 \,dxdt
  \\
  \ge &
  \textstyle
  - 36 C_a(N+C_a) \int_0^T \left[\int_{\Omega_{\leq 1}}+\int_{\Omega_{\ge 1}}\right] 
  |x|^{-2}\bra{x}^{-1-\delta}\abs{u(t,x)}^2 \,dxdt
\end{split}
\end{equation}
We observe that, thanks to \eqref{eq:stnorma3}, we have
\begin{equation}\label{eq:usopotenziale1}
  \textstyle
  \int_0^T \int_{\Omega_{\geq 1}} \frac{\abs{u}^2}{\abs{x}^2 \bra{x}^{1+\delta}}\,dxdt 
  =
  \int_{\Omega_{\geq 1}} \frac{\norma{u(x)}_{L^2_T}}{\abs{x}^2\bra{x}^{1+\delta}} 
  \leq
  2 \delta^{-1} \norma{u}_{X L^2_T}.
\end{equation}
Moreover, thanks to \eqref{eq:hardymag1} and \eqref{eq:equivnonhom}, we estimate
\begin{equation}\label{eq:usopotenziale2}
\begin{split}
  \textstyle
  \int_0^T \int_{\Omega_{\leq 1}} \frac{\abs{u}^2}{\abs{x}^2\bra{x}^{1+\delta}}\,dxdt 
  \leq&
  \textstyle
  \int_0^T \int_{\Omega_{\leq 1}} {\abs{x}^{-2}}{\abs{u}^2}\,dxdt 
  \leq \,
  4 \int_0^T \int_{\Omega_{\leq 1}} \abs{\nabla^b u}^2\, dxdt
  \\
  \textstyle
  =& \,
  4 \norma{\nabla^b u}^2_{L^2(\Omega_{\leq 1}) L^2_T}
\end{split}
\end{equation}
Gathering \eqref{eq:usopotenziale1} and \eqref{eq:usopotenziale2}, we have
\begin{equation}\label{eq:stimadimbassasingolare}
  \textstyle
  \int_0^T \int_{\Omega} \frac{\abs{u}^2}{\abs{x}^2\bra{x}^{1+\delta}}\,dxdt
  \leq
  2 \delta^{-1} \norma{u}^2_{X L^2_T} + 
  4 \norma{\nabla^b u}^2_{L^2(\Omega_{\leq 1}) L^2_T}.
\end{equation}
We get immediately from \eqref{eq:dastimareII} and \eqref{eq:stimadimbassasingolare} that
\begin{equation}\label{eq:Rv2n3}
\begin{split}
  \textstyle
  II
  \ge 
  - 324 \delta^{-1} C_a(N+C_a) \left[
    \norma{u}^2_{X L^2_T} + 
    \norma{\nabla^b u}^2_{L^2(\Omega_{\leq 1}) L^2_T}
  \right].
\end{split}
\end{equation}

We consider the second term in \eqref{eq:termsv2};
thanks to
\eqref{eq:assac} and \eqref{eq:stimadimbassasingolare} we have
\begin{align}
  \textstyle
  -\int_0^T \int_{\Omega} a(\nabla\psi,\nabla c) \abs{u}^2 \,dxdt 
  \ge &
  - \int_0^T \int_{\Omega}
  \frac{C_{c}}{2}\frac{\abs{u}^2}{\abs{x}^2\bra{x}^{1+\delta}}\,dxdt
  \notag
  \\ 
  \label{eq:estcbis}
  \ge & 
  \textstyle
   - C_{c} \delta^{-1} \norma{u}^2_{X L^2_T} -
   2 C_{c}\norma{\nabla^b u}^2_{L^2(\Omega_{\leq 1}) L^2_T}.
\end{align}

Recalling \eqref{eq:Rv2n3}, \eqref{eq:Sv2n3}
and \eqref{eq:estcbis} we finally get
\begin{equation*}%
\begin{split}
  \textstyle
  \int_{\Omega}\int_0^T I_{|u|^{2}}\,dtdx\ge \,
  &
  \textstyle
  (1-C_{I})^{2}
  \frac{1}{R^{2}}\int_{\Omega_{=R}}\norma{u}^{2}_{L^2_T} dS 
  \\
  &
  -(72C_{I}\delta^{-1} + \delta^{-1}C_{c})\|u\|_{X L^2_T}^{2}
  -2 C_{c}\|\nabla^{b}u\|^{2}_{L^{2}(\Omega_{\le1})L^2_T}
  \\
    &
  -324 \delta^{-1}
  C_{a}(N+C_{a})
  \left[
  \|u\|_{X L^2_T}^{2}+\|\nabla^{b}u\|^{2}_{L^{2}(\Omega_{\le1})L^2_T}
  \right]
\end{split}
\end{equation*}
whence, noticing that 
$\|w\|_{L^{2}(\Omega_{\le1})}\le \sqrt{2}\|w\|_{Y}$,
we have \eqref{eq:estIv2bis} for all $R>1$.

\subsection{Estimate of the terms in
\texorpdfstring{$|\nabla^{b}u|^{2}$}{}}
\label{sub:nablav}

For such terms, using assumption
\eqref{eq:assader}, we shall prove for all $R>0$ the estimate
\begin{equation}\label{eq:estimatingnablau}
  \textstyle
  \int_{\Omega}
  \alpha_{lm} 
      \Re(\partial_l^b u
      \overline{\partial_m^b u})\,dx
  \ge
  \frac{n-1}{nR}\nu^{2}
  \int_{\Omega_{\le R}}
  \norma{\nabla^b u(x)}_{L^2_T}^2 \, dx
  -
  24NC_a\delta^{-1}\|\nabla^{b}u\|^{2}_{ Y_x L^2_T},
\end{equation}
where $\alpha_{lm}$ are the quantities
defined in \eqref{eq:alpha}. The computations here are very
similar to those in Section 4 of 
\cite{CacciafestaDAnconaLuca14-a}.
We split the quantities $\alpha_{\ell m}$ as
\begin{equation*}
  \alpha_{\ell m}(x)=
  s_{\ell m}(x)+r_{\ell m}(x)
\end{equation*}
where the remainder $r_{\ell m}$ gathers all terms containing
some derivative of the $a_{jk}$.
Since the weight $\psi$ is radial, we have
\begin{equation*}
  \textstyle
  s_{\ell m}(x)=
  2 a_{jm} a_{\ell k} \widehat{x}_{j}\widehat{x}_{k}
  \left(
    \psi''-\frac{\psi'}{|x|}
  \right)
  +
  2 a_{jm}a_{j\ell}
  \frac{\psi'}{|x|}
\end{equation*}
while
\begin{equation*}
  r_{\ell m}(x)=
  [2a_{jm}a_{\ell k;j}-a_{jk}a_{\ell m;j}]\widehat{x}_{k}\psi'.
\end{equation*}
We estimate directly
\begin{equation*}%
  |r_{\ell m}(x)
  \Re(\partial^{b}_{\ell}u\overline{\partial^{b}_{m}u})
  |\le 
  3|a(x)||a'(x)||\nabla^{b}u(x)|^{2}
\end{equation*}
and by assumption \eqref{eq:assader}
we obtain
\begin{equation*}
  |r_{\ell m}(x)
  \Re(\partial^{b}_{\ell}u\overline{\partial^{b}_{m}u})
  |\le 3N C_a\langle x\rangle^{-1-\delta}|\nabla^{b}u|^{2}.
\end{equation*}
Integrating in $t \in [0,T]$ first
and then in $x \in\Omega$, we get
\begin{equation*}
\begin{split}
\textstyle
  \int_{\Omega}\int_0^T |r_{\ell m}(x)
  \Re(\partial^{b}_{\ell}u\overline{\partial^{b}_{m}u})|\,dtdx
  &
  \textstyle
  \le 3N C_a\int_{\Omega} \langle x\rangle^{-1-\delta}
  \int_0^T |\nabla^{b}u|^{2}\,dt\,dx\\
  &
  \textstyle
  = 3N C_a\int_{\Omega} \langle x\rangle^{-1-\delta} 
  \norma{\nabla^b u(x)}_{L^2_T}^2\,dx.
\end{split}
\end{equation*}
Thus, using \eqref{eq:stnorma4}, we obtain the estimate
\begin{equation}\label{eq:estRD}
  \textstyle
  \int_{\Omega}\int_0^T
  |r_{\ell m}
  \Re(\partial^{b}_{\ell}u\overline{\partial^{b}_{m}u})|\,dtdx
  \le
  24N C_a\delta^{-1}\|\nabla^{b}u\|^{2}_{ Y_x L^2_T}.
\end{equation}
Concerning the terms $s_{\ell m}$, 
in the region $|x|>R$ we have
\begin{equation*}
  \textstyle
  s_{\ell m}(x)=
  [a_{jm}a_{j\ell}- a_{jm} a_{\ell k} \widehat{x}_{j}
  \widehat{x}_{k}]
    \frac{1}{|x|}
  +\frac{R^{n-1}}{|x|^{n}}
  a_{jm} a_{\ell k} \widehat{x}_{j}\widehat{x}_{k}
  -
  a_{jm}a_{j\ell}\frac{R^{n-1}}{n|x|^{n}}
\end{equation*}
so that, in the sense of positivity of matrices,
\begin{equation*}
  \textstyle
  s_{\ell m}(x)\ge
  [a_{jm}a_{j\ell}- a_{jm} a_{\ell k} \widehat{x}_{j}
  \widehat{x}_{k}]
    \frac{n-1}{n|x|}\ge0
    \quad\text{for $|x|>R$}
\end{equation*}
(indeed, one has
$a_{jm}a_{j\ell}\ge a_{jm}a_{\ell k}\widehat{x}_{j}
\widehat{x}_{k}$
as matrices); on the other hand,
in the region $|x|\le R$ we have
\begin{equation*}
  \textstyle
  s_{\ell m}(x)=
  a_{jm}a_{j\ell}
    \frac{n-1}{nR}
  \quad\text{for $|x|\le R$}.
\end{equation*}
Thus, by the assumption $a(x)\ge\nu I$, one has for all $x$
\begin{equation}\label{eq:estder}
  \textstyle
  s_{\ell m}(x)
  \Re(\partial^{b}_{\ell}u\overline{\partial^{b}_{m}u})
  \ge
  \frac{n-1}{nR}
  \nu^{2}
  \one{|x|\le R}(x)
  |\nabla^{b}u|^{2}.
\end{equation}
Integrating \eqref{eq:estder} with respect to 
$t \in [0,T]$ and $x \in\Omega$, and recalling \eqref{eq:estRD},
we conclude the proof of \eqref{eq:estimatingnablau}.

\subsection{Estimate of the magnetic terms}
\label{sub:estimate_magnetic}

We now consider the terms
\begin{equation*}
  I_{b}:=
  2\Im[a_{jk}\partial^{b}_{k}u
     (\partial_{j}b_{\ell}-\partial_{\ell}b_{j})
     a_{\ell m}\partial_{m}\psi\ \overline{u}]
  =
  2\Im 
  \left[
    (db \cdot a \widehat{x}) \cdot (a \nabla^{b}u)
    \overline{u}\psi'
  \right]
\end{equation*}
where the identity holds for any radial $\psi$, while 
$db$ is the matrix 
\begin{equation*}
  db=[\partial_{j}b_{\ell}-\partial_{\ell}b_{j}]_{j,\ell=1}^{n}.
\end{equation*}

\subsubsection{Proof of Theorem \ref{the:1}}\label{subsub:estimate_magnetic}
We shall prove the estimate
\begin{equation}\label{eq:magnetic4}
  \textstyle
\int_{\Omega} \int_0^T 
|I_{b}|\,dx
 \le
 5 \delta^{-1}N^{2}C_{b}
 (\|\nabla^{b}u\|_{\dot Y_x L^2_T}^{2}
 +\|u\|_{\dot X_x  L^2_T}^{2}).
\end{equation}
Indeed, since $0\le \psi'\le 1/2$ and $|a(x)|\le N$, 
by \eqref{eq:assdb} we have
\begin{equation*}
  \textstyle
  |I_{b}(x)|\le 
  2N^{2}|db(x)|\cdot|\nabla^{b}u||u|\psi'
  \le
  N^2 \frac{\abs{\nabla^b u}\abs{u}}{\abs{x}^{2+\delta}+\abs{x}^{2-\delta}}.
\end{equation*}
We integrate in $t\in [0,T]$, then in $x \in \Omega$,
and we use the H\"older inequality in time:
\begin{equation*}
  \textstyle
  \int_{\Omega}\int_0^T \abs{I_b(x)}\,dtdx
  \leq 
  \textstyle
  N^2 \int_{\Omega}\int_0^T
   \frac{\abs{\nabla^b u}\abs{u}}{\abs{x}^{2+\delta}
   +\abs{x}^{2-\delta}}\,dtdx
   \leq
   N^2 \int_{\Omega} \frac{\norma{\nabla^b u}_{L^2_T}
   \norma{ u}_{L^2_T}}{\abs{x}^{2+\delta}
   +\abs{x}^{2-\delta}}
   \,dx
\end{equation*}
and by estimate \eqref{eq:stnorma10} we obtain \eqref{eq:magnetic4}.

\subsubsection{Proof of Theorem \ref{the:1strong}}
\label{subsub:estimate_magneticstrong}
In this case we prove the estimate
\begin{equation}\label{eq:estIbbis}
  \textstyle
  \int_{\Omega}\int_0^T
  |I_{b}|\,dtdx
   \le
   9 \delta^{-1}N^{2}C_{b}
   (\|\nabla^{b}u\|_{ Y_x L^2_T}^{2}+\|u\|_{ X_x L^2_T}^{2}).
\end{equation}
The proof is completely analogous to the previous one,
using \eqref{eq:assdbstrong} and \eqref{eq:stnorma13}.

\subsection{Estimate of the terms containing 
\texorpdfstring{$f(u)$}{}} \label{sec:f}

We prove here that there exists a $\gamma_{0}>0$ such that
\begin{equation}\label{eq:estfu}
  A\psi [f(u)\bar u - 2F(u)] \geq  
  \frac{\gamma_{0}}{R\vee \abs{x}}[f(u)\bar u -2 F(u)].
\end{equation}
Thanks to \eqref{eq:repulsivity}, it is sufficient to 
check the pointwise inequality
\begin{equation*}
  A \psi(x)\geq \frac{\gamma_{0}}{R\vee \abs{x}}.
\end{equation*}
Indeed, for $\abs{x}\leq R$, 
\begin{equation*}
  \textstyle
  \widehat{a}\psi'' + \frac{\overline{a}-\widehat{a}}{|x|} \psi'=
  \frac{n-1}{2n}\left[\frac{\widehat a}{R}+\frac{\overline a -
      \widehat a}{R}\right]
  =\frac{n-1}{2n}\frac{\overline a}{R}
\end{equation*}
while for $\abs{x}>R$
\begin{equation*}
  \textstyle
  \widehat{a}\psi'' + \frac{\overline{a}-\widehat{a}}{|x|} \psi'=
   \,
  \frac{\widehat{a}}{\abs{x}}
  \frac{n-1}{2n}\frac{R^{n-1}}{\abs{x}^{n-1}}+
  \frac{\overline{a}-\widehat{a}}{\abs{x}}
  \left(\frac{1}{2}- \frac{1}{2n}\frac{R^{n-1}}{\abs{x}^{n-1}}\right) \\
  \geq \frac{\overline{a}-\widehat{a}}{\abs{x}}\frac{n-1}{2n}.
\end{equation*}
Moreover, by \eqref{eq:assader},
\begin{equation*}
  \textstyle
  a_{lm;l}\widehat x_m \psi' \geq 
  -\frac{C_a}{\bra{x}^{1+\delta}}\abs{\psi'} \geq -\frac{C_a}{\abs{x}}\abs{\psi'}.
\end{equation*}
Summing up we get
\begin{equation*}
\begin{split}
  A \psi \geq &
  \begin{cases}
    \frac{n-1}{2nR} \left(\overline a - C_a \right) \quad &\abs{x}\leq R \\
    \frac{1}{2\abs{x}}\left[\frac{n-1}{n}(\overline a - \widehat a)- C_a\right] \quad &\abs{x}>R,
  \end{cases} \\
  \geq & \frac{\gamma_{0}}{R\vee \abs{x}},
\end{split}
\end{equation*}
for any $\gamma_{0}>0$ such that
\begin{equation}\label{eq:gamma}
  \gamma_{0} < 
  \begin{cases}
    \frac{n-1}{2n}\left(\overline a - C_a \right) \quad &\abs{x}\leq R \\
    \frac{1}{2}\left[\frac{n-1}{n}(\overline a - \widehat a)- C_a\right] \quad &\abs{x}>R,
  \end{cases}
\end{equation}
which is possible provided $C_{a}$ is so small that
$C_a <  \frac{n-1}{n}(\overline{a}(x)-\widehat{a}(x))$
(see Remark \ref{rem:explicsmall1}).

\subsection{Estimate of the boundary terms}
\label{sec:boundary}

We now prove that
\begin{equation}\label{eq:termsbdry}
  \int_{\Omega} \partial_j \{ -\Re Q_j + 2 F(u) 
  a_{jk}\partial_k \psi  + \Im[u_t \bar u a_{jk}
  \partial_k \psi]\}\,dx \geq 0.
\end{equation}
Indeed, proceeding exactly as in 
\cite{CacciafestaDAnconaLuca14-a},
we see that assumption \eqref{eq:assbdry} implies
\begin{equation*}
  \textstyle
  \int_{\Omega} \partial_j \Re Q_j \, dx \leq 0.
\end{equation*}
Moreover, at any fixed $t\in [0,T]$ we have
\begin{equation*}
  \int_{\Omega} \nabla \cdot \{2F(u)a \nabla \psi 
  + \Im[u_t \bar u a \nabla \psi]\} = 0.
\end{equation*}
To see this, we integrate 
$\nabla \cdot \{2F(u)a \nabla \psi + 
\Im[u_t \bar u a \nabla \psi]\}$
over the set $\Omega \cap \{\abs{x}\leq R\}$ and  let 
$R \to +\infty$:
the integral over $\abs{x}=R$ tends to 0 since
 $a \nabla \psi \in L^{\infty}(\Omega)$ and thanks to \eqref{eq:growthfu}
\begin{equation}\label{eq:Fl1}
  \textstyle
  \abs{F(u)}\leq \left|\int_0^{\abs{u}}f(s)\,ds
  \right|\lesssim |u|^{\gamma+1} \in L^1(\Omega),
\end{equation}
(recall that $u\in H^1_0(\Omega)$), while
the integral over $\partial \Omega$ vanishes by the 
Diriclet boundary condition since $F(0)=0$.

\subsection{Estimate of the derivative term}
\label{sec:deriv}
We finally estimate the term at the left hand side of
\eqref{eq:virial}.
We need the following Lemma:

\begin{lemma}\label{stimaterminederivata}
  Let $v\in H^{1}_{0}(\Omega)$ and 
  $\psi\colon \mathbb{R}^n \to \mathbb{R}$ be such that
  $\nabla \psi$ and $\abs{x}A \psi $ are bounded. 
  Then there exist 
  $C=C(\norma{a}_{L^\infty},\norma{\nabla\psi}_{L^\infty}
  ,\norma{\abs{x}A\psi}_{L^\infty})>0$ such that
  \begin{equation*}
    \left\vert \int_{\Omega} 
    a(\nabla \psi, \nabla^b v)v\,dx\right\vert 
    \leq C \|v\|_{\dot H^{\frac12}}^2,
  \end{equation*}
\end{lemma}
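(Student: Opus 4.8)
The plan is to write the left-hand side as the diagonal value $B(v,v)$ of the sesquilinear form
\[
  B(u,v):=\int_{\Omega}a_{jk}\,\partial_{k}\psi\;\overline{\partial_{j}^{b}u}\;v\,dx,
\]
so that indeed $B(v,v)=\int_{\Omega}a(\nabla\psi,\nabla^{b}v)\,v\,dx$, and then to establish two ``endpoint'' estimates for $B$ and interpolate. All computations are carried out for $u,v\in C^{\infty}_{c}(\Omega)$, the general case $v\in H^{1}_{0}(\Omega)$ following by density once the bound is in place. I will freely use that, under the standing hypotheses on $b$, $\|\nabla^{b}w\|_{L^{2}}\simeq\|\nabla w\|_{L^{2}}$ (Lemma \ref{lem:equivmag}; one direction is just the diamagnetic inequality together with Sobolev embedding), so that the two norms may be used interchangeably below.

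The first endpoint is immediate: since $|a(x)\nabla\psi(x)|\le C\|a\|_{L^{\infty}}\|\nabla\psi\|_{L^{\infty}}$, Cauchy--Schwarz gives
\[
  |B(u,v)|\le \|a\nabla\psi\|_{L^{\infty}}\,\|\nabla^{b}u\|_{L^{2}}\,\|v\|_{L^{2}}\lesssim \|u\|_{\dot H^{1}}\,\|v\|_{L^{2}}.
\]
For the symmetric endpoint I integrate by parts to move the magnetic derivative off $u$, using the identity $\int_{\Omega}\overline{\partial_{j}^{b}u}\,\phi\,dx=-\int_{\Omega}\bar u\,\partial_{j}^{b}\phi\,dx$ (the boundary term vanishes because $u\in H^{1}_{0}(\Omega)$), applied with $\phi=a_{jk}\partial_{k}\psi\,v$. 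Since $\sum_{j}\partial_{j}(a_{jk}\partial_{k}\psi)=A\psi$ and $\partial_j^b(a_{jk}\partial_k\psi\,v)=(A\psi)v+a_{jk}\partial_k\psi\,\partial^b_j v$, this yields
\[
  B(u,v)=-\int_{\Omega}(A\psi)\,\bar u\,v\,dx-\int_{\Omega}a_{jk}\,\partial_{k}\psi\,\bar u\,\partial_{j}^{b}v\,dx.
\]
In the first term I bound $|A\psi|\le\||x|A\psi\|_{L^{\infty}}|x|^{-1}$ and apply Cauchy--Schwarz together with the magnetic Hardy inequality \eqref{eq:hardymag1} (taken at $y=0$), $\||x|^{-1}v\|_{L^{2}}\le\tfrac{2}{n-2}\|\nabla^{b}v\|_{L^{2}}$; the second term is handled by Cauchy--Schwarz as before. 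This gives
\[
  |B(u,v)|\le C_{2}\,\|u\|_{L^{2}}\,\|\nabla^{b}v\|_{L^{2}}\lesssim \|u\|_{L^{2}}\,\|v\|_{\dot H^{1}},
\]
with $C_{2}$ depending only on $n$, $\|a\|_{L^{\infty}}$, $\|\nabla\psi\|_{L^{\infty}}$ and $\||x|A\psi\|_{L^{\infty}}$, as required.

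Finally I invoke the bilinear complex interpolation theorem for $B$ between $B\colon\dot H^{1}\times L^{2}\to\mathbb{C}$ and $B\colon L^{2}\times\dot H^{1}\to\mathbb{C}$ (extending functions of $H^{1}_{0}(\Omega)$ by zero to $\mathbb{R}^{n}$, where $[\dot H^{1},L^{2}]_{1/2}=\dot H^{1/2}$), which produces $|B(u,v)|\le C\|u\|_{\dot H^{1/2}}\|v\|_{\dot H^{1/2}}$; setting $u=v$ completes the proof. The one place that requires care is the integration by parts yielding the second endpoint: one must track the magnetic phase correctly in the IBP identity, verify that the boundary term genuinely vanishes (it does, thanks to the Dirichlet condition on $v$), and observe that the leftover lower-order term carrying $A\psi$ is precisely of the form absorbed by the magnetic Hardy inequality, so the symmetric $L^{2}\times\dot H^{1}$ structure is preserved. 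Estimating $B(v,v)$ by Cauchy--Schwarz alone only gives $\|\nabla^{b}v\|_{L^{2}}\|v\|_{L^{2}}\simeq\|v\|_{\dot H^{1}}\|v\|_{L^{2}}$, which is too large (there is no reverse of $\|v\|_{\dot H^{1/2}}^{2}\le\|v\|_{\dot H^{1}}\|v\|_{L^{2}}$), so this interpolation step is the real content of the lemma.
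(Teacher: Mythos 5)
Your proposal is correct and follows essentially the same route as the paper's proof: both define the same sesquilinear form (your $B$, the paper's $T$), establish the two endpoint bounds $\dot H^1\times L^2$ and $L^2\times\dot H^1$ via Cauchy--Schwarz and an integration by parts that exposes $A\psi$ controlled by the magnetic Hardy inequality, and then conclude by the equivalence $\|\nabla^b\cdot\|_{L^2}\simeq\|\nabla\cdot\|_{L^2}$, bilinear complex interpolation to $\dot H^{1/2}\times\dot H^{1/2}$, and density of $C^\infty_c(\Omega)$.
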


\begin{proof}
  Define for $f,g \in C^{\infty}_{c}(\Omega)$
  \begin{equation*}
    \textstyle
    T(f,g):=\int_{\Omega} \nabla \psi (x) \cdot a(x) 
    \overline{\nabla^b f(x)}g(x)\,dx
    =
    \int_{\Omega} [a(x)\nabla \psi (x)]\cdot 
    \overline{\nabla^b f(x)}g(x)\,dx.
  \end{equation*}
  We have trivially
  \begin{equation*}
    \textstyle
    \abs{T(f,g)} 
    \leq \int_{\Omega} 
    \abs{[a(x)\nabla \psi (x)]\cdot \overline{\nabla^b f(x)}g(x)}\,dx 
    \le
    C \norma{\nabla^b f}_{L^2(\Omega)} \norma{g}_{L^2(\Omega)}
  \end{equation*}
  with $C=\norma{a\nabla \psi}_{L^\infty}$.
  On the other hand, integration by parts gives
  \begin{equation*}
  \begin{split}
    \abs{T(f,g)}=&
    \textstyle
    \Big\vert \int_{\R^n} [a(x)\nabla \psi (x)] 
    \overline{\nabla^b f(x)}g(x) \,dx \Big\vert= 
    \\
    = &
    \textstyle
     \Big\vert \int_{\R^n} [a(x) \nabla \psi (x)] 
     \nabla^b g(x) \overline{f(x)}\,dx + 
     \int_{\R^n} \nabla\cdot[a(x)\nabla\psi(x)] 
     g(x) \overline{f(x)}\,dx + \\
     &
     \textstyle
     \qquad \qquad \qquad
     -  \int_{\R^n} \nabla\cdot \{[a(x)\nabla\psi(x)] 
     g(x) \overline{f(x)}\}\,dx \Big\vert.
  \end{split}
  \end{equation*}
  Discarding the divergence term and using the boundedness 
  of $\abs{x}A\psi$ we have, for some
  $C=C(\norma{a}_{L^\infty},\norma{\nabla\psi}_{L^\infty},
  \norma{\abs{x}A\psi}_{L^\infty})>0$,
  \begin{equation*}
    \abs{T(f,g)}\leq 
    C \left[\norma{f}_{L^{2}(\Omega)}\norma{\nabla^b g}_{L^{2}(\Omega)} 
    + \norma{f}_{L^{2}(\Omega)}
    \norma {|x|^{-1}g}_{L^{2}(\Omega)}\right]
  \end{equation*}
  which implies, using the magnetic Hardy inequality 
  \eqref{eq:hardymag1},
  \begin{equation*}
    \abs{T(f,g)}\leq C \norma{f}_{L^{2}(\Omega)}
    \norma{\nabla^b g}_{L^{2}(\Omega)}
  \end{equation*}
  for a different 
  $C=C(\norma{a}_{L^\infty},
  \norma{\nabla\psi}_{L^\infty},
  \norma{\abs{x}A\psi}_{L^\infty})>0$.
  The claim then follows by the equivalence
  $\|\nabla^{b}v\|_{L^{2}}\simeq\|\nabla v\|_{L^{2}}$
  proved in Lemma \ref{lem:equivmag}, by complex interpolation
  and by density.
\end{proof}

Applying Lemma \ref{stimaterminederivata} we get
\begin{equation}\label{eq:derterm}
  \Im \int_{\Omega}  a(\nabla \psi, \nabla^b u)u\, dx \leq 
  \left\vert \int_{\Omega} a(\nabla \psi, \nabla^b u)u\, dx 
  \right\vert \leq
  \widetilde{C}  \norma{u}_{\dot H^{\frac{1}{2}}}^2 
\end{equation}
for some $\widetilde{C}$ depending on
$\norma{a}_{L^\infty}$, $\norma{\nabla\psi}_{L^\infty}$, 
$\norma{\abs{x}A\psi}_{L^\infty}$.
Note that even if $\psi$ depends on $R>0$, 
the constant $\widetilde{C}$ does not, since by
\eqref{eq:bddpsip}, \eqref{eq:assader},
\begin{equation*}
  \textstyle
  \norma{a \nabla \psi}_{L^\infty} \leq 
  \frac12
  \norma{a}_{L^\infty}, 
  \qquad
  \norma{\abs{x}A\psi}_{L^\infty} 
  \leq C(C_a, \norma{a}_{L^\infty}).
\end{equation*}

\subsection{Conclusion of the proof}\label{sub:conclusion}

From \eqref{eq:virial}, using \eqref{eq:estfu}, we have
  \begin{equation*}
    \begin{split}
      \partial_t [\Im(a(\nabla\psi,\nabla^b u)u)] \geq & 
      -\textstyle \frac12 A^{2}\psi|u|^{2}
     -\Re a(\nabla \psi,\nabla c)|u|^{2} 
      + \Re(\alpha_{\ell m}\ 
      \partial^{b}_{m}u\ \overline{\partial^{b}_{\ell}u}) \\
      & +2\Im(a_{jk}\partial^{b}_{k}u
       (\partial_{j}b_{\ell}-\partial_{\ell}b_{j})
       a_{\ell m}\partial_{m}\psi\ \overline{u}) \\
       & + \gamma_{0}[f(u)\bar u -2 F(u)](R\vee \abs{x})^{-1} \\
      & + \partial_j \{ -\Re Q_j + 
        2 F(u) a_{jk}\partial_k \psi  + 
      \Im[u_t \bar u a_{jk}\partial_k \psi]\} .
    \end{split}
  \end{equation*}
Integrating with respect to $t \in [0,T]$ and 
then $x \in \Omega$ we obtain
\begin{align}
  \textstyle
  \int_{\Omega} \int_0^T \partial_t 
  \Im &
  [a(\nabla \psi, \nabla^b u)u]\, dtdx \geq 
  \label{align:derivative} 
  \\
  & 
  \textstyle
  - \int_{\Omega} \int_0^T \left[\frac{1}{2}A^2 \psi + 
  \Re a(\nabla \psi, \nabla c)\right]\abs{u}^2\,dtdx \\
  & 
  \textstyle
  + \int_{\Omega} \int_0^T \Re\left[\alpha_{lm}\partial_m^b u 
  \overline{\partial_l^b u}\right]\,dtdx 
  \label{align:nablau} \\
  &
  \textstyle
   + 2 \int_{\Omega} 
   \int_0^T \Im [a_{jk}\partial_k^b u (\partial_j b_l - 
   \partial_l b_j)a_{lm} \partial_m \psi \bar u]\,dtdx 
   \label{align:magnetic} \\
  &
  \textstyle
   + \gamma_{0} \int_{\Omega}\int_0^T  
     \frac{f(u)\bar u -2 F(u)}{R\vee \abs{x}}\,dtdx \\
  &
  \textstyle
   + \int_{\Omega} 
   \int_0^T \partial_j \{ -\Re Q_j + 2 F(u) a_{jk}\partial_k \psi 
   + \Im[u_t \bar u a_{jk}\partial_k \psi]\}\,dtdx 
   \label{align:boundary}
\end{align}
We now use the estimates from the previous sections.

For the term \eqref{align:derivative}, we use \eqref{eq:derterm}:
\begin{equation*}
\begin{split}
  \textstyle
  \int_{\Omega} \int_0^T \partial_t &
  \Im [a(\nabla \psi, \nabla^b u)u]\, dtdx \\
  \leq & 
  \textstyle
  \int_{\Omega}  \Im a(\nabla \psi, \nabla^b u(0))u(0)
  \, dx + 
   \int_{\Omega} \Im a(\nabla \psi, \nabla^b u(T))u(T)
   \, dx\leq  \\
   \leq & 
   \textstyle 
   C \left( \|u(0)\|_{\dot H^{\frac{1}{2}}}^2
   + \|u(T)\|_{\dot H^{\frac{1}{2}}}^2\right),
  \end{split}
\end{equation*}
where $C$ depends on $\norma{a}_{L^\infty}$,
$\norma{\nabla \psi}_{\infty}$, 
$ \norma{\abs{x}A\psi}_{L^\infty}$, but not on $R>0$.

For \eqref{align:boundary} we swap the integrals, then
using \eqref{eq:termsbdry} we see that this term can be
discarded. 

\subsubsection{Proof of Theorem \ref{the:1}}
We estimate \eqref{align:nablau} 
using \eqref{eq:estimatingnablau} and recalling that 
$\norma{\cdot}_{Y}\leq \norma{\cdot}_{\dot Y}$,  while
\eqref{align:magnetic} is estimated using \eqref{eq:magnetic4}.
Summing up, we have obtained
\begin{align}
  C 
  \big( &
    \|u(0)\|_{\dot H^{\frac{1}{2}}}^2
       + \|u(T)\|_{\dot H^{\frac{1}{2}}}^2\big)
    \geq 
  \notag\\
  & 
  \textstyle
  \frac{1}{2}\left(
   \frac{n-1}{2}\nu
  \frac{1}{R^{2}}\int_{\Omega_{=R}}\widehat{a}
  \norma{u}_{L^2_T}dS
   -
  \left[\frac{n+3}{2}N-n\nu\right]
  (n-1)
  \int_{\Omega_{\ge R}}
  \widehat{a}
  \frac{R^{n-1}}{|x|^{n+2}}
  \norma{u}_{L^2_T}\,dx
  \right) \notag \\
  &  
  \textstyle
  -(12 n C_a(N+C_a)+C_{c})\delta^{-1}
  \|u\|_{\dot X_x L^2_T}^{2} \notag\\
  &
  \textstyle
  + \frac{n-1}{nR}\nu^{2}
  \int_{\Omega_{\le R}}
  \norma{\nabla^{b}u(x)}^{2}_{L^2_T}\,dx
  -
  24NC_a\delta^{-1}\|\nabla^{b}u\|^{2}_{\dot Y_x L^2_T} 
  \notag\\
  & 
  \textstyle
  -5 \delta^{-1}N^{2}C_{b}
   \left(\|\nabla^{b}u\|_{\dot Y_x L^2_T}^{2}
   +\|u\|_{\dot X_x L^2_T}^{2}\right) \notag\\
  &
  \textstyle
   + \gamma_{0} \int_{\Omega} 
   \int_0^T \frac{f(u)\bar u -2 F(u)}{R\vee \abs{x}}\,dtdx.
   \label{eq:finalizzabile}
\end{align}
We now take the sup over $R>0$ at the right hand side. 
Denote with $\Sigma(R)$ all the terms which depend on $R$:
\begin{equation*}
\begin{split}
  \Sigma(R):=&
  \textstyle
  \frac{1}{2}\left(
   \frac{n-1}{2}\nu
  \frac{1}{R^{2}}\int_{\Omega_{=R}}
  \widehat{a}\norma{u}_{L^2_T}dS
  \left[\frac{n+3}{2}N-n\nu\right]
  (n-1)
  \int_{\Omega_{\ge R}}
  \widehat{a}
  \frac{R^{n-1}}{|x|^{n+2}}
  \norma{u}_{L^2_T}\,dx
  \right)\\
  &
  \textstyle
  +\frac{n-1}{nR}\nu^{2}
  \int_{\Omega_{\le R}}
  \norma{\nabla^{b}u(x)}^{2}_{L^2_T}\,dx
   + \gamma_{0} \int_{\Omega} \int_0^T \frac{f(u)\bar u -2 F(u)}
   {R\vee \abs{x}}\,dtdx.
\end{split}
\end{equation*}
We shall use the simple remark that if three nonnegative
quantities $f,g,h$ depend on $R$, then
\begin{equation}\label{eq:propsup}
  \sup_{R>0} [f(R) + g(R) + h(R)]\geq 
  \frac13\left(\sup_{R>0} f(R)+ \sup_{R>0} g(R)
  + \sup_{R>0} h(R)\right).
\end{equation}
We now distinguish two cases.

First case: $\frac{n+3}{2}N\ge n\nu$.
Then by \eqref{eq:stnorma2} we get
\begin{equation*}
  \textstyle
  \Sigma(R)\ge
  Z(R)
  - \frac12
    \left[\frac{n+3}{2}N-n\nu\right]
    \|\widehat{a}^{1/2}u\|_{\dot X_x L^2_T}^{2},
\end{equation*}
where
\begin{equation*}
  Z(R):=
  \textstyle   
  \frac{n-1}{4}\nu
  \frac{1}{R^{2}}\int_{\Omega_{=R}}\widehat{a}
  \norma{u}_{L^2_T}^2 dS
  +\frac{n-1}{nR}\nu^{2}
  \int_{\Omega_{\le R}}
  \norma{\nabla^{b}u(x)}^{2}_{L^2_T}\,dx 
    + \gamma_{0} \int_{\Omega} \int_0^T 
    \frac{f(u)\bar u -2 F(u)}{R\vee \abs{x}}\,dtdx.
\end{equation*}
Thanks to \eqref{eq:stnorma2}, \eqref{eq:propsup}, 
and recalling that $\widehat{a}\ge\nu$, we obtain
\begin{equation*}
  \textstyle
  \sup_{R>0} Z(R)\geq \frac{n-1}{12}\nu^2
  \|u\|_{\dot X_x L^2_T}^{2}
    + \frac{n-1}{3n}\nu^2 
    \norma{\nabla^b u}_{\dot Y_x L^2_T}^2 
    + \frac{\gamma_{0}}3 
    \int_{\Omega}\int_0^T \frac{f(u(x))\bar u(x)-2F(u)}{\abs{x}}
    \,dtdx
\end{equation*}
and consequently, again by $\widehat a \geq \nu$,
\begin{equation} \label{eq:stimaS(R)}
  \textstyle
  \sup_{R>0} \Sigma(R) \geq
   \frac{K_0}2 \nu^2\|u\|_{\dot X_x L^2_T}^{2} 
     + \frac{n-1}{3n}\nu^2 
     \norma{\nabla^b u}_{\dot Y_x L^2_T}^2 
     + \frac{\gamma_{0}}3 \int_{\Omega}\int_0^T 
     \frac{f(u(x))\bar u(x)-2F(u)}{\abs{x}}\,dtdx,
\end{equation}
provided we define
\begin{equation}\label{eq:defK}
  \textstyle
  K_{0}:=
  \frac{n-1}{6}-\frac{n+3}{2}\frac{N}{\nu}+n
\end{equation}
which is a strictly positive quantity provided
we assume $N/\nu$ is small enough
(like in \eqref{eq:assratio}).

Second case: $\frac{n+3}{2}N \leq n\nu$.
Then we have
\begin{equation}
  \textstyle
  \Sigma(R)\geq
   \frac{n-1}{4}\nu \frac{1}{R^2}
   \int_{\Omega_{=R}}\widehat a \norma{u}_{L^2_T}^2 \,dS 
   +\frac{n-1}{nR}\nu^2 \int_{\Omega_{\leq R}}
   \norma{\nabla^b u(x)}_{L^2_T}^2 \,dx
   + \gamma \int_{\Omega} 
   \int_0^T \frac{f(u)\bar u -2 F(u)}{R\vee \abs{x}}\,dtdx.
\end{equation}
Thanks to \eqref{eq:propsup},
recalling that $\widehat a \geq \nu$, and observing that 
in this case $K_0 \leq \frac{n-1}{6}$, 
we obtain again \eqref{eq:stimaS(R)}.

By \eqref{eq:finalizzabile}, \eqref{eq:stimaS(R)} 
we conclude that
\begin{equation}\label{eq:concl}
  \textstyle
  M_1\norma{u}_{\dot X_x L^2_T}^2 
  + 
  M_2 \norma{\nabla^b u}_{\dot Y_x L^2_T}^2 
  +
  M_3 \int_{\Omega}\int_0^T \frac{f(u(x))\bar u(x)-2F(u)}{\abs{x}}\,dtdx 
  \leq 
   C \big( \norma{u(0)}_{\dot{ {H}}^{\frac{1}{2}}}^2+ \norma{u(T)}_{\dot{ {H}}^{\frac{1}{2}}}^2\big)
\end{equation}
for some $C>0$, where $\gamma_0$ is defined in \eqref{eq:gamma} and
\begin{equation*}
  \textstyle
  M_1:=
  \frac{K_0}{2}\nu^2 -\frac{5 N^{2}C_{b} 
  + 12 n C_a(N+C_a)+C_{c}}{\delta},
\end{equation*}
\begin{equation*}
  \textstyle
  M_2:= \frac{n-1}{3n} \nu^2 - 
    \frac{ 5N^{2}C_{b}+ 24NC_a}{\delta},
    \qquad
  M_3:= \frac{\gamma_0}3.
\end{equation*}
If the constants $C_{a},C_{b}$ and $C_{c}$ are sufficiently
small, these quantities are positive, and
the estimate \eqref{eq:concl} is effective.

\subsubsection{Proof of Theorem \ref{the:1strong}}

We estimate \eqref{align:nablau} 
 using \eqref{eq:estimatingnablau} and
\eqref{align:magnetic} thanks to \eqref{eq:estIbbis}.
Summing up, we have obtained 
\begin{align}
  C 
  \big( 
    \norma{u(0)}_{\dot{ {H}}^{\frac{1}{2}}}^2 &
    + \norma{u(T)}_{\dot{ {H}}^{\frac{1}{2}}}^2\big)
    \geq 
  \notag \\
  & (1-C_{I})^{2}
  \frac{1}{R^{2}}\int_{\Omega_{=R}}\norma{u}_{L^2_T}^{2}dS 
  \\
  &
  -8 \delta^{-1}
  [C_{c}+9C_{I}+41 C_{a}(N+C_{a})]
  \|u\|^{2}_{X L^2_T}
  \\
  &
  -13 \delta^{-1}
  [C_{c}+36 C_{a}(N+C_{a})]
  \|\nabla^{b}u\|^{2}_{Y L^2_T}
  \\
  &
  \textstyle
  + \frac{n-1}{nR}\nu^{2}
  \int_{\Omega_{\le R}}
  \norma{\nabla^{b}u(x)}^{2}_{L^2_T}\,dx
  -
  24NC_a\delta^{-1}\|\nabla^{b}u\|^{2}_{ Y_x L^2_T} 
  \notag\\
  & 
  \textstyle
   - 9 \delta^{-1}N^{2}C_{b}
   (\|\nabla^{b}u\|_{ Y_x L^2_T}^{2}+\|u\|_{ X_x L^2_T}^{2})  \notag\\
  &
  \textstyle
   + \gamma_0 \int_{\Omega} 
   \int_0^T \frac{f(u)\bar u -2 F(u)}{R\vee \abs{x}}\,dtdx.
   \label{eq:finalizzabile1}
\end{align}
We now take the sup over $R>1$ at the right hand side. 
We denote with $\Sigma(R)$ all the terms which depend on $R$:
\begin{equation*}
\begin{split}
  \Sigma(R):=
  &
  \textstyle
  (1-C_{I})^{2}
  \frac{1}{R^{2}}\int_{\Omega_{=R}}\norma{u}_{L^2_T}^{2}dS 
    \textstyle
  + \frac{n-1}{nR}\nu^{2}
  \int_{\Omega_{\le R}}
  \norma{\nabla^{b}u(x)}^{2}_{L^2_T}\,dx
  \\
   &
   + \gamma \int_{\Omega} 
   \int_0^T \frac{f(u)\bar u -2 F(u)}{R\vee \abs{x}}\,dtdx
\end{split}
\end{equation*}
Thanks to \eqref{eq:stnorma7}, we have, for $0<\theta<1$,
\begin{equation}\label{eq:perilsup1}
\begin{split}
  (1-C_{I})^{2}
  \sup_{R>1} 
  \frac{1}{R^{2}}\int_{\Omega_{=R}}\norma{u}_{L^2_T}^{2}dS \geq 
  \,&(1-\theta)(1-C_{I})^{2}
  \sup_{R>1} 
  \frac{1}{R^{2}}\int_{\Omega_{=R}}\norma{u}_{L^2_T}^{2}dS \\
  &+ \theta (1-C_{I})^{2} \left(\frac1{4}\norma{u}^2_{X L^2_T} - \frac{13}{4} \norma{\nabla^b u}^2_{Y L^2_T}   \right).
  \end{split}
\end{equation}
Note also that we can take $\nu=1-C_{I}$ 
and $N=1+C_{I}$ by assumption
\eqref{eq:assperturb}, while $n=3$. We obtain
\begin{equation}\label{eq:perilsup2}
  \sup_{R>1} \frac{n-1}{nR}\nu^{2}
  \int_{\Omega_{\le R}}
  \norma{\nabla^{b}u(x)}^{2}_{L^2_T}\,dx \geq 
  \frac23 (1-C_I)^2\norma{\nabla^{b}u}^{2}_{Y L^2_T}
\end{equation}
Finally
\begin{equation}\label{eq:perilsup3}
  \gamma_0 \, \sup_{R>1}   \int_{\Omega} 
   \int_0^T \frac{f(u)\bar u -2 F(u)}{R\vee \abs{x}}\,dtdx \geq 
   \gamma_0 \, \int_{\Omega}\int_0^T 
   \frac{f(u)\bar u -2 F(u)}{\bra{x}}\,dtdx.
\end{equation}
We take $\theta:=2/13$ (it is enough to choose $\theta$ 
such that $2/3 \geq (13\theta)/4 $). 
Thanks to \eqref{eq:propsup}, \eqref{eq:perilsup1}, 
\eqref{eq:perilsup2}, \eqref{eq:perilsup3},
we get
\begin{equation}
  \label{eq:stimaSigmaR}
  \begin{split}
    \sup_{R>1} \Sigma(R) \geq & \frac{(1-C_I)^2}{78} \norma{u}^2_{X L^2_T} 
                              +\frac{(1-C_I)^2}{6} \norma{\nabla^{b}u}^{2}_{Y L^2_T} 
                              \\
                              & 
                              +\frac{\gamma_0}{3} 
                              \int_{\Omega}\int_0^T \frac{f(u)\bar u -2 F(u)}{\bra{x}}\,dtdx.
                            \end{split}
                          \end{equation}
By \eqref{eq:finalizzabile1}, \eqref{eq:stimaSigmaR} 
we conclude that
\begin{equation}\label{eq:conclstrong}
\textstyle
\begin{split}
  M_1\norma{u}_{ X_x L^2_T}^2 
  + 
  M_2 \norma{\nabla^b u}_{ Y_x L^2_T}^2 
  +
  M_3 \int_{\Omega}\int_0^T \frac{f(u(x))\bar u(x)-2F(u)}{\bra{x}}\,dtdx &
  \\
  \leq 
   C \big( \norma{u(0)}_{\dot{ {H}}^{\frac{1}{2}}}^2+ \norma{u(T)}_{\dot{ {H}}^{\frac{1}{2}}}^2\big)&
 \end{split}
\end{equation}
for some $C>0$, where 
\begin{equation*}
\begin{split}
  \textstyle
  &M_1':=
  \frac{(1-C_I)^2}{78} 
  -8 \delta^{-1}
  [C_{c}+9C_{I}+41 C_{a}(N+C_{a})]
  - 9 \delta^{-1}N^{2}C_{b},
  \\
  &M_2':= \frac{(1-C_I)^2}{6} 
  -13 \delta^{-1}
  [C_{c}+38 C_{a}(N+C_{a})]
  - 9 \delta^{-1}N^{2}C_{b},
  \\   
  &M_3= \frac{\gamma_0}3,
\end{split}
\end{equation*}
and $\gamma_0$ is defined in \eqref{eq:gamma}.
If the constants $C_{a},C_{b}$,$C_{c}$ and $C_I$ are sufficiently
small,
these quantities are positive and 
the estimate \eqref{eq:conclstrong} is effective.

\subsection{Proof of Corollary \ref{cor:smooheat}}

Since $u=e^{itL}u_{0}$ satisfies equation \eqref{eq:main}
with the choice $f \equiv0$, we see that $u$
satisfies the smoothing estimate \eqref{eq:smoomorwei2}.
By complex interpolation, we deduce from
\eqref{eq:smoomorwei2} the estimate
\begin{equation*}
  \|\bra{x}^{-1-}(-\Delta)^{\frac14}u\|_{L^{2}_{T}L^{2}}
  \lesssim
  \norma{u_{0}}_{\dot{ H}^{\frac{1}{2}}}+ 
      \norma{u(T)}_{\dot{ H}^{\frac{1}{2}}}
\end{equation*}
for all $T>0$. Proceeding exactly as in the proof of
Corollary 1.4 in \cite{CacciafestaDAncona12-a},
from the gaussian bound for $e^{tL}$
in Proposition \ref{pro:heatk} we deduce 
the weighted estimate
\begin{equation*}
  \|w(x)(-L)^{\frac14}v\|_{L^{2}}\lesssim
  \|w(x)(-\Delta)^{\frac14}v\|_{L^{2}}
\end{equation*}
for any $A_{2}$ weight $w$,
and in particular for $w(x)=\bra{x}^{-s}$ for any $s>0$.
Thus we have
\begin{equation*}
  \|\bra{x}^{-1-}(-L)^{\frac14}u\|_{L^{2}_{T}L^{2}}
  \lesssim
  \|\bra{x}^{-1-}(-\Delta)^{\frac14}u\|_{L^{2}_{T}L^{2}}
  \lesssim
  \norma{u_{0}}_{\dot{ H}^{\frac{1}{2}}}+ 
      \norma{u(T)}_{\dot{ H}^{\frac{1}{2}}}
\end{equation*}
and commuting $(-L)^{\frac14}$ with $e^{itL}$,
and recalling the equivalence \eqref{eq:equivLH1},
we obtain
\begin{equation*}
  \|\bra{x}^{-1-}u\|_{L^{2}_{T}L^{2}}
  \lesssim
  \norma{u_{0}}_{L^{2}}+ \norma{u(T)}_{L^{2}}
  \simeq
  \|u_{0}\|_{L^{2}}
\end{equation*}
by the conservation of the $L^{2}$ norm.

\section{Proof of Theorems \ref{the:2}, \ref{the:2strong}:
the bilinear smoothing estimate}
\label{sec:interaction}

Since the arguments for Theorems \ref{the:2} and \ref{the:2strong} 
largely overlap, we shall again
proceed with both proofs in parallel. 

Let $u$ be a solution of \eqref{eq:main},
and write identity \eqref{eq:virial} with a weight
of the form $\psi=\psi(x-y)$, for $x,y\in \Omega$.
In the following formulas, to make notations lighter,
we shall write simply $u(x)$, $u(y)$ instead of
$u(t,x)$, $u(t,y)$.
We have
\begin{equation*}
  \begin{split}
  M(&x,y)=
  -\textstyle \frac12 A_x^{2}\psi(x-y)|u(x)|^{2}
    + \Re(\alpha_{\ell m}(x)\ 
  \partial^{b(x)}_{x_m}u(x)
  \ \overline{\partial^{b(x)}_{x_\ell}u(x)}) \\
  & -\Re a(x)(\nabla_x \psi(x-y),\nabla_x c(x))|u(x)|^{2} \\
  & +2\Im(a_{jk}(x)\partial^{b(x)}_{x_k} u(x)
   (\partial_{x_j}b(x)_{\ell}-\partial_{x_\ell}b(x)_{j})
   a_{\ell m}(x)\partial_{x_m}\psi(x-y)\ \overline{u(x)}) \\
  & + A_x\psi(x-y) [f(u(x))\bar u(x) - 2 F(u(x))] \\
  & + \partial_{x_j} 
  \{ -\Re Q_j(x) + 2 F(u(x)) a_{jk}(x)\partial_{x_k} \psi(x-y)  
  + \Im[u_t(x) \bar u(x) a_{jk}(x)\partial_{x_k} \psi(x-y)]\}.
  \end{split}
\end{equation*}
where $M(x,y)$ is defined by
\begin{equation*}
  M(x,y):=
  \partial_t 
  \{\Im(a_x(\nabla_x \psi(x-y),\nabla^{b(x)}_x u(x))u(x))\}.
\end{equation*}
Note that
in order to distinguish between the two groups
of variables $x$ and $y$, here and in the following
we used the notations
\begin{equation*}
  a(x)(z,w)=a_{jk}(x)z_{j}\overline{w_{k}},
  \qquad
  \partial_{x_{j}}^{b(x)}=\partial_{x_{j}}+ib_{j}(x),
  \qquad
  A^{b(x)}_{x}v=
  \partial^{b(x)}_{x_{j}}(a_{jk}(x)\partial^{b(x)}_{x_{k}}v(x,y))
\end{equation*}
and similarly $A_{x}$, $\nabla_{x}^{b(x)}$;
we shall however stick to simpler notations whenever possible.
The starting point for the proof is the identity
\begin{equation}\label{eq:virialinteraction}
\begin{split}
  \partial_t &
  \{\Im[a(x)(\nabla_x\psi(x-y),\nabla^b u(x))u(x)]\abs{u(y)}^2\}= 
  \\
    &\qquad= M(x,y)\abs{u(y)}^2 + 
    \Im[a(x)(\nabla \psi(x-y),\nabla^b u(x))u(x)]
    \partial_t\{\abs{u(y)}^2\}.
\end{split}
\end{equation}
Since $u$ is a solution of \eqref{eq:main}
and $c$, $f(u)\bar u$ are real valued, we have
\begin{equation*}
\begin{split}
  \partial_t\abs{u}^2= & 2 \Re[u_t \bar u]= 
  2\Re[\bar u (-i A^b u+icu+if(u))]= \\
  = & 2 \Re[-i A^b u \bar u + 
  i c \abs{u}^2 + i f(u)\bar u]= 2 \Im[A^b u \bar u]
\end{split}
\end{equation*}
and using the identity
\begin{equation*}
   A^b u \bar u= - a(\nabla^b u, \nabla^b u) + \nabla \cdot \{a \nabla^b u \bar u\},
\end{equation*}
by the reality of $a(\nabla^b u, \nabla^b u)$ we have
\begin{equation*}
  \partial_t\abs{u(y)}^2=2 \Im[A^{b(y)} u(y) \overline{u(y)}]= 2\nabla_y\cdot \{\Im[a(y) \nabla^{b(y)}_y u(y) \bar u(y)]\}.
\end{equation*}
Thus the last term in \eqref{eq:virialinteraction}
can be manipulated as follows:
\begin{equation*}
  \begin{split}
    \Im[a(x)&(\nabla \psi(x-y),\nabla^b u(x))u(x)]
    \partial_t[\abs{u(y)}^2]=\\
    = & 2\Im[a(x)(\nabla \psi(x-y),\nabla^b u(x))u(x)] 
    \nabla_y\cdot \{\Im[a(y)\nabla^{b(y)} u(y) \bar u(y)]\}=\\
    = & 2 \Im[ (a(x)\overline{\nabla_x^b u(x)})^t u(x)] 
    D^2\psi (x-y)
    \Im[(a(y)\nabla_y^b u(y))\overline{u(y)}] \\
      & + \nabla_y\cdot 
      \{ 2\Im[a(x)(\nabla \psi(x-y),\nabla^b u(x))u(x)] 
         \Im[a(y)\nabla^{b(y)} u(y) \bar u(y)]\}.
  \end{split}
\end{equation*}
Moreover, we rewrite the quantities $\alpha_{\ell m}$
in the form
\begin{equation*}
  \alpha_{\ell m}=
  2(a(x)D^{2}_{x}\psi(x-y)a(x))_{\ell m}
  +r_{\ell m}
\end{equation*}
where the first term is the $\ell m$ entry of the
matrix $a\cdot D^{2}\psi \cdot a$ and
\begin{equation}\label{eq:defr}
  r_{lm}= \partial_k \psi_y(2 a_{jm}a_{lk;j}-a_{jk}a_{lm;j}).
\end{equation}

We choose different weights for the proofs of Theorem \ref{the:2}
and Theorem \ref{the:2strong}:
in the proof of Theorem \ref{the:2strong}
we set
\begin{equation}\label{eq:defnpsi}
  \psi(x-y):= \brasigma{x-y},
\end{equation}
for $\sigma>0$,
where we use the following notation:
\begin{equation*}
  \brasigma{x-y}:=(\sigma^2+\abs{x-y}^2)^{1/2},
\end{equation*}
while in the proof of Theorem \ref{the:2} we take $\sigma=0$ 
in \eqref{eq:defnpsi}, that is to say, we choose
\begin{equation*}
  \psi(x-y):=\abs{x-y},
\end{equation*}
Note that in the following, with a small abuse,
we shall use the same notation for the radial weight function
$\psi(x)$ and for $\psi(r)=\psi(|x|)$.
We gather here some identities satisfied by
$\psi(r)=\brasigma{r}$ for $\sigma\ge 0$:
\begin{gather} \nonumber 
  \psi'=\frac{r}{\brasigma{r}},
   \quad \psi''=\frac{\sigma^2}{\brasigma{r}^3},
  \quad \psi'''=-3\sigma^2\frac{r}{\brasigma{r}^5},
   \quad
  \psi^{IV}= 12 \frac{\sigma^2}{\brasigma{r}^5}-15 
  \frac{\sigma^4}{\brasigma{r}^7},
  \\ \label{eq:derpsiexpl}
  \frac{1}{r^2}\left(\psi''-\frac{\psi'}{r}\right)
  =-\frac{1}{\brasigma{r}^3},
   \quad
  \psi'''-\frac{\psi''}{r}
  =-\sigma^2\left(\frac{r}{\brasigma{r}^5}
  +\frac{1}{\brasigma{r}^3 r}\right) 
  \leq \frac{4\sigma^2}{\brasigma{r}^3 r}.
\end{gather}
Moreover, for $\sigma \geq 0$,
we introduce the notation
\begin{equation*}
  \widehat{(x-y)}_{ m}^\sigma:=
  \frac{x_{m}-y_{m}}{\brasigma{x-y}}.
\end{equation*}
We have (see \eqref{eq:Apsiphi})
\begin{equation*}
\begin{split}
  A_x \brasigma{x-y} = 
  &
  a_{\ell m;\ell}(x)\widehat{(x-y)}_{ m}^\sigma
  +
  \\
  &+
  \frac{\sigma^{2}}{\brasigma{x-y}}
  a_{\ell m}(x)\widehat{(x-y)}_{\ell}^\sigma 
  \widehat{(x-y)}_{ m}^\sigma
  + \frac{\bar a (x)- \widehat{(x-y)}_{\ell }^\sigma
  a_{\ell m}(x)\widehat{(x-y)}_{m}^\sigma}{\brasigma{x-y}}
\end{split}
\end{equation*}
which implies, since the last two terms are non negative,
\begin{equation*}
  A_x \brasigma{x-y} \ge
  a_{\ell m;\ell}(x)\widehat{(x-y)}_{ m}^\sigma
  \geq - |a'(x)|\geq -\frac{C_a}{\bra{x}^{1+\delta}},
\end{equation*}
and, using assumption \eqref{eq:repulsivity}, 
\begin{equation*}
  A_x \brasigma{x-y} [f(u(x))\overline{u(x)}-2F(u(x))]
  \abs{u(y)}^2 \geq -\frac{C_a}{\bra{x}^{1+\delta}} 
  [f(u(x))\overline{u(x)}-2F(u(x))]\abs{u(y)}^2.
\end{equation*}

Now we integrate \eqref{eq:virialinteraction}
on $\Omega^{2}=\Omega_{x}\times \Omega_{y}$.
The divergence terms in $\nabla_{x}$
can be neglected exactly as in the proof of
Theorems \ref{the:1} and \ref{the:1strong}, 
while the divergence terms in
$\nabla_{y}$ vanish on $\partial \Omega_{y}$
and at infinity.
Taking into account the previous
computations we obtain the inequality
\begin{equation}\label{daintegrare}
  \begin{split}
    \textstyle
    2 \int_{\Omega^{2}} &
    \textstyle
    \Re[ (a(x)\overline{\nabla^b u(x)})^t D^2
    \psi(x-y) (a(x)\nabla^b u(x))]
    \abs{u(y)}^2 dxdy+ \\
    + & 2
    \textstyle
    \int_{\Omega^{2}} 
    \Im[(a(x)\overline{\nabla^b u(x)})^t
    u(x)] D^2 \psi(x-y)
    \Im[(a(y)\nabla^b u(y))\overline{u(y)}]   dxdy + \\
    -&
    \textstyle
    \frac{1}{2}\int_{\Omega^{2}} 
    A^2 \psi(x-y) \abs{u(x)}^2\abs{u(y)}^2 dxdy \leq  \\
    \leq  \partial_t 
    &
    \textstyle
    \int_{\Omega^{2}} 
    \Im[a(x)(\nabla\psi(x-y),\nabla^b u(x))u(x)]\abs{u(y)}^2 dxdy 
    +\int_{\Omega^{2}} R(x,y) \abs{u(y)}^2  dxdy,
  \end{split}
\end{equation}
where
\begin{equation}\label{eq:defR}
  \begin{split}
    R(x,y)= & - r_{lm}(x)\partial^b_m u(x) 
      \overline{\partial_l^b u(x)} \\
        & + \Re[a(x)(\nabla\psi(x-y),\nabla c(x))]\abs{u(x)}^2 \\
        & - 2 \Im[a_{jk}(x)\partial^{b(x)}_{k}u(x)
   (\partial_{j}b_{\ell}(x)-\partial_{\ell}b_{j}(x))
   a_{\ell m}(x)\partial_{m}\psi(x-y)\ \overline{u(x)}] \\
        & -C_a \bra{x}^{-1-\delta}
        [f(u(x)) \overline{u(x)}-2F(u(x))]\abs{u(y)}^2.
  \end{split}
\end{equation}
We remark that $R(x,y)$ depends on $y$ only through $\psi$.
In the following sections we integrate \eqref{daintegrare}
in time on an interval $[0,T]$ and we estimate each term
individually.

\subsection{Positivity of quadratic terms in the derivative}
\label{sec:neglecting}

The first two terms in \eqref{daintegrare} can be
dropped from the inequality since their sum is
nonnegative. To check this fact, we rewrite them as the sum
\begin{equation*}
\begin{split}
  \textstyle
  \int_{\Omega^{2}}&  (a(x)\overline{\nabla^b u(x)})^t D^2
  \psi(x-y) (a(x)\nabla^b u(x))
  \abs{u(y)}^2dxdy \\
  & \textstyle
  +\int_{\Omega^{2}}  (a(y)\overline{\nabla^b u(y)})^t D^2
  \psi(x-y) (a(y)\nabla^b u(y))
  \abs{u(x)}^2dxdy\\
  & \textstyle
  +2 \int_{\Omega^{2}} \Im[(a(x)\overline{\nabla^b u(x)})^t
     u(x)] D^2 \psi(x-y)
     \Im[(a(y)\nabla^b u(y))\overline{u(y)}]   dxdy
\end{split}
\end{equation*}
and then positivity follows from the
the following algebraic lemma with the choice
$T(x,y)=D^2_x \psi(x-y)$:

\begin{lemma}
Let $T(x,y)$ be a real, symmetric, nonnegative matrix
depending on $x,y\in \mathbb{R}^{n}$. Then the following
quantity is nonnegative a.e.:
\begin{equation*}
  \begin{split}
    (a(x) \overline{\nabla^b u(x)})^t &
    T(x,y) (a(x)\nabla^b u(x))
    \abs{u(y)}^2
    + (a(y)\overline{\nabla^b u(y)})^t T(x,y) (a(y)\nabla^b u(y))
    \abs{u(x)}^2 \\
    &+ 2  \Im[(a(x)\overline{\nabla^b u(x)})^t
    u(x)] T(x,y)
    \Im[(a(y)\nabla^b u(y))\overline{u(y)}]  \geq 0.
  \end{split}
\end{equation*}
\end{lemma}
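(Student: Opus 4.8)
The plan is to reduce the statement to a pointwise algebraic inequality in $\C^{n}$ and then to a scalar Cauchy--Schwarz estimate. Fix $x,y$ and suppress them from the notation: put $P:=a(x)\nabla^{b}u(x)$ and $Q:=a(y)\nabla^{b}u(y)$ (vectors in $\C^{n}$), $p:=u(x)$ and $q:=u(y)$ (scalars in $\C$), and $T:=T(x,y)$. Since $T$ is real, symmetric and nonnegative it factors as $T=S^{t}S$ with $S$ a real matrix (e.g.\ $S=T^{1/2}$). Because $S$ has real entries it commutes with complex conjugation, so, writing $P':=SP$, $Q':=SQ$ and using $a(x)\overline{\nabla^{b}u(x)}=\overline P$ (as $a$ is real), one gets $\overline P^{\,t}TP=\abs{P'}^{2}$, $\overline Q^{\,t}TQ=\abs{Q'}^{2}$, and $\Im[\overline P^{\,t}p]^{t}\,T\,\Im[Q\overline q]=\sum_{l}\Im(\overline{P'_{l}}\,p)\,\Im(Q'_{l}\,\overline q)$, where $\abs{\cdot}$ denotes the Euclidean norm on $\C^{n}$ and the subscript $l$ the $l$-th component. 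Hence the quantity in the Lemma equals
\[
  \abs{P'}^{2}\abs q^{2}+\abs{Q'}^{2}\abs p^{2}
  +2\sum_{l}\Im(\overline{P'_{l}}\,p)\,\Im(Q'_{l}\,\overline q).
\]

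Next I would prove nonnegativity term by term in $l$. Fixing $l$ and writing $\alpha:=P'_{l}$, $\beta:=Q'_{l}$, the elementary bound $\abs{\Im(zw)}\le\abs z\abs w$ together with the arithmetic--geometric mean inequality gives
\[
  2\,\bigl\lvert\Im(\overline\alpha\,p)\,\Im(\beta\,\overline q)\bigr\rvert
  \le 2\,\abs\alpha\abs p\,\abs\beta\abs q
  =2\,(\abs\alpha\abs q)(\abs\beta\abs p)
  \le \abs\alpha^{2}\abs q^{2}+\abs\beta^{2}\abs p^{2},
\]
so that $\abs\alpha^{2}\abs q^{2}+\abs\beta^{2}\abs p^{2}+2\,\Im(\overline\alpha\,p)\,\Im(\beta\,\overline q)\ge0$. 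Summing over $l$ yields the claimed inequality. Since $u\in C(\R,H^{1}_{0}(\Omega))$, the vectors $P,Q$ and the scalars $p,q$ are defined for almost every $(x,y)\in\Omega\times\Omega$, which is why the conclusion is stated to hold a.e.

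There is no real obstacle here: the only point requiring a little care is that $T$ acts on complex vectors, which the real factorization $T=S^{t}S$ handles cleanly, after which the inequality decouples into $n$ independent scalar estimates. (Equivalently, for each $l$ the symmetric $2\times2$ matrix with diagonal entries $\abs\alpha^{2}\abs q^{2}$, $\abs\beta^{2}\abs p^{2}$ and off-diagonal entry $\Im(\overline\alpha\,p)\,\Im(\beta\,\overline q)$ has nonnegative trace and, by Cauchy--Schwarz, nonnegative determinant, hence is positive semidefinite; evaluating its quadratic form at $(1,1)$ gives the same conclusion, but the displayed estimate is shorter.) In the application to \eqref{daintegrare} one takes $T(x,y)=D^{2}_{x}\psi(x-y)$, which is positive semidefinite because the radial weights $\psi(r)=\brasigma{r}$, $\sigma\ge0$, used here satisfy $\psi''\ge0$ and $\psi'/r\ge0$.
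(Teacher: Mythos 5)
Your proof is correct and follows essentially the same route as the paper's: both reduce the matrix inequality to $n$ independent scalar Cauchy--Schwarz estimates, with the key pointwise bound $2\bigl\lvert\Im(\overline\alpha\,p)\,\Im(\beta\,\overline q)\bigr\rvert\le\abs{\alpha}^{2}\abs{q}^{2}+\abs{\beta}^{2}\abs{p}^{2}$ appearing identically in both. The only cosmetic difference is that you use the square-root factorization $T=S^{t}S$ to absorb $T$ into the vectors at once, whereas the paper first treats the diagonal case and then handles general $T$ via the spectral decomposition $T=S^{t}\Lambda S$ with $S$ orthogonal; the two reductions are interchangeable.
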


\begin{proof}
  Let $\Sigma(x,y)$ be the quantity in the statement.
  Assume first $T=\text{diag}(\lambda_1,\dots,\lambda_n)$
  is diagonal at a point $(x,y)$, with $\lambda_{j}\ge0$.
  We have then
  \begin{equation*}
    \begin{split}
      \Sigma(x,y) & =
      \textstyle
      \sum_{j=1}^n \lambda_j 
      \big\{\abs{(a(x){\nabla^b u(x)})_j }^2 \abs{u(y)}^2+ 
        \abs{(a(y){\nabla^b u(y)})_j }^2\abs{u(x)}^2 \\
        &\qquad \quad+2 \Im[(a(x)\overline{\nabla^b u(x)})_j
        u(x)]  \Im[(a(y)\nabla^b u(y))_j \overline{u(y)}]  \big\} \\
      & \textstyle
      \geq\sum_{j=1}^n 
        \lambda_j \big\{\abs{(a(x){\nabla^b u(x)})_j }^2 
        \abs{u(y)}^2+ 
        \abs{(a(y){\nabla^b u(y)})_j }^2\abs{u(x)}^2 \\
        &\qquad \quad - 2 \abs{(a(x)\overline{\nabla^b u(x)})_j}
        \abs{u(x)}  \abs{(a(y)\nabla^b u(y))_j}
        \abs{\overline{u(y)}}  \big\} \geq 0.
  \end{split}
  \end{equation*}
  If $T(x,y)$ is non diagonal,
  we can find an orthonormal matrix
  $S=S(x,y)$ with real entries
  such that $T =S^t \Lambda S$, with 
  $\Lambda \geq 0$ real and diagonal. This implies
  \begin{equation*}
    \begin{split}
      \Sigma(x,y) 
      = &(S a(x)\overline{\nabla^b u(x)})^t \Lambda 
      (Sa(x)\nabla^b u(x))
      \abs{u(y)}^2 \\
      &+  (Sa(y)\overline{\nabla^b u(y)})^t \Lambda 
      (Sa(y)\nabla^b u(y))
      \abs{u(x)}^2 \\
      & + 2  \Im[(Sa(x)\overline{\nabla^b u(x)})^t
      u(x)] \Lambda
      \Im[(Sa(y)\nabla^b u(y))\overline{u(y)}],
    \end{split}
  \end{equation*}  
  and the claim follows from the previous step.
\end{proof}

\subsection{The \texorpdfstring{$\partial_t$}{} term} 
\label{sec:derivative}

We now consider the first term at the right hand side
of \eqref{daintegrare}, which is differentiated in time.
We need a Lemma:

\begin{lemma}
  Let $\psi(x-y)=\brasigma{x-y}$, for $\sigma \geq 0$.
 Then the following estimate holds:
 \begin{equation*}
  \left\vert \int_{\Omega^{2}} 
     a(x)(\nabla \psi(x-y),\nabla^b u(x))u(x)\, \varphi(y)\,dxdy \,
     \right\vert  
         \lesssim  \norma{\varphi}_{L^1} 
         \norma{u}_{\dot H^{\frac12}}^2,
 \end{equation*}
for an implicit constant independent on $\sigma$.
\end{lemma}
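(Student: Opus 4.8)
The plan is to reduce the double integral to a bound, uniform in $y\in\Omega$ and in $\sigma\ge0$, for the single integral
\[
  J(y):=\int_\Omega a(x)\bigl(\nabla_x\psi(x-y),\nabla^b u(x)\bigr)u(x)\,dx ,
\]
namely $\abs{J(y)}\lesssim\norma{u}_{\dot H^{1/2}}^2$ with implicit constant depending only on $n,N,\nu$; granting this, $\abs{\int_{\Omega^2}a(x)(\nabla\psi(x-y),\nabla^bu(x))u(x)\,\varphi(y)\,dxdy}\le\int_\Omega\abs{J(y)}\,\abs{\varphi(y)}\,dy\lesssim\norma{\varphi}_{L^1}\norma{u}_{\dot H^{1/2}}^2$, which is the claim. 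The estimate on $J(y)$ is the exact analogue of Lemma \ref{stimaterminederivata}, with the radial weight $\psi_y:=\psi(\,\cdot-y\,)$ now centered at $y$ instead of at the origin. The one point where Lemma \ref{stimaterminederivata} cannot be quoted verbatim is that $\norma{\abs{x-y}\,A_x\psi_y}_{L^\infty}$ is \emph{not} bounded uniformly in $y$: by the formula for $A_x\brasigma{x-y}$ recorded above, $A_x\psi_y(x)$ equals $a_{\ell m;\ell}(x)\,\frac{x_m-y_m}{\brasigma{x-y}}$ plus two nonnegative terms, each bounded by $(n+1)N\abs{x-y}^{-1}$, so that $\abs{A_x\psi_y(x)}\le C_a\bra{x}^{-1-\delta}+(n+1)N\abs{x-y}^{-1}$ uniformly in $y,\sigma$; it is precisely the lower-order term $a_{\ell m;\ell}$ that spoils uniform boundedness of $\abs{x-y}\,A_x\psi_y$, and I would handle it separately.

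Fix $y\in\Omega$ and, for $f,g\in C^\infty_c(\Omega)$, set $T_y(f,g):=\int_\Omega\bigl[a(x)\nabla_x\psi_y(x)\bigr]\cdot\overline{\nabla^bf(x)}\,g(x)\,dx$, so that $J(y)=T_y(u,u)$. First I would record two estimates. Since $\abs{\nabla_x\psi_y(x)}=\abs{x-y}/\brasigma{x-y}\le1$ uniformly, $\abs{a\nabla_x\psi_y}\le N$ and hence $\abs{T_y(f,g)}\le N\norma{\nabla^bf}_{L^2}\norma{g}_{L^2}$. Integrating $T_y$ by parts (the boundary term on $\partial\Omega$ vanishes by the Dirichlet condition, the term at infinity since $a\nabla_x\psi_y\in L^\infty$ and $f,g$ are compactly supported — the same manipulation as in Lemma \ref{stimaterminederivata}) gives $T_y(f,g)=-\int_\Omega[a\nabla_x\psi_y]\cdot\nabla^bg\,\overline f\,dx-\int_\Omega(A_x\psi_y)g\,\overline f\,dx$. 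The first integral is $\le N\norma{\nabla^bg}_{L^2}\norma{f}_{L^2}$; in the second, the singular part $(n+1)N\abs{x-y}^{-1}$ of $A_x\psi_y$ is absorbed by the magnetic Hardy inequality \eqref{eq:hardymag1} \emph{centered at the point $y$} (which holds for every $y\in\Omega$), producing $\lesssim N\norma{\nabla^bg}_{L^2}\norma{f}_{L^2}$, while the lower-order part $C_a\bra{x}^{-1-\delta}$ is bounded by $C_a\norma{\bra{x}^{-(1+\delta)/2}g}_{L^2}\norma{\bra{x}^{-(1+\delta)/2}f}_{L^2}$ and then by the weighted trace inequality $\norma{\bra{x}^{-(1+\delta)/2}h}_{L^2}\lesssim\norma{h}_{\dot H^{1/2}}$ — which I would prove by splitting into $\abs{x}\le1$, where $\dot H^{1/2}\hookrightarrow L^{2n/(n-1)}\hookrightarrow L^2(B_1)$ for $n\ge3$, and $\abs{x}>1$, where $\bra{x}^{-(1+\delta)}\le\abs{x}^{-1}$ and the homogeneous Hardy inequality $\norma{\abs{x}^{-1/2}h}_{L^2}\lesssim\norma{h}_{\dot H^{1/2}}$ applies. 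This yields $\abs{T_y(f,g)}\le C(n,N)\norma{\nabla^bg}_{L^2}\norma{f}_{L^2}+C(n)\,C_a\,\norma{g}_{\dot H^{1/2}}\norma{f}_{\dot H^{1/2}}$, all constants independent of $y$ and $\sigma$.

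Finally, using $\norma{\nabla^bh}_{L^2}\simeq\norma{h}_{\dot H^1}$ (Lemma \ref{lem:equivmag}), the first estimate says $T_y$ is bounded $\dot H^1\times L^2\to\C$ with norm $\lesssim N$, and the second says its principal part is bounded $L^2\times\dot H^1\to\C$ with norm $\lesssim C(n,N)$ modulo the remainder $C(n)C_a\,\norma{\cdot}_{\dot H^{1/2}}\norma{\cdot}_{\dot H^{1/2}}$; complex interpolation of sesquilinear forms at $\theta=1/2$ gives boundedness $\dot H^{1/2}\times\dot H^{1/2}\to\C$ with norm $\lesssim C(n,N)$, and since the remainder is already of the form $C_a\,\norma{\cdot}_{\dot H^{1/2}}\norma{\cdot}_{\dot H^{1/2}}$ with $C_a$ small by hypothesis, it is absorbed. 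Specializing $f=g=u$ gives $\abs{J(y)}=\abs{T_y(u,u)}\lesssim\norma{u}_{\dot H^{1/2}}^2$ uniformly in $y$ and $\sigma$, and integration against $\varphi$ finishes the proof. The main obstacle I anticipate is exactly this lower-order term $a_{\ell m;\ell}$: it blocks a direct appeal to Lemma \ref{stimaterminederivata}, forcing both the separate weighted estimate above and the use of the smallness of $C_a$ to absorb its contribution after interpolation; everything else is a faithful repetition of the proof of Lemma \ref{stimaterminederivata}, with the Hardy inequality re-centered at $y$.
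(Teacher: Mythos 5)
Your reduction to a single integral $J(y)$ uniformly in $y$, the integration by parts, the first $\dot H^{1}\times L^{2}$ bound, the bound on the principal part of $A_x\psi_y$ by the magnetic Hardy inequality centered at $y$, and the $L^{\infty}$ bound $\abs{\nabla\psi}\le 1$ all match the paper's argument. Where you diverge is in the treatment of the lower-order term $a_{\ell m;\ell}\partial_{x_m}\psi_y$: the paper simply uses $\bra{x}^{-1-\delta}\le\abs{x}^{-1}$ together with \eqref{eq:hardymag1} (with $y=0$) to get $\|\bra{x}^{-1-\delta}g\|_{L^2}\lesssim\|\nabla^{b}g\|_{L^2}$, which places this contribution in the same $L^{2}\times\dot H^{1}$ shape as the others. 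This yields a clean pair of endpoint bounds $\dot H^{1}\times L^{2}$ and $L^{2}\times\dot H^{1}$ on the \emph{same} sesquilinear form $T_y$, and the interpolation is immediate. Your alternative --- splitting the weight as $\bra{x}^{-(1+\delta)/2}\otimes\bra{x}^{-(1+\delta)/2}$ and proving a separate $\dot H^{1/2}$ trace inequality --- is correct as far as it goes, but it is strictly more work than needed, and it introduces a genuine logical wrinkle in your final step.

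The wrinkle is this: as written, you interpolate the estimate ``$T_y\colon\dot H^{1}\times L^{2}\to\mathbb{C}$'' against ``$P_y\colon L^{2}\times\dot H^{1}\to\mathbb{C}$,'' where $P_y$ is the principal part of $T_y$ after integration by parts. Complex (Stein--Weiss) interpolation requires \emph{one} form with bounds on \emph{both} couple-pairs; you cannot interpolate bounds on two distinct forms. To close the argument you would need, in addition, $|P_y(f,g)|\lesssim\|f\|_{\dot H^{1}}\|g\|_{L^{2}}$, which does follow from the triangle inequality once you observe $|R_y(f,g)|\lesssim C_a\|\bra{x}^{-1-\delta}f\|_{L^{2}}\|g\|_{L^{2}}\lesssim C_a\|\nabla^{b}f\|_{L^{2}}\|g\|_{L^{2}}$ (again \eqref{eq:hardymag1}). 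With that extra line the proof is complete: interpolate the two bounds for $P_y$ to get $|P_y(f,g)|\lesssim\|f\|_{\dot H^{1/2}}\|g\|_{\dot H^{1/2}}$, and add the already $\dot H^{1/2}\times\dot H^{1/2}$ bound for $R_y$. Note also that no smallness of $C_a$ is required here --- the remainder is merely added, not absorbed; your phrasing suggests a bootstrap that is not actually taking place, and indeed the Lemma's statement imposes no smallness on $C_a$. In short: the mechanism is sound and equivalent in spirit to the paper's proof, but you should either (i) handle the $a'$ term as the paper does, which removes the extra trace inequality and the interpolation subtlety at once, or (ii) keep your route but make the decomposition $T_y=P_y+R_y$ explicit and supply the missing $\dot H^{1}\times L^{2}$ bound on $P_y$ before interpolating.
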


\begin{proof}
  For $f,g\in C^{\infty}_c(\Omega)$, set 
  \begin{equation*}
    \textstyle
    T(f,g):=\int_{\Omega^{2}} 
    a(x)(\nabla\psi(x-y),\nabla^b f(x))g(x)\,\varphi(y)\,dxdy.
  \end{equation*}
We have immediately
\begin{equation}\label{eq:time-1}
  \abs{T(f,g)}\leq \norma{a}_{L^{\infty}}
  \norma{\varphi}_{L^1}\norma{\nabla^b f}_{L^2}\norma{g}_{L^2}.
\end{equation}
On the other hand, integrating by parts we get
\begin{equation}\label{eq:time0}
  \begin{split}
    \abs{T(f,g)} \leq & 
    \textstyle
    \left\vert 
    \int_{\Omega^{2}} a(x)(\nabla\psi(x-y), \nabla^b g(x))f(x)
    \overline{\varphi(y)} dxdy\right\vert \\
    & + \textstyle
    \left\vert 
    \int_{\Omega^{2}} \partial_{x_m}a_{\ell m}(x)\partial_{x_\ell}
    \psi(x-y)\overline{f(x)}g(x)\varphi(y)\,dxdy \right\vert \\
    & + \textstyle
    \left\vert \int_{\Omega^{2}} a_{\ell m}(x)
    \partial_{x_\ell x_m}\psi(x-y)\overline{f(x)}g(x)
    \varphi(y)\, dydx      \right\vert.
    \end{split}
\end{equation}
By assumption \eqref{eq:assader}, we have
\begin{equation}\label{eq:time1}
\begin{split}
  \textstyle |
  \int_{\Omega^{2}} \partial_{x_m}a_{\ell m}(x) &
  \partial_{x_\ell} \psi (x-y) 
  \overline{f(x)}g(x)\varphi(y)\,
  dxdy | \le \\
  & 
  \leq C_a \norma{\varphi}_{L^1} 
  \norma{f}_{L^2} \left\Vert\bra{x}^{-1 - \delta}g  
  \right\Vert_{L^2}  \lesssim \norma{\varphi}_{L^1} 
  \norma{f}_{L^2} \norma{\nabla^b g}_{L^2},
\end{split}
\end{equation}
where in the last step we used \eqref{eq:hardymag1}.
By direct computation
\begin{equation}\label{eq:time2}
  \begin{split}
    \textstyle
    \vert \int_{\Omega^{2}} a_{\ell m}(x)&
    \partial_{x_\ell x_m}\psi(x-y)\overline{f(x)}g(x)
    \varphi(y) dydx
    \vert \\
    &\le \textstyle
     \int_{\R^{2n}} 
    \vert\bar a (x)- a(x)(\widehat{(x-y)}^\sigma,\widehat{(x-y)}^\sigma) 
    \vert \cdot
    \brasigma{x-y}^{-1}\cdot|\overline{f(x)}g(x)\varphi(y)|
    dxdy \\
    &\leq  \textstyle Nn
     \norma{\varphi}_{L^1} 
    \norma{f}_{L^2} 
    \sup_{y}\left(\int_{\R^{n}}
    \frac{\abs{g(x)}^2}{\abs{x-y}^2}\,dx\right)^{\frac12} \\
    &\lesssim    \norma{\varphi}_{L^1} 
    \norma{f}_{L^2} \norma{\nabla^b g}_{L^2}, 
  \end{split}
\end{equation}
again using \eqref{eq:hardymag1} in the last inequality.
By \eqref{eq:time1} and \eqref{eq:time2}, we deduce
from \eqref{eq:time0}
\begin{equation*}
  \abs{T(f,g)}\lesssim \norma{\varphi}_{L^1} 
  \norma{f}_{L^2}\norma{\nabla^b g}_{L^2}.
\end{equation*}
Recalling now the equivalence \eqref{eq:equivmag},
by complex interpolation beetwen this
estimate and \eqref{eq:time-1} we obtain
\begin{equation*}
  \textstyle
  |\int_{\R^n} a(x)(\nabla\psi(x-y),\nabla^b f(x))g(x)\,
  \varphi(y)\,dxdy| \lesssim 
  \norma{\varphi}_{L^1}
  \norma{f}_{\dot H^{\frac12}}
  \norma{g}_{\dot H^{\frac12}}
\end{equation*}
and taking $f=g=u$ we conclude the proof.
\end{proof}

If we choose $\varphi=\abs{u}^2$ in the previous Lemma,
we obtain
\begin{equation}\label{eq:derest}
  \begin{split}
    & \left\vert \int_0^T \partial_t \int_{\Omega^{2}} 
      \Im[a(x)(\nabla\psi(x-y),\nabla^b u(x))u(x)]\abs{u(y)}^2 
      dxdy\,dt\right\vert  \\
    & \qquad \qquad \lesssim
    \norma{u(0)}_{L^2}^2 
    \left[\norma{u(0)}_{\dot H^{\frac{1}{2}}}^2+
    \norma{u(T)}_{\dot H^{\frac{1}{2}}}^2\right],
  \end{split}
\end{equation}
since the $L^2$-norm of the solution is constant in time.

\subsection{The \texorpdfstring{$R(x,y)$}{} term}
\label{sec:remainder}

We now focus on the last term in \eqref{daintegrare}. 
Our goal is to prove
\begin{equation}\label{eq:estrem}
  \left\vert \int_0^T \int_{\Omega^{2}} 
  R(x,y)\abs{u(y)}^2dxdy\,dt\right\vert \lesssim
  \norma{u(0)}_{L^2}^2
  \left[\norma{u(0)}_{\dot H^{\frac{1}{2}}}^2
  +\norma{u(T)}_{\dot H^{\frac{1}{2}}}^2\right].
\end{equation}
The quantity $R(x,y)$, defined by
\eqref{eq:defR}, gives rise to four terms.

For the term containing $r_{\ell m}$ (see \eqref{eq:defr})
we notice that for all $\sigma \ge 0$ we have $|\nabla \psi|\le1$,
hence both in the proof of Theorem \ref{the:2} and Theorem \ref{the:2strong}
we have
\begin{equation*}
  |r_{\ell m}(x)|\le 2N|a'(x)|\le 2NC_{a}\bra{x}^{-1-\delta}
\end{equation*}
using \eqref{eq:assader}.
This implies
\begin{equation*}
  \textstyle   
  |\int_0^T \int_{\Omega^{2}}
    r_{\ell m}(x)\partial^b_m u(x) 
     \overline{\partial_\ell^b u(x)}\abs{u(y)}^2
     dxdydt|
  \lesssim  \norma{u(0)}_{L^2}^{2} \int_{\R^n} 
  \bra{x}^{-1-\delta}
  \norma{\nabla^b u(x)}^2_{L^2_t}dx
\end{equation*}
by the conservation of the $L^{2}$ norm.
In the proof of Theorem \ref{the:2},
by estimate \eqref{eq:stnorma4} and \eqref{eq:smoomor} we obtain
\begin{equation*}
  \textstyle   
  |\int_0^T \int_{\Omega^{2}}
    r_{\ell m}(x)\partial^b_m u(x) 
     \overline{\partial_\ell^b u(x)}\abs{u(y)}^2
     dxdydt|
  \lesssim  \norma{u(0)}_{L^2}^{2}
  \left[\norma{u(0)}_{\dot H^{\frac{1}{2}}}^2+
  \norma{u(T)}_{\dot H^{\frac{1}{2}}}^2\right],
\end{equation*}
and in the proof of Theorem \ref{the:2strong} we get the same result 
thanks to \eqref{eq:stnorma4} and \eqref{eq:smoomorstrong}.

We estimate differently the term containing $c$ in the two proofs.
In the proof of Theorem \ref{the:2},
recalling assumption \eqref{eq:assacnabla}, we have
\begin{equation*}
  \begin{split}
    \textstyle
    |\int_0^T \int_{\Omega^{2}}a(x) &
    (\nabla\psi(x-y),\nabla c(x))\abs{u(x)}^2
    \abs{u(y)}^2dxdydt|
    \\
    &
    \textstyle
    \lesssim \norma{u(0)}_{L^2}\int_{\Omega} 
    \norma{u(x)}^2_{L^2_t}
    |x|^{-2}\bra{x}^{-1-\delta}\,dx
    \lesssim 
    \norma{u(0)}_{L^2_x}^2 \norma{u}_{\dot X L^2_t}^2 
  \end{split}
\end{equation*}
using the inequality \eqref{eq:stnorma1},
and, thanks to \eqref{eq:smoomor},
\begin{equation*}
  \textstyle
    |\int_0^T \int_{\Omega^{2}}
     a(x)(\nabla\psi(x-y),\nabla c)
     \abs{u(x)}^2\abs{u(y)}^2dxdy\,dt|
    \lesssim  \norma{u(0)}_{L^2}^2
    \left[\norma{u(0)}_{\dot H^{\frac{1}{2}}}^2
    +\norma{u(T)}_{\dot H^{\frac{1}{2}}}^2\right].
\end{equation*}
In the proof of Theorem \ref{the:2strong},
recalling assumption \eqref{eq:assacnabla} and thanks to \eqref{eq:stimadimbassasingolare}, we have
\begin{equation}\label{eq:usodopo1}
  \begin{split}
    \textstyle
    |\int_0^T \int_{\Omega^{2}}a(x) &
    (\nabla\psi(x-y),\nabla c(x))\abs{u(x)}^2
    \abs{u(y)}^2dxdydt|
    \\
    &
    \textstyle
    \lesssim \norma{u(0)}_{L^2}
    \int_0^T \int_{\Omega} 
    |x|^{-2}\bra{x}^{-1-\delta}
    \abs{u(x)}^2
    \,dx\,dt
    \\
    &
    \textstyle
    \lesssim
    \norma{u(0)}_{L^2}
    \left[\norma{u}^2_{X L^2_T} 
    +
    \norma{\nabla^b u}^2_{Y L^2_T}
    \right]
    \\
    &
    \textstyle
    \lesssim
    \norma{u(0)}_{L^2}
    \left[\norma{u(0)}_{\dot H^{\frac{1}{2}}}^2
      +\norma{u(T)}_{\dot H^{\frac{1}{2}}}^2\right].
  \end{split}
\end{equation}

We turn to the estimate of the term containing $b(x)$.
In the proof of Theorem \ref{the:2}, $b$ satisfies \eqref{eq:assdb}, 
and we proceed exactly as in  Section 
\ref{subsub:estimate_magnetic} above, 
and then use \eqref{eq:smoomor}. 
In the proof of Theorem \ref{the:2strong}, 
$b$ satisfies \eqref{eq:assdbstrong}
and we proceed exactly as in  Section \ref{subsub:estimate_magneticstrong} above, 
and then use \eqref{eq:smoomorstrong}. 
In both cases we get
\begin{equation*}
  \textstyle
    \int_0^T \int_{\Omega^{2}} \abs{I_b(x)} 
    \abs{u(y)}^2dxdy\,dt 
    \lesssim \norma{u(0)}_{L^2}^2
    \left[\norma{u(0)}_{\dot H^{\frac{1}{2}}}^2
    +\norma{u(T)}_{\dot H^{\frac{1}{2}}}^2\right].
\end{equation*}

For the term containing $f(u)$ we write
\begin{equation*}
  \begin{split}
    C_a
    \textstyle
    \int_0^T| \int_{\Omega^{2}} &
    \bra{x}^{-1-\delta}
    [f(u(x))\overline{u(x)}-2F(u(x))] 
    \abs{u(y)}^2dxdydt 
    \\
    & 
    \textstyle
    \lesssim \norma{u(0)}_{L^2} 
    \left[\norma{u(0)}_{\dot H^{\frac{1}{2}}}^2
    +\norma{u(T)}_{\dot H^{\frac{1}{2}}}^2\right],
  \end{split}
\end{equation*}
by \eqref{eq:smoomor} in the proof of Theorem \ref{the:2} 
and \eqref{eq:smoomorstrong}  in the proof of Theorem \ref{the:2strong},
and this concludes the proof of \eqref{eq:estrem}.

\subsection{The main term}\label{sec:mainterm}
Integrating in time the inequality \eqref{daintegrare} on
$[0,T]$ and collecting estimates 
\eqref{eq:derest},
\eqref{eq:estrem}
and the results of Section \ref{sec:neglecting},
we have proved that
\begin{equation}\label{eq:finale1}
    - \int_0^T \int_{\Omega^{2}}
    A_{x}^2 \psi(x-y) \abs{u(x)}^2\abs{u(y)}^2\,dxdy dt
    \lesssim   \norma{u(0)}_{L^2}^2
    \left[\norma{u(0)}_{\dot H^{\frac{1}{2}}}^2
    +\norma{u(T)}_{\dot H^{\frac{1}{2}}}^2\right].
\end{equation}
We now compute explicitly the quantity $A^2_x \psi$:
we have
\begin{equation*}
  A^2_x \psi(x-y)= S(x,y) + E(x,y)
\end{equation*}
where, using the notations
\begin{equation*}
  \widetilde{a}=
  \widetilde a(x,y)= a(x)\widehat{(x-y)}\cdot\widehat{(x-y)},
  \qquad
  \widehat{x}=\frac{x}{|x|},
\end{equation*}
$S(x,y)$ and $E(x,y)$ are given by
\begin{equation}\label{eq:Sx2interaction}
\begin{split}
  S(x,y)=
  &
  \textstyle
  \widetilde{a}^{2}\psi^{IV}(x-y)+
  [2 \overline{a}(x)\widetilde{a}-6 \widetilde{a}^{2}+4|a(x) 
  \widehat{(x-y)}|^{2}]
     \frac{\psi'''(x-y)}{\abs{x-y}}+
    \\
  &+
  \textstyle
  [2a_{\ell m}(x) a_{\ell m}(x)
     +\overline{a}^{2}(x)
     -6 \overline{a}(x)\widetilde{a}
     +15 \widetilde{a}^{2}
     -12 |a(x) \widehat{(x-y)}|^{2}]\times \\
     & \qquad \qquad
     \textstyle
     \times\left(\frac{\psi''(x-y)}{|x-y|^{2}}-
     \frac{\psi'(x-y)}{|x-y|^{3}}\right)
\end{split}
\end{equation}
and
\begin{equation*}
  \textstyle
\begin{split}
  E(x,y)=
  &
  \widetilde{a}a_{\ell m;\ell}(x)\widehat{(x-y)}_{m}{\psi'''(x-y)}+
  (\overline{a}(x)-\widetilde{a})
  a_{jk;j}(x)\widehat{(x-y)}_{k}
  \textstyle
  \left(
  \frac{\psi''(x-y)}{|x-y|}-\frac{\psi'(x-y)}{|x-y|^{2}}
  \right)+
    \\
  &+
    [
  \partial_{j}(a_{jk}(x) a_{\ell m;k}(x)\widehat{(x-y)}_{\ell}
  \widehat{(x-y)}_{m})
  +
  \partial_{j}(a_{jk}(x) a_{\ell m}(x))\partial_{k}(\widehat{(x-y)}_{\ell}
      \widehat{(x-y)}_{m})
  ]
  \textstyle
  \times \\ & \qquad \qquad\times
  \textstyle
  \left(
  {\psi''(x-y)}-\frac{\psi'(x-y)}{|x-y|}
  \right)
  \\
  &+
  \textstyle
  (A_x \overline{a}(x))\frac{\psi'(x-y)}{|x-y|}
    \\
  &+
    2a_{jk}(x) a_{\ell m;k}(x)\widehat{(x-y)}_{\ell}\widehat{(x-y)}_{m}
    \widehat{(x-y)}_{j}
    \textstyle
    \left({\psi'''(x-y)}-\frac{\psi''(x-y)}{|x-y|}\right)
    \\ 
    &
    \textstyle
    +2a(x)(\nabla \overline{a}(x),\nabla \frac{\psi'(x-y)}{|x-y|})
    +
    A_x(a_{\ell m;\ell}(x)\widehat{(x-y)}_{m}\psi'(x-y)).
\end{split}
\end{equation*}
With long but elementary computations, for $n\ge 3$ and $\sigma \ge 0$ we have that
\begin{equation*}
  \abs{E(x,y)} \leq 5n C_a (N+C_a) 
  \left[
    \frac{1}{\bra{x}^{1+\delta}\abs{x-y}\brasigma{x-y}}
  +\frac{1}{\bra{x}^{1+\delta}\abs{x}\brasigma{x-y}}
  + \frac{1}{\bra{x}^{1+\delta}\abs{x}^2}\right],
\end{equation*}
whence
\begin{equation*}
  \int_{\Omega^2} E(x,y) \abs{u(x)}^2 \abs{u(y)}^2 \, dxdy \lesssim C_a [I + II + III]
\end{equation*}
with an implicit constant depending on $N$ and $n$,
where
\begin{equation*}
  I=\int_{\Omega^{2}}
  \frac{\abs{u(x)}^2\abs{u(y)}^2}
  {\bra{x}^{1+\delta}\abs{x-y}^2}\,dxdy,
  \qquad
  II=\int_{\Omega^{2}}\frac{\abs{u(x)}^2\abs{u(y)}^2}
  {\bra{x}^{1+\delta}\abs{x}\abs{x-y}}\,dxdy
\end{equation*}
and
\begin{equation*}
  III=\int_{\Omega^{2}}\frac{\abs{u(x)}^2\abs{u(y)}^2}
  {\bra{x}^{1+\delta}\abs{x}^2}\,dxdy.
\end{equation*}
We now extend $u(t,x)$ as zero outside $\Omega$,
i.e.~we define the function $U(t,x)$ as
\begin{equation*}
  U(t,x)=u(t,x)
  \ \text{for $x\in \Omega$},
  \qquad
  U(t,x)=0
  \ \text{for $x\not \in \Omega$.}
\end{equation*}
Before proceeding further, we need the following Lemma:

\begin{lemma}\label{lem:stima}
  Let $n\ge3$, $\delta\in(0,1]$.
  There exist $\eta=\eta(n,\delta)>0$
  such that for all $f\in \mathscr{S}$
  \begin{equation*}
    \begin{split}
    \left\Vert \abs{D}^{\frac{3-n}{2}-1} 
    \frac{f}{\bra{\cdot}^{1+\delta}}\right\Vert_{L^2(\R^n)}
    \leq \eta \norma{\abs{D}^{\frac{3-n}{2}}f}_{L^2(\R^n)}, 
    \\
    \left\Vert \abs{D}^{\frac{3-n}{2}-1} 
    \frac{f}{\abs{\cdot}^{\frac12}
    \bra{\cdot}^{\frac12+\delta}}\right\Vert_{L^2(\R^n)}
    \leq \eta \norma{\abs{D}^{\frac{3-n}{2}}f}_{L^2(\R^n)}.
  \end{split}
  \end{equation*}
\end{lemma}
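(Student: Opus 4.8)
The plan is to dualize the two inequalities into a single weighted product estimate in homogeneous Sobolev norms, and then prove that estimate by Hardy's inequality when $n=3$ and by the fractional Leibniz rule when $n\ge 4$.

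Write $\sigma:=\tfrac{n-3}{2}\ge 0$, so that the operators in the statement are $\abs{D}^{-\sigma-1}$ on the left and $\abs{D}^{-\sigma}$ on the right, and let $w$ denote either of the two weights $\bra{\cdot}^{-1-\delta}$ or $\abs{\cdot}^{-1/2}\bra{\cdot}^{-1/2-\delta}$. The first step is the duality identity: using that $\abs{D}^{\sigma}$ is self--adjoint and $w$ is real, for $f\in\mathscr S$ one has $\norma{\abs{D}^{-\sigma-1}(fw)}_{L^2}=\sup_{\norma{g}_{L^2}=1}\bigl|\langle f,\,w\,\abs{D}^{-\sigma-1}g\rangle\bigr|$, and each pairing is $\le\norma{\abs{D}^{-\sigma}f}_{L^2}\norma{\abs{D}^{\sigma}(w\,\abs{D}^{-\sigma-1}g)}_{L^2}$. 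Setting $h:=\abs{D}^{-\sigma-1}g$, so that $\norma{g}_{L^2}=\norma{h}_{\dot H^{\sigma+1}}$, both asserted inequalities (with the same $\eta$) follow once we prove
\[
  \norma{wh}_{\dot H^{\sigma}}\le\eta\,\norma{h}_{\dot H^{\sigma+1}},
  \qquad h\in\mathscr S. \qquad (\ast)
\]
Since $\sigma+1=\tfrac{n-1}{2}<\tfrac n2$, the space $\dot H^{\sigma+1}$ is a genuine function space and the Sobolev embeddings $\dot H^{\sigma+1}\hookrightarrow L^{2n}$ and $\dot H^{1}\hookrightarrow L^{2n/(n-2)}$ are available.

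For $n=3$ one has $\sigma=0$ and $(\ast)$ reads $\norma{wh}_{L^2}\lesssim\norma{h}_{\dot H^1}$; both weights satisfy $0\le w(x)\le\abs{x}^{-1}$ pointwise (for $\bra{x}^{-1-\delta}$ because $\bra{x}^{-1-\delta}\le\bra{x}^{-1}\le\abs{x}^{-1}$; for $\abs{x}^{-1/2}\bra{x}^{-1/2-\delta}$ by checking $\abs{x}\le1$ and $\abs{x}\ge1$ separately), so $(\ast)$ is an immediate consequence of the Hardy inequality \eqref{eq:hardymag1} with $b\equiv0$. For $n\ge 4$, so $\sigma>0$, I would use the fractional Leibniz (Kato--Ponce) inequality in the form $\norma{\abs{D}^{\sigma}(wh)}_{L^2}\lesssim\norma{w}_{L^{n}}\norma{\abs{D}^{\sigma}h}_{L^{2n/(n-2)}}+\norma{\abs{D}^{\sigma}w}_{L^{2n/(n-1)}}\norma{h}_{L^{2n}}$ (all exponents lie in $(1,\infty)$ and the pairs $(n,\tfrac{2n}{n-2})$, $(\tfrac{2n}{n-1},2n)$ are conjugate to $2$), and bound the $h$--factors by $\norma{h}_{\dot H^{\sigma+1}}$ via the two Sobolev embeddings above, using $\norma{\abs{D}^{\sigma}h}_{\dot H^1}=\norma{h}_{\dot H^{\sigma+1}}$. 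What remains is to check $\norma{w}_{L^n}<\infty$ and $\norma{\abs{D}^{\sigma}w}_{L^{2n/(n-1)}}<\infty$: $w$ is bounded (first weight) or $\sim\abs{x}^{-1/2}\in L^n_{\mathrm{loc}}$ (second) near the origin, and $w\lesssim\abs{x}^{-1-\delta}$ at infinity with $(1+\delta)n>n$ precisely because $\delta>0$; for $\abs{D}^{\sigma}w$ one splits $w=\chi w+(1-\chi)w$ with $\chi$ a cutoff near $0$, notes that $(1-\chi)w$ is a symbol of order $0$ decaying like $\abs{x}^{-1-\delta}$ so that $\abs{D}^{\sigma}\bigl((1-\chi)w\bigr)$ decays like $\abs{x}^{-(n-1)/2-\delta}$ — in $L^{2n/(n-1)}$ exactly because $\delta>0$ — while $\chi w$ is either smooth and compactly supported or equal, up to a smooth bounded factor, to $\chi\abs{x}^{-1/2}$, for which $\abs{D}^{\sigma}\abs{x}^{-1/2}=c_n\abs{x}^{-(n-2)/2}$ (legitimate since $0<\tfrac12+\sigma=\tfrac{n-2}{2}<n$) lies in $L^{2n/(n-1)}_{\mathrm{loc}}$ because $\tfrac{n-2}{2}\cdot\tfrac{2n}{n-1}<n$. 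This proves $(\ast)$ and hence the Lemma.

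The main obstacle is conceptual rather than computational: the right--hand side of $(\ast)$ is a homogeneous Sobolev norm one order higher than the left--hand side, so the two cannot be compared directly (homogeneous spaces do not embed into one another), and the gained derivative must be spent honestly — converted, through Sobolev embedding into $L^{2n}$ and $L^{2n/(n-2)}$, into the integrability needed to absorb the critical ($\abs{x}^{-1}$--type) size of $w$. This is exactly where the strict decay $\bra{x}^{-1-\delta}$ with $\delta>0$ is used, placing $w$ in $L^{n}$ and $\abs{D}^{\sigma}w$ in $L^{2n/(n-1)}$. A secondary technical annoyance is that $\sigma=\tfrac{n-3}{2}$ is an integer only for odd $n$, so for even $n$ the fractional form of Leibniz is genuinely required, and one must be slightly careful about the mapping properties of $\abs{D}^{\sigma}$ on the non--smooth second weight, which is handled by the cutoff splitting above.
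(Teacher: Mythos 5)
Your argument is correct and follows essentially the same route as the paper's proof: duality reduces both inequalities to $\norma{wh}_{\dot H^{\sigma}}\lesssim\norma{h}_{\dot H^{\sigma+1}}$ with $\sigma=\frac{n-3}{2}$, which is Hardy's inequality for $n=3$ and Kato--Ponce plus Sobolev embedding for $n\ge4$. Your unified treatment of the two weights and the cutoff splitting for the local singularity of the second weight are slightly more explicit than the paper, which proves the first inequality and declares the second ``analogous''; you also implicitly correct a small slip in the paper's display \eqref{eq:grafakos}, where $\abs{D}^{\frac{n-3}{2}}\bra{x}^{-1}$ should read $\abs{D}^{\frac{n-3}{2}}\bra{x}^{-1-\delta}$ so that this factor actually lies in $L^{\frac{2n}{n-1}}(\R^n)$.
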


\begin{proof}
  We prove the first inequality.
  By duality, it is equivalent to prove that
  \begin{equation}\label{eq:lemmaprovanda}
    \left\Vert\abs{D}^{\frac{n-3}{2}} 
    \frac{f}{\bra{x}^{1+\delta}}\right\Vert_{L^2(\R^n)} \lesssim 
    \norma{\abs{D}^{\frac{n-3}{2}+1} f}_{L^2(\R^n)}.
  \end{equation}
  If $n=3$, \eqref{eq:lemmaprovanda} is a simple consequence of Hardy inequality \eqref{eq:hardymag1},
  in the case $y=0$, $b\equiv 0$.
  If $n\ge 4$,
  by the Kato-Ponce inequality
  (see e.g.~\cite{GrafakosOh13-a})
  and Sobolev embedding, we have
  \begin{equation}\label{eq:grafakos}
  \begin{split}
    &\left\Vert\abs{D}^{\frac{n-3}{2}} 
    \frac{f}{\bra{x}^{1+\delta}}\right\Vert_{L^2(\R^n)} \\
    &\quad \quad \lesssim
    \norma{\abs{D}^{\frac{n-3}{2}}f}_{L^{\frac{2n}{n-2}}(\R^n)}
    \norma{\bra{x}^{-1-\delta}}_{L^{n}} 
    + \norma{\abs{D}^{\frac{n-3}{2}}\bra{x}^{-1}}_{L^{\frac{2n}{n-1}}}
    \norma{f}_{L^{2n}}\\
    & \quad \quad \lesssim\norma{\abs{D}^{\frac{n-3}{2}+1}f}_{L^2}
  \end{split}
  \end{equation}
  where the implicit constants clearly depend only on
  $n$ and $\delta$.
  The proof of the second inequality is analogous.
\end{proof}

Now, to estimate $I$ we write
\begin{equation*}
\begin{split}
  I
  &=
  \textstyle
  \int_{\mathbb{R}^{n}} \frac{\abs{U(x)}^2}
  {\bra{x}^{1+\delta}}\int_{\mathbb{R}^{n}} 
  \frac{\abs{U(y)}^2}{\abs{x-y}^{2}}dydx  
  \simeq \int_{\mathbb{R}^{n}} 
  \frac{\abs{U(x)}^2}{\bra{x}^{1+\delta}}
  \abs{D}^{2-n}\abs{U(x)}^2\,dx
  \\
  &= 
  \textstyle
  \int_{\R^n} \abs{D}^{\frac{3-n}{2}-1}
  (\bra{x}^{-1-\delta}\abs{U(x)}^2)
  \abs{D}^{\frac{3-n}{2}}\abs{U(x)}^2\,dx 
  \\
  &\leq \left\Vert 
  \abs{D}^{\frac{3-n}{2}-1}
  \frac{\abs{U}^2}
  {\bra{x}^{1+\delta}}
  \right\Vert_{L^2}\norma{\abs{D}^{\frac{3-n}{2}}\abs{U}^2}_{L^2}
\end{split}
\end{equation*}
and applying Lemma \ref{lem:stima} we obtain
\begin{equation*}
  I \le C(n,\delta) 
  \norma{\abs{D}^{\frac{3-n}{2}}\abs{U}^2}_{L^2}^2.
\end{equation*}
Next we split the integral $II$ 
\begin{equation*}
  II=
  \int_{\R^{2n}} 
  \frac{\abs{U(x)}^2\abs{U(y)}^2}
  {\bra{x}^{1+\delta}\abs{x}\abs{x-y}}dxdy 
  =\int_{A}+\int_{B}
\end{equation*}
in the regions
$A=\{(x,y):2\abs{x}\geq  \abs{y}\}$ and 
$B=\{(x,y):2\abs{x} < \abs{y}\}$.
On $A$ we have 
\begin{equation*}
  \begin{split}
  \int_{A}\frac{\abs{U(x)}^2\abs{U(y)}^2}
  {\bra{x}^{1+\delta}\abs{x}\abs{x-y}}dxdy 
  \lesssim &
  \int_{A} \frac{\abs{U(x)}^2\abs{U(y)}^2}
  {\bra{x}^{\frac12+\frac{\delta}{2}}
  \abs{x}^\frac{1}{2}\bra{y}^{\frac12+
  \frac{\delta}{2}}\abs{y}^\frac{1}{2}\abs{x-y}}\,dxdy \\
  \leq & \int_{\R^{2n}} \frac{\abs{U(x)}^2}
  {\bra{x}^{\frac12+\frac{\delta}{2}}\abs{x}^\frac{1}{2}}
    \frac{1}{\abs{x-y}}
    \frac{\abs{U(y)}^2}{\bra{y}^{\frac12+\frac{\delta}{2}}
    \abs{y}^\frac{1}{2}}\,dxdy \\
  = & \int_{\R^n} \frac{\abs{U(x)}^2}
  {\bra{x}^{\frac12+\frac{\delta}{2}}\abs{x}^\frac{1}{2}}
  \abs{D}^{1-n}\frac{\abs{U(x)}^2}
  {\bra{x}^{\frac12+\frac{\delta}{2}}\abs{x}^\frac{1}{2}}\,dx \\
  = &  \left\Vert \abs{D}^{\frac{1-n}{2}}
  \frac{\abs{U}^2}{\abs{\cdot}^{\frac{1}{2}}
  \bra{\cdot}^{\frac{1}{2}+\frac{\delta}{2}}}
  \right\Vert^2_{L^2(\R^n)} 
  \leq  \, C(n,\delta) 
  \norma{\abs{D}^{\frac{3-n}{2}}\abs{U}^2}^2_{L^2(\R^n)},
  \end{split}
\end{equation*}
where in the last step we used Lemma \ref{lem:stima}. 
On the region $B$ we have
 $\abs{x}\lesssim \abs{x-y}$, hence
\begin{equation*}
  \int_B \frac{\abs{U(x)}^2\abs{U(y)}^2}{\bra{x}^{1+\delta}
  \abs{x}\abs{x-y}}dxdy 
  \lesssim  \int_B \frac{\abs{U(x)}^2\abs{U(y)}^2}
  {\bra{x}^{1+\delta}\abs{x}^2}dxdy 
  \leq  III
\end{equation*}

Summing up, we have proved the estimate
\begin{equation}\label{eq:summingup}
  \begin{split}
    \textstyle
    -\int_{\Omega^{2}}A^2_x & \psi(x-y)\abs{u(x)}^2\abs{u(y)}^2dxdy 
    \\
    \textstyle
    &\gtrsim
    -\int_{\Omega^2} S(x,y) \,dxdy
    -III
    -C(n,N,\delta)C_{a}
    \norma{\abs{D}^{\frac{3-n}{2}}\abs{U}^2}_{L^2(\R^n)}^2
  \end{split}
\end{equation}
with an implicit constant depending on $N,n$ only.

\subsubsection{Proof of Theorem \ref{the:2}}

In this case, the expression for $S$ simplifies:
\begin{equation*}
    \textstyle
  S(x,y)=-\abs{x-y}^{-3}
  \left[2 a_{lm}(x)a_{lm}(x)+ 
  \overline{a}^2(x)-6 \overline{a}(x) \widetilde a(x)
  + 15 \widetilde a^2 - 12 \abs{a(x) \widehat{(x-y)}}^2
  \right].
\end{equation*}
Now recalling \eqref{eq:formtoratio},
we see that if $N/\nu-1$ is small enough we have
\begin{equation*}
  -S(x,y)\ge \epsilon_{0}|x-y|^{-3}
\end{equation*}
for some strictly positive constant $\epsilon_{0}$. This implies
\begin{equation}\label{eq:termS}
\begin{split}
  \textstyle
  \int_{\Omega^{2}} -S(x,y) \abs{u(x)}^2\abs{u(y)}^2 dxdy &\geq 
  \epsilon_{0} \int_{\Omega^{2}}  
  \frac{\abs{u(x)}^2\abs{u(y)}^2}{\abs{x-y}^3} dxdy \\
  & =  \epsilon_{0} \norma{\abs{D}^{\frac{3-n}{2}} 
   \abs{U}^2}_{L^2(\R^n)}^2.
\end{split}
\end{equation}
and, from \eqref{eq:summingup}, we get
\begin{equation*}
  \textstyle
  -\int_{\Omega^{2}}A^2_x \psi(x-y)\abs{u(x)}^2\abs{u(y)}^2dxdy
  \gtrsim
  -III
  +(\epsilon_{0}-C(n,N,\delta)C_{a})
  \norma{\abs{D}^{\frac{3-n}{2}}\abs{U}^2}_{L^2(\R^n)}^2
\end{equation*}
with an implicit constant depending on $N,n$ only.
If $C_{a}$ is sufficiently small (with respect
to $N,n,\nu$ and $\delta$), we obtain
\begin{equation*}
  \gtrsim -III+
  \norma{\abs{D}^{\frac{3-n}{2}}\abs{U}^2}_{L^2(\R^n)}^2
\end{equation*}
and integrating in time on $[0,T]$ and
recalling \eqref{eq:finale1}, we arrive at
the estimate
\begin{equation*}
  \textstyle
  \norma{\abs{D}^{\frac{3-n}{2}}\abs{U}^2}_{L^{2}_{T}L^2_{x}}^2
  \lesssim
  \int_{0}^{T}III dt+
  \|u(0)\|_{L^{2}}^{2}
  \left[\norma{u(0)}_{\dot H^{\frac{1}{2}}}^2
  +\norma{u(T)}_{\dot H^{\frac{1}{2}}}^2\right].
\end{equation*}
Note that by \eqref{eq:stnorma1} we can write
\begin{equation*}
  \textstyle
  \int_0^T IIIdt \le
  \|u(0)\|_{L^{2}}^{2}
  \||x|^{-1}\bra{x}^{-\frac12-\frac{\delta}{2}}u\|^{2}
    _{L^{2}_{x}L^{2}_{T}}
  \lesssim
  \|u(0)\|_{L^{2}}^{2}
  \|u\|_{\dot XL^{2}_{T}}^{2}
\end{equation*}
and recalling \eqref{eq:smoomor} this gives
\begin{equation*}
  \textstyle
  \int_0^T IIIdt \leq 
  \norma{u}_{L^2_x}^2\norma{u}_{L^2_T\dot X}^2 \\
  \leq  
  \norma{u(0)}_{L^2}^2 
  \left[\norma{u(0)}_{\dot H^{\frac{1}{2}}}^2
  +\norma{u(T)}_{\dot H^{\frac{1}{2}}}^2\right].
\end{equation*}
In conclusion we have
\begin{equation*}
  \norma{\abs{D}^{\frac{3-n}{2}}\abs{U}^2}_{L^{2}_{T}L^2_{x}}^2
  \lesssim
  \norma{u(0)}_{L^2}^2 
  \left[\norma{u(0)}_{\dot H^{\frac{1}{2}}}^2
  +\norma{u(T)}_{\dot H^{\frac{1}{2}}}^2\right].
\end{equation*}
Note that
\begin{equation*}
  \textstyle
  \norma{\abs{D}^{\frac{3-n}{2}}\abs{U}^2}_{L^2_{x}}^2
  =
  \int_{\mathbb{R}^{n}}
  |D|^{\frac{3-n}{2}}|U|^{2}\cdot|D|^{\frac{3-n}{2}}|U|^{2}dx
  =
  \int_{\mathbb{R}^{n}}
  |U|^{2}\cdot|D|^{3-n}|U|^{2}dx
\end{equation*}
and this can be written, apart from a constant,
\begin{equation*}
  \textstyle
  =
  \int_{\mathbb{R}^{2n}}
  \frac{|U(x)|^{2}|U(y)|^{2}}{|x-y|^{3}}dxdy
  =
 \int_{\Omega^{2}}
 \frac{|u(x)|^{2}|u(y)|^{2}}{|x-y|^{3}}dxdy 
\end{equation*}
which concludes the proof of the Theorem.

\subsubsection{Proof of Theorem \ref{the:2strong}}
We recall the following identities for $a$:
\begin{gather*}
  a = I + q, \quad a_{lm}=\delta_{lm}+q_{lm}, \\
  \bar a = 3 + \bar q, \quad a_{lm}a_{lm}= 3 + 2\bar q + q_{lm}q_{lm}, \\
  \widetilde a = 1 + \widetilde q, \quad 
  \abs{a \widehat{(x-y)}}^2=1 + 2\widetilde q + \abs{q \widehat{(x-y)}}^2.
\end{gather*}
Starting from \eqref{eq:Sx2interaction} and using
formulas \eqref{eq:derpsiexpl} and
the previous identities, we obtain 
\begin{equation*}
  \begin{split}
  -S(x,y) \geq & 15 \frac{\sigma^4}{\brasigma{x-y}^7} 
      + 30 \widetilde{q} \frac{\sigma^4}{\brasigma{x-y}^7} 
       +\left(2\bar q - 6 \widetilde q + 2 \bar q \widetilde q \right)
       \frac{3\sigma^2}{\brasigma{x-y}^5} \\
       & + \left(4 \bar q - 12 \widetilde q - 6 \bar q \widetilde q
       -3{\widetilde{q}}^2-12\abs{q \widehat{(x-y)}}^2\right)\frac{1}{\brasigma{x-y}^3}.
    \end{split}
\end{equation*}
Since we have by assumption
\begin{gather*}
  \abs{\bar q} \leq 3 C_I \bra{x}^{-\delta}, 
  \quad
  \abs{\widetilde q}\leq C_I \bra{x}^{-\delta}, 
  \quad
  \abs{q \widehat{(x-y)}}\leq C_I \bra{x}^{-\delta}.
\end{gather*}
this implies 
\begin{equation}\label{eq:gathering3}
  -S(x,y) \geq  15 \sigma^4 \brasigma{x-y}^{-7} - 
  46 C_I \bra{x}^{-\delta}\brasigma{x-y}^{-3}.
\end{equation}
From \eqref{eq:summingup} and \eqref{eq:gathering3} we have
\begin{equation*}
\begin{split}
  \textstyle
  -\int_{\Omega^{2}}A^2_x & \psi(x-y)\abs{u(x)}^2\abs{u(y)}^2dxdy
  \\
  \gtrsim&
  \int_{\Omega^2} \left( 15 \frac{\sigma^4}{\brasigma{x-y}^7} - \frac{46 C_I}{ \brasigma{x-y}^{3}} \right)
  \abs{u(x)}^2 \abs{u(y)}^2 \,dxdy 
  \\
  &-III
  -C(n,N,\delta)C_{a}
  \norma{u}^4_{L^4}
\end{split}
\end{equation*}
with an implicit constant depending on $N,n$ only.
We let $\sigma \to 0$ and integrate in $t$ on $ [0,T]$: 
recalling \eqref{eq:finale1}, we get
\begin{equation}\label{eq:finale2}
\textstyle
  (15 - 46 C_I -C(n,N,\delta)C_{a}) \norma{u}_{L^4_T L^4}^4 \lesssim   
  \norma{u(0)}_{L^2}^2 
  \left[\norma{u(0)}_{{\dot H}^{\frac{1}{2}}}^2
  +\norma{u(T)}_{{\dot H}^{\frac{1}{2}}}^2\right] + \int_0^T III \, dt.
\end{equation}
Note that by \eqref{eq:stimadimbassasingolare}, \eqref{eq:equivnonhom}, and \eqref{eq:smoomorstrong} we have
\begin{equation}\label{eq:gathering2}
\begin{split}
  \textstyle
  \int_0^T IIIdt 
  &\le
  \|u(0)\|_{L^{2}}^{2}
  \||x|^{-1}\bra{x}^{-\left(1+{\delta}\right)/2}u\|^{2}
    _{L^{2}_{x}L^{2}_{T}} \\
  &\leq
  \|u(0)\|_{L^{2}}^{2}
  \delta^{-1} \left[ \|u\|_{ XL^{2}_{T}}^{2}+  \|\nabla^b u\|_{ Y L^{2}_{T}}^{2}\right] \\
  &\lesssim
  \norma{u(0)}_{L^2}^2 
  \left[\norma{u(0)}_{{\dot H}^{\frac{1}{2}}}^2
  +\norma{u(T)}_{{\dot H}^{\frac{1}{2}}}^2\right].
\end{split}
\end{equation}
If $C_I$ and $C_a$ are small enough,
we get immediately the claim 
from \eqref{eq:finale2} and \eqref{eq:gathering2}.

\section{Gaussian bounds and applications} 
\label{sec:gaussian_bds}

Let $L$ be the operator \eqref{eq:opL}, \eqref{eq:assopL}
defined on an open set $\Omega \subseteq\mathbb{R}^{n}$.
For the results in this section it is not necessary to
assume any condition on $\Omega$ which may be an arbitrary
open set; we shall anyway assume $\partial \Omega\in C^{1}$
for the sake of simplicity.
First of all, we chack that $L$
can be realized as a selfadjoint operator, with Dirichlet b.c.,
under very weak assumptions on the coefficients:

\begin{proposition}[]\label{pro:selfadjoint}
  Let $n\ge3$ and $\Omega \subseteq \mathbb{R}^{n}$
  an open set with $C^{1}$ boundary. Consider the operator
  $L$ defined on $C^{\infty}_{c}(\Omega)$ by
  \eqref{eq:opL}, \eqref{eq:assopL}, under the assumptions
  \begin{equation}\label{eq:assLcoefb}
    a\in L^{\infty},
    \qquad
    b\in L^{n,\infty},
    \qquad
    c\in L^{\frac n2,\infty},
    \qquad
    \|c_{-}\|_{L^{\frac n2,\infty}}
    <\epsilon.
  \end{equation}
  Then, if $\epsilon$ sufficiently small,
  $-L$ extends to a selfadjoint nonnegative operator in the
  sense of forms, and
  $D(-L)=H^{1}_{0}(\Omega)\cap H^{2}(\Omega)$ is a form core.
  Moreover we have 
  \begin{equation}\label{eq:equivLH1}
    (-Lv,v)_{L^{2}}=
    \|(-L)^{\frac12}v\|_{L^{2}}^{2}
    \simeq\|\nabla v\|^{2}_{L^{2}},
    \qquad
    \|(-L)^{\frac14}v\|_{L^{2}}
    \simeq
    \|v\|_{\dot H^{\frac12}}.
  \end{equation}
\end{proposition}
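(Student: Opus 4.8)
The plan is to realize $-L$ as the nonnegative selfadjoint operator associated with the symmetric quadratic form
\[
  Q(u,v):=\int_\Omega a(x)\nabla^b u\cdot\overline{\nabla^b v}\,dx+\int_\Omega c(x)u\bar v\,dx,
\]
initially defined for $u,v\in C^\infty_c(\Omega)$; symmetry is immediate from $a=a^{T}$ and the reality of $a,b,c$. The heart of the matter is to show that, for $\epsilon$ small, $Q$ is nonnegative and that $Q(u,u)\simeq\|\nabla u\|_{L^2}^2$ on $C^\infty_c(\Omega)$. For the lower bound I would estimate, using ellipticity and then H\"older in Lorentz spaces,
\[
  Q(u,u)\ge\nu\|\nabla^b u\|_{L^2}^2-\int_\Omega c_{-}|u|^2,
  \qquad
  \int_\Omega c_{-}|u|^2\le\|c_{-}\|_{L^{n/2,\infty}}\||u|^2\|_{L^{n/(n-2),1}}=\|c_{-}\|_{L^{n/2,\infty}}\|u\|_{L^{2n/(n-2),2}}^2,
\]
and then bound $\|u\|_{L^{2n/(n-2),2}}\lesssim\|\nabla|u|\|_{L^2}\le\|\nabla^b u\|_{L^2}$ by the critical Sobolev embedding into the Lorentz space $L^{2n/(n-2),2}$ and the diamagnetic inequality \eqref{eq:diam}. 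Choosing $\epsilon$ so small that $\|c_{-}\|_{L^{n/2,\infty}}$ times the Sobolev constant is strictly less than $\nu$ yields $Q(u,u)\ge c_0\|\nabla^b u\|_{L^2}^2\ge0$; the same computation with $c$ in place of $c_{-}$ gives $|Q(u,u)|\lesssim\|\nabla^b u\|_{L^2}^2$. Combining these with the equivalence $\|\nabla^b u\|_{L^2}\simeq\|\nabla u\|_{L^2}$ of \eqref{eq:equivmag} (which uses $b\in L^{n,\infty}$) gives $Q(u,u)\simeq\|\nabla u\|_{L^2}^2$ on $C^\infty_c(\Omega)$.

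It follows that the closure of $(Q,C^\infty_c(\Omega))$ is a closed, symmetric, nonnegative form whose domain is exactly $H^1_0(\Omega)$. The first representation theorem for closed semibounded forms then produces a unique nonnegative selfadjoint operator, which we call $-L$, with form domain $H^1_0(\Omega)$, $D((-L)^{1/2})=H^1_0(\Omega)$, and $(-Lu,v)_{L^2}=Q(u,v)$. Since $\|(-L)^{1/2}v\|_{L^2}^2=Q(v,v)$, the equivalence of the previous paragraph gives at once
\[
  (-Lv,v)_{L^2}=\|(-L)^{1/2}v\|_{L^2}^2=Q(v,v)\simeq\|\nabla v\|_{L^2}^2,
\]
the first half of \eqref{eq:equivLH1}. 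Moreover $C^\infty_c(\Omega)$, and hence also $H^1_0(\Omega)\cap H^2(\Omega)$, is dense in the form domain, so each is a form core; the identification $D(-L)=H^1_0(\Omega)\cap H^2(\Omega)$ follows from standard second-order elliptic regularity up to the $C^1$ boundary.

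For the second half of \eqref{eq:equivLH1} I would interpolate, exactly as in the proof of Lemma \ref{lem:distsob}: the equivalence $\|(-L)^{s/2}v\|_{L^2}\simeq\|v\|_{\dot H^s}$ is trivial for $s=0$ and, by the previous paragraph, holds for $s=1$ (with $\|v\|_{\dot H^1}=\|\nabla v\|_{L^2}$, $-\Delta$ denoting the Dirichlet Laplacian on $\Omega$). Using the complex interpolation identity $[D(H^{\sigma_0}),D(H^{\sigma_1})]_\theta=D(H^{\sigma_\theta})$, $\sigma_\theta=(1-\theta)\sigma_0+\theta\sigma_1$, valid for any nonnegative selfadjoint $H$ — applied to $H=-L$ and to $H=-\Delta$ — we obtain $\|(-L)^{s/2}v\|_{L^2}\simeq\|v\|_{\dot H^s}$ for all $0\le s\le1$, and $s=1/2$ is the claim.

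The main obstacle is the estimate in the first paragraph: both $b$ and $c$ sit at the scaling-critical level, so neither can be absorbed by the usual relative form-boundedness $\varepsilon\|\nabla u\|_{L^2}^2+C_\varepsilon\|u\|_{L^2}^2$. The magnetic part must be handled directly at that level via \eqref{eq:equivmag}, and for the electric part one needs the sharp endpoint Sobolev embedding into the Lorentz space $L^{2n/(n-2),2}$ together with the smallness of $\|c_{-}\|_{L^{n/2,\infty}}$ to obtain genuine nonnegativity rather than mere semiboundedness; the rest is routine form theory and interpolation.
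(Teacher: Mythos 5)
Your proposal is essentially the paper's own argument with the standard details filled in. Both set up the quadratic form $Q$ on $C^\infty_c(\Omega)$, bound the electric contribution via H\"older in $L^{n/2,\infty}\times L^{n/(n-2),1}$ followed by the Lorentz--Sobolev embedding $L^{2n/(n-2),2}\hookleftarrow \dot H^1$, invoke the equivalence $\|\nabla^b v\|_{L^2}\simeq\|\nabla v\|_{L^2}$ from Lemma~\ref{lem:equivmag}, conclude $Q(v,v)\simeq\|\nabla v\|_{L^2}^2$ for $\epsilon$ small, and then use the representation theorem for closed nonnegative forms plus complex interpolation of the scale $D((-L)^s)$ to get the $s=1/2$ equivalence. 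The only cosmetic difference is that you route the Sobolev step through $\|\nabla|u|\|_{L^2}\le\|\nabla^b u\|_{L^2}$ via the diamagnetic inequality \eqref{eq:diam}, whereas the paper estimates $\|v\|_{L^{2n/(n-2),2}}\lesssim\|\nabla v\|_{L^2}$ directly and then applies \eqref{eq:equivmag}; these are two equivalent orderings of the same ingredients (indeed the proof of \eqref{eq:equivmag} itself already uses \eqref{eq:diam}). Your invocation of elliptic regularity for $D(-L)=H^1_0\cap H^2$ matches the paper's level of detail, which likewise asserts this without elaboration.
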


\begin{proof}
  We sketch the proof which is mostly standard,
  apart from the use of Lorentz spaces. The form
  \begin{equation*}
    \textstyle
    q(v)=(-Lv,v)_{L^{2}}=
    \int_{\Omega} a(\nabla^{b}v,\nabla^{b}v)dx
    +
    \int_{\Omega}c|v|^{2}dx
  \end{equation*}
  is bounded on $H^{1}_{0}(\Omega)$: indeed,
  by H\"{o}lder and Sobolev inequalities in Lorentz spaces,
  \begin{equation*}
    \textstyle
    \int_{\Omega}|c|\cdot |v|^{2}dx
    \lesssim
    \|c\|_{L^{\frac n2,\infty}}
    \||v|^{2}\|_{L^{\frac{n}{n-2},1}}
    \lesssim
    \|c\|_{L^{\frac n2,\infty}}
    \|v\|^{2}_{L^{\frac{2n}{n-2},2}}
    \lesssim
    \|c\|_{L^{\frac n2,\infty}}
    \|\nabla v\|^{2}_{L^{2}}
  \end{equation*}
  while by \eqref{eq:equivmag} we have
  $\|\nabla^{b}v\|_{L^{2}}\simeq\|\nabla v\|_{L^{2}}$. Thus
  if $\epsilon$ is sufficiently small
  we have $q(v)\simeq\|\nabla v\|_{L^{2}}$; in particular
  $q(v)$ is a symmetric, closed, nonnegative form on
  $H^{1}_{0}(\Omega)$, and defines a selfadjoint operator
  with $D(-L)=H^{2}(\Omega)\cap H^{1}_{0}(\Omega)$
  which is also a core for $q$.
  The last property in \eqref{eq:equivLH1} follows
  by complex interpolation, since
  $D((-L)^{s})$ for $0\le s\le 1$ is an interpolation
  family.
\end{proof}

Under slightly stronger assumptions, we can see that the heat flow
$e^{tL}$ satisfies an upper gaussian bound; this will
be a crucial tool in the following. Compare with
\cite{DAnconaPierfelice05-a} and
\cite{DAnconaFanelliVega10-a} for similar results in the case
$a=I$, $\Omega=\mathbb{R}^{n}$.
Note that for $a,b,c\in L^{\infty}$ with $c\ge0$
the bound is proved in Corollary 6.14 of \cite{Ouhabaz04-a}.
The following result is sufficient for our purposes,
although the assumptions on
the coefficients could be further relaxed.

\begin{proposition}[]\label{pro:heatk}
  Let $n\ge3$. Assume the operator $L$ is defined 
  as in \eqref{eq:opL}, \eqref{eq:assopL} on the open
  set $\Omega \subseteq \mathbb{R}^{n}$
  with $C^{1}$ boundary, and that $a,b,c$
  satisfy
  \begin{equation}\label{eq:assLheatk}
    a\in L^{\infty},
    \quad
    b\in L^{4}_{loc}\cap L^{n,\infty},
    \quad
    \nabla \cdot b\in L^{2}_{loc},
    \quad
    c\in L^{\frac n2,1},
    \quad
    \|c_{-}\|_{L^{\frac n2,1}}<\epsilon.
  \end{equation}
  Then, if $\epsilon$ is sufficiently 
  small, the heat kernel $e^{tL}$ satisfies, for some $C,C'>0$,
  \begin{equation}\label{eq:gaussestup}
    |e^{tL}(x,y)|\le 
    C' t^{-\frac n2}e^{-\frac{|x-y|^{2}}{Ct}},
    \qquad t>0,\ x,y\in \Omega.
  \end{equation}
\end{proposition}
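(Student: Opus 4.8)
The plan is to build the bound up in three layers that match the decomposition $L=A^{b}-c_{+}+c_{-}$, where $c_{\pm}=\max\{0,\pm c\}$, and to dispose of the last, potentially dangerous term $c_{-}$ by a perturbative argument exploiting the smallness of $\|c_{-}\|_{L^{\frac n2,1}}$. (Note that \eqref{eq:assLheatk} is stronger than \eqref{eq:assLcoefb}, so Proposition \ref{pro:selfadjoint} already guarantees that $L$ is selfadjoint and nonpositive, and the heat semigroup $e^{tL}$ is well defined.)

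First I would settle the principal part. The real, symmetric, bounded, uniformly elliptic operator $A=\nabla\cdot(a\nabla)$ with Dirichlet conditions on $\Omega$ generates a positivity preserving semigroup whose kernel obeys the classical Aronson (Gaussian) upper bound: one may either invoke de Giorgi--Nash--Moser / Davies directly on $\Omega$, or dominate the Dirichlet kernel on $\Omega$ by the kernel on $\mathbb{R}^{n}$ via the restriction property of Dirichlet forms. The diamagnetic inequality (valid because $b\in L^{2}_{\loc}$ and $\nabla\cdot b\in L^{2}_{\loc}$, which is implied by \eqref{eq:assLheatk}; cf.\ Leinfelder--Simader) gives $|e^{tA^{b}}f|\le e^{tA}|f|$ pointwise, and adding the nonnegative potential $c_{+}$ only shrinks the semigroup, so $|e^{t(A^{b}-c_{+})}f|\le e^{tA}|f|$ --- first for $c_{+}\in L^{\infty}$, then for general $c_{+}\in L^{1}_{\loc}$ by the monotone approximation $c_{+}\wedge k\uparrow c_{+}$. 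Hence $e^{t(A^{b}-c_{+})}$ has a kernel bounded by $C't^{-n/2}e^{-|x-y|^{2}/(Ct)}$.

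Next I would restore the negative part through a Dyson--Phillips (Duhamel) expansion
\[
  e^{tL}=\sum_{k\ge0}\Gamma_{k}(t),\qquad
  \Gamma_{0}(t)=e^{t(A^{b}-c_{+})},\quad
  \Gamma_{k}(t)=\int_{0}^{t}e^{(t-s)(A^{b}-c_{+})}\,c_{-}\,\Gamma_{k-1}(s)\,ds,
\]
which converges to $e^{tL}$ by standard perturbation theory. On the level of kernels $\Gamma_{0}$ is Gaussian, and the inductive step reduces to the convolution estimate that the operator with kernel $\int_{0}^{t}\!\int g_{t-s}(x,z)\,|c_{-}(z)|\,g_{s}(z,y)\,dz\,ds$ is again bounded by $C_{0}\,\|c_{-}\|_{L^{\frac n2,1}}\,g_{C't}(x,y)$, uniformly in $t$, where $g_{t}$ denotes a generic Gaussian kernel whose parameters are allowed to worsen by a fixed factor at each iteration but remain uniformly comparable. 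The crucial point is that $L^{\frac n2,1}$ is exactly scaling--critical: pairing $|c_{-}|$ with $|x-\cdot|^{-(n-2)}\in L^{\frac{n}{n-2},\infty}$ via H\"older in Lorentz spaces shows that a potential in $L^{\frac n2,1}$ with small norm has small global Kato norm, and this is precisely what lets the geometric series $\sum_{k}\Gamma_{k}(t)$ be summed while keeping the Gaussian shape, once $\|c_{-}\|_{L^{\frac n2,1}}<\epsilon$ is small enough. (Alternatively one may quote directly the perturbation theorems for Gaussian bounds under small Kato--class potentials, as in \cite{Ouhabaz04-a} and in the references mentioned after the statement, \cite{DAnconaPierfelice05-a} and \cite{DAnconaFanelliVega10-a}, which treat the model case $a=I$, $\Omega=\mathbb{R}^{n}$.)

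The main obstacle is the uniform-in-$t$ bookkeeping in this last step: one must choose the Gaussian parameters at each iteration so that the exponential decay rate is diminished only by a summable amount, and one must control the endpoints $s\to0$ and $s\to t$ of the Duhamel integral, where the Lorentz--H\"older estimate is genuinely borderline and it is the fine second index $1$ in $L^{\frac n2,1}$ --- rather than $L^{\frac n2,\infty}$ or $L^{\frac n2}$ --- that makes the bound hold. Everything else (selfadjointness, the identification of the series with $e^{tL}$, and the passage from bounded to unbounded $c_{+}$) is routine and essentially recorded in the cited literature.
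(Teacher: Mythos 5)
Your proposal is correct in outline but takes a genuinely different route from the paper's in the final (and decisive) step. The first two layers coincide in content: the paper also starts from the Gaussian bound for the pure second--order operator $A$ (quoting Ouhabaz) and then passes to $A^{b}$ by a pointwise domination $|e^{tA^{b}}f|\le e^{tA}|f|$, though it derives this from a resolvent inequality $|(-A^{b}+\lambda)^{-k}f|\le(-A+\lambda)^{-k}|f|$ following Leinfelder--Simader rather than invoking the diamagnetic inequality for semigroups directly; these are two faces of the same argument. Where you diverge is the treatment of the potential. You first absorb $c_{+}$ by monotonicity and then restore $c_{-}$ via a Dyson--Phillips/Duhamel expansion, with the series summed by hand keeping the Gaussian shape at each iterate. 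The paper instead feeds all of $c$ at once into the abstract Miyadera--perturbation theorem of Liskevich--Vogt--Voigt, which asserts preservation of Gaussian bounds under such perturbations; the only thing to check is that $c_{-}$ has Miyadera constant $\gamma<1$, and this reduces to precisely the Riesz--potential estimate you describe, $\sup_{y}\int|c(x)|\,|x-y|^{2-n}\,dx\lesssim\|c\|_{L^{n/2,1}}$ by Lorentz H\"older/Young. Both approaches correctly locate the crucial role of the second Lorentz index in $L^{n/2,1}$. What the paper's route buys is that the uniform--in--$t$ Gaussian bookkeeping and the delicate control near the endpoints $s\to 0,t$ of the Duhamel integral --- which you rightly flag as the main obstacle --- are entirely delegated to the quoted theorem, so there is nothing left to verify beyond the Kato--type estimate; your route is more self--contained but would require carrying out exactly that technical work (e.g.\ following Voigt or Zhang) before the proof is complete. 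Either path is legitimate, and the key numerical inequality is the same.
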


\begin{proof}
  We can apply Proposition \ref{pro:selfadjoint}
  since the assumptions are stronger.
  When $b=c=0$, the gaussian
  bound follows directly from Corollary 6.14 in
  \cite{Ouhabaz05-a}; note that in this case the kernel
  of $e^{tL}$ is $\ge0$. 

  Next, in order to
  handle the case $b\neq0$, $c=0$, we adapt the
  proof of Lemma 10 in \cite{LeinfelderSimader81-a}.
  Let $\phi\in C^{\infty}_{c}(\Omega)$ and write
  $\phi_{\delta}=\sqrt{|\phi|^{2}+\delta^{2}}$ for $\delta>0$.
  It is easy to prove the pointwise inequality
  (recall notations \eqref{eq:notAAb})
  \begin{equation*}
    \textstyle
    A \phi_{\delta}\ge
    \Re(\frac{\overline{\phi}}{\phi_{\delta}}A^{b}\phi)
  \end{equation*}
  which implies, for all $\lambda>0$,
  \begin{equation*}
    \textstyle
    (-A+\lambda)\phi_{\delta}
    \le
    \Re
    (\frac{\overline{\phi}}{\phi_{\delta}}(-A^{b}+\lambda)\phi)
    +\lambda(\phi_{\delta}-\frac{|\phi|^{2}}{\phi_{\delta}}).
  \end{equation*}
  Proceeding as in \cite{LeinfelderSimader81-a}, we obtain
  \begin{equation*}
    |(-A^{b}+\lambda)^{-1}f|\le(-A+\lambda)^{-1}|f|
  \end{equation*}
  and iterating we have for all $k\ge0$
  \begin{equation}
    |(-A^{b}+\lambda)^{-k}f|\le(-A+\lambda)^{-k}|f|
  \end{equation}
  since $(-A+\lambda)^{-1}$ is positivity preserving
  (see Remark 1 in \cite{LeinfelderSimader81-a}).
  Then we deduce
  \begin{equation*}
    |e^{tA^{b}}\phi|\le e^{tA}|\phi|
  \end{equation*}
  via $e^{tA^{b}}=\lim_{k\to \infty}(I-tA^{b}/n)^{-n}$,
  and applying the last formula to a delta sequence 
  $\phi=\phi_{j}$ made of nonnegative
  functions, we conclude that the gaussian bound
  \eqref{eq:gaussestup} is valid for $e^{tA^{b}}$.

  It remains to consider the case $c\neq0$. To this end we apply
  the theory of \cite{LiskevichVogtVoigt06-a}.
  Let $U(t,s)$ be the propagator
  defined as $U(t,s)f=e^{(t-s)A^{b}}f$, for $t\ge s\ge0$.
  By the gaussian bound
  just proved we have that $U(t,s)$ extends to a uniformly
  bounded operator $L^{1}\to L^{1}$ and 
  $L^{\infty}\to L^{\infty}$, moreover
  $\|U(t,s)\|_{L^{1}\to L^{\infty}}\lesssim|t-s|^{-\frac n2}$;
  finally, the adjoint propagator
  $U_{*}(t,s):=(U(s,t))^{*}$ for $s\ge t\ge0$
  coincides with $U(s,t)$
  since $A^{b}$ is selfadjoint,
  so that $U_{*}$ is strongly continuous on $L^{1}$
  (notice that this last assumption is not actually necessary,
  as mentioned in the paper).
  Then by applying Theorem 3.10 from
  \cite{LiskevichVogtVoigt06-a} we conclude that the gaussian
  bound, with possibly different constants, is satisfied
  also by the perturbed propagator
  $U_{c}=e^{t(A^{b}-c)}$, provided the potential $c$
  is a Miyadera perturbation of both $U$ and $U_{*}$
  such that $c_{-}$ is Miyadera small with constants
  $(\infty,\gamma)$, $\gamma<1$. The verification of this
  last condition, in the special case considered here,
  reduces to the following inequality, for all $s\ge0$
  \begin{equation}\label{eq:miyad}
    \textstyle
    I:=\int_{s}^{+\infty}
    \|c(x)e^{(t-s)A^{b}}f\|_{L^{1}}dt\le \gamma\|f\|_{L1}
  \end{equation}
  (we are using formula (2.5) in \cite{LiskevichVogtVoigt06-a}
  with the choices
  $\alpha=\infty$, $J=\mathbb{R}^{+}$ and $p=1$)
  and the same inequality with $\gamma<1$ for $c_{-}$.
  The gaussian bound already proved for $e^{tA^{b}}$
  implies 
  \begin{equation*}
    \textstyle
    I
    \lesssim
    \int_{\Omega}\int_{\Omega}|c(x)||f(y)|
    \int_{0}^{+\infty}t^{-\frac n2}
    e^{-\frac{|x-y|^{2}}{Ct}}dtdydx
    \lesssim
    \|f\|_{L^{1}}
    \sup_{y\in \Omega}
    \int_{\Omega}\frac{|c(x)|}{|x-y|^{n-2}}dx
  \end{equation*}
  and by the Young inequality in Lorentz spaces we get
  \begin{equation*}
    I \lesssim
    \|c\|_{L^{\frac n2,1}}\|f\|_{L^{1}},
  \end{equation*}
  which concludes the proof
  (compare with the proof of Lemma 5.1 in \cite{Voigt86-a}).
\end{proof}

\begin{proposition}[]\label{pro:powereq}
  Let $n\ge3$. Assume the operator $L$ is defined 
  as in \eqref{eq:opL}, \eqref{eq:assopL} on the open
  set $\Omega \subseteq \mathbb{R}^{n}$
  with $C^{1}$ boundary, and that $a,b,c$
  satisfy
  \begin{equation}\label{eq:assLcoef}
    b^{2}+|\nabla \cdot b|\in L^{2}_{loc},
    \quad
    c\in L^{\frac n2,1},
    \quad
    \|a-I \|_{L^{\infty}}+
    \||b|+|a'|\|_{L^{n,\infty}}+
    \|b'\|_{L^{\frac n2,\infty}}
    +\|c_{-}\|_{L^{\frac n2,1}}
    <\epsilon.
  \end{equation}
  If $\epsilon$ sufficiently small
  then for all $0\le \sigma\le 1$ we have
  \begin{equation}\label{eq:equivfr}
    \|(-L)^{\sigma}v\|_{L^{p}}
    \simeq
    \|(-\Delta)^{\sigma}v\|_{L^{p}},
    \qquad
    1<p<\frac{n}{2 \sigma}.
  \end{equation}
\end{proposition}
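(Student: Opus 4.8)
The plan is to reduce everything to the endpoint cases $\sigma=0$ (trivial, since $(-L)^0=\mathrm{Id}$) and $\sigma=1$, and then to recover $0<\sigma<1$ with the sharp range $1<p<\frac n{2\sigma}$ by complex interpolation of an operator‑valued analytic family. Note first that the hypotheses \eqref{eq:assLcoef} imply both those of Proposition \ref{pro:selfadjoint} (so $-L$ is a nonnegative selfadjoint operator, injective on its domain when $\Omega=\mathbb{R}^n$ or an exterior domain) and those of Proposition \ref{pro:heatk} (so $e^{tL}$ obeys the Gaussian bound \eqref{eq:gaussestup}). Multiplying \eqref{eq:gaussestup} by $t^{\sigma-1}$ and integrating in $t$ shows that $(-L)^{-\sigma}$, realised as $\frac1{\Gamma(\sigma)}\int_0^\infty t^{\sigma-1}e^{tL}\,dt$, has an integral kernel bounded by $C_{n,\sigma}\abs{x-y}^{2\sigma-n}$; by the Hardy--Littlewood--Sobolev inequality in Lorentz spaces, $(-L)^{-\sigma}$ therefore maps $L^{p}(\Omega)\to L^{q}(\Omega)$ whenever $1<p<q<\infty$ with $\frac1q=\frac1p-\frac{2\sigma}n$, exactly as $(-\Delta)^{-\sigma}$ does.

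The case $\sigma=1$ amounts to $\norma{Lv}_{L^p}\simeq\norma{\Delta v}_{L^p}$ for $1<p<\frac n2$; I sketch this for $\Omega=\mathbb{R}^n$, the case of a domain being analogous with the Dirichlet Laplacian in place of $-\Delta$. Expanding $\nabla^b=\nabla+ib$ and $a=I+(a-I)$, one writes $Lv-\Delta v$ as a sum of terms of two kinds: (i) terms carrying a coefficient small in $L^\infty$, in $L^{n,\infty}$ or in $L^{n/2,\infty}$ --- namely $(a-I)\nabla^2 v$, $a'\nabla v$, $(\nabla\!\cdot(ab))v$, $(ab)\cdot\nabla v$ and $\abs{b}^2 v$ --- each controlled in $L^p$ by $C\epsilon\norma{\Delta v}_{L^p}$ via the Calder\'on--Zygmund bound $\norma{\nabla^2 v}_{L^p}\lesssim\norma{\Delta v}_{L^p}$ and the Sobolev--Lorentz bounds $\norma{\nabla v}_{L^{q_1,p}}\lesssim\norma{\Delta v}_{L^p}$ ($\frac1{q_1}=\frac1p-\frac1n$), $\norma{v}_{L^{q_2}}\lesssim\norma{\Delta v}_{L^p}$ ($\frac1{q_2}=\frac1p-\frac2n$), plus H\"older in Lorentz spaces; and (ii) the single term $cv$, where only $c_-$, not $c$, is small. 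The bound $\norma{Lv}_{L^p}\lesssim\norma{\Delta v}_{L^p}$ follows at once, estimating $\norma{cv}_{L^p}\lesssim\norma{c}_{L^{n/2,\infty}}\norma{v}_{L^{q_2}}\lesssim\norma{c}_{L^{n/2,\infty}}\norma{\Delta v}_{L^p}$. For the reverse inequality one cannot afford this, and instead bounds $\norma{v}_{L^{q_2}}=\norma{(-L)^{-1}(Lv)}_{L^{q_2}}\lesssim\norma{Lv}_{L^p}$ using the kernel bound of the first paragraph; then $\norma{\Delta v}_{L^p}\le\norma{Lv}_{L^p}+C\epsilon\norma{\Delta v}_{L^p}+C\norma{c}_{L^{n/2,\infty}}\norma{Lv}_{L^p}$, and for $\epsilon$ small the middle term is absorbed. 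By density this gives that the $L^p$‑realisations of $-L$ and $-\Delta$ have the same domain with equivalent graph norms for $1<p<\frac n2$.

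For $0<\sigma<1$ I interpolate on the strip $0\le\Re z\le1$ the analytic families $F(z)=(-L)^{z}(-\Delta)^{-z}$ and $G(z)=(-\Delta)^{z}(-L)^{-z}$. Since $e^{tL}$ has a Gaussian bound, $-L$ satisfies a Mihlin‑type spectral multiplier theorem, hence $\norma{(-L)^{i\tau}}_{L^p\to L^p}\lesssim(1+\abs\tau)^{s_p}$ for $1<p<\infty$, and the analogous (classical) bound holds for $-\Delta$. Thus on $\Re z=0$ both $F$ and $G$ are bounded on every $L^p$, $1<p<\infty$, with at most polynomial growth in $\abs{\Im z}$; on $\Re z=1$ the factorisations $F(1+i\tau)=(-L)^{i\tau}\,[(-L)(-\Delta)^{-1}]\,(-\Delta)^{-i\tau}$ and $G(1+i\tau)=(-\Delta)^{i\tau}\,[(-\Delta)(-L)^{-1}]\,(-L)^{-i\tau}$, together with the case $\sigma=1$, give boundedness on $L^p$ for $1<p<\frac n2$, again with polynomial growth in $\abs\tau$. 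Stein's interpolation theorem then yields that $F(\sigma)$ and $G(\sigma)$ are bounded on $L^p$ whenever $\frac1p=\frac{1-\sigma}{p_0}+\frac{\sigma}{p_1}$ with $1<p_0<\infty$ arbitrary and $1<p_1<\frac n2$; letting $p_0\to\infty$ and $p_1\uparrow\frac n2$ this covers precisely $1<p<\frac n{2\sigma}$. Hence, for such $p$, $\norma{(-L)^\sigma v}_{L^p}=\norma{F(\sigma)(-\Delta)^\sigma v}_{L^p}\lesssim\norma{(-\Delta)^\sigma v}_{L^p}$, and the converse through $G(\sigma)$, which is \eqref{eq:equivfr}.

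The delicate point is the case $\sigma=1$: because only $c_-$ is assumed small, the term $cv$ in $Lv-\Delta v$ is genuinely non‑perturbative, and the device that closes the estimate is to measure $v$ in the lower exponent $L^{q_2}$ through $(-L)^{-1}$ rather than through $(-\Delta)^{-1}$ --- which is possible only thanks to the Gaussian bound of Proposition \ref{pro:heatk} (valid despite the large positive part of $c$) and the resulting pointwise estimate $\abs{(-L)^{-1}(x,y)}\lesssim\abs{x-y}^{2-n}$. A secondary technical point is extracting the sharp range $p<\frac n{2\sigma}$, which is forced by pairing the restricted endpoint $p_1<\frac n2$ at $\Re z=1$ with an arbitrary endpoint at $\Re z=0$; as usual all the manipulations with negative and fractional powers are carried out first on a dense core of Schwartz‑type functions, using there the identities $(-\Delta)^{-\sigma}(-\Delta)^{\sigma}=\mathrm{Id}$ and $(-L)^{-1}(-L)=\mathrm{Id}$, and then extended by density.
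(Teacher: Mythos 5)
Your proposal is correct and follows essentially the same path as the paper: prove the case $\sigma=1$ by comparing $L$ to $\Delta$ term by term, using the Gaussian heat-kernel bound to control the non-perturbative positive part of $c$, and then recover $0<\sigma<1$ by Stein--Weiss interpolation of the analytic family $(-L)^{z}(-\Delta)^{-z}$, using boundedness of the imaginary powers supplied by the spectral multiplier theory that follows from the same Gaussian bound. The only small difference is that in the reverse $\sigma=1$ estimate the paper factors through $(-\Delta+c_{+})^{-1}$ while you go directly through $(-L)^{-1}$, both via the same $|x-y|^{2-n}$ kernel bound furnished by the Gaussian estimate; your spelled-out range count ($\frac{1}{p}=\frac{1-\sigma}{p_0}+\frac{\sigma}{p_1}$ with $p_0<\infty$, $p_1<\frac{n}{2}$) is a helpful amplification of the paper's terse appeal to ``a standard application of the Stein--Weiss theorem.''
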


\begin{proof}
  The assumptions of the two previous Propositions are satisfied,
  thus $-L$ is selfadjoint, nonnegative, and the gaussian bound
  \eqref{eq:gaussestup} is valid. 

  Consider first the case $\sigma=1$. Write the operator
  $L$ in the form
  \begin{equation*}
    \textstyle
    Lv=
    \sum_{jk}a_{jk}\partial_{j}\partial_{k}v+
       \sum_{j}\beta_{j}\partial_{j}v
       +\gamma_{0}v
       -c_{+} v
  \end{equation*}
  where
  \begin{equation*}
    \textstyle
    \beta_{k}=\sum_{j} (\partial_{j}a_{jk}+ 2ia_{jk}b_{k}),
    \qquad
    \gamma_{0}=\sum_{j,k} i \partial_{j}(a_{jk}b_{k})
      -a(b,b)+c_{-}.
  \end{equation*}
  Then by H\"{o}lder and Sobolev inequalities
  in Lorentz spaces we have for $1<p<\frac n2$
  \begin{equation*}
    \|Lv\|_{L^{p}}
    \le
    \|a\|_{L^{\infty}}\|D^{2}v\|_{L^{p}}
    +
    \|\beta\|_{L^{n,\infty}}
    \|Dv\|_{L^{\frac{np}{n-p},p}}
    +\|\gamma_{0}-c_{+}\|_{L^{\frac n2,\infty}}
    \|v\|_{L^{\frac{np}{n-2p},p}}
    \lesssim
    \|\Delta v\|_{L^{p}}.
  \end{equation*}
  To prove the converse inequality, we first represent the
  operator $(-\Delta+c_{+})^{-1}$ in the form
  \begin{equation*}
    \textstyle
    (-\Delta+c_{+})^{-1}=
    c(n)\int_{0}^{+\infty}e^{t(\Delta-c_{+})}dt
  \end{equation*}
  and we apply the gaussian bound to obtain
  \begin{equation*}
    \textstyle
    |(-\Delta+c_{+})^{-1}|\lesssim
    \int_{0}^{+\infty}e^{-\frac{|x-y|^{2}}{Ct}}t^{-\frac n2}dt
    \lesssim|x-y|^{2-n}.
  \end{equation*}
  As a consequence, using the Hardy-Sobolev inequality we get
  \begin{equation*}
    \|(-\Delta+c_{+})^{-1}v\|_{L^{\frac{np}{n-2  p}}}
    \lesssim
    \|v\|_{L^{p}}
    \quad\text{i.e.}\quad 
    \|v\|_{L^{\frac{np}{n-2  p}}}
    \lesssim
    \|(-\Delta+c_{+})v\|_{L^{p}}
  \end{equation*}
  for all
  \begin{equation*}
    \textstyle
    1<p<\frac{n}{2}.
  \end{equation*}
  In particular this gives
  (since $\|c_{+}\|_{L^{\frac n2,\infty}}
  \lesssim \|c_{+}\|_{L^{\frac n2,1}}$)
  \begin{equation*}
    \textstyle
    \|\Delta v\|_{L^{p}}\le
    \|(\Delta-c_{+})v\|_{L^{p}}
    +\|c_{+}\|_{L^{\frac n2,\infty}}
    \|v\|_{L^{\frac{np}{n-2p}}}
    \lesssim
    \|(\Delta-c_{+})v\|_{L^{p}},
    \qquad
    1<p<\frac n2.
  \end{equation*}
  Adding and subtracting the remaining terms in $L$ 
  in the last term, we obtain
  \begin{equation*}
    \textstyle
    \|(\Delta-c_{+})v\|_{L^{p}}\le
    \|Lv\|_{L^{p}}+
    \|\sum(a_{jk}-\delta_{jk})\partial_{j}\partial_{k}v\|_{L^{p}}
    +
    \|\sum\beta_{k}\partial_{k}v\|_{L^{p}}
    +\|\gamma_{0}v\|_{L^{p}}
  \end{equation*}
  and a last application of H\"{o}lder and Sobolev 
  inequalities gives
  \begin{equation*}
    \|\Delta v\|_{L^{p}}
    \lesssim
    \|(\Delta-c_{+})v\|_{L^{p}}
    \lesssim
    \|Lv\|_{L^{p}}+\epsilon\|\Delta v\|_{L^{p}.}
  \end{equation*}
  If $\epsilon$ is sufficiently small we can subtract the
  last term from the left hand side, and the proof
  of the case $\sigma=1$ is concluded.
  The case $\sigma=0$ is trivial, and
  the remaining cases will be handled by
  Stein-Weiss complex interpolation.

  Indeed, consider the family of operators
  $T_{z}=(-L)^{z}(-\Delta)^{-z}$ for $0\le\Re z\le1$;
  our first goal is to prove that $T_{z}:L^{p}\to L^{p}$
  is bounded provided $1<p<n/(2\Re z)$,
  which implies the inequality $\lesssim$ in \eqref{eq:equivfr}.
  Note that the following arguments work with trivial modifications
  also for $-1\le\Re z\le0$ and give then the converse inequality
  $\gtrsim$. 

  $T_{z}$ is obviously an
  analytic family of operators, and $T^{iy}$  for real $y$
  is bounded on all
  $L^{p}$ with $1<p<\infty$, with a norm growing
  at most polynomially as $|y|\to \infty$.
  This property is well known
  for $(-\Delta)^{iy}$, while for $L^{iy}$
  it follows from the theory developed in
  \cite{DuongOuhabazSikora02-a} (see also
  \cite{CacciafestaDAncona12-a} for the case 
  $\Omega=\mathbb{R}^{n}$), which requires the sole
  assumption that $L$ satisfies a gaussian bound like
  \eqref{eq:gaussestup}. A standard application of the
  Stein-Weiss theorem then gives the claim.
\end{proof}

To conclude this section we construct a family of regularizing
operators which will be needed
later in the proof of $H^{1}$ well posedness;
what follows is an adaptation of 
Section 1.5 in \cite{Cazenave03-a}.
Assume that $\Omega$ and $L$ satisfy the assumptions
of the previous Proposition.
We define for $0<\epsilon\le1$ the operators
\begin{equation}\label{eq:defJ}
  J_{\epsilon}:=(I-\epsilon L)^{-1}
  \equiv\epsilon^{-1}R(-\epsilon^{-1})
\end{equation}
where $R(z)=(-L-z)^{-1}$ is the resolvent operator of $-L$.
Then for every 
$f\in H^{-1}(\Omega)$ the function 
$u=J_{\epsilon}f\in H^{1}_{0}(\Omega)$ is well defined as
the unique weak solution of the elliptic equation
\begin{equation*}
  -Lu+\epsilon^{-1}u=\epsilon^{-1}f.
\end{equation*}
Thus $J_{\epsilon}:H^{-1}(\Omega)\to H^{1}_{0}(\Omega)$
is a bounded operator, $L:H^{1}_{0}(\Omega)\to H^{-1}(\Omega)$
is bounded, we have the equivalence
$\|(I-L)v\|_{H^{-1}(\Omega)}\simeq\|v\|_{H^{1}_{0}(\Omega)}$
and the estimates
\begin{equation}\label{eq:estJ1}
  \|J_{\epsilon}v\|_{H^{1}_{0}(\Omega)}\le 
    C \epsilon^{-1}\|v\|_{H^{-1}(\Omega)},
  \qquad
  \|J_{\epsilon}v\|_{H^{2}(\Omega)}\le 
    C \epsilon^{-1}\|v\|_{L^{2}(\Omega)}
\end{equation}
by standard elliptic theory, with a $C$ independent of 
$\epsilon$. Further we have
\begin{equation}\label{eq:estJ2}
  \|J_{\epsilon}v\|_{H^{1}_{0}(\Omega)}\le 
    C \|v\|_{H^{1}_{0}(\Omega)},
  \quad
  \|J_{\epsilon}v\|_{L^{2}(\Omega)}\le 
    C  \|v\|_{L^{2}(\Omega)},
  \quad
  \|J_{\epsilon}v\|_{H^{-1}(\Omega)}\le 
    C \|v\|_{H^{-1}(\Omega)}
\end{equation}
and by complex interpolation
\begin{equation*}
  \|J_{\epsilon}v\|_{H^{1}_{0}(\Omega)}\le 
    C \epsilon^{-\frac12} \|v\|_{L^{2}(\Omega)},
  \qquad
  \|J_{\epsilon}v\|_{L^{2}(\Omega)}\le 
    C \epsilon^{-\frac12} \|v\|_{H^{-1}(\Omega)}.
\end{equation*}
Then, using the identity 
$J_{\epsilon}-I=
J_{\epsilon}(I-I+\epsilon L)=\epsilon J_{\epsilon}L$,
we deduce
\begin{equation}\label{eq:estJ3}
  \|(J_{\epsilon}-I)v\|_{H^{-1}(\Omega)}
  \le
  C \epsilon\|Lv\|_{H^{-1}(\Omega)}
  \le
  C' \epsilon\|v\|_{H^{1}_{0}(\Omega)}.
\end{equation}
Note that if $v\in H^{-1}(\Omega)$ only, we can
still approximate it with $\phi\in C^{\infty}_{c}(\Omega)$
to get
\begin{equation*}
  \|(J_{\epsilon}-I)v\|_{H^{-1}(\Omega)}
  \le
  C\|v-\phi\|_{H^{-1}(\Omega)}+
  C \epsilon\|\phi\|_{H^{1}_{0}(\Omega)}
\end{equation*}
and this implies
\begin{equation}\label{eq:estJ3bis}
  \forall v\in H^{-1}(\Omega)
  \qquad
  J_{\epsilon}v\to v
  \ \ \text{in $H^{-1}(\Omega)$ as $\epsilon\to0$.}
\end{equation}
We also obtain
\begin{equation}\label{eq:estJ4}
  \|(J_{\epsilon}-I)v\|_{L^{2}(\Omega)}
  \le
  C\|(J_{\epsilon}-I)v\|^{\frac12}_{H^{1}_{0}(\Omega)}  
  \|(J_{\epsilon}-I)v\|^{\frac12}_{H^{-1}(\Omega)}
  \le
  C'\epsilon^{\frac12}\|v\|_{H^{1}_{0}(\Omega)}
\end{equation}
and an argument similar to the previous one gives
\begin{equation}\label{eq:estJ3ter}
  \forall v\in L^{2}(\Omega)
  \qquad
  J_{\epsilon}v\to v
  \ \ \text{in $L^{2}(\Omega)$ as $\epsilon\to0$.}
\end{equation}
Finally, by the equivalence
$\|(J_{\epsilon}-I)v\|_{H^{1}_{0}(\Omega)}\simeq
\|(J_{\epsilon}-I)(I-L)v\|_{H^{-1}(\Omega)}$
we get
\begin{equation}\label{eq:estJ5}
  \forall v\in H^{1}_{0}(\Omega)
  \qquad
  J_{\epsilon}v\to v
  \ \ \text{in $H^{1}_{0}(\Omega)$ as $\epsilon\to0$.}
\end{equation}

Concerning the convergence in $L^{p}(\Omega)$ we have:

\begin{proposition}[]\label{pro:regulJLp}
  Let $p\in[1,\infty)$ and let $\Omega$ and $L$ satisfy
  the assumptions of Proposition \ref{pro:powereq}. 
  Then $J_{\epsilon}$ extends to
  a bounded operator on $L^{p}(\Omega)$ and the
  following estimate holds for $0<\epsilon\le1$
  \begin{equation}\label{eq:estJLp1}
    \|J_{\epsilon}v\|_{L^{p}(\Omega)}
    \le
    C\|v\|_{L^{p}(\Omega)}
  \end{equation}
  with a constant depending on $p$ but not of $\epsilon$.
  Moreover, for $1<p<\infty$ we have
  \begin{equation}\label{eq:estJLp2}
    \forall v\in L^{p}(\Omega)
    \qquad
    J_{\epsilon}v\to v
    \ \ \text{in $L^{p}(\Omega)$ as $\epsilon\to0$.}
  \end{equation}
\end{proposition}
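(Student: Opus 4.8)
The plan is to deduce both assertions from the Gaussian heat-kernel bound of Proposition~\ref{pro:heatk} together with the $L^{2}$-level facts already at hand, in particular the convergence \eqref{eq:estJ3ter}. The starting point is the subordination formula
\[
  J_{\epsilon}=(I-\epsilon L)^{-1}=\int_{0}^{\infty}e^{-s}\,e^{s\epsilon L}\,ds ,
\]
valid as a strongly convergent integral on $L^{2}(\Omega)$ because $-L\ge0$; this reduces everything to uniform mapping properties of the heat semigroup $e^{\tau L}$.

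\emph{Step 1 (uniform $L^{p}$ bound).} By Proposition~\ref{pro:heatk} the operator $e^{\tau L}$ has an integral kernel $p_{\tau}(x,y)$ with $|p_{\tau}(x,y)|\le C' \tau^{-n/2}e^{-|x-y|^{2}/(C\tau)}$. Majorising by the Gaussian on all of $\mathbb{R}^{n}$ and rescaling $y\mapsto\sqrt{\tau}\,z$, one gets $\sup_{x}\int_{\Omega}|p_{\tau}(x,y)|\,dy+\sup_{y}\int_{\Omega}|p_{\tau}(x,y)|\,dx\le C''$ with $C''$ independent of $\tau>0$. Schur's test then gives $\|e^{\tau L}\|_{L^{p}(\Omega)\to L^{p}(\Omega)}\le C''$ for every $p\in[1,\infty]$, uniformly in $\tau$, and inserting this into the subordination formula yields $\|J_{\epsilon}v\|_{L^{p}}\le C''\|v\|_{L^{p}}$ first for $v\in L^{p}\cap L^{2}$ and then, by density, for all $v\in L^{p}(\Omega)$ with $p<\infty$; this is \eqref{eq:estJLp1}.

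\emph{Step 2 (convergence for $1<p<\infty$).} For $p=2$ the claim \eqref{eq:estJLp2} is exactly \eqref{eq:estJ3ter}. For $p\ne2$ choose $q=1$ if $p<2$ and $q=\infty$ if $p>2$, so that $L^{p}$ interpolates between $L^{2}$ and $L^{q}$, say $\tfrac1p=\tfrac\theta2+\tfrac{1-\theta}q$ with $\theta\in(0,1)$. For $v$ in the class $L^{2}(\Omega)\cap L^{q}(\Omega)$, which contains $C^{\infty}_{c}(\Omega)$ and is therefore dense in $L^{p}(\Omega)$, the logarithmic convexity of the $L^{p}$ norms together with Step~1 gives
\[
  \|J_{\epsilon}v-v\|_{L^{p}}\le\|J_{\epsilon}v-v\|_{L^{2}}^{\theta}\,\|J_{\epsilon}v-v\|_{L^{q}}^{1-\theta}\le\bigl((C''+1)\|v\|_{L^{q}}\bigr)^{1-\theta}\|J_{\epsilon}v-v\|_{L^{2}}^{\theta},
\]
which tends to $0$ as $\epsilon\to0$ by \eqref{eq:estJ3ter}. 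The uniform bound of Step~1 then upgrades this to all $v\in L^{p}(\Omega)$ by a routine density argument. This scheme also makes transparent why $p=1$ is excluded: there the interpolation against $L^{2}$ degenerates.

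I do not expect a serious obstacle; the only slightly delicate point is the rescaling in Step~1 when $\Omega\ne\mathbb{R}^{n}$, but this is harmless since $\int_{\Omega}|p_{\tau}(x,\cdot)|$ is dominated by the integral over $\mathbb{R}^{n}$ of the Gaussian majorant. As an alternative to Step~2 one could invoke that a semigroup enjoying Gaussian bounds extends to a bounded analytic $C_{0}$-semigroup on $L^{p}(\Omega)$ (see e.g.\ \cite{Ouhabaz05-a}), in which case \eqref{eq:estJLp2} is the standard fact that $\lambda R(\lambda,L)\to I$ strongly as $\lambda\to\infty$; I nevertheless prefer the interpolation route as it uses nothing beyond \eqref{eq:estJ3ter} and the kernel bound of Proposition~\ref{pro:heatk}.
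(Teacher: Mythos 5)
Your proof is correct, and Step~2 is essentially identical to the paper's argument (log-convexity of the $L^p$-norm between $L^2$ and an endpoint, combined with the uniform $L^p$-bound and convergence in $L^2$, then density via $C^\infty_c$). Step~1, however, follows a genuinely different route. The paper obtains \eqref{eq:estJLp1} by a multiplier argument: testing the resolvent equation $(-L+\epsilon^{-1})u=\epsilon^{-1}v$ against $\phi(|u|)\bar u$, using the pointwise identity with $\nabla^b$ displayed in the paper, and running the truncation argument of Proposition~1.5.1 in Cazenave's book. You instead use the Laplace-transform subordination $J_\epsilon=\int_0^\infty e^{-s}e^{s\epsilon L}\,ds$ together with the Gaussian kernel bound of Proposition~\ref{pro:heatk} and Schur's test to get a $\tau$-uniform bound $\|e^{\tau L}\|_{L^p\to L^p}\le C''$, which you then push through the integral. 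Both approaches are available under the stated hypotheses (since the assumptions of Proposition~\ref{pro:powereq} imply those of Proposition~\ref{pro:heatk}, as noted in the paper). Your route is slightly more machinery-heavy but arguably cleaner: it leans on a result already proved in the section, avoids an external reference, and in fact gives the bound \eqref{eq:estJLp1} for $p=\infty$ as well, which you exploit by taking $q=\infty$ as the upper endpoint of the interpolation. One small point of care that you gloss over: to make sense of the subordination integral in $L^p$ for $v\in L^p\cap L^2$ you either need Bochner measurability of $s\mapsto e^{s\epsilon L}v$ in $L^p$ (standard via Pettis' theorem, but worth a word), or, more simply, a duality argument pairing against $w\in L^{p'}\cap L^2$, for which the scalar integral $\int_0^\infty e^{-s}\langle e^{s\epsilon L}v,w\rangle\,ds$ is manifestly well defined and bounded by $C''\|v\|_{L^p}\|w\|_{L^{p'}}$.
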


\begin{proof}
  Let $\phi:(0,\infty)\to[0,\infty)$ be a smooth nondecreasing
  function with $\phi(s),s \phi'(s)$ bounded.
  Starting from the identity
  \begin{equation*}
    \textstyle
    \Re(-Lv \cdot\phi(|v|)\overline{v})
    +\nabla \cdot\{\Re(\overline{v}\phi(|v|)a \nabla^{b}v) \}
    =
    \phi(|v|)a(\nabla^{b}v,\nabla^{b}v)+
    \frac{\phi'(|v|)}{|v|}
    |\Re(\overline{v}\cdot a \nabla^{b}v)|^{2}
    +c \phi(|v|) |v|^{2},
  \end{equation*}
  and proceeding exactly as in the proof of 
  Proposition 1.5.1 in \cite{Cazenave03-a},
  we obtain \eqref{eq:estJLp1}. In order to prove
  \eqref{eq:estJLp2}, we can assume $v\in C^{\infty}_{c}(\Omega)$
  (as above). Then by the interpolation
  inequality in $L^{p}$ we can write for
  all $0<\theta<1$
  \begin{equation*}
    \|(J_{\epsilon}-I)v\|_{L^{\frac{2}{1-\theta}}}
    \le
    \|(J_{\epsilon}-I)v\|_{L^{1}}^{\theta}
    \|(J_{\epsilon}-I)v\|_{L^{2}}^{1-\theta}
    \le
    C\|v\|_{L^{1}}^{\theta}
    \cdot
    \|(J_{\epsilon}-I)v\|_{L^{2}}^{\theta}
  \end{equation*}
  where we used \eqref{eq:estJLp1}, and by \eqref{eq:estJ3ter}
  we conclude that $J_{\epsilon}v\to v$ in $L^{p}(\Omega)$
  for all $p=\frac{2}{1-\theta}\in(1,2)$. A similar argument
  gives the result for $p\in(2,\infty)$, and the case
  $p=2$ we already know.
\end{proof}

\section{Global existence and Scattering: proof of 
Theorem \ref{the:3}} \label{sec:scattering}

Throughout this section $\Omega \subseteq \mathbb{R}^{n}$ 
is an open set with $C^{1}$ boundary, $n\ge3$,
while $L$ is the unbounded operator
on $L^{2}(\Omega)$ 
with Dirichlet boundary conditions
under the assumptions of Proposition \ref{pro:selfadjoint}.
As explained in the Introduction, we shall work under the
black box Assumption (S) which ensures that the
necessary Strichartz estimates are available.
Notice that we are restricting the range
of admissible indices at the left hand side
for the derivative of the flow $\nabla e^{itL}$.

Our goal is to extend the usual local and global $H^{1}$ theory
to the NLS with variable coefficients
\begin{equation}\label{eq:maineq}
  iu_{t}-Lu+f(u)=0,
  \qquad
  u(0,x)=u_{0}(x).
\end{equation}
We shall sketch only the essential results which
will be needed in the proof of scattering, and not
aim at the greatest possible generality. In the following
we use the notations
\begin{equation*}
  L^{p}_{T}L^{q}=L^{p}(0,T;L^{q}(\Omega)),
  \qquad
  C_{T}H^{1}_{0}=C([0,T],H^{1}_{0}(\Omega)).
\end{equation*}

\begin{proposition}[Local existence in $H^{1}_{0}(\Omega)$]
\label{pro:localex}
  Let $n\ge3$ and assume (S) holds,
  while $f\in C^{1}(\mathbb{C},\mathbb{C})$ satisfies
  \begin{equation}\label{eq:assflocal}
    \textstyle
    |f(z)|\lesssim|z|^{\gamma},
    \quad
    |f(z)-f(w)|
    \lesssim(|z|+|w|)^{\gamma-1}|z-w|
    \ \ \text{for some}\
    1\le \gamma<1+\frac{4}{n-2}.
  \end{equation}
  Then for all $u_{0}\in H^{1}_{0}(\Omega)$ there exists
  $T=T(\|u_{0}\|_{H^{1}})$ and a unique solution
  $u\in C([0,T];H^{1}_{0}(\Omega))$. 
\end{proposition}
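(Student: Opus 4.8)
\medskip

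\noindent\textbf{Proof plan.} The plan is to solve the Duhamel equation
\[
  u(t)=\Phi(u)(t):=e^{itL}u_{0}-i\int_{0}^{t}e^{i(t-s)L}f(u(s))\,ds
\]
by a contraction argument in a Strichartz space, exploiting that $\gamma<1+\frac{4}{n-2}$ is $H^{1}$--subcritical so that a positive power of $T$ can be extracted. First I would fix an admissible non--endpoint pair $(p,q)$ with $2<p<\infty$ and $2\le q<n$; the restriction $q<n$ is exactly the one under which Assumption (S) provides Strichartz bounds for $\nabla e^{itL}$ and for $\nabla\int_{0}^{t}e^{i(t-s)L}F\,ds$, and it is harmless because $q$ can be chosen freely below $n$ and, by Sobolev embedding $W^{1,q}(\Omega)\hookrightarrow L^{nq/(n-q)}(\Omega)$, the auxiliary exponent $q^{*}=nq/(n-q)$ then ranges up to $+\infty$ as $q\uparrow n$. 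Since $f\in C^{1}$ satisfies $|f(z)-f(w)|\lesssim(|z|+|w|)^{\gamma-1}|z-w|$, one has the pointwise bounds $|f(u)|\lesssim|u|^{\gamma}$ and $|\nabla f(u)|\lesssim|u|^{\gamma-1}|\nabla u|$, so that H\"older's inequality in space, distributing the $L^{\tilde q'}$ norm of $\nabla f(u)$ between $L^{q}$ (for $\nabla u$) and $L^{q^{*}}$ (for $u$, itself interpolated between $L^{\infty}_{T}L^{2}\subset L^{\infty}_{T}L^{q^{*}}$ and $L^{p}_{T}L^{q^{*}}$), and then H\"older in time, bound the nonlinear Duhamel term by $CT^{\theta}\|u\|_{X_{T}}^{\gamma}$ for an auxiliary admissible pair $(\tilde p,\tilde q)$, with $\theta>0$ precisely in the subcritical range.

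\smallskip
\noindent Next I would set $X_{T}:=C([0,T];H^{1}_{0}(\Omega))\cap L^{p}(0,T;W^{1,q}(\Omega))$, with $u$ and $\nabla u$ measured in the Strichartz norm and the Dirichlet condition carried by $H^{1}_{0}$, and work in the ball $B_{R}\subset X_{T}$ of radius $R=2C\|u_{0}\|_{H^{1}}$. Applying the Strichartz estimates \eqref{eq:bbstr} and \eqref{eq:1bbstr} one gets $e^{itL}u_{0}\in B_{R}$ and, from the nonlinear bound above, $\|\Phi(u)-e^{itL}u_{0}\|_{X_{T}}\le CT^{\theta}R^{\gamma}$; hence $\Phi(B_{R})\subseteq B_{R}$ for $T=T(\|u_{0}\|_{H^{1}})$ small. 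For the contraction I would estimate $\Phi(u)-\Phi(v)$ only at the level of $\|\cdot\|_{L^{p}_{T}L^{q}}+\|\cdot\|_{L^{\infty}_{T}L^{2}}$ (no derivatives), using just the difference bound on $f$ and the same H\"older/Sobolev scheme, which makes $\Phi$ a contraction on $B_{R}$ in this weaker complete metric (the $X_{T}$--bound being preserved in the limit); this gives the unique fixed point $u\in X_{T}$, and continuity in time $u\in C([0,T];H^{1}_{0})$ comes from the Strichartz bounds and dominated convergence. Uniqueness in the class $C([-T,T];H^{1}_{0})$ follows from the same difference estimate, and the negative--time half is identical.

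\smallskip
\noindent The main obstacle I expect is the exponent bookkeeping so that all estimates close \emph{simultaneously} with $q<n$: in dimensions $n=3,4$ the $H^{1}$--endpoint pair $q=\frac{2n}{n-2}$ is $\ge n$ and is therefore excluded by (S) for the derivative of the flow, so one must run the argument with a strictly non--endpoint $q$ and verify by an explicit computation that the whole subcritical interval $\gamma<1+\frac{4}{n-2}$ is still covered by letting $q\uparrow n$ (equivalently $q^{*}\uparrow\infty$) and choosing $\theta>0$ accordingly. A secondary, purely notational point is that $\nabla$ does not commute with $e^{itL}$ because $L$ has variable coefficients; this is why (S) is stated directly in the form \eqref{eq:1bbstr} rather than deduced by commutation, and in this proof those estimates are used as black boxes (the norm equivalence \eqref{eq:equivfrac} of Proposition \ref{pro:powereq} and Lemma \ref{lem:equivmag} are what make this form of (S) natural, but are not invoked again here).
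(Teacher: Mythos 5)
Your proposal follows essentially the same route as the paper: a fixed point argument in $X_T = C_T H^1_0 \cap L^p_T W^{1,q}$ with the contraction carried out in the weaker $C_T L^2 \cap L^p_T L^q$ metric, the subcritical condition $\gamma<1+\frac{4}{n-2}$ giving a positive power $T^\theta$, and the constraint $q<n$ from Assumption (S) handled by choosing $q$ away from the Sobolev exponent. The paper simply makes the exponent bookkeeping you foresee explicit, via a free parameter $k$ with $n<2kn<n+2$ and $\gamma(n-4)+2<2kn<\gamma(n-2)+2$, producing $(p_1,q_1)$ and the auxiliary admissible pair $(p_2,q_2)$, and then checks directly that $q_1<n$ and that $\gamma<\frac{n+2}{n-2}$ is equivalent to $\gamma p_2'<p_1$.

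Two small points where your sketch as written would not quite close. First, the embedding $L^\infty_T L^2\subset L^\infty_T L^{q^*}$ is false on an unbounded domain; the way to control $\|u(t)\|_{L^{q^*}}$ is the Sobolev embedding $W^{1,q}(\Omega)\hookrightarrow L^{q^*}(\Omega)$ applied directly to the $L^{p}_T W^{1,q}$ component of $X_T$ (the paper reads $\|v\|_{L^{\gamma n/(kn-1)}}\lesssim\|\nabla v\|_{L^{q_1}}$ and then takes the $L^{\gamma p_2'}_T$ norm, with $\gamma p_2'<p_1$). Second, the contraction gives you a unique fixed point in the ball $B_R\subset X_T$, which is uniqueness only among solutions with finite $L^p_T W^{1,q}$ norm; the Proposition asserts uniqueness in the full class $C([-T,T];H^1_0)$, and the paper proves this separately by estimating the difference of two $C_T H^1$ solutions in $L^p_T L^{\gamma+1}$ (choosing $p=\frac{4}{n}\frac{\gamma+1}{\gamma-1}$), using only the non-derivative Strichartz bound and Sobolev, and extracting a positive power of $T$. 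You should add a similar unconditional uniqueness step.
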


\begin{proof}
  The proof is standard; we sketch the main steps
  in order to check that
  the restriction $q_{1}<n$ imposed in (S)
  is harmless. We apply a fixed point argument to the map
  $\Phi: v \mapsto u$ defined as the solution of
  $iu_{t}-Lu+f(v)=0$, $u(0,x)=u_{0}$,
  working in a suitable bounded subset of the space
  $X_{T}=C([0,T];H^{1}_{0}(\Omega))\cap 
    L^{p}(0,T;W^{1,q}(\Omega))$
  for an appropriate choice of $(p,q)$,
  endowed with the distance
  $d(u,v)=\|u-v\|_{C_{T}L^{2}\cap L^{p}_{T}L^{q}}$;
  note that bounded subsets of $X_{T}$ are complete with
  this distance.

  In order to choose the indices
  we pick a real number $k$ such that
  \begin{equation}\label{eq:condk}
    n<2kn<n+2,
    \qquad
    \gamma(n-4)+2<2kn<\gamma(n-2)+2.
  \end{equation}
  Note that for all $n\ge3$ and all $1<\gamma<\frac{n+2}{n-2}$
  the two intervals in \eqref{eq:condk} have a nonempty
  intersection. Moreover, the couples $(p_{j},q_{j})$
  defined by
  \begin{equation*}
    \textstyle
    p_{1}=\frac{4 \gamma}{2+\gamma(n-2)-2kn},
    \quad
    q_{1}=\frac{\gamma n}{kn+\gamma-1},
    \qquad
    p_{2}=\frac{4}{2kn-n},
    \quad
    q_{2}=\frac{1}{1-k}
  \end{equation*}
  are admissible and we can use the estimates in (S),
  provided $q_{1}<n$ which will be checked at the end.
  We choose then $(p,q)=(p_{1},q_{1})$ in the definition
  of $X_{T}$.
  Applying Strichartz estimates on a time interval
  $[0,T]$ with $T$ to be chosen, we have for $u=\Phi(v)$
  \begin{equation*}
    \|\nabla u\|_{L^{p_{1}}_{T}L^{q_{1}}}
    +
    \|\nabla u\|_{L^{\infty}_{T}L^{2}}
    \lesssim
    \|u_{0}\|_{\dot H^{1}}+
    \|\nabla f(v)\|_{L^{p_{2}'}_{T}L^{q_{2}'}}.
  \end{equation*}
  By H\"{o}lder and Sobolev inequalities, using the
  assumptions on $f$, we have
  \begin{equation*}
    \|\nabla f(v)\|_{L^{p_{2}'}_{T}L^{q_{2}'}}
    \lesssim
    \norma*{\|v\|^{\gamma-1}_{L^{\frac{\gamma n}{kn-1}}}
    \|\nabla v\|_{L^{q_{1}}}}_{L^{p_{2}'}_{T}}
    \lesssim
    \|\nabla v\|^{\gamma}_{L^{\gamma p_{2}'}_{T}L^{q_{1}}}.
  \end{equation*}
  Now we note that the condition $\gamma<\frac{n+2}{n-2}$ is
  equivalent to $\gamma p_{2}'<p_{1}$, thus H\"{o}lder inequality
  on $[0,T]$ gives
  \begin{equation*}
    \|\nabla u\|_{L^{p_{1}}_{T}L^{q_{1}}}
    +
    \|\nabla u\|_{L^{\infty}_{T}L^{2}}
    \lesssim
    \|u_{0}\|_{\dot H^{1}}+
    T^{\frac{1}{p_{2}'}-\frac{\gamma}{p_{1}}}
    \|\nabla v\|^{\gamma}_{L^{p_{1}}_{T}L^{q_{1}}}
  \end{equation*}
  with a strictly positive power of $T$.
  An analogous computation gives
  \begin{equation*}
    \|u\|_{L^{p_{1}}_{T}L^{q_{1}}}
    +
    \|u\|_{L^{\infty}_{T}L^{2}}
    \lesssim
    \|u_{0}\|_{L^{2}}+
    T^{\frac{1}{p_{2}'}-\frac{\gamma}{p_{1}}}
    \|\nabla v\|^{\gamma-1}_{L^{p_{1}}_{T}L^{q_{1}}}.
    \|v\|_{L^{p_{1}}_{T}L^{q_{1}}}
  \end{equation*}
  and summing up we have proved
  \begin{equation*}
    \textstyle
    \|\Phi(v)\|_{X_{T}}\lesssim 
        \|u_{0}\|_{H^{1}}+T^{\sigma}
        \|v\|^{\gamma}_{X_{T}},
        \qquad
    \sigma=\frac{1}{p_{2}'}-\frac{\gamma}{p_{1}}>0.
  \end{equation*}
  Similar computations give
  \begin{equation*}
    d(\Phi(v_{1}),\Phi(v_{2}))\lesssim 
      T^{\sigma}
      (1+\|v_{1}\|_{X_{T}}+\|v_{2}\|_{X_{T}})^{\gamma-1}
      \|v_{1}-v_{2}\|_{L^{p_{1}}_{T}L^{q_{1}}}
  \end{equation*}
  and by a standard contraction argument on a suitable ball
  of $X_{T}$ we obtain the existence of a fixed point
  i.e. a solution of \eqref{eq:maineq} provided
  $T$ is smaller than a quantity $T(\|u_{0}\|_{H^{1}})$
  which depends only on the $H^{1}$ norm of the initial data.

  It remains to check the claim $q_{1}<n$. Since
  $2kn>n$ and $\gamma<\frac{n+2}{n-2}$ we have
  \begin{equation*}
    \textstyle
    q_{1}=
    \frac{2\gamma n}{2kn+2\gamma-2}<
    \frac{2\gamma n}{n+2\gamma-2}<
    \frac{2n(n+2)}{n^{2}-2n+8}
  \end{equation*}
  and the last fraction is $\le3$ for all integers $n\ge5$,
  while it is equal to $70/33<4$ for $n=4$ and
  to $30/11<3$ when $n=3$.

  To prove uniqueness, if $u,v$ are two solutions in
  $C_{T}H^{1}$ for some $T>0$, we can write
  \begin{equation*}
    \|u-v\|_{L^{p}_{T}L^{\gamma+1}}
    \lesssim
    \|f(u)-f(v)\|_{L^{p'}_{T}L^{(\gamma+1)'}}
    \lesssim
    \|u-v\|_{L^{b}_{T}L^{\gamma+1}}
    \||u|+|v|\|^{\gamma-1}_{L^{p_{0}}_{T}L^{\gamma+1}}
  \end{equation*}
  where
  \begin{equation*}
    \textstyle
    p=\frac4n \frac{\gamma+1}{\gamma-1},
    \qquad
    \frac{1}{p_{0}}=\frac1p-\frac12,
    \qquad
    \frac1b=\frac \gamma 2-\frac \gamma p+\frac12.
  \end{equation*}
  (note that we are not using Strichartz estimates of
  $\nabla u$), hence by Sobolev embedding
  \begin{equation*}
    \|u-v\|_{L^{p}_{T}L^{\gamma+1}}
    \lesssim
    (\|u\|_{L^{p_{0}}_{T}H^{1}}+\|v\|_{L^{p_{0}}_{T}H^{1}})
    ^{\gamma-1}
    \|u-v\|_{L^{b}_{T}L^{\gamma+1}}
  \end{equation*}
  It is easy to check that $b<p$, thus we get
  \begin{equation*}
    \lesssim
    T^{\epsilon}
    (\|u\|_{L^{\infty}_{T}H^{1}}+\|v\|_{L^{\infty}_{T}H^{1}})
    \|u-v\|_{L^{p}_{T}L^{\gamma+1}}
  \end{equation*}
  for some $\epsilon>0$ and this implies $u-v \equiv 0$
  if $T$ is small enough.
\end{proof}

Define the \emph{energy} of a solution 
$u\in C([0,T];H^{1}_{0}(\Omega))$ as
\begin{equation}\label{eq:ener}
  \textstyle
  E(t)=\frac12\int_{\Omega}a(\nabla^{b}u,\nabla^{b}u)dx
  +\frac12\int_{\Omega}c(x)|u|^{2}dx
  +\int_{\Omega}F(u)dx
\end{equation}

\begin{theorem}[Global existence in $H^{1}$]\label{the:global}
  Let $n\ge3$ and assume the coefficients of $L$ satisfy
  \begin{equation}\label{eq:assLenergy}
    b^{2}+|\nabla \cdot b|\in L^{2}_{loc},
    \quad
    c\in L^{\frac n2,1},
    \quad
    \|a-I \|_{L^{\infty}}+
    \||b|+|a'|\|_{L^{n,\infty}}+
    \|b'\|_{L^{\frac n2,\infty}}
    +\|c_{-}\|_{L^{\frac n2,1}}
    <\epsilon.
  \end{equation}
  Assume $f(u)$ satisfies the conditions \eqref{eq:assflocal}
  of the previous result, and in addition it is
  gauge invariant \eqref{eq:gauinv}
  with $F(r)=\int_{0}^{r}f(s)ds\ge0$ for $s\in \mathbb{R}$.
  Moreover, assume condition (S) holds. 

  Then, if $\epsilon$
  is sufficiently small, for all initial data
  $u_{0}\in H^{1}_{0}(\Omega)$ problem 
  \eqref{eq:maineq} has a unique global solution
  $u\in C\cap L^{\infty}(\mathbb{R};H^{1}_{0}(\Omega))$. 
  In addition the solution has constant energy $E(t)\equiv E(0)$ 
  for all $t\in \mathbb{R}$.
\end{theorem}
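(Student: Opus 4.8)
The plan is the classical one: invoke the local existence result Proposition~\ref{pro:localex}, whose lifespan depends only on $\|u_0\|_{H^{1}}$, establish conservation of the $L^{2}$ norm and of the energy \eqref{eq:ener}, deduce from them a uniform‑in‑time bound on $\|u(t)\|_{H^{1}}$, and then iterate to reach all of $\R$. Formally the two conservation laws are immediate: pairing \eqref{eq:maineq} with $\bar u$ and taking imaginary parts annihilates $Lu$ (self‑adjointness) and $f(u)\bar u$ (which is real by gauge invariance \eqref{eq:gauinv}), so $\tfrac{d}{dt}\|u\|_{L^{2}}^{2}=0$; pairing instead with $\overline{u_{t}}$ and using $-Lu+f(u)=-iu_{t}$ together with the self‑adjointness of $L$ and the chain rule $\partial_{t}F(u)=\Re(f(u)\,\overline{u_{t}})$ (the time analogue of the identity $\nabla F(u)=\Re(f(u)\,\overline{\nabla^{b}u})$ established in the proof of Proposition~\ref{pro:virialidentiy}) yields $\tfrac{d}{dt}E(t)=\Re\langle -iu_{t},u_{t}\rangle=0$.

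The substantive point is to justify these identities for a solution that a priori only lies in $C(\R;H^{1}_{0}(\Omega))$, so that $u_{t},Lu\in C(\R;H^{-1})$ and the pairings above are not literally meaningful. For this I would regularize with the resolvent operators $J_{\epsilon}=(I-\epsilon L)^{-1}$ constructed at the end of Section~\ref{sec:gaussian_bds}. Setting $u_{\epsilon}:=J_{\epsilon}u$, one has $u_{\epsilon}\in C(\R;H^{2}\cap H^{1}_{0})$, $\partial_{t}u_{\epsilon}\in C(\R;H^{1}_{0})$, and $u_{\epsilon}$ solves $i\partial_{t}u_{\epsilon}-Lu_{\epsilon}+J_{\epsilon}f(u)=0$ in $L^{2}$; the displayed computations are now legitimate and produce $\tfrac{d}{dt}\|u_{\epsilon}\|_{L^{2}}^{2}=2\Im(J_{\epsilon}f(u),u_{\epsilon})_{L^{2}}$ and an analogous identity for $\tfrac12(-Lu_{\epsilon},u_{\epsilon})_{L^{2}}$. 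One then integrates in $t$ and passes to the limit $\epsilon\to0$, using $J_{\epsilon}v\to v$ in $H^{1}_{0}$, in $L^{2}$ and in $L^{p}$ for $v$ in the respective spaces (\eqref{eq:estJ5}, \eqref{eq:estJ3ter}, \eqref{eq:estJLp2}), the uniform bounds \eqref{eq:estJ2}, \eqref{eq:estJLp1}, and the facts that $f(u)\in C(\R;L^{(\gamma+1)'})\cap C(\R;H^{-1})$ and $u\in C(\R;L^{\gamma+1})$ by \eqref{eq:assflocal}. The only delicate term is the nonlinear one, where $J_{\epsilon}f(u)$ must be matched with $\tfrac{d}{dt}\int_{\Omega}F(u)\,dx$ by commuting $J_{\epsilon}$ onto the companion factor; this is the adaptation of \cite[Sections~3.3--3.4]{Cazenave03-a} referred to when $J_{\epsilon}$ and Proposition~\ref{pro:regulJLp} were introduced.

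Granting $\|u(t)\|_{L^{2}}=\|u_{0}\|_{L^{2}}$ and $E(t)=E(0)$, the a priori $H^{1}$ bound follows from \eqref{eq:ener}: since $F\geq0$ by \eqref{eq:defocus} and $c_{+}\geq0$,
\[ \tfrac12\int_{\Omega}a(\nabla^{b}u,\nabla^{b}u)\,dx\leq E(0)+\tfrac12\int_{\Omega}c_{-}|u|^{2}\,dx. \]
By $\|a-I\|_{L^{\infty}}<\epsilon$ the left side is $\geq\tfrac{1-\epsilon}{2}\|\nabla^{b}u\|_{L^{2}}^{2}$, which is $\gtrsim\|\nabla u\|_{L^{2}}^{2}$ by Lemma~\ref{lem:equivmag} (applicable since $b\in L^{n,\infty}$), while $\int_{\Omega}c_{-}|u|^{2}\lesssim\|c_{-}\|_{L^{n/2,1}}\|u\|_{L^{2n/(n-2),2}}^{2}\lesssim\epsilon\|\nabla u\|_{L^{2}}^{2}$ by H\"older and Sobolev in Lorentz spaces. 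Hence for $\epsilon$ small $\|\nabla u(t)\|_{L^{2}}^{2}\lesssim E(0)$, and $E(0)$ is finite and controlled by $\|u_{0}\|_{H^{1}}$ (bound each term of \eqref{eq:ener} by the same Lorentz--Sobolev estimate and the subcritical embedding $H^{1}\hookrightarrow L^{\gamma+1}$). Combined with mass conservation this gives $\sup_{t}\|u(t)\|_{H^{1}}\leq M(\|u_{0}\|_{H^{1}})<\infty$. Since the lifespan in Proposition~\ref{pro:localex} depends only on the $H^{1}$ norm of the data, the solution can then be restarted from any time and extended by a fixed amount $\geq T(M)$; as its $H^{1}$ norm never exceeds $M$, it exists globally, forward and backward in time. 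It belongs to $L^{\infty}(\R;H^{1}_{0}(\Omega))$ by the uniform bound and to $C(\R;H^{1}_{0}(\Omega))$ by the local theory, and uniqueness is the uniqueness clause of Proposition~\ref{pro:localex}.

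I expect the genuine obstacle to be the material of the second paragraph: carrying the $J_{\epsilon}$‑regularization through the energy identity, in particular handling the nonlinear term, since the regularized equation carries $J_{\epsilon}f(u)$ rather than $f(J_{\epsilon}u)$. This is precisely why the fine properties \eqref{eq:estJ1}--\eqref{eq:estJ5} of $J_{\epsilon}$ and Proposition~\ref{pro:regulJLp} were set up in advance; the a priori bound and the continuation argument are then routine.
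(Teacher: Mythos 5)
Your proposal is correct and follows essentially the same route as the paper's proof: invoke Proposition~\ref{pro:localex}, regularize with $J_{\epsilon}=(I-\epsilon L)^{-1}$ so that $u_{\epsilon}=J_{\epsilon}u$ satisfies $i\partial_{t}u_{\epsilon}-Lu_{\epsilon}+J_{\epsilon}f(u)=0$ in $L^{2}$, justify the energy identity in the limit $\epsilon\to0$ using the properties \eqref{eq:estJ2}--\eqref{eq:estJ5} and Proposition~\ref{pro:regulJLp}, and deduce the a priori $H^{1}$ bound from conservation of mass and energy. You correctly single out the treatment of the discrepancy between $J_{\epsilon}f(u)$ and $f(u_{\epsilon})$ as the only delicate point; the paper handles it by writing out the three error integrands (derivative, nonlinear, potential) explicitly and verifying dominated convergence with Strichartz and Lorentz--Sobolev bounds, with a separate index choice for $n=3$ where $\gamma+1$ can exceed $n$.
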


\begin{proof}
  Since the lifespan of the local solution only depends on
  the $H^{1}$ norm of the data, in order to prove the claim
  it is sufficient to prove that the energy $E(t)$ of
  the solution is conserved. Indeed,
  $E(t)$ controls the $H^{1}$ norm of $u$, and
  then global existence follows from a standard
  continuation argument.

  Let $e(u)$ be the energy density
  \begin{equation*}
    \textstyle
    e(u)(t,x)=\frac12 a(x)\nabla^{b}u \cdot \overline{\nabla^{b}u}
        +\frac12c(x)|u|^{2}+F(u)
  \end{equation*}
  so that $E(t)=\int_{\Omega}e(u)dx$.
  By gauge invariance and the definition of $F$ we have
  $\partial_{t}F(u)
    =\partial_{t}\int_{0}^{|u|}f(s)ds
    =\Re\left( f(|u|)\frac{u}{|u|}\bar u_{t}\right)=\Re(f(u)\bar u_{t})$.
  If the function $u$ satisfies $u(t)\in H^{2}(\Omega)$,
  we can write
  \begin{equation}\label{eq:idener}
    \partial_{t}e(u)
    +
    \nabla \cdot\{\Re \overline{u_{t}}a(x)\nabla^{b}u \}
    =
    \Re \overline{u_{t}}(i u_{t} -Lu+f(u))
    \equiv0
  \end{equation}
  and integrating on $\Omega$, since
  $u_{t}\vert_{\partial \Omega}=0$ by
  the Dirichlet boundary conditions, we obtain
  that $E(u)(t)\equiv E(u)(0)$ is constant in time.

  Since we know only $u(t)\in H^{1}_{0}(\Omega)$, in order to use
  \eqref{eq:idener} we need a regularization procedure;
  we use the operators $J_{\epsilon}$ constructed at the
  end of Section \ref{sec:gaussian_bds}.
  Thus we define
  $u_{\epsilon}=J_{\epsilon}u$ and note that $u_{\epsilon}$
  belongs to $C_{T}H^{2}(\Omega)$ and satisfies
  \begin{equation*}
    i \partial_{t}u_{\epsilon}-Lu_{\epsilon}+J_{\epsilon}f(u)=0.
  \end{equation*}
  Using \eqref{eq:idener} we obtain, after an
  integration on $[t_{1},t_{2}]\times \Omega$, with
  $0\le t_{1}<t_{2}\le T$,
  \begin{equation*}
    \textstyle
    \int_{\Omega}e(u_{\epsilon})\vert_{t_{1}}^{t_{2}}dx=
    \Re\int_{t_{1}}^{t_{2}}\int_{\Omega}
    \partial_{t}\overline{u_{\epsilon}}\cdot
    (f(u_{\epsilon})-J_{\epsilon}f(u))dxdt.
  \end{equation*}
  Substituting $\partial_{t}u_{\epsilon}$ from the equation
  and using the Cauchy-Schwartz inequality
  and the assumption $a_{jk}\in L^{\infty}$ we get
  \begin{equation}\label{eq:engoal}
    \textstyle
    \left|
    \int_{\Omega}e(u_{\epsilon})\vert_{t_{1}}^{t_{2}}dx
    \right|\lesssim
    \int_{t_{1}}^{t_{2}}
    [\phi_{\epsilon}(t)+\psi_{\epsilon}(t)+\chi_{\epsilon}(t)]
    dt
  \end{equation}
  where
  \begin{equation*}
    \textstyle
    \phi_{\epsilon}=
    \int_{\Omega}|\nabla^{b} u_{\epsilon}|\cdot
      |\nabla^{b}(f(u_{\epsilon})-J_{\epsilon}f(u))|dx,
    \qquad
    \psi_{\epsilon}(t)=
    \int_{\Omega} |J_{\epsilon}f(u)|\cdot
    |f(u_{\epsilon})-J_{\epsilon}f(u)|dx.
  \end{equation*}
  \begin{equation*}
    \textstyle
    \chi_{\epsilon}(t)=
    \int_{\Omega}|c| |u_{\epsilon}|\cdot
    |f(u_{\epsilon})-J_{\epsilon}f(u)|dx
  \end{equation*}
  Since $u_{\epsilon}\to u$ in $H^{1}_{0}$ and hence
  by Sobolev embedding in $L^{\gamma+1}$, we see that
  $E(u_{\epsilon})\to E(u)$.
  Thus to conclude the proof it is sufficient to show
  that the right hand side of \eqref{eq:engoal}
  tends to 0 as $\epsilon\to0$, possibly through
  a subsequence; to this end we shall apply dominated
  convergence on the interval $[0,T]$.

  Consider first the case $n\ge4$, so that
  $\gamma+1<n$.
  We prepare a few additional inequalities:
  \begin{equation*}
    \|\nabla u_{\epsilon}\|_{L^{\gamma+1}}
    \simeq
    \|(-L)^{\frac12}J_{\epsilon}u\|_{L^{\gamma+1}}
    =
    \|J_{\epsilon}(-L)^{\frac12}u\|_{L^{\gamma+1}} 
    \lesssim
    \|(-L)^{\frac12}u\|_{L^{\gamma+1}} 
    \simeq
    \|\nabla u\|_{L^{\gamma+1}}
  \end{equation*}
  by the $L^{p}$ boundedness of $J_{\epsilon}$ and 
  \eqref{eq:equivfr} for $\sigma=1/2$. By H\"{o}lder and
  Sobolev inequalities in Lorentz spaces,
  using $b\in L^{n,\infty}$, we have also
  \begin{equation*}
    \textstyle
    \|bu_{\epsilon}\|_{L^{\gamma+1}}
    \lesssim
    \|u_{\epsilon}\|_{L^{q,\gamma+1}}
    \lesssim
    \|\nabla u_{\epsilon}\|_{L^{\gamma+1}}
    \lesssim
    \|\nabla u\|_{L^{\gamma+1}},
    \qquad
    \frac{1}{\gamma+1}=\frac1n+\frac1q
  \end{equation*}
  and summing the two
  \begin{equation*}
    \|\nabla^{b} u_{\epsilon}\|_{L^{\gamma+1}}
    \lesssim
    \|\nabla u\|_{L^{\gamma+1}}.
  \end{equation*}
  Thus we have
  \begin{equation*}
    \phi_{\epsilon}(t)
    \lesssim
    \|\nabla u\|_{L^{\gamma+1}}
    \|\nabla(f(u_{\epsilon})-J_{\epsilon}f(u)))\|
      _{L^{\frac{\gamma+1}{\gamma}}}
    \lesssim
    \|\nabla u\|_{L^{\gamma+1}}^{2}
    \|u\|_{L^{\gamma+1}}^{\gamma-1}
    =: \phi(t).
  \end{equation*}
  Note that $\phi\in L^{1}(0,T)$ since
  \begin{equation*}
    \textstyle
    \int_{0}^{T} \phi dt\le 
    \|\nabla u\|_{L^{2}_{T} L^{\gamma+1}}^{2}
    \|u\|_{L^{\infty}L^{\gamma+1}}^{\gamma-1}
  \end{equation*}
  and $\nabla u\in L^{p}_{T} L^{\gamma+1}$ for some
  $p>2$ by Strichartz estimates, while
  $u\in C_{T}H^{1}_{0} \hookrightarrow L^{\infty}_{T}L^{\gamma+1}$
  by Sobolev embedding. For $\psi_{\epsilon}$ we have easily
  \begin{equation*}
    \textstyle
    \psi_{\epsilon}(t)\lesssim\|u\|_{L^{2 \gamma}}^{2 \gamma}
    =:\psi(t),
  \end{equation*}
  and by the interpolation and Sobolev inequalities
  \begin{equation*}
    \textstyle
    \|u\|_{L^{2 \gamma}}^{2 \gamma}
    \le
    \|u\|_{L^{\gamma+1}}^{2 \gamma-\sigma}
    \|u\|_{L^{\frac{n(\gamma+1)}{n-(\gamma+1)}}}^{\sigma}
    \lesssim
    \|u\|_{L^{\gamma+1}}^{2 \gamma-\sigma}
    \|\nabla u\|_{L^{\gamma+1}}^{\sigma},
    \qquad
    \sigma=\frac{\gamma-1}{\gamma+1}n
  \end{equation*}
  so that
  \begin{equation*}
    \textstyle
    \int_{0}^{T}\psi dt
    \lesssim
    \|u\|_{L^{\infty}_{T}L^{\gamma+1}}^{2 \gamma-\sigma}
    \|\nabla u\|_{L^{\sigma}_{T}L^{\gamma+1}}^{\sigma}
  \end{equation*}
  and again we obtain $\psi\in L^{1}(0,T)$ since $0<\sigma<2$
  for $1<\gamma<\frac{n+2}{n-2}$.
  As to $\chi_{\epsilon}$, recalling that
  $|c|^{\frac12}\in L^{n,\infty}$, we can write
  \begin{equation*}
    \|cu_{\epsilon}J_{\epsilon}f(u)\|_{L^{1}}
    \le
    \||c|^{\frac12}u_{\epsilon}\|_{L^{\gamma+1}}
    \||c|^{\frac12}J_{\epsilon}f(u)\|_{L^{\frac{\gamma+1}{\gamma}}}
    \lesssim
    \|\nabla u\|_{L^{\gamma+1}}
    \|\nabla J_{\epsilon}f(u)\|_{L^{\frac{\gamma+1}{\gamma}}}
    \lesssim \phi(t)
  \end{equation*}
  proceeding as in the estimate of $bu_{\epsilon}$;
  the term $cu_{\epsilon}f(u_{\epsilon})$ is similar.
  Thus the sequences 
  $\phi_{\epsilon},\psi_{\epsilon},\chi_{\epsilon}$
  are dominated. Moreover, it is easy to check,
  using exactly the previous estimates and properties
  \eqref{eq:estJ2}, \eqref{eq:estJ5}, \eqref{eq:estJLp1}
  and \eqref{eq:estJLp2}, that for a.e.~$t\in[0,T]$ one has
  $\phi_{\epsilon}(t), \psi_{\epsilon}(t),\chi_{\epsilon}(t)\to0$
  as $\epsilon\to0$.

  In the case $n=3$, the quantity $\gamma+1$ is in the range
  $2\le \gamma+1< 6$ and can be
  larger than $n$. The previous computations
  work fine for $1\le \gamma<2$; when $2\le \gamma<5$
  it is not difficult to modify the choice of indices so
  to use only the allowed Strichartz norms.
  For the estimate of $\phi_{\epsilon}(t)$ we can write
  for $\frac14<\epsilon< \frac12$
  \begin{equation*}
    \phi_{\epsilon}(t)
    \lesssim
    \|\nabla u\|_{L^{\frac{3}{1+\epsilon}}}^{2}
    \|u\|_{L^{\frac{3(\gamma-1)}{1-2 \epsilon}}}^{\gamma-1}
    \lesssim
    \|\nabla u\|_{L^{\frac{3}{1+\epsilon}}}^{2}
    \|\nabla u\|
      _{L^{\frac{3(\gamma-1)}{\gamma-2 \epsilon}}}^{\gamma-1}
    =: \phi(t)
  \end{equation*}
  by H\"{o}lder and Sobolev inequalitites, and hence
  \begin{equation*}
    \textstyle
    \int_{0}^{T}\phi(t)dt
    \le
    \|\nabla u\|_{L^{\frac{4}{1-2 \epsilon}}_{T}
        L^{\frac{3}{1+\epsilon}}}^{2}
    \|\nabla u\|
      _{L^{\frac{2(\gamma-1)}{1+2 \epsilon}}_{T}
        L^{\frac{3(\gamma-1)}{\gamma-2 \epsilon}}}^{\gamma-1}.
  \end{equation*}
  Notice that the first factor is an (allowed) Strichartz
  norm, while the second factor can be estimated 
  by H\"{o}lder inequality in time with the Strichartz norm
  \begin{equation*}
    \|\nabla u\|
    _{L^{\frac{4(\gamma-1)}{\gamma-3+4 \epsilon}}_{T}
      L^{\frac{3(\gamma-1)}{\gamma-2 \epsilon}}}^{\gamma-1},
  \end{equation*}
  (which is allowed and meaningful for 
  $\frac14<\epsilon<\frac12$) since
  the condition 
  $\frac{4(\gamma-1)}{\gamma-3+4 \epsilon}>
    \frac{2(\gamma-1)}{1+2 \epsilon}$
  is equivalent to $\gamma<5$.
  The reamining estimates can be modified in a similar way;
  we omit the details.
\end{proof}


The next Proposition is the crucial step in the proof of
scattering. We follow the simpler approach to scattering
developed in 
\cite{Visciglia09-a} and
\cite{CassanoTarulli14-a}.
We prefer this to the more technical method of
\cite{TaoVisanZhang07-a},
which could also be used here.

\begin{proposition}[]\label{pro:decayLp}
  Let $n \geq 3$, and consider Problem \eqref{eq:maineq}
  under the assumptions of Theorem \ref{the:2} if $n \ge 4$
  or of Theorem \ref{the:2strong} if $n=3$.
  Then any solution 
  $u \in C\cap L^{\infty}(\mathbb{R};H^{1}_{0}(\Omega))$ 
  satisfies
  \begin{equation} \label{eq:stimaDecay}
    \lim_{t\to \pm\infty} 
    \|u(t,\cdot)\|_{L^{r}}=0
    \quad\text{for all}\quad 
    2 < r < \frac{2n}{n-2}.
  \end{equation}
\end{proposition}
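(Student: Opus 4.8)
The plan is to use the bilinear smoothing estimate of Theorem~\ref{the:2} (for $n\ge4$) or Theorem~\ref{the:2strong} (for $n=3$) together with the uniform $H^1$ bound $C:=\sup_{t\in\mathbb{R}}\|u(t)\|_{H^1_0}<\infty$ (which is available by hypothesis, since $u\in C\cap L^\infty(\mathbb{R};H^1_0)$, and which in particular gives $\|u(t)\|_{\dot H^{1/2}}\le C$ for all $t$). From this I will first obtain a sequence $t_k\to+\infty$ along which $\|u(t_k)\|_{L^{r_0}}\to0$ for one conveniently chosen exponent $r_0\in(2,\tfrac{2n}{n-2})$, then upgrade this to a full limit by a soft equicontinuity argument, and finally pass to all $r\in(2,\tfrac{2n}{n-2})$ by interpolation; the case $t\to-\infty$ is identical by time reversal.

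\textbf{Decay along a sequence.} Letting $T\to+\infty$ in \eqref{eq:interactiontesi} (resp.\ \eqref{eq:interactiontesistrong}), whose right-hand side stays bounded by $C'\|u(0)\|_{L^2}^2$ thanks to the uniform $\dot H^{1/2}$ bound, I get $\int_0^{\infty}Q(t)\,dt<\infty$, where $Q(t):=\int_{\Omega\times\Omega}\frac{|u(t,x)|^2|u(t,y)|^2}{|x-y|^3}\,dx\,dy$ if $n\ge4$ and $Q(t):=\|u(t)\|_{L^4(\Omega)}^4$ if $n=3$. Hence there is $t_k\to+\infty$ with $Q(t_k)\to0$. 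When $n=3$ this is already $\|u(t_k)\|_{L^4}\to0$, so I take $r_0=4$. When $n\ge4$, extending $u(t)$ by zero outside $\Omega$ as in Section~\ref{sec:mainterm} one has $Q(t)=c_n\big\||D|^{\frac{3-n}{2}}|u(t)|^2\big\|_{L^2(\mathbb{R}^n)}^2$, while $|u(t)|^2$ is uniformly bounded in $L^{\frac n{n-2}}(\mathbb{R}^n)$ because $\||u(t)|^2\|_{L^{n/(n-2)}}=\|u(t)\|_{L^{2n/(n-2)}}^2\lesssim\|u(t)\|_{\dot H^1}^2\le C^2$. A Gagliardo--Nirenberg interpolation inequality of the form $\|g\|_{L^{q_0}}\lesssim\|g\|_{\dot H^{-(n-3)/2}}^{\vartheta}\|g\|_{L^{n/(n-2)}}^{1-\vartheta}$, valid (with $\vartheta\in(0,1)$ fixed by scaling) for any $q_0$ with $\tfrac{2n}{2n-3}<q_0<\tfrac{n}{n-2}$, applied to $g=|u(t)|^2$ then gives $\|u(t)\|_{L^{2q_0}}^2\lesssim Q(t)^{\vartheta/2}$. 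Fixing such a $q_0$ and setting $r_0:=2q_0$, note $r_0\in\big(\tfrac{4n}{2n-3},\tfrac{2n}{n-2}\big)\subset(2,\tfrac{2n}{n-2})$, that $\|u(t_k)\|_{L^{r_0}}\to0$, and that $\int_0^\infty\|u(t)\|_{L^{r_0}}^{4/\vartheta}\,dt\lesssim\int_0^\infty Q(t)\,dt<\infty$ (in the case $n=3$ the corresponding $\int_0^\infty\|u(t)\|_{L^{r_0}}^{4}\,dt<\infty$ is immediate).

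\textbf{From a sequence to a limit, and conclusion.} The point is that $g(t):=\|u(t)\|_{L^{r_0}}$ is uniformly continuous on $[0,\infty)$. From the equation $u_t=i\big(Lu-f(u)\big)$, the hypothesis $|f(z)|\lesssim|z|^{\gamma}$ with $\gamma<1+\tfrac4{n-2}$, and Sobolev embedding, one checks that $u_t$ is bounded in $L^\infty(\mathbb{R};H^{-2}(\Omega))$ with norm controlled by $C$ and $C^\gamma$; interpolating the resulting uniform Lipschitz bound in $H^{-2}$ against $u\in L^\infty_tH^1_0$ yields $\|u(t)-u(s)\|_{L^{r_0}}\lesssim|t-s|^{1-\theta}$ for some $\theta\in(0,1)$, uniformly in $t,s$. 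Now any nonnegative uniformly continuous function $g$ on $[0,\infty)$ with $\int_0^\infty g^{p}<\infty$ must tend to $0$: otherwise $g\ge\varepsilon$ along some $s_j\to\infty$, hence $g\ge\varepsilon/2$ on intervals of a fixed length around a $2\delta$-separated subsequence of the $s_j$, forcing $\int_0^\infty g^{p}=\infty$. Applying this with $p=4/\vartheta$ (resp.\ $p=4$) gives $\|u(t)\|_{L^{r_0}}\to0$. Finally, for arbitrary $2<r<\tfrac{2n}{n-2}$ I write $L^r$ as an interpolation space between $L^2$ and $L^{r_0}$ when $r\le r_0$, or between $L^{r_0}$ and $L^{2n/(n-2)}$ when $r_0<r<\tfrac{2n}{n-2}$, and use conservation of $\|u(t)\|_{L^2}$ together with $\|u(t)\|_{L^{2n/(n-2)}}\lesssim\|u(t)\|_{\dot H^1}\le C$ to conclude $\|u(t)\|_{L^r}\to0$ as $t\to+\infty$. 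The identical argument on $(-\infty,0]$ handles $t\to-\infty$.

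\textbf{Main obstacle.} I expect the delicate step to be the upgrade from the sequential limit to the genuine limit: it requires a quantitative, $t$-uniform modulus of continuity for $t\mapsto\|u(t)\|_{L^{r_0}}$, which is where the bound on $u_t$ in a negative Sobolev space (interpolated against the uniform $H^1$ bound) enters, and this must be paired exactly with the integrability $\int_0^\infty\|u(t)\|_{L^{r_0}}^{p}\,dt<\infty$ extracted from interaction Morawetz. The fractional Gagliardo--Nirenberg inequality converting $\||D|^{(3-n)/2}|u|^2\|_{L^2}$ into an $L^{r_0}$ norm of $u$ when $n\ge4$ is the other technical ingredient, but it is standard and the admissible range of exponents is exactly $\big(\tfrac{2n}{2n-3},\tfrac n{n-2}\big)$, which lies strictly inside the required range.
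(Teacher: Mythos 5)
Your template --- turn the interaction Morawetz bound into a time-integrability statement, upgrade the resulting subsequential decay to a genuine limit via equicontinuity and a Barbalat-type lemma, then interpolate --- is sound in structure, and for $n=3$ it works as written (and is a genuine alternative to the paper's contradiction argument). For $n\ge4$, however, the conversion step is broken: the fractional Gagliardo--Nirenberg inequality
\begin{equation*}
  \|g\|_{L^{q_0}}\lesssim\|g\|_{\dot H^{-(n-3)/2}}^{\vartheta}\|g\|_{L^{n/(n-2)}}^{1-\vartheta},
  \qquad \tfrac{2n}{2n-3}<q_0<\tfrac{n}{n-2},
\end{equation*}
is false, even for nonnegative $g$ (which is the case you need, $g=|u|^2$). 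Scaling fixes $\vartheta$ but does not certify the estimate, and the direction of the Lebesgue exponents is the obstruction: you are asking a negative-Sobolev norm together with a \emph{higher} Lebesgue norm to control a \emph{lower} Lebesgue norm $L^{q_0}$, $q_0<\tfrac{n}{n-2}$. A counterexample: take $g=\sum_{j=1}^N\mathbf 1_{B_1(y_j)}$ with centers separated by a distance $D\to\infty$. Then $\|g\|_{L^{q_0}}\sim N^{1/q_0}$, $\|g\|_{L^{n/(n-2)}}\sim N^{(n-2)/n}$, and since $\|g\|_{\dot H^{-(n-3)/2}}^2\simeq\int_{\R^n}\int_{\R^n} g(x)g(y)|x-y|^{-3}\,dx\,dy$ reduces for large $D$ to the diagonal contribution $\sim N$, one gets $\|g\|_{\dot H^{-(n-3)/2}}\sim N^{1/2}$. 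Comparing $N$-powers, your inequality requires $\tfrac1{q_0}\le\tfrac{\vartheta}{2}+(1-\vartheta)\tfrac{n-2}{n}$; substituting the scaling value of $\vartheta$, the difference of the two sides equals $(n-3)\bigl(\tfrac1{q_0}-\tfrac{n-2}{n}\bigr)$, strictly positive precisely when $q_0<\tfrac{n}{n-2}$. So the estimate fails for $N$ large.

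The fix is to replace this false inequality by the cube-localized Gagliardo--Nirenberg estimate \eqref{eq:GNir} together with the elementary pointwise lower bound
\begin{equation*}
  \sup_{x_0}\|u(t)\|^4_{L^2(Q_1(x_0))}\lesssim
  \int_{\Omega\times\Omega}\frac{|u(t,x)|^2|u(t,y)|^2}{|x-y|^3}\,dx\,dy,
\end{equation*}
which holds because $|x-y|\le2\sqrt n$ on $Q_1(x_0)\times Q_1(x_0)$. Feeding this into \eqref{eq:GNir} with the uniform $H^1$ bound gives $\|u(t)\|_{L^{2+4/n}}^{2n+4}\lesssim Q(t)$ pointwise in $t$, hence $\int_0^\infty\|u(t)\|_{L^{2+4/n}}^{2n+4}\,dt<\infty$, and your equicontinuity/Barbalat step then closes the argument with $r_0=2+\tfrac4n$ and $p=2n+4$. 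For comparison, the paper takes a different route from this point: it argues by contradiction, showing via \eqref{eq:GNir} that a persistent lower bound on $\|u(t_k)\|_{L^{2+4/n}}$ forces a bubble of $L^2$ mass on a unit cube, and then uses the finite-speed-of-mass-transport estimate \eqref{eq:Morawetz1} --- which plays exactly the role of your $\|u_t\|_{L^\infty_t H^{-1}}<\infty$ --- to keep that bubble alive for a fixed time and contradict the integrability of the Morawetz quantity. Once the $n\ge4$ step is corrected, both packagings are valid.
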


\begin{proof}
  We consider only the case $t\to+\infty$; the proof in
  the case $t\to-\infty$ is identical.
  It is enough to prove
  \eqref{eq:stimaDecay} for $r=2+\frac4n$, i.e.,
  \begin{equation}\label{eq:rscelto}
    \lim_{t\to +\infty} 
    \|u(t,\cdot)\|_{L^{2+\frac4n}}=0.
  \end{equation}
  Indeed, the $H^{1}$ norm of $u$ is bounded
  for $t\in \mathbb{R}$, so that by Sobolev inequality we have
  \begin{equation}\label{eq:bddenergy}
    \|u(t,\cdot)\|_{L^{\frac{2n}{n-2}}}+
    \|u(t,\cdot)\|_{L^{2}}
    \lesssim
    \|u(t,\cdot)\|_{H^{1}}+
    \|u(t,\cdot)\|_{L^{2}}
    \le C
  \end{equation}
  with $C$ independent of $t$, and interpolating with
  \eqref{eq:rscelto} we obtain the full claim 
  \eqref{eq:stimaDecay}.

  Assume by contradiction that there exist an $\epsilon_{0}>0$
  and a sequence of times $t_{k}\uparrow +\infty$ such that
  for all $k$
  \begin{equation} \label{eq:dalbasso}
    \|u(t_{k},\cdot)\|_{L^{2+\frac4n}}\ge \epsilon_{0}.
  \end{equation}
  Denote with $Q_{R}(x)$ the intersection with $\Omega$
  of the cube of side $R$ and center $x$
  (with sides parallel to the axes). By interpolation
  in $L^{p}$ spaces and Sobolev embedding, we have
  for all $v\in H^{1}_{0}(\Omega)$ and $x\in \Omega$
  \begin{equation*}
    \|v\|_{L^{2+\frac4n}(Q_{1}(x))}^{2+\frac4n}
    \le
    \|v\|_{L^{\frac{2n}{n-2}}(Q_{1}(x))}^{2}
    \cdot
    \|v\|_{L^{2}(Q_{1}(x))}^{\frac4n}
    \lesssim
    \|v\|_{H^{1}(Q_{1}(x))}^{2}
    \cdot
    \|v\|_{L^{2}(Q_{1}(x))}^{\frac4n}
  \end{equation*}
  which implies, for all $x\in \Omega$,
  \begin{equation*}
    \|v\|_{L^{2+\frac4n}(Q_{1}(x))}^{2+\frac4n}
    \lesssim
    \|v\|_{H^{1}(Q_{1}(x))}^{2}
    \cdot
    \sup_{y\in \Omega}
    \|v\|_{L^{2}(Q_{1}(y))}^{\frac4n}.
  \end{equation*}
  Choosing a sequence of centers $x\in \Omega$ such that
  the cubes $Q_{1}(x)$ cover $\Omega$ and are
  almost disjoint, and summing over all cubes,
  we obtain the inequality
  \begin{equation}\label{eq:GNir}
    \|v\|_{L^{2+\frac4n}(\Omega)}^{2+\frac4n}
    \lesssim
    \|v\|_{H^{1}(\Omega)}^{2}
    \cdot
    \sup_{x\in \Omega}
    \|v\|_{L^{2}(Q_{1}(x))}^{\frac4n}.
  \end{equation}
  Combining \eqref{eq:GNir} with the energy bound 
  \eqref{eq:bddenergy} and recalling \eqref{eq:dalbasso},
  we obtain that there exists a sequence of points
  $x_{k}\in \Omega$ such that
  \begin{equation*}
    \norma{u(t_k,\cdot)}_{L^2(Q_1(x_k))}\geq \epsilon_1>0.
  \end{equation*}
  We claim that we can find $\bar t >0 $ such that 
  \begin{equation}\label{eq:claim0}
    \|u(t,\cdot)\|_{L^2(Q_2(x_k))} 
    \ge \epsilon_1/2
    \quad \text{for all } 
    t \in (t_k,t_k+\bar t).
  \end{equation}
  Indeed, consider a cut-off function 
  $\chi\in C^\infty_c(\R^n)$ such that 
  $\chi(x)=1$ on the cube of side 1 with center $x_{k}$,
  and
  $\chi(x)=0$ outside the cube of side 2 with center $x_{k}$.
  We integrate the elementary identity
  \begin{equation*}
    \textstyle
    \frac{d}{dt} \left[ \chi(x)\abs{u(t,x)}^2\right] = 
    2 \chi(x) \nabla \cdot \{\Im[a(x) \nabla^b u(t,x) 
    \bar u(t,x)]\}
  \end{equation*}
  on $\Omega$
  and we obtain, for all $t\in\R$,
  \begin{equation}    \label{eq:Morawetz1}
    \begin{split}
      \textstyle
      \left\vert \frac{d}{dt} 
      \int_{\Omega} \chi(x) \abs{u(t,x)}^2\,dx \right\vert
      &\lesssim
      \textstyle
      \left\vert \int_{\Omega} 
      \nabla\chi(x)\cdot 
      \Im[a(x)\nabla^b u(t,x)\bar u(t,x)] \right\vert \\
      &\lesssim \|u(t,\cdot)\|_{L^2(\Omega)} 
      \|\nabla^b u(t,\cdot)\|_{L^2(\Omega)} \\
      &\leq \|u(0,\cdot)\|_{L^2(\Omega)} 
      \sup_{t \in \R}\|\nabla u(t,\cdot)\|_{L^2(\Omega)} 
      =: \overline C  < +\infty,
    \end{split}
  \end{equation}
  where we used \eqref{eq:equivmag}. This implies
  \begin{equation*}
    \textstyle
    \int_{Q_2(x_k)} \abs{u(t,x)}^2\,dx 
    \geq \int_{Q_1(x_k)}\abs{u(t_k,x)}^2\,dx -
     \overline C \abs{t-t_k},
  \end{equation*}
  whence \eqref{eq:claim0} follows provided that we choose 
  $\bar t>0$ such that  
  $\epsilon_1^2 - \overline C \bar t>\epsilon_1^2/4$.
  Note, by passing to a subsequence, we can assume the
  intervals $(t_{k},t_{k}+\overline{t})$ to be disjoint.
  
  If $n\ge 4$,  we get 
  \begin{equation*}
    \textstyle
    \int\,\int_{\Omega\times \Omega} 
    \frac{|u(t,x)|^2|u(t,y)|^2}{|x-y|^3}dxdydt
    \gtrsim
    \sum_{k}
    \int_{t_{k}}^{t_{k}+\overline{t}}
    \int_{Q_2(x_k)\times Q_2(x_k)}
      |u(t,x)|^2|u(t,y)|^2dxdydt=\infty.
  \end{equation*}
  but this is in contradiction
  with \eqref{eq:interactiontesi}, since 
  $u \in L^\infty(\R,H^1_0(\Omega))$, and this concludes the proof
  in this case. On the other hand, if $n=3$,
  from \eqref{eq:claim0} we get that
  \begin{equation*}
    \norma{u}_{L^4((t_k,t_k+\bar t)\times Q_2(x_k))}^4 \geq 
    C \epsilon_1^4 \bar t,
  \end{equation*}
  which is in contradiction with \eqref{eq:interactiontesistrong}.
\end{proof}

By fairly standard arguments, property
\eqref{eq:stimaDecay} implies that the
Strichartz norms of a global $H^{1}$ solutions are
bounded, and scattering follows. The only limitation here
is the requirement $q_{1}<n$ in Assumption (S), which is
effective only in dimension $n=3,4$. We sketch the
arguments for the sake of completeness:

\begin{proposition}[]\label{pro:bddstrich}
  Let $u \in C\cap L^{\infty}(\mathbb{R};H^{1}_{0}(\Omega))$ 
  be a solution to Problem \eqref{eq:maineq} 
  under the assumptions  of Theorem \ref{the:2} if $n \ge 4$ and 
  under the assumptions  of Theorem \ref{the:2strong} if $n=3$.
  Moreover, assume that (S) holds
  and that $\gamma>1+\frac4n$.
  Then for every admissible pair $(p,q)$ we have
  $u\in L^{p}L^{q}$, and for every admissible pair
  $(p,q)$ with $q<n$ we have $\nabla u\in L^{p}L^{q}$.
\end{proposition}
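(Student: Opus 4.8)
The plan is to combine the decay estimate of Proposition \ref{pro:decayLp} with a standard Strichartz bootstrap, after splitting the time axis into a compact part and two tails.

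First I would reduce to proving that $u\in L^{p_0}(\mathbb R;L^{q_0})$ for a single well chosen admissible pair, e.g. the diagonal pair $p_0=q_0=2+\frac4n$ (which is admissible since $2/p_0+n/q_0=n/2$ and $p_0>2$). Indeed, once this is known, writing the Duhamel formula $u(t)=e^{itL}u_0-i\int_0^te^{i(t-s)L}f(u)\,ds$, using the inhomogeneous estimate in \eqref{eq:bbstr} together with $|f(u)|\lesssim|u|^\gamma$ and Hölder--Sobolev, one upgrades the membership to every admissible pair $(p,q)$; applying instead the derivative estimates \eqref{eq:1bbstr} and $|\nabla f(u)|\lesssim|u|^{\gamma-1}|\nabla u|$ gives $\nabla u\in L^pL^q$ for every admissible $(p,q)$ with $q<n$. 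This last restriction is the only place where the bound on the admissible range in Assumption (S) enters, and it is harmless exactly as in the proof of Proposition \ref{pro:localex}.

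For the core estimate, I would fix a small $\eta>0$ (depending only on the Strichartz constants and on $n,\gamma$) and, using Proposition \ref{pro:decayLp}, pick $T_*>0$ with $\|u(t)\|_{L^{2+4/n}}<\eta$ for all $|t|\ge T_*$. On the compact interval $[-T_*,T_*]$ the lifespan in Proposition \ref{pro:localex} depends only on $\|u(t)\|_{H^1}$, which is bounded by conservation of energy (Theorem \ref{the:global}); iterating the local theory on finitely many subintervals gives all the Strichartz norms of $u$ and $\nabla u$ (with $q<n$) finite on $[-T_*,T_*]$. It remains to treat $[T_*,+\infty)$, the tail $(-\infty,-T_*]$ being symmetric. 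For $\tau>T_*$ I would write the Duhamel formula based at $T_*$ on $[T_*,\tau]$, apply \eqref{eq:bbstr}, and estimate the nonlinear contribution by Hölder in $x$ and $t$, distributing the $\gamma$ copies of $u$ as: one copy carried by the norm $\|u\|_{L^{p_0}([T_*,\tau];L^{q_0})}$ being bootstrapped, a block estimated by $\sup_{t\ge T_*}\|u(t)\|_{L^{2+4/n}}\le\eta$, and --- in the intercritical (mass--supercritical) range, where these two are not enough to balance the exponents --- a further block absorbed into the globally finite space--time norm provided by the bilinear smoothing estimate \eqref{eq:interactiontesi} (respectively \eqref{eq:interactiontesistrong} if $n=3$), which holds on all of $\mathbb R$ since $u\in L^\infty(\mathbb R;H^1_0(\Omega))$. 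Since $\|u\|_{L^{p_0}([T_*,\tau];L^{q_0})}$ is a priori finite (by the local-theory iteration applied to the compact interval $[T_*,\tau]$), a continuity argument in $\tau$, for $\eta$ small enough that the factor $\eta^{\gamma-1}$ controls the relevant Strichartz constant, yields a bound uniform in $\tau$; letting $\tau\to+\infty$ gives $u\in L^{p_0}([T_*,+\infty);L^{q_0})$, hence, with the compact part and the other tail, $u\in L^{p_0}(\mathbb R;L^{q_0})$.

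The main obstacle I expect is the exponent bookkeeping in the window $1+\frac4n<\gamma<1+\frac4{n-2}$: one has to choose the admissible pairs so that, after Hölder, the only factors that cannot be absorbed are exactly the decaying quantity $\|u\|_{L^{2+4/n}}$ and the globally bounded quantity from Theorems \ref{the:2}--\ref{the:2strong}, with every remaining exponent matching. The hypothesis $\gamma>1+\frac4n$ is precisely what makes the relevant Strichartz time exponent finite, so that such a choice exists and the iteration closes, while $\gamma<1+\frac4{n-2}$ keeps all spatial exponents below $\frac{2n}{n-2}$, so that they are controlled by the $H^1$ bound and by Proposition \ref{pro:decayLp}.
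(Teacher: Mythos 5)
Your overall architecture (use the decay $\|u(t)\|_{L^r}\to0$ from Proposition \ref{pro:decayLp}, then bootstrap a single Strichartz norm on the tail and iterate local theory on the compact part, then upgrade to all admissible pairs and to $\nabla u$ via $|\nabla f(u)|\lesssim|u|^{\gamma-1}|\nabla u|$) matches the paper's. The place where your proposal diverges, and where it breaks down, is the choice of admissible pair. You work with the diagonal pair $p_0=q_0=2+\frac4n$; after H\"older in space, $\|f(u)\|_{L^{q_0'}}\lesssim\|u\|_{L^{\gamma q_0'}}^{\gamma}$ with $\gamma q_0'=\gamma\tfrac{2n+4}{n+4}$. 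As $\gamma$ approaches $1+\tfrac{4}{n-2}$ this exponent exceeds $\tfrac{2n}{n-2}$, so it cannot be interpolated between $L^{2+4/n}$ (where you have decay) and $L^{2n/(n-2)}$ (where you have the $H^1$ bound). You propose to send the excess into "the globally finite space--time norm provided by the bilinear smoothing estimate," but for $n\ge4$ estimate \eqref{eq:interactiontesi} bounds $\|\,|D|^{\frac{3-n}{2}}|u|^2\|_{L^2_tL^2_x}$, i.e.\ a \emph{negative}-order Sobolev norm of $|u|^2$; it does not give, and does not follow from, any $L^pL^q$ control on $u$, so it cannot supply the missing factor in your H\"older split. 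You flag "exponent bookkeeping" as the main obstacle yourself, but do not resolve it, and this step as written does not close.

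The paper's proof sidesteps the issue entirely by working with the admissible pair $(p,\gamma+1)$, $p=\frac4n\frac{\gamma+1}{\gamma-1}$, for which $(\gamma+1)'\gamma=\gamma+1$: the only spatial exponent that ever appears is $\gamma+1<\frac{2n}{n-2}$, and the time factor is split as $\gamma=(\gamma-\tfrac{p}{p'})+\tfrac{p}{p'}$. The hypothesis $\gamma>1+\frac4n$ is exactly equivalent to $\gamma>\tfrac{p}{p'}$, so $\|u\|_{L^\infty_{T,\infty}L^{\gamma+1}}^{\gamma-p/p'}$ is a genuine small factor (tending to $0$ by Proposition \ref{pro:decayLp}, with $r=\gamma+1$ rather than your $r=2+4/n$), and $\|u\|_{L^p_{T,t}L^{\gamma+1}}^{p/p'}$ is the bootstrap quantity. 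No direct use of the bilinear smoothing estimate is needed at this stage. If you replace your diagonal pair by $(p,\gamma+1)$ and take the decay at $r=\gamma+1$, the third block disappears and your argument becomes essentially the paper's.
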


\begin{proof}
  We consider in detail the case $n\ge4$, where $\gamma+1<n$.
  For the case $n=3$ in the range $2\le \gamma<6$, the following
  arguments can be easily modified as in the
  last part of the proof in Theorem \ref{the:global}.
  Note that we know that the Strichartz norms are finite on
  bounded time intervals, and we only need to prove an
  uniform bound as the time interval invades $\mathbb{R}$.

  We use the notation 
  $L^{p}_{T,t}L^{q}:=L^{p}(T,t;L^{q}(\Omega))$
  for $t>T$.
  By Strichartz estimates on the time interval
  $[T,t]$ for the admissible couple
  $(p,\gamma+1)$ where $p=\frac4n \frac{\gamma+1}{\gamma-1}$
  we have
  \begin{equation*}  
  \begin{split}
    \|u\|_{L^{p}_{T,t}L^{\gamma+1}}  
    & \lesssim
    \|u(T)\|_{L^{2}}+\|f(u)\|_{L^{p'}_{T,t}L^{(\gamma+1)'}} 
  \\
    &  \lesssim
    \|u(T)\|_{L^{2}}+
    \|\|u\|_{L^{\gamma+1}}^{\gamma}\|_{L^{ p'}_{T,t}}
  \end{split}
  \end{equation*}
  since $|f(u)|\lesssim|u|^{\gamma}$
  and $(\gamma+1)'\gamma=\gamma+1$. The condition 
  $\gamma>1+\frac4n$ is equivalent to $\gamma>\frac{p}{p'}$,
  thus we can continue the estimate as follows:
  \begin{equation*}
  \begin{split}
    &\lesssim
    \|u(T)\|_{L^{2}}+
    \|u\|_{L^{\infty}_{T,t}L^{\gamma+1}}^{\gamma-\frac{p}{p'}}
    \|\|u\|_{L^{\gamma+1}}^{\frac{p}{p'}}\|_{L^{ p'}_{T,t}}
    \\
    & \le
    \|u(T)\|_{L^{2}}+
    \|u\|_{L^{\infty}_{T,\infty}L^{\gamma+1}}^{\gamma-\frac{p}{p'}}
    \|u\|_{L^{ p'}_{T,t}L^{\gamma+1}}^{\frac{p}{p'}}.
  \end{split}
  \end{equation*}
  By Proposition \ref{pro:decayLp} we know that
  $o(T)=\|u\|_{L^{\infty}_{T,\infty}L^{\gamma+1}}\to0$ 
  as $T\to \infty$.
  Thus the function 
  $\phi(t):=\|u\| _{L^{p}_{T,t}L^{\gamma+1}}$
  satisfies an inequality of the form
  $\phi(t)\le C+o(T)\phi(t)^{\frac{p}{p'}}$.
  Taking $T$ large enough, an easy continuity argument
  shows that $\phi(t)$ is bounded for all $t>T$.
  This proves that $u\in L^{p}L^{\gamma+1}$. Now
  we notice that in the previous computations 
  we have also proved that 
  $f(u)\in L^{p'}L^{(\gamma+1)'}$, and using again Strichartz
  estimates we conclude that $u\in L^{r}L^{q}$ for all
  admissible $(r,q)$.

  The estimate of $\nabla u$ is similar:
  \begin{equation*}  
  \begin{split}
    \|\nabla u\|_{L^{p}_{T,t}L^{\gamma+1}}  
    & \lesssim
    \|\nabla u(T)\|_{L^{2}}
    +\|\nabla f(u)\|_{L^{p'}_{T,t}L^{(\gamma+1)'}} 
  \\
    &  \lesssim
    \|\nabla u(T)\|_{L^{2}}+
    \|\|u\|_{L^{\gamma+1}}^{\gamma-1}
      \|\nabla u\|_{L^{\gamma+1}}\|_{L^{ p'}_{T,t}}
  \end{split}
  \end{equation*}
  since $|f'(u)|\lesssim|u|^{\gamma-1}$, and as before,
  using H\"{o}lder inequality,
  \begin{equation*}  
  \begin{split}
    & \lesssim
    \|\nabla u(T)\|_{L^{2}}+
    \|u\|_{L^{\infty}_{T,\infty}L^{\gamma+1}}^{\gamma-\frac{p}{p'}}
    \|\|u\|_{L^{\gamma+1}}^{\frac{p}{p'}-1}
      \|\nabla u\|_{L^{\gamma+1}}\|_{L^{ p'}_{T,t}}
  \\
    &  \lesssim
    \|\nabla u(T)\|_{L^{2}}+
    \|u\|_{L^{\infty}_{T,\infty}L^{\gamma+1}}^{\gamma-\frac{p}{p'}}
    \|u\|_{L^{p}_{T,t}L^{\gamma+1}}^{\frac{p}{p'}-1}
    \|\nabla u\|_{L^{p}_{T,t}L^{\gamma+1}}.
  \end{split}
  \end{equation*}
  By the bound already proved, this implies
  \begin{equation*}
    \|\nabla u\|_{L^{p}_{T,t}L^{\gamma+1}}
    \lesssim
    \|\nabla u(T)\|_{L^{2}}+
    o(T)\|\nabla u\|_{L^{p}_{T,t}L^{\gamma+1}}
  \end{equation*}
  and taking $T$ large enough we obtain the claim.
\end{proof}

We can now conclude the proof of Theorem \ref{the:3}. 
Part (i) is Theorem \ref{the:global}. Scattering is an immediate
consequence of the a priori bounds of the Strichartz norms
proved in Proposition \ref{pro:bddstrich}. We briefly sketch the
main steps of the proof which are completely standard,
in the case $t\to+\infty$; the case
$t\to-\infty$ is identical.

To construct the wave operator (claim (ii) of the
Theorem), given $u_{+}\in H^{1}_{0}(\Omega)$,
we consider the integral equation
\begin{equation}\label{eq:inteq}
  u(t):= e^{-it L }u_{+} + i \int_t^\infty e^{-i(t-s)}f(u(s))ds
\end{equation}
and we look for a solution defined on $[T,\infty)$, for $T$ 
sufficiently large. Using Strichartz estimates with the
same choice of indices as in the proof of local existence,
and noticing that the Strichartz norms of $e^{-itL}u_{+}$
are arbitrarily small for $T$ large,
by a fixed point approach we construct a solution 
$u\in C\cap L^{\infty}([T,+\infty),H^{1}_{0}(\Omega))$
to \eqref{eq:inteq}. This is also a solution to the
Schr\"{o}dinger equation in \eqref{eq:maineq}, and
thanks to the global existence result,
$u$ can be extended to a solution 
$u\in C\cap L^{\infty}(\mathbb{R},H^{1}_{0}(\Omega))$
defined for all $t\in \mathbb{R}$.
We can then choose $u_{0}=u(0)$. Uniqueness follows by
a similar argument: if two solutions $u_{1}$, $u_{2}$ 
of \eqref{eq:maineq}, with possibly different data, have the
same asymptotic behaviour i.e.~$\|u_{1}(t)-u_{2}(t)\|_{H^{1}}\to0$
as $t\to+\infty$, then they both solve \eqref{eq:inteq}, and
the previous fixed point argument implies $u_{1}(t)= u_{2}(t)$
for $t$ large. Then $u_{1}\equiv u_{2}$ by global uniqueness.

To prove asymptotic completeness (claim (iii) of the Theorem),
we fix a $u_{0}\in H^{1}_{0}(\Omega)$ and let $u(t)$ be the
corresponding global solution to Problem 
\eqref{eq:maineq}. 
Then we define $v(t)=e^{itL}u(t)$ and note that
\begin{equation*}
  v(t)=u_{0}+i\int_{0}^{t}e^{isL}f(u(s))ds.
\end{equation*}
Note that $\|e^{itL}\phi\|_{L2}=\|\phi\|_{L^{2}}$
by the unitarity of $e^{itL}$; moreover,
since $(-L \phi,\phi)_{L^{2}}\simeq\|\phi\|^{2}_{\dot H^{1}}$, 
we have
$\|e^{itL}\phi\|^{2}_{\dot H^{1}}\simeq
(-Le^{itL}\phi,e^{itL} \phi)_{L^{2}}\simeq\|\phi\|_{\dot H^{1}}$,
and in conclusion we get
\begin{equation*}
  \|e^{itL}\phi\|_{H^{1}}\simeq \|\phi\|_{H^{1}}
  \qquad
  \forall \phi\in H^{1}_{0}(\Omega)
\end{equation*}
with constants uniform in $t$.
Thus for $0< \tau < t$ we can write
\begin{equation*}
  \norma{v(t)-v(\tau)}_{H^1} 
  \simeq   \left\Vert e^{-itL} (v(t)-v(\tau))
    \right\Vert_{H^1}
  =
  \left\Vert \int_\tau^t e^{-i(t-s) L} f(u)\,ds 
  \right\Vert_{L^\infty_t H^1}
\end{equation*}
and by Strichartz estimates, 
H\"older inequality and interpolation, we get
\begin{equation*}
  \|v(t)-v(\tau)\|_{H^1} 
  \lesssim 
  \|f(u)\|_{L^{p'}_{\tau,t}W^{1,(\gamma+1)''}}
\end{equation*}
where $p=\frac4n \frac{\gamma+1}{\gamma-1}$;
this choice is always possible in dimension $n\ge4$;
in dimension $n=3$ for the range $2\le \gamma<6$
one needs to modify the choice as in the proof of
Theorem \ref{the:global}.
By Proposition \ref{pro:bddstrich} we know that the Strichartz
norms of $u$ are bounded, and by the same argument
used in that proof we see that $f(u)\in L^{p'}W^{1,(\gamma+1)'}$.
As a consequence, the right hand side of the previous
inequality can be made arbitrarily small provided
$t,\tau$ are large enough. We deduce that
$v(t)$ converges
in $H^{1}_{0}(\Omega)$ as $t\to+\infty$ to a limit $u_{+}$,
and finally
\begin{equation*}
  \|u(t)-e^{-itL}u_{+}\|_{H^{1}}
  \simeq
  \|v(t)-u_{+}\|_{H^{1}}\to0
\end{equation*}
as claimed.

\section{Strichartz estimates}\label{sec:strichartz}

Throughout this section, $\Omega=\mathbb{R}^{n}$ and
$L$ is the selfadjoint operator on $L^{2}(\mathbb{R}^{n})$
defined in Proposition \ref{pro:selfadjoint}.
We look for sufficient conditions on the coefficients
$a,b,c$ in order to have Strichartz estimates on 
$\mathbb{R}^{n}$ for the flow $e^{itL}$
\begin{equation}\label{eq:strest}
  \|e^{itL}u_{0}\|_{L^{p_{1}}L^{q_{1}}}
  \lesssim
  \|u_{0}\|_{L^{2}},
\end{equation}
\begin{equation}\label{eq:strestnh}
  \textstyle
  \|\int_{0}^{t}e^{i(t-s)L}Fds\|_{L^{p_{1}}L^{q_{1}}}
  \lesssim
  \|F\|_{L^{p_{2}'}L^{q_{2}'}}
\end{equation}
and for the derivative of the flow $\nabla e^{itL}$
\begin{equation}\label{eq:1strest}
  \|\nabla e^{itL}u_{0}\|_{L^{p_{1}}L^{q_{1}}}
  \lesssim
  \|\nabla u_{0}\|_{L^{2}},
\end{equation}
\begin{equation}\label{eq:1strestnh}
  \textstyle
  \|\nabla\int_{0}^{t}e^{i(t-s)L}Fds\|_{L^{p_{1}}L^{q_{1}}}
  \lesssim
  \|\nabla F\|_{L^{p_{2}'}L^{q_{2}'}}
\end{equation}
for admissible couples of indices $(p_{j},q_{j})$.
Recall that \emph{admissible couples} $(p,q)$ satisfy 
$p\in[2,\infty]$, $q\in[2,\frac{2n}{n-2}]$ with
$\frac2p+\frac nq=\frac n2$
and the \emph{endpoint} is the couple $(2,\frac{2n}{n-2})$. 

We shall derive the estimates of the first kind
by combining Tataru's results in
\cite{Tataru08-a} with our smoothing estimates.
On the other hand, in order to deduce \eqref{eq:1strest}, 
\eqref{eq:1strestnh} we shall use the equivalence of
Sobolev norms proved in Proposition \ref{pro:powereq}.
The following result is a direct application of
\cite{Tataru08-a}:

\begin{theorem}[] \label{the:strich1}
  Let $n\ge3$.
  Assume the coefficients $a,b,c$ of $L$ satisfy
  \begin{equation}\label{eq:suffassT}
    |a-I|+\bra{x}(|a'|+|b|)+\bra{x}^{2}(|a''|+|b'|+|c|)
    \le
    \epsilon \bra{x}^{-\delta}
  \end{equation}
  for some $\epsilon,\delta>0$. If $\epsilon$ is sufficiently
  small, the flow $e^{itL}$ satisfies the 
  Strichartz estimates \eqref{eq:strest},
  \eqref{eq:strestnh}
  for all admissible couples $(p_{j},q_{j})$, $j=1,2$,
  including the endpoint.
\end{theorem}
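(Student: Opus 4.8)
The plan is to recognize $L$, under hypothesis \eqref{eq:suffassT}, as a member of the class of variable coefficient Schr\"odinger operators for which Strichartz estimates are established in \cite{Tataru08-a}, and then quote those estimates. First I would rewrite $L$ in non-magnetic form: expanding $\nabla^{b}\cdot(a\nabla^{b}v)$ with $\nabla^{b}=\nabla+ib$ and using the symmetry of $a(x)$, one finds
\begin{equation*}
  Lv = \nabla\cdot(a\nabla v) + 2i\,(ab)\cdot\nabla v + Vv,
  \qquad
  V := i\,\nabla\cdot(ab) - a(b,b) - c ,
\end{equation*}
where $(ab)_{k}=a_{kj}b_{j}$ and $a(b,b)=a_{jk}b_{j}b_{k}$. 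Thus $L$ has principal part $A=\nabla\cdot(a\nabla\,\cdot\,)$, a first order coefficient $2i\,ab$, and a bounded, decaying potential $V$.

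Next I would check that these coefficients meet the decay and smallness requirements of \cite{Tataru08-a}. Assumption \eqref{eq:suffassT} gives $|a-I|+\bra{x}|a'|+\bra{x}^{2}|a''|\le\epsilon\bra{x}^{-\delta}$, so the metric is a small long range perturbation of the flat one (which, together with uniform ellipticity \eqref{eq:assopL}, rules out trapping for $\epsilon$ small). For the drift, $|ab|\lesssim|b|\le\epsilon\bra{x}^{-1-\delta}$ and $|\nabla(ab)|\lesssim|a'||b|+|a||b'|\lesssim\epsilon\bra{x}^{-2-\delta}$, i.e.\ $\bra{x}|ab|+\bra{x}^{2}|\nabla(ab)|\lesssim\epsilon\bra{x}^{-\delta}$. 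For the potential, $|\nabla\cdot(ab)|\lesssim\epsilon\bra{x}^{-2-\delta}$, $|a(b,b)|\lesssim\epsilon^{2}\bra{x}^{-2-2\delta}$ and $|c|\le\epsilon\bra{x}^{-2-\delta}$, hence $\bra{x}^{2}|V|\lesssim\epsilon\bra{x}^{-\delta}$. So every coefficient of $L$ lies in the admissible perturbative class with smallness constant $O(\epsilon)$.

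With this verified, I would invoke the main Strichartz theorem of \cite{Tataru08-a}: for $\epsilon$ small enough --- which guarantees the nontrapping condition and the validity of the parametrix construction used there --- the self-adjoint realization of $L$ from Proposition \ref{pro:selfadjoint} (which under \eqref{eq:suffassT} agrees with the operator treated by Tataru) generates a flow $e^{itL}$ satisfying the homogeneous and inhomogeneous estimates \eqref{eq:strest}, \eqref{eq:strestnh} for all admissible pairs $(p_{j},q_{j})$, including the endpoint $(2,\frac{2n}{n-2})$ valid for $n\ge3$.

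The only delicate point is bookkeeping: one must translate the pointwise bounds \eqref{eq:suffassT} into the precise symbol-type hypotheses (and regularity level) of \cite{Tataru08-a}, and confirm that the first order magnetic term $2i\,ab$ and the potential $V$ are swallowed as admissible perturbations within Tataru's framework rather than requiring separate handling. If the statement there is not phrased to cover the first order term directly, one can instead treat $b$ and $c$ perturbatively by Duhamel, using the dispersive and local smoothing bounds proved in \cite{Tataru08-a} together with the $\bra{x}^{-1-\delta}$ and $\bra{x}^{-2-\delta}$ decay of $b$ and $c$; the exact match of decay rates makes this routine.
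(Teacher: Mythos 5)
Your proposal is correct and follows essentially the same route as the paper: rewrite $L=\nabla\cdot(a\nabla\cdot)+2i(ab)\cdot\nabla+V$ with $V=i\nabla\cdot(ab)-a(b,b)-c$, check that \eqref{eq:suffassT} places these coefficients in the admissible class of small long-range perturbations, and invoke Tataru's theorem. The paper simply makes the bookkeeping you describe explicit, verifying the required bounds with respect to the dyadic $L^\infty$ norm $\|v\|_{Z}=\|v\|_{L^\infty(|x|\le1)}+\sum_{j\ge1}\|v\|_{L^\infty(2^{j-1}\le|x|\le2^j)}$ that appears in \cite{Tataru08-a} (conditions \eqref{eq:assTa}--\eqref{eq:assTc}), and these follow from your pointwise $\bra{x}^{-\delta}$-decay bounds because $\sum_j 2^{-j\delta}<\infty$.
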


\begin{proof}
  We rewrite $L$ as the sum of 
  $Au=\nabla \cdot(a \nabla u)$ plus lower order terms
  \begin{equation*}
    Lu=Au+2ia(\nabla u,b)+i \partial_{j}(a_{jk}b_{k})u-a(b,b)u
    -c(x)u.
  \end{equation*}
  Define the norm
  \begin{equation*}
    \|v\|_{Z}=
    \|v\|_{L^{\infty}(|x|\le1)}
    +
    \sum_{j\ge1}\|v\|
    _{L^{\infty}(2^{j-1}\le|x|\le 2^{j})}.
  \end{equation*}
  By Theorem 4 and Remarks 6 and 7 in \cite{Tataru08-a},
  if $a,b,c$ satisfy
  \begin{equation}\label{eq:assTa}
    \|\bra{x}^{2}|a''(x)|\|_{Z}
    +
    \|\bra{x}|a'(x)|\|_{Z}
    +
    \||a(x)-I|\|_{Z}
    \le \epsilon,
  \end{equation}
  \begin{equation}\label{eq:assTb}
    \|\bra{x}^{2}\partial_{m}(a_{jk}b_{k})\|_{Z}
    +\|\bra{x}a_{jk}b_{k}\|_{Z}
    \le \epsilon,
  \end{equation}
  \begin{equation}\label{eq:assTc}
    \|\bra{x}^{2}[|\partial_{j}(a_{jk}b_{k})|
    +|a(b,b)|+|c(x)|]\|_{Z}\le \epsilon
  \end{equation}
  for $\epsilon$ small enough, then the linear flow
  $e^{itL}$ satisfies the full set of Strichartz
  estimates \eqref{eq:strest}, \eqref{eq:strestnh}. 
  It is immediate to
  check that condition \eqref{eq:suffassT}
  implies \eqref{eq:assTa}--\eqref{eq:assTc}.
\end{proof}

Combining the previous Theorem with our smoothing
estimate (Corollary \ref{cor:smooheat}) we cover the
case of repulsive electric potentials with a large positive
part:

\begin{theorem}[]\label{the:strich2}
  Let $n\ge3$.
  Assume the coefficients $a,b$ of $L$ satisfy
  \begin{equation}\label{eq:suffassT2}
    |a-I|+\bra{x}(|a'|+|b|)+\bra{x}^{2}(|a''|+|b'|)
    +\bra{x}^{3}|a'''|
    \le
    \epsilon \bra{x}^{-\delta}
  \end{equation}
  while the potential $c(x)$ satisfies
  \begin{equation}\label{eq:assc1}
    -\epsilon \bra{x}^{-2}\le c(x)\le C_{+}^{2}\bra{x}^{-2},
    \qquad
    \bra{x}^{1+\delta}c\in L^{n}
  \end{equation}
  and the repulsivity condition
  \begin{equation}\label{eq:assc2}
    a(x)x \cdot \nabla c(x)\le \epsilon|x|^{-1}\bra{x}^{-1-\delta}
  \end{equation}
  for some $\epsilon,\delta,C_{+}>0$.
  If $\epsilon$ is sufficiently
  small, the flow $e^{itL}$ satisfies the homogeneous
  Strichartz estimates \eqref{eq:strest}
  for all admissible couples, and
  the inhomogeneous estimates \eqref{eq:strestnh}
  for all couples with the exception of the
  endpoint-endpoint case.
\end{theorem}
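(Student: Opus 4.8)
The plan is to treat $L$ as a potential perturbation $L=L_{0}-c$ of the variable--coefficient magnetic Laplacian $L_{0}v:=\nabla^{b}\cdot(a\nabla^{b}v)$, to transfer the Strichartz estimates from $e^{itL_{0}}$ to $e^{itL}$ by a Duhamel formula, and to absorb the resulting potential term by means of the smoothing estimate of Corollary~\ref{cor:smooheat}. The first step is to observe that, under \eqref{eq:suffassT2}, the operator $L_{0}$ falls within the scope of Theorem~\ref{the:strich1} with $c\equiv0$: conditions \eqref{eq:assTa}--\eqref{eq:assTc} hold because the $Z$--norms of $\bra{x}^{2}|a''|$, $\bra{x}|a'|$, $|a-I|$ and of $\bra{x}^{2}|b'|$, $\bra{x}|b|$ are all $\lesssim\epsilon$. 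Hence $e^{itL_{0}}$ obeys the full set of homogeneous and inhomogeneous Strichartz estimates \eqref{eq:strest}--\eqref{eq:strestnh} for every admissible couple, endpoint included.

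The second step is to check that $L$ itself satisfies the hypotheses of Theorem~\ref{the:1} (if $n\ge4$), respectively Theorem~\ref{the:1strong} (if $n=3$), on $\mathbb{R}^{n}$, together with the extra assumptions of Corollary~\ref{cor:smooheat}, provided $\epsilon$ is small. Indeed \eqref{eq:suffassT2} gives \eqref{eq:assader}, \eqref{eq:assdb} (resp.\ \eqref{eq:assdbstrong}), \eqref{eq:assperturb}, and the local regularity $b^{2}+|\nabla\cdot b|\in L^{2}_{loc}$ needed in Corollary~\ref{cor:smooheat}; \eqref{eq:assc1} gives $c_{-}\le\epsilon\bra{x}^{-2}$ and $|c|\le C\bra{x}^{-2}$, i.e.\ \eqref{eq:assc} with $C_{-}$ small (the upper constant $C_{+}$ may remain large, which is harmless, since only $C_{-}$ — not the positive part of $c$ — is required to be small in Theorems~\ref{the:1}, \ref{the:1strong} and in Corollary~\ref{cor:smooheat}); and \eqref{eq:assc2} is exactly the repulsivity \eqref{eq:assac} with $C_{c}=\epsilon$. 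Finally, the decay/integrability hypothesis $\bra{x}^{1+\delta}c\in L^{n}$, combined with $|c|\lesssim\bra{x}^{-2}$, yields $c\in L^{\frac n2,1}$ and $\|c_{-}\|_{L^{\frac n2,1}}<\epsilon$ (a real interpolation argument: the $\bra{x}^{-2}$ bound puts $c_{-}$ in $L^{s}$, $s>n/2$, with norm $\lesssim\epsilon$, while $\bra{x}^{1+\delta}c\in L^{n}$ forces $c_{-}\in L^{\rho}$ for some $\rho<n/2$). Thus Corollary~\ref{cor:smooheat} applies and gives
\begin{equation*}
  \|\bra{x}^{-1-\delta}e^{itL}u_{0}\|_{L^{2}_{t}L^{2}_{x}}\lesssim\|u_{0}\|_{L^{2}}.
\end{equation*}

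For the homogeneous estimates I would start from the Duhamel identity
\begin{equation*}
  e^{itL}u_{0}=e^{itL_{0}}u_{0}-i\int_{0}^{t}e^{i(t-s)L_{0}}\bigl(c(x)\,e^{isL}u_{0}\bigr)\,ds .
\end{equation*}
For any admissible couple $(p_{1},q_{1})$, including the endpoint, the first term is $\lesssim\|u_{0}\|_{L^{2}}$ by the homogeneous Strichartz estimate for $e^{itL_{0}}$, and the second is $\lesssim\|c\,e^{isL}u_{0}\|_{L^{2}_{s}L^{2n/(n+2)}_{x}}$ by the endpoint inhomogeneous estimate for $e^{itL_{0}}$. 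Writing $c=(\bra{x}^{1+\delta}c)\,\bra{x}^{-1-\delta}$ and applying H\"older in $x$ with exponents $n$ and $2$ (note $\frac{n+2}{2n}=\frac1n+\frac12$), followed by the smoothing estimate above, one obtains
\begin{equation*}
  \|c\,e^{isL}u_{0}\|_{L^{2}_{s}L^{2n/(n+2)}_{x}}
  \le\|\bra{x}^{1+\delta}c\|_{L^{n}}\,\|\bra{x}^{-1-\delta}e^{isL}u_{0}\|_{L^{2}_{s}L^{2}_{x}}
  \lesssim\|u_{0}\|_{L^{2}},
\end{equation*}
which establishes \eqref{eq:strest} for every admissible couple, endpoint included. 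The inhomogeneous estimates then follow by a standard soft argument: since $L$ is self-adjoint, $e^{itL}$ and its time reversal satisfy the homogeneous estimates for all admissible pairs, so the $TT^{*}$ identity yields the untruncated inhomogeneous bound for all admissible $(p_{1},q_{1})$, $(p_{2},q_{2})$, and the Christ--Kiselev lemma upgrades this to \eqref{eq:strestnh} whenever $p_{2}'<p_{1}$, i.e.\ for every pair of admissible couples with the sole exception of the endpoint--endpoint case $p_{1}=p_{2}=2$.

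The main obstacle I anticipate is not the perturbative scheme, which is essentially forced, but the bookkeeping in the second step: one must verify that in Theorems~\ref{the:1}, \ref{the:1strong} and in Corollary~\ref{cor:smooheat} only $c_{-}$ (and not the large part $C_{+}^{2}\bra{x}^{-2}$ of $c$) is required to be small, that the various repulsivity and derivative conditions line up with constants proportional to $\epsilon$, and — most delicately — that $\bra{x}^{1+\delta}c\in L^{n}$ is precisely the hypothesis needed both to place $c$ in $L^{\frac n2,1}$ with $c_{-}$ small there (so that the Gaussian heat--kernel bound, and hence Corollary~\ref{cor:smooheat}, are available) and to make the H\"older splitting of the Duhamel term close against the $\bra{x}^{-1-\delta}$--weighted smoothing estimate. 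Keeping track of the endpoint in the estimates for $L_{0}$ and of the Christ--Kiselev restriction is then routine.
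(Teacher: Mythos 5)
Your proposal follows the same route as the paper: apply Tataru's Strichartz estimates (Theorem~\ref{the:strich1}) to the magnetic operator $L_{0}=\nabla^{b}\cdot(a\nabla^{b}\,\cdot)$, absorb the potential term via the Duhamel identity and H\"older against the weighted $L^{2}$ smoothing bound of Corollary~\ref{cor:smooheat}, and deduce the non-endpoint inhomogeneous estimates by $TT^{*}$ and Christ--Kiselev. The only difference is cosmetic: you verify the hypothesis $c\in L^{n/2,1}$ (with $c_{-}$ small there) directly by a Lorentz interpolation argument, whereas the paper simply observes that the Gaussian heat-kernel bound --- the only consequence of that hypothesis actually used in Corollary~\ref{cor:smooheat} --- already holds for bounded $c$, citing a more general result.
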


\begin{proof}
  By Theorem \ref{the:strich1}, Strichartz estimates
  are valid for the flow $e^{itL_{0}}$ with $c=0$.
  The complete flow $u=e^{itL}u_{0}$
  satisfies the equation $iu_{t}+L_{0}u=cu$, hence it can
  be written
  \begin{equation*}
    \textstyle
    u=e^{itL}u_{0}=
    e^{itL_{0}}u_{0}-
    i\int_{0}^{t}e^{i(t-s)L_{0}}(cu)ds
  \end{equation*}
  so that, by the previous result,
  \begin{equation*}
    \|u\|_{L^{p}L^{q}}
    \lesssim
    \|u_{0}\|_{L^{2}}
    +
    \|cu\|_{L^{2}L^{\frac{2n}{n+2}}}
  \end{equation*}
  for all admissible couples $(p,q)$.
  By H\"{o}lder inequality we have
  \begin{equation*}
    \|cu\|_{L^{2}L^{\frac{2n}{n+2}}}
    \lesssim
    \|\bra{x}^{1+\delta}c\|_{L^{n}}
    \|\bra{x}^{-1-\delta}u\|_{L^{2}L^{2}}
  \end{equation*}
  and the homogeneous estimate 
  will be proved if we can prove the estimate
  \begin{equation}\label{eq:close}
    \|\bra{x}^{-1-\delta}u\|_{L^{2}L^{2}}
    \lesssim
    \|u_{0}\|_{L^{2}}.
  \end{equation}
  Indeed, the assumptions of Corollary \ref{cor:smooheat}
  are satisfied by $L$; in particular, the gaussian upper 
  bound for the heat flow $e^{itL}$ is valid for
  general $L^{\infty}$ coefficients
  (see Theorem 5.4 in \cite{Ouhabaz04-a} or \cite{Ouhabaz05-a}).
  Thus \eqref{eq:close} follows from inequality
  \eqref{eq:smoomorzero} and we obtain the full set
  of homogeneous Strichartz estimates for the flow
  $e^{itL}$.

  To prove inhomogeneous estimates it is sufficient
  to apply a standard
  $TT^{*}$ argument combined with the Christ-Kiselev lemma,
  and this gives \eqref{eq:strestnh}
  with the exception of the endpoint-endpoint case.
\end{proof}

We conclude the section by proving the estimates for
the flow $\nabla e^{itL}$, which are now a 
straightforward consequence of the previous results.
Note that the application of Proposition \ref{pro:powereq}
imposes an additional condition $q_{1}<n$,
which is restrictive only in
dimensions $n=3$ and $4$.

\begin{corollary}[]\label{cor:strichder}
  Let $n\ge3$. Estimates
  \eqref{eq:1strest}, \eqref{eq:1strestnh} hold
  for the flow $\nabla e^{itL}$, for all admissible couples
  $(p_{j},q_{j})$, $j=1,2$, provided $q_{1}<n$
  and the coefficients $a,b,c$ of $L$ satisfy
  either assumption \eqref{eq:suffassT},
  or assumptions
  \eqref{eq:suffassT2}, \eqref{eq:assc1}, \eqref{eq:assc2},
  provided $\epsilon$ is small enough.
\end{corollary}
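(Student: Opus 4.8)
The plan is to reduce the estimates for $\nabla e^{itL}$ to the estimates for the scalar flow $e^{itL}$ — already available from Theorems \ref{the:strich1} and \ref{the:strich2} — by commuting the fractional power $(-L)^{1/2}$ through the flow and the Duhamel term and then trading $(-L)^{1/2}$ for $\nabla$ by means of the $L^{q}$ norm equivalence of Proposition \ref{pro:powereq}.

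The first step is to verify that, under either set of hypotheses in the statement, the operator $L$ satisfies assumption \eqref{eq:assLcoef} of Proposition \ref{pro:powereq}. Under \eqref{eq:suffassT} this is immediate: the bounds force $\|a-I\|_{L^{\infty}}$, $\||b|+|a'|\|_{L^{n,\infty}}$, $\|b'\|_{L^{\frac n2,\infty}}$ and $\|c\|_{L^{\frac n2,1}}$ (hence $\|c_{-}\|_{L^{\frac n2,1}}$) all to be $\le C\epsilon$, while $b^{2}+|\nabla\cdot b|\in L^{\infty}_{loc}\subseteq L^{2}_{loc}$. Under \eqref{eq:suffassT2}, \eqref{eq:assc1}, \eqref{eq:assc2} the bounds on $a$ and $b$ are the same, and for the potential one combines $|c|\le C_{+}^{2}\bra{x}^{-2}$, $c_{-}\le\epsilon\bra{x}^{-2}$ with the fact that $\bra{x}^{1+\delta}c$ belongs to $L^{n}$ (from \eqref{eq:assc1}); a short interpolation in Lorentz spaces then gives $c\in L^{\frac n2,1}$ with, moreover, $\|c_{-}\|_{L^{\frac n2,1}}\le C\epsilon$. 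Hence Proposition \ref{pro:powereq} applies; taking $\sigma=1/2$ it yields
\begin{equation*}
  \|(-L)^{1/2}v\|_{L^{q}}\simeq\|(-\Delta)^{1/2}v\|_{L^{q}}\simeq\|\nabla v\|_{L^{q}},
  \qquad 1<q<n .
\end{equation*}
This is exactly where the restriction $q_{1}<n$ in the statement comes from; note it also covers $q=2$ since $n\ge3$.

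The second step is the commutation. Since $L$ is selfadjoint, $(-L)^{1/2}$ commutes with $e^{itL}$ and with the Duhamel operator $\int_{0}^{t}e^{i(t-s)L}(\cdot)\,ds$. Thus, for an admissible couple $(p_{1},q_{1})$ with $q_{1}<n$,
\begin{equation*}
  \|\nabla e^{itL}u_{0}\|_{L^{p_{1}}L^{q_{1}}}
  \simeq\|e^{itL}(-L)^{1/2}u_{0}\|_{L^{p_{1}}L^{q_{1}}}
  \lesssim\|(-L)^{1/2}u_{0}\|_{L^{2}}\simeq\|\nabla u_{0}\|_{L^{2}}
\end{equation*}
by \eqref{eq:strest} and the norm equivalence (used in $L^{q_{1}}$ pointwise in $t$ and in $L^{2}$), which is \eqref{eq:1strest}; and, observing that $q_{2}'\in[\tfrac{2n}{n+2},2]$, so that $q_{2}'<n$ automatically,
\begin{equation*}
  \Bigl\|\nabla\!\int_{0}^{t} e^{i(t-s)L}F\,ds\Bigr\|_{L^{p_{1}}L^{q_{1}}}
  \simeq\Bigl\|\!\int_{0}^{t} e^{i(t-s)L}(-L)^{1/2}F\,ds\Bigr\|_{L^{p_{1}}L^{q_{1}}}
  \lesssim\|(-L)^{1/2}F\|_{L^{p_{2}'}L^{q_{2}'}}\simeq\|\nabla F\|_{L^{p_{2}'}L^{q_{2}'}}
\end{equation*}
by \eqref{eq:strestnh}, which is \eqref{eq:1strestnh}. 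In the range where only Theorem \ref{the:strich2} is available one keeps, as there, the exclusion of the endpoint–endpoint inhomogeneous case.

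I do not expect a genuine obstacle: this is a routine combination of the scalar Strichartz estimates, the functional calculus for the selfadjoint operator $L$, and the $L^{q}$ equivalence of Proposition \ref{pro:powereq}. The only points that need a little care are the verification that the second set of hypotheses forces $c\in L^{\frac n2,1}$ with $\|c_{-}\|_{L^{\frac n2,1}}$ small (the Lorentz-space interpolation mentioned above, interpolating the $\bra{x}^{-2}$ pointwise bound against the $L^{n}$ information on $\bra{x}^{1+\delta}c$), and the observation — already built into the statement — that the loss of range to $q_{1}<n$ is intrinsic, since the equivalence $\|(-L)^{1/2}v\|_{L^{q}}\simeq\|\nabla v\|_{L^{q}}$ is only known for $q<n$.
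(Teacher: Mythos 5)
Your proof is correct and follows the same route as the paper's: verify that each set of hypotheses implies the assumptions of Proposition~\ref{pro:powereq} (including the smallness of $\|c_-\|_{L^{n/2,1}}$ via the $\bra{x}^{-2}$ pointwise bound together with $\bra{x}^{1+\delta}c\in L^n$), then commute $(-L)^{1/2}$ through $e^{itL}$ and the Duhamel integral and use the equivalence $\|(-L)^{1/2}v\|_{L^q}\simeq\|\nabla v\|_{L^q}$ for $1<q<n$. Your explicit observation that $q_2'\le 2<n$, so that the equivalence also applies on the right-hand side of the inhomogeneous estimate, is a detail the paper leaves implicit.
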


\begin{proof}
  In both cases we see that the assumptions of
  Proposition \ref{pro:powereq} are satisfied. In particular,
  in the second case the smallness of the
  $L^{\frac n2,1}$ norm of $c_{-}$ follows from the fact that
  the $L^{n}$ norm of $\bra{x}^{1+\delta}c$ is arbitrarily
  small outside a sufficiently large ball, and inside
  the ball we have $|c_{-}|\le \epsilon$ by condition
  \eqref{eq:assc1}.

  Now in the first case the assumptions of
  Theorem \ref{the:strich1} are satisfied and we can write
  \begin{equation*}
  \begin{split}
    \|\nabla e^{itL}u_{0}\|_{L^{p_{1}}L^{q_{1}}}
    \simeq
    &
    \|(-L)^{\frac12} e^{itL}u_{0}\|_{L^{p_{1}}L^{q_{1}}}
    =
    \| e^{itL}(-L)^{\frac12}u_{0}\|_{L^{p_{1}}L^{q_{1}}}
    \\
    \lesssim
    &
    \|(-L)^{\frac12}u_{0}\|_{L^{2}}
    \simeq
    \|\nabla u_{0}\|_{L^{2}}
  \end{split}
  \end{equation*}
  by a repeated application of \eqref{eq:equivfr} for
  $\sigma=\frac12$. The proof of the remaining claims is
  identical.
\end{proof}

\bibliography{interaction-20141214.bib}
\bibliographystyle{plain}

\end{document}